\PassOptionsToPackage{dvipsnames}{xcolor}
\documentclass[11pt,a4paper]{amsart}

\usepackage[all]{xy}
\usepackage[utf8]{inputenc}
\usepackage[T1]{fontenc}      
\usepackage{geometry}[2.5cm]         
\usepackage[english]{babel}  
\usepackage{tikz}
\usepackage{tikz-cd}
\usetikzlibrary{arrows.meta, calc, positioning, shapes}

\usepackage{a4wide}
\usepackage{amsfonts,amsthm,amsmath,amssymb}
\usepackage{amscd,color}
\usepackage{psfrag,graphicx,subfigure,hyperref,enumerate}
\usepackage{import} 
\usepackage[shortlabels]{enumitem}
\usepackage[dvipsnames]{xcolor}
\usepackage{verbatim}
\usepackage[all]{xy}
\usepackage{soul}
\makeatletter
\@namedef{subjclassname@2020}{\textup{2020} Mathematics Subject Classification}
\makeatother

\theoremstyle{plain}
\newtheorem{theo}{Theorem}[section]

\newtheorem{prop}[theo]{Proposition}
\newtheorem{coro}[theo]{Corollary}
\newtheorem{lem}[theo]{Lemma}
\newtheorem*{lem*}{Lemma}

\newtheorem*{thmA}{Theorem A}

\newtheorem*{thmB}{Theorem B}
\newtheorem*{thmC}{Theorem C}
\newtheorem*{thmD}{Theorem D}

\theoremstyle{definition}
\newtheorem{defi}[theo]{Definition}

\newtheorem*{obs*}{Observation}
\newtheorem{rem}[theo]{Remark}

\newtheorem*{rem*}{Remark}

\numberwithin{equation}{section}

\DeclareMathOperator{\diam}{diam}
\DeclareMathOperator{\dist}{dist}

\DeclareMathOperator{\id}{Id}

\DeclareMathOperator{\inter}{int}

\newcommand{\R}{\mathbb{R}}

\newcommand{\N}{\mathbb{N}}
\newcommand{\Z}{\mathbb{Z}}
\newcommand{\C}{\mathbb{C}}

\newcommand{\D}{\mathbb{D}}

\newcommand{\chat}{\hat{\mathbb{C}}}

\newcommand{\jac}{\mathrm{Jac}\,}

\newcommand{\dbar}{\overline{\partial}}
\newcommand{\rs} {\hat{\mathbb{C}}}

\newcommand{\limn}{\lim_{n \rightarrow \infty}}

\newcommand{\eps}{\epsilon}

\newcommand{\acal}{\mathcal{A}}

\newcommand{\mcal}{\mathcal{M}}
\newcommand{\tcal}{\mathcal{T}}

\newcommand{\htil}{\widetilde{h}}

\newcommand{\im}{\mathrm{Im}}

\newcommand{\bif}{\mathrm{Bif}}

\newcommand{\lam}{\lambda}

\newcommand{\B}{\mathbb{B}}

\renewcommand{\H}{\mathbb{H}}

\definecolor{Violet}{cmyk}{0.79,0.88,0,0}
\definecolor{Lavender}{cmyk}{0,0.48,0,0}

\newcommand{\la}{\lambda}
\renewcommand{\phi}{\varphi}

\newcommand{\ra}{\rightarrow}

\newcommand{\JJ}{\mathcal{J}}

\renewcommand{\emptyset}{\varnothing}

\newcommand{\ov}{\overline}
\newcommand{\Tf}{\mathcal{T}(\mathbf f)}
\newcommand{\TT}{\mathcal{T}}
\newcommand{\ind}{\operatorname{ind}}
\renewcommand{\tilde}{\widetilde}

\renewcommand{\Im}{\operatorname{Im}}
\newcommand{\Id}{\operatorname{Id}}

\title{A universal model for the bifurcations of asymptotic values}

\author{Matthieu Astorg, Anna Miriam Benini, N\'uria Fagella}%

\begin{document}

\begin{abstract}
	We study the notion of tangent-like maps, which is a transcendental analogue of polynomial-like maps. 
	We introduce a model family analogous to quadratic polynomials, with only one free asymptotic value, and 
	define the "Tandelbrot set" as the analogue of the Mandelbrot set.
	We prove a Straightening Theorem for tangent-like maps, with uniqueness of the model map in the case where the filled-in Julia set is connected, and a parameter version of the Straightening Theorem for suitable holomorphic families of tangent-like maps. As a consequence, we prove the existence of topological copies of the Tandelbrot set in  bifurcation loci of numerous   families of meromorphic maps.
\end{abstract}

\maketitle

\tableofcontents

\section{Introduction} \label{sec:intro}

The quadratic family $f_c(z):=z^2+c$, $c \in \C$ is  the simplest non-trivial holomorphic family of rational maps in one complex variable.
 These maps have only one {critical point}, located at  $z=0$, which is the only point at which $f_c$ fails to be a local homeomorphism. The  orbit of $z=0$ governs the global dynamics of $f_c$. The  Mandelbrot set $\mathcal M$,  defined as the set of parameters $c\in\C$ for which the orbit of $z=0$ does not escape to infinity, has been the subject of extensive research over the last decades and still is the object of one of the main open questions in the field:  the MLC conjecture, which states that $\mcal$ is locally connected. It is known that the MLC conjecture would imply a positive answer to several other important conjectures, such as the Fatou conjecture on the genericity of hyperbolicity in the quadratic family.
 
A remarkable feature of the    Mandelbrot set is its {\em universality}. In the early 80's,
many numerical experiments showed the existence of topological copies of quadratic Julia sets in the dynamical plane of polynomial or rational maps, or copies of the Mandelbrot set in the parameter spaces of most holomorphic families of rational maps.
Both of these phenomena are explained by the seminal work of Douady and Hubbard on polynomial-like mappings.

Introduced in  \cite{douhub}, polynomial-like mappings provide a powerful conceptual framework that allows one to transfer results from  polynomial dynamics to a much broader class of holomorphic dynamical systems.  A polynomial-like map is a triple $(f, U, V)$ where $U$ and $V$ are topological disks, $\overline{U}\subset V$, and $f:U\to V$ is a proper holomorphic map of degree $d\geq 2$.  
The fundamental result of this theory is the so-called Straightening Theorem (\cite[Theorem 1]{douhub}), which asserts that every polynomial-like map is (quasiconformally) conjugate to a genuine polynomial $P$ in a neighborhood of their respective filled Julia sets, see Definition \ref{def:hybrid} for a more precise statement.   
On the other hand, given a rational map $f: \hat{\C} \to \hat{\C}$, the flexibility of this definition allows in many situations to construct restrictions $f_{|U}^n: U \to V$ of some iterate of $f$ 
which are quadratic-like, or more generally polynomial-like.
The Straightening  Theorem then  explains why copies of polynomial Julia sets are encountered in the phase space of many rational or transcendental maps, near a { critical point}.
In the particular  case of a quadratic polynomial with connected Julia set, if such a restriction exists, then $f_c^n: U \to V$ is quasiconformally conjugated
to a unique $f_{c'}$, $c' \in \mcal$: this gives rise to a \emph{renormalization operator} $\mathcal{R}: c \mapsto c'$.
The study of the fine properties of this renormalization operator  are closely related to the MLC conjecture, by the famous work of Yoccoz (\cite{hubbard1992local}). We refer the reader the recent survey \cite{dudko2025mlc} for an overview of this topic, and the related works of Avila, Kahn, Lyubich, McMullen, Shen and many others.

There is also a parameter version of the Straightening Theorem, which applies only to families of unicritical polynomial-like maps (in particular, for families of quadratic-like maps).
It allowed McMullen (\cite{mcmullen2000mandelbrot}) to prove the universality property of the Mandelbrot set mentionned above: in any holomorphic family of rational maps with non-empty bifurcation locus and simple critical points, there are quasiconformal copies of the Mandelbrot set, or of its higher degree analogue in the case of critical points with persistent higher multiplicity.

\medskip

In a different setting, there has been remarkable progress during the recent years in the understanding of the dynamics of maps with essential singularities, an area known as {transcendental} dynamics. In the presence of critical points, transcendental functions behave locally like polynomials and we see polynomial Julia sets in their dynamical plane, as explained above. But a different type of singularity of the inverse map may exist for transcendental functions, namely {\em asymptotic values}. Heuristically speaking, asymptotic values $v$ are points that have some of their preimages "at the essential singularity". More formally, given a holomorphic map $f: U \to V$ between Riemann surfaces, a point $v \in V$ is an asymptotic value for $f$ if there exists a continuous curve $\gamma: [0,+\infty) \to U$ such that $\gamma(t)$ leaves every compact of $U$ as $t \to +\infty$, but $\lim_{t \to +\infty} f \circ \gamma(t)=v$. The presence of critical values (that is, images of critical points) or asymptotic values is the only obstruction to the existence of well-defined local inverse branches of the map $f$.

A natural question one may ask is whether there exists a class of holomorphic maps analoguous to quadratic-like maps, but 
for asymptotic values instead of simple critical points.
This leads to the notion of a so-called \emph{tangent-like map}, of which we give here  a first, informal definition:  a holomorphic map $f: U \to V$ is tangent-like if there exists $v \in V$ such that $f: U \to V \setminus \{v\}$ is a universal cover, where $U, V$ are Jordan domains and $U \Subset V$. Then $v$ is an asymptotic value, and there will be an essential singularity $u \in \partial U$. See Definition \ref{def:tlmap}.
This notion was introduced by Galazka and Kotus (\cite{GaKo}).

\begin{figure}[hbt!]
	\begin{center}
		\includegraphics[width=0.3\textwidth]{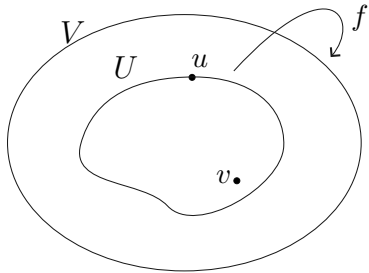}
		\setlength{\unitlength}{0.3\textwidth} 
		\put(-0.85, 0.65){\large$V$}
		\put(-0.7, 0.55){\large$U$}
		\put(-0.03, 0.7){$f$}
		\put(-0.48, 0.58){$u$}
		\put(-0.41, 0.25){$v$}
\caption{\label{fig:TLmap} The definition of a tangent-like map. The point $u \in \partial U$ is an essential singularity. The asymptotic value $v$ can be located anywhere in $V$.}
	\end{center}
\end{figure}

Similarly to polynomial-like maps, one can define the {\em filled Julia set} $K(f)$ of a tangent-like map $f$ as the points that either never leave $U$ under iteration, or eventually land on the essential singularity $u$. The Julia set  $J(f)$ is defined  as its boundary of $K(f)$.  The connectedness of $K(f)$ or $J(f)$ can be proved to be  equivalent to the property $v \in K(f)$ (Proposition \ref{prop:Kconnected}).  
Likewise, the concept of quasiconformal conjugacy and hybrid equivalence can be appropriately defined in this setting (see Section \ref{sec:TLmaps} for details),

Note that unless $u=v$, tangent-like maps have points which are mapped to an essential singularity.
Therefore, the class of entire maps is somehow too restrictive to admit many different tangent-like restrictions; instead, 
the natural setting is the class of transcendental meromorphic maps, and their iterates.

We now introduce a model family, which will be our analogue of the quadratic family:

\begin{equation}\label{def:T}
	T_\alpha(z) :=  \frac{e^{(\alpha-1)z/8}-1}{\alpha e^{(\alpha-1)z/8}-1}, \quad \quad \alpha \in \C \setminus \{1\}.
\end{equation}

For $\alpha\in\C\setminus\{1\}$, the map $T_\alpha$ has exactly two asymptotic values (which are $1$ and $\frac{1}{\alpha}$), no critical points and it also fixes $0$ with multiplier equal to $\frac{1}{8}$. These properties characterize the family, up to affine conjugation (see Lemma \ref{lem:rigidity}).
Putting aside the choice of multiplier $\frac{1}{8}$, which is arbitrary, this is in a way the simplest possible family of meromorphic maps with a free asymptotic value. { Maps in this slice were previously studied in \cite{GaKo,GaKo2,chen2022accessible} and  \cite{chen2023slices}.}

This family exhibits a symmetry with respect to the involution $\alpha \mapsto \frac{1}{\alpha}$, in the sense that $T_{\alpha}$ is linearly conjugated to $T_{1/\alpha}$, and the conjugacy switches the two asymptotic values.
We shall therefore mostly be concerned with values of $\alpha$ in  $\overline{\D}\setminus \{1\}$, and work with the free asymptotic value $\frac{1}{\alpha}$. Indeed, for $\alpha \in \overline{\D} \setminus \{1\}$, one can show  (Lemma \ref{lem:disk mapping inside}) that the asymptotic value 1 is always in the basin of $0$, while $1/\alpha$ is free and may or may not be captured by this basin. The basin of $0$, which we denote by $\Omega_\alpha$, will play the same role as  the basin of infinity for polynomials.

Our main motivation for introducing the family $(T_\alpha)_{\alpha \in \C \setminus \{1\}}$ is the following straightening theorem. { The first part of this result was already anticipated by Galazka and Kotus in \cite{GaKo}, though the argument given there appears to be incomplete.}

\begin{thmA}[Straightening Theorem]
	Let $(f,U,V,u,v)$ be a tangent-like map. Then 
 there exists $\alpha \in \C \setminus \{1\}$ such that $f$ is hybrid equivalent to $T_\alpha$, with the hybrid conjugacy mapping $v$ to $\frac{1}{\alpha}$. Moreover, if $K(f)$ is connected, then $\alpha$ is unique.
\end{thmA}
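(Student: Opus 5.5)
We follow the Douady--Hubbard scheme: replace the "outside" dynamics of $f$ by that of a fixed model near an attracting fixed point, glue quasiconformally, and straighten with the Measurable Riemann Mapping Theorem. The role of the basin of infinity is played by the basin $\Omega_\alpha$ of the attracting fixed point $0$ of $T_\alpha$, with multiplier $\frac18$.

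\textbf{Step 1 (model on the outside).} Let $A = \overline{V}\setminus U$, a closed annulus. We want a quasiconformal map $\psi$ from a neighbourhood of $A$ onto a fundamental annulus for the basin of $0$ of some model map. Since $f: U\to V\setminus\{v\}$ is a universal cover, the correct local model near $\partial V \cup \partial U$ is the exponential structure $w \mapsto e^{w}$ seen from an attracting point. Concretely, we take the map $E(z) = 8$-th root-type dilation composed with $e^{z}$-like covering: the annulus $\{r<|z|<1\}$ maps to $\{|z|<r\}$ by $z\mapsto$ a universal cover of a punctured disk, with the puncture image being the second asymptotic value $1$.

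\textbf{Step 2 (quasiregular model).} We build a quasiregular map $F$ on $\hat{\C}$ that equals $f$ on $U$ and on $\hat\C\setminus U$ is a universal covering of the punctured disk $\hat\C \setminus \overline V$, mapping into $V$ minus a point. This must be glued along $\partial U$ using a lift of a quasiconformal interpolation, which exists because both sides are universal covers of annuli; we may choose the gluing so that $F$ has an attracting fixed point in $\hat\C\setminus\overline V$ with multiplier $\frac18$ on that side. Next we define an $F$-invariant Beltrami coefficient: it is $0$ on $K(f)$ and on the exterior model, and it is spread to $U\setminus K(f)$ by pulling back along iterates of $F$. Each orbit meets the non-holomorphic gluing region at most once, so the dilatation stays bounded.

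\textbf{Step 3 (identification).} Integrating the Beltrami coefficient via the Measurable Riemann Mapping Theorem gives a quasiconformal $\phi$ for which $G = \phi\circ F\circ\phi^{-1}$ is meromorphic on $\hat\C$ minus the single image of the essential singularity. $G$ has no critical points and exactly two asymptotic values, and it fixes a point with multiplier $\frac18$. By Lemma~\ref{lem:rigidity}, $G$ is affinely conjugate to some $T_\alpha$, and we normalise so that $\phi(v)=\frac1\alpha$. Since $\bar\partial\phi=0$ a.e.\ on $K(f)$, the map $\phi$ is a hybrid equivalence. The main obstacle is making Step 2 honest near the essential singularity $u\in\partial U$. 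Here we would work in logarithmic coordinates: lift $\partial U$ and $\partial V$ to the universal covers and interpolate there by a $2\pi i$-periodic (deck-equivariant) quasiconformal map, which guarantees bounded dilatation.

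\textbf{Uniqueness.} If $K(f)$ is connected and $f$ is hybrid equivalent to both $T_\alpha$ and $T_\beta$, then $T_\alpha$ and $T_\beta$ are hybrid equivalent with $\frac1\alpha \in K(T_\alpha)$. We extend the conjugacy to the basins $\Omega_\alpha$ and $\Omega_\beta$ by a conformal conjugacy coming from linearisation at $0$, which is possible since $1$ and $\frac1\alpha$ behave compatibly. This yields a global quasiconformal conjugacy that is conformal off $J$. If $J$ has measure zero, or more generally using the pullback argument with $\bar\partial = 0$ a.e.\ on $K$, it is conformal, hence affine. Lemma~\ref{lem:rigidity} then gives $\alpha=\beta$.
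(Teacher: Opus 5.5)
Your overall plan is the paper's: Douady--Hubbard surgery with an exterior model at an attracting fixed point of multiplier $\frac18$, identification via Lemma~\ref{lem:rigidity}, and uniqueness by upgrading a hybrid conjugacy to a global conformal one. However, both technical cores of the argument have gaps.

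\textbf{Existence.} Your proposal does not resolve the step you flag as the main obstacle. The $2\pi i$-periodicity you invoke is the deck group of $f:U\to V\setminus\{v\}$. It acts on $U$, not on the annulus $\overline V\setminus U$ where the interpolation has to take place. In any case, periodicity only lets a map descend to a quotient; it gives no control of the dilatation at $u$. What must be controlled is that the boundary map on $\partial U$, obtained by lifting the one on $\partial V$ through $f$, is quasisymmetric up to and including $u$. This is exactly what the hypothesis that $\partial U$ is a quasicircle is for, and your argument never uses it.

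The paper proceeds as follows. Normalise $u=\infty$ and take as exterior model $T_0(z)=1-e^{-z/8}$ with its tangent-like restriction $(T_0,U_0,V_0,\infty,\infty)$. Pick a M\"obius map $t_1:V\to V_0$ with $t_1(v)=\infty$, and lift it through the two universal covers to $t_2$ with $T_0\circ t_2=t_1\circ f$. Then $t_2:U\to U_0$ is a conformal isomorphism between quasidisks, so it is quasisymmetric on $\partial U$ by Lemma~\ref{lem:quasidisk}. Lemma~\ref{lem:interpolation} then gives a quasiconformal $t$ on $\overline V\setminus U$ with these boundary values. Setting $F=t^{-1}\circ T_0\circ t$ off $U$ also fixes the exterior model, which your Steps 1--2 leave unspecified and describe inconsistently. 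An annulus is not a universal cover of a punctured disk, and the exterior piece should map $\rs\setminus\overline U$ onto $\rs\setminus\overline V$ minus a point, not into $V$. With the paper's choice, $F$ is automatically a universal cover onto the punctured exterior disk, with an attracting fixed point $t^{-1}(0)$ of multiplier $\frac18$ in the region where $t$ is M\"obius.

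\textbf{Uniqueness.} The sentence \emph{this yields a global quasiconformal conjugacy} is the whole difficulty, not a consequence. A map equal to the hybrid conjugacy $\psi$ on $K(T_\alpha)$ and to the conformal basin conjugacy $\phi$ on $\Omega_\alpha$ need not even be continuous across $J(T_\alpha)$, since $\psi$ and $\phi$ are unrelated near $J$.

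The paper (Lemma~\ref{lem:lift}, Theorem~\ref{thm:uniqueness of straightening}) fixes a tangent-like restriction $T_\alpha:U_1\to V_1$. It starts from $h_0$ equal to $\psi$ on $\overline{U_1}$, to $\phi$ off $V_1$, and to a quasiconformal interpolation in between. It then defines $h_{i+1}$ as the lift of $h_i\circ T_\alpha$ through $T_\beta$, both being universal covers of $\rs\setminus\{1,\frac1\beta\}$. This requires $h_i(1)=1$ and $h_i(\frac1\alpha)=\frac1\beta$; the latter is where connectedness of $K$ enters. Uniqueness of lifts gives $h_i=\psi$ on $T_\alpha^{-i}(\overline{U_1})$ and $h_i=\phi$ on $T_\alpha^{-i}(\rs\setminus V_1)$. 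The $h_i$ are uniformly quasiconformal, so a limit $h_\infty$ exists; it equals $\phi$ on $\Omega_\alpha$ and $\psi$ on $K$. Hence $h_\infty$ is conformal by Weyl's lemma, with no condition on the area of $J$. You invoke a pullback only afterwards, for conformality, but it is needed to build the homeomorphism in the first place.

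Finally, linearisation at $0$ gives only a non-injective coordinate on $\Omega_\alpha$. To get a conformal conjugacy $\Omega_\alpha\to\Omega_\beta$ you need one of the following:
\begin{itemize}
\item the B\"ottcher analogue, Proposition~\ref{lem:confconjH}, which conjugates both maps to $C\tan(\pi z)$ on $\H$;
\item a lift of a linear map through the linearisers $\ell_\alpha:\Omega_\alpha\to\C\setminus(8^n\ell_\alpha(1))_{n\geq1}$.
\end{itemize}
The linearisers are universal coverings precisely because $\Omega_\alpha$ is simply connected with $1$ as its only singular value, i.e.\ because $\frac1\alpha\in K(T_\alpha)$.
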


See Figure \ref{fig:ST}. 

\begin{figure}[h]
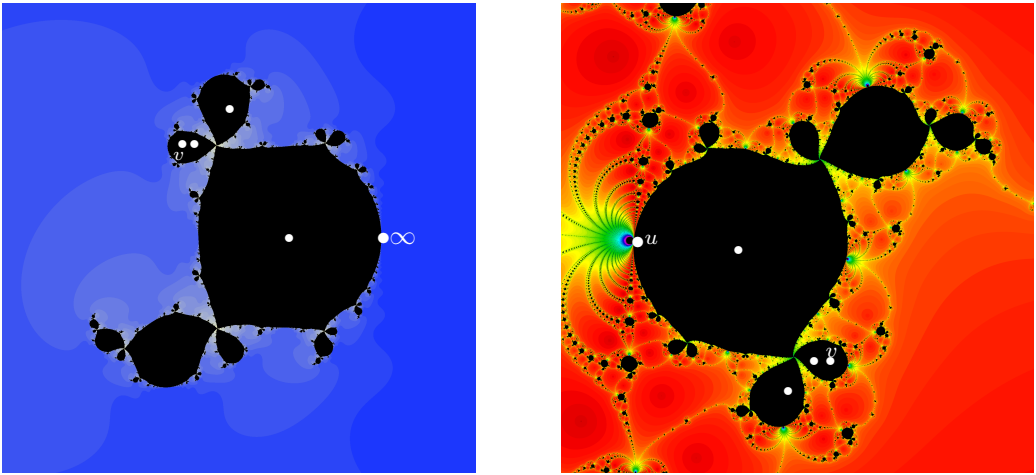

	\begin{center}
		\includegraphics[width=0.4\textwidth]{model_per3.pdf}
		\hfil
		\includegraphics[width=0.4\textwidth]{newton_rabbit_per6.png}		\setlength{\unitlength}{\textwidth}
		\put(-0.55,0.2){\color{white}{\circle*{0.01}}}
		\put(-0.545,0.195){\color{white}{\small $\infty$}}
		\put(-0.63,0.2){\color{white}{\circle*{0.007}}}
		\put(-0.68,0.31){\color{white}{\circle*{0.007}}}
		\put(-0.71,0.28){\color{white}{\circle*{0.007}}}
		\put(-0.72,0.28){\color{white}{\circle*{0.007}}}
		\put(-0.728,0.267){\color{white}{\tiny $v$}}
		\put(-0.335,0.197){\color{white}{\circle*{0.01}}}
		\put(-0.329,0.195){\color{white}{\scriptsize $u$}}
		\put(-0.208,0.07){\color{white}{\circle*{0.007}}}
		\put(-0.186,0.095){\color{white}{\circle*{0.007}}}
		\put(-0.172,0.095){\color{white}{\circle*{0.007}}}
		\put(-0.176,0.099){\color{white}{\scriptsize $v$}}
		\put(-0.25,0.19){\color{white}{\circle*{0.007}}}
		\caption{\label{fig:ST} \small Left: The filled Julia set $K(T_\alpha)$  of $T_\alpha$ for $\alpha\simeq-0.021+i 0.009$, showing the basin of attraction of an attracting 3-periodic orbit. The asymptotic value is $v=1/\alpha$. Right: A quasiconformal copy of $K(T_\alpha)$ in the dynamical plane of a Newton's method $N_a$, applied to the family of entire functions $f_a(z)=z+ a e^z$ for $a\simeq-1.1627+i 0.1143$. There is a 3-periodic orbit under $N_a^2$. The point $u$ is a pole of $N_a$ which becomes an essential singularity under the second iterate. The assymptotic value is $v=0$.  } 
	\end{center} 
\end{figure}

We define the {\em filled Julia set} of $T_\alpha$ as $K(T_\alpha):=\rs \setminus \Omega_\alpha$.
The { Julia set} is the usual set of non-normality. As we will see in Section \ref{sec:model}, the maps 
$T_\alpha$ all admit tangent-like restrictions, and the definitions of filled-Julia set and Julia sets agree 
with those given above for tangent-like maps.

By analogy with the Mandelbrot set, we define the \emph{Tandelbrot set} $\tcal$:

\begin{equation}\label{def:M}
	\tcal:=\{\alpha \in \C \setminus \{1\}: \text{ $T_\alpha^n \left(\frac{1}{\alpha}\right)\not\rightarrow 0$ as $n\to\infty$}\}   
\end{equation}

\begin{figure}[h]
	\begin{center}
		\includegraphics[width=0.7\textwidth]{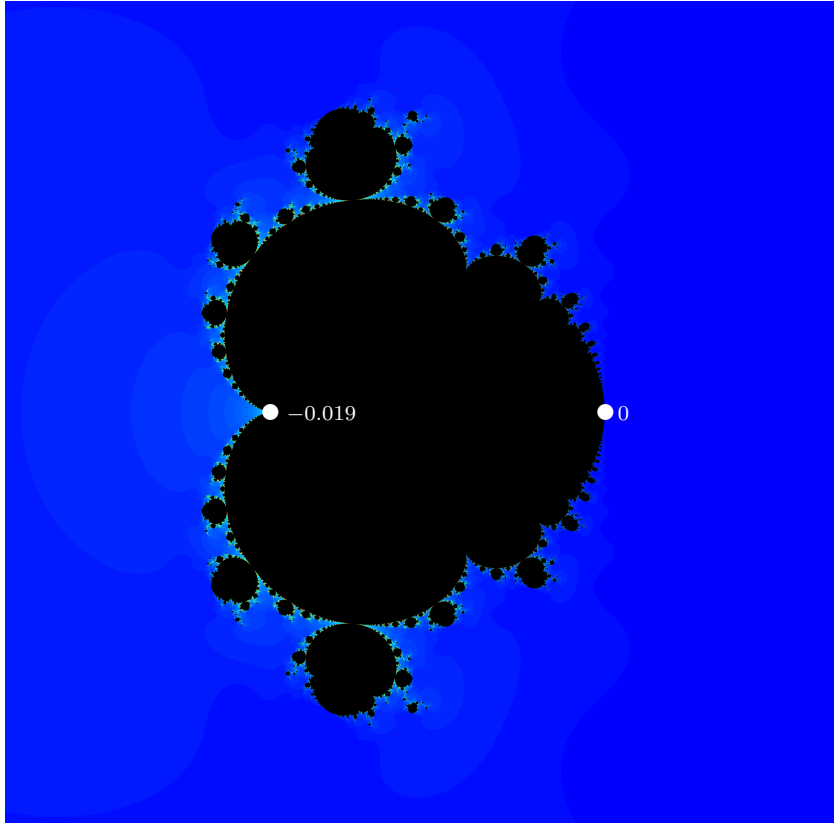}
		\setlength{\unitlength}{0.7\textwidth}
		\put(-0.275,0.5){\color{white}{\circle*{0.02}}}
		\put(-0.26,0.49){\color{white}{\scriptsize $0$}}
		\put(-0.68,0.5){\color{white}{\circle*{0.02}}}
		\put(-0.66,0.49){\color{white}{\scriptsize$-0.019$}}
		\caption{\label{fig:Tandelbrot} \small In black, the Tandelbrot set. } 
	\end{center} 
\end{figure}

\begin{thmB}[Properties of $\tcal$]
	The  Tandelbrot set $\tcal$ is a full and connected compact set, contained in $\D$. Its boundary is the bifurcation locus of the family $\{T_\alpha\}_{\alpha \in \D}$. Moreover, for all $\alpha \in \D$,
	$\alpha \in \tcal$ if and only if $J(T_\alpha)$ is connected.
\end{thmB}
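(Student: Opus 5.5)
Throughout, I use the tangent-like restrictions of $T_\alpha$ constructed in Section \ref{sec:model}, Proposition \ref{prop:Kconnected} ($K$ connected iff $v\in K$), Lemma \ref{lem:disk mapping inside}, and Theorem A.

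\textbf{Compactness and $\tcal\subset\D$.} For $|\alpha|\ge 1$, $\alpha\neq 1$, the conjugacy $T_\alpha\sim T_{1/\alpha}$ exchanges the asymptotic values, and $1/\alpha\in\ov\D\setminus\{1\}$. Applying Lemma \ref{lem:disk mapping inside} to $T_{1/\alpha}$ puts its value $1$ in the basin of $0$. Transporting back, the value $1/\alpha$ of $T_\alpha$ lies in $\Omega_\alpha$. So $\tcal\subset\D$. On the unit circle this is the case $1/\alpha=\bar\alpha\in\partial\D$, and it is still covered by the lemma.

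Closedness should follow from openness of the complement. If $T^n_\alpha(1/\alpha)\to 0$, then some iterate enters a fixed linearizing disk around $0$, where $|T_\alpha'|\approx 1/8$. This condition is open in $\alpha$ as long as the finite orbit avoids the poles of $T_\alpha$, which are the points with $e^{(\alpha-1)z/8}=1/\alpha$, and avoids $\infty$. If the orbit hits $\infty$, it is in $K$ and not in the basin, so this case causes no trouble. Near $\alpha=1$ the map degenerates, so I need a separate argument showing $\tcal$ stays away from $1$. I would get this from the explicit estimate in Lemma \ref{lem:disk mapping inside}: for $\alpha$ near $1$, a disk around $0$ containing both $1$ and $1/\alpha$ is mapped into itself. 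Hence $\tcal$ is compact.

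\textbf{Bifurcation locus equals $\partial\tcal$.} I treat the free asymptotic value as a marked singular value and use the standard theory: $J$-stability in families of finite-type meromorphic maps holds iff all singular orbits are passive, that is, $\{\alpha\mapsto T^n_\alpha(1/\alpha)\}$ is normal. On $\D\setminus\tcal$ these maps converge locally uniformly to $0$, so $\alpha$ is stable. On $\inter\tcal$ the orbit avoids a fixed neighborhood of $0$ after one accounts for the linearizing coordinate. Montel's theorem (omitting $0$, a preimage of $0$, and $\infty$) then gives normality, with care taken at parameters where the orbit hits a pole. At $\alpha_0\in\partial\tcal$, normality fails, because nearby parameters have limit $0$ while $\alpha_0$ does not; a limit function would be $\equiv 0$ on an open set, and by the identity principle it would then vanish at $\alpha_0$, which is impossible. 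The value $1$ is always passive.

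\textbf{Full, connected, and the Julia set characterization.} For the Julia characterization, suppose $\alpha\in\tcal$. Then $v=1/\alpha\notin\Omega_\alpha$, so the tangent-like restriction has $v\in K$, and by Proposition \ref{prop:Kconnected}, $K$ and hence $J$ are connected. Conversely, if $1/\alpha\in\Omega_\alpha$, then $v\notin K$, the same proposition gives $J$ disconnected, and I would check that this agrees with $J(T_\alpha)$.

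Fullness: a bounded component of $\C\setminus\tcal$ lies in $\D\setminus\tcal$. On it, the function $\alpha\mapsto$ (linearizing coordinate of $T^n_\alpha(1/\alpha)$ scaled by $8^n$) is holomorphic. I would show it tends to the boundary of the linearization domain at $\partial\tcal$ and apply the maximum principle to get a contradiction. Alternatively, I would use stability: in the escaping component the orbit lands in $\Omega_\alpha$ at depth bounded on compacts, but the depth blows up at the boundary, and a holomorphic depth function cannot do that.

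Connectedness: mirroring Douady–Hubbard, I would build a Riemann map $\Phi:\D\setminus\tcal\to$ a model region. I would define $\Phi(\alpha)$ as the position of $1/\alpha$ in a dynamically defined coordinate on $\Omega_\alpha$, meaning the linearizer at $0$ extended, normalized using the asymptotic value $1$. I would then prove that $\Phi$ is proper and a covering of an annulus-type region via a holomorphic motion/qc surgery argument, as in the parameter Straightening. \textbf{This is the main obstacle}: the Böttcher-type coordinate is not canonical here, because the basin has no critical orbit structure. The fallback is a Riemann–Hurwitz/degree count of $\Phi$ near $\partial\D$. Given properness and degree one, the complement $\D\setminus\tcal$ is an annulus, so $\tcal$ is connected.
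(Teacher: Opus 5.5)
Your arguments for $\tcal\subset\D$ (via the involution $\alpha\mapsto 1/\alpha$ and Lemma~\ref{lem:disk mapping inside}), for compactness, for the Julia set criterion and for $\partial\tcal$ being the activity locus are sound. The activity-locus argument is in fact more direct than the paper's: the paper approximates active parameters by virtual cycle parameters and then applies the Shooting Lemma. You instead read off both inclusions from normality on $\inter\tcal$ and on $\D\setminus\tcal$, plus the identity principle at $\partial\tcal$. Your ``care'' about poles in $\inter\tcal$ amounts to the Shooting Lemma, which shows there are no virtual cycles there.

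The genuine gap is in fullness and connectedness, which are the real content of Theorem B. Take fullness first, with the extended linearizer $\ell_\alpha=\lim_n 8^nT^n_\alpha$. The function $\Phi(\alpha)=\ell_\alpha(1/\alpha)$ is holomorphic on $\C\setminus\tcal$, but it has no boundary behaviour a maximum principle could use. It vanishes at every parameter for which $1/\alpha$ is eventually mapped onto $0$. Virtual cycle parameters are dense in $\partial\tcal$, and the Shooting Lemma at each of them (with target $\gamma\equiv 0\notin S(T_\alpha)$) shows these zeros accumulate at every point of $\partial\tcal$. Unlike the superattracting case there is no escape-rate function with constant boundary values: $\log|\Phi|=\lim_n\left(\log|T^n_\alpha(1/\alpha)|+n\log 8\right)$ is merely subharmonic, with $-\infty$ singularities piling up on $\partial\tcal$. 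Even $|\Phi|\to\infty$ along $\partial W$ would only make $\Phi\colon W\to\C$ proper, which is no contradiction. Your ``depth'' alternative also fails. The entry time into $\D(0,2)$ is integer valued, not holomorphic, so no maximum principle applies to it. Moreover, near each virtual cycle parameter of order $n$ the Shooting Lemma gives capture parameters of depth at most $n+1$, so there is no reason for the depth to blow up at $\partial\tcal$.

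For connectedness you name the obstacle yourself, and it is decisive. The coordinate you suggest, $\alpha\mapsto \ell_\alpha(1/\alpha)/\ell_\alpha(1)$, vanishes at all the infinitely many capture parameters above, so it is neither injective nor proper. Your fallback degree count near $\partial\D$ presupposes exactly a proper degree-one holomorphic map on $\D\setminus\tcal$, which you have not constructed; building one needs a genuine quasiconformal surgery, which is the route of \cite{chen2023slices}. The missing idea is the paper's reduction to finite level. Write $\tcal=\bigcap_n\overline{A_n}$, where $A_n$ is the set of $\alpha\in\D(0,\frac12)$ whose $n$-th iterate of $1/\alpha$ is still outside $\overline{\D}(0,2)$; it then suffices that each $\overline{A_n}$ is a full continuum. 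For connectedness of $\overline{A_n}$ the paper replaces $\exp$ by $P_k(z)=(1+z/k)^k$. For $k$ large, the rational maps $T_{\alpha,k}=M_\alpha\circ P_k\circ N_\alpha$ form a proper equipped family of unicritical polynomial-like maps over $\D(0,\frac12)$, with critical point $\infty$ and critical value $1/\alpha$. By Lyubich's lemma the sets $A_{n,k}$ are therefore Jordan domains. Uniform convergence $T_{\alpha,k}\to T_\alpha$ (Lemma~\ref{lem:nearcv}) puts $\overline{A_{n,k}}$ inside any neighbourhood of $\overline{A_n}$ while keeping it close to the subsets $A_n(\delta)$, and this rules out a splitting $\overline{A_n}=F_1\sqcup F_2$. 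Fullness of $\overline{A_n}$ is then handled inductively in $n$, using the single function $\alpha\mapsto T^n_\alpha(1/\alpha)$ rather than the whole orbit.
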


The assertion that $\tcal$ is connected and full was already proved in \cite{chen2023slices}. We will give here a different, shorter proof (see Theorem \ref{th:msconnected}).

Recall that a holomorphic family of meromorphic maps $\{f_\la\}_{\la \in M}$ is called \emph{natural} if there exists 
quasiconformal homeomorphisms $\phi_\la: \rs \to \rs$ and $\psi_\la: \rs \to \rs$ such that $f_\la = \phi_\la \circ f_{\la_0} \circ \psi_\la^{-1}$, for some $\la_0 \in \Lambda$, see Section \ref{sec:natfam}. 
The activity locus $\acal(v_{\la_0})$ of a marked asymptotic value $\la \mapsto v_\la:=\phi_\la(v_{\la_0})$ is 
by definition the non-normality locus of $\{ \la \mapsto f_\la^n(v_\la) , n \in \N \}$. 
We also refer to Section \ref{sec:natfam}  the link between activity loci and 
$J$-stability, and to Section \ref{sec:tracts} the definition of a {\em tract over an asymptotic value}. 
{ For every $\la**\in M$ and any one-dimensional topological disc $D \subset M$ containing $\la_*$, the natural family can be reparametrized in $D$, so that it is defined for $\la\in \D$, with basepoint $f_{\la_*}=f_0$. Hence the following is actually a local theorem, which states that  copies of the Tandelbrot set can be found  arbitrarily close to any parameter $\la_*$ in the activity locus of an asymptotic value (satisfying certain conditions).}

\begin{thmC}[Universality of the Tandelbrot set]
		Let $\{f_\lam\}_{\la\in \D}$ 
	be a natural family of meromorphic maps, and assume that the following conditions hold:
	\begin{enumerate}
		\item\label{it:vc} $f_0$ has an active isolated asymptotic value $v_0$, whose forward orbit does not persistently contain a critical point;
		\item\label{it:tractqd} $f_0$ has a tract above $v_0$ which is a quasidisk;
		\item\label{it:pole} and $f_0$ has at least one simple pole which is not a singular value.
	\end{enumerate}
	{Then  there exists a topological embedding $\chi: \partial \tcal  \to \acal(v_0)$.}
\end{thmC}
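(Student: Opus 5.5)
The proof will follow McMullen's scheme for the universality of the Mandelbrot set, with Theorem A and its parameter version replacing the Douady--Hubbard straightening of quadratic-like families. There are three steps: find a parameter near $0$ where $v_\la$ lands on a suitable point, build a holomorphic family of tangent-like restrictions of an iterate $f_\la^n$ near that parameter, and straighten this family to get a map into the $\alpha$-plane whose inverse, restricted to $\partial\tcal$, is $\chi$.

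\emph{Step 1: a landing parameter.} Let $p_0$ be the simple pole given by hypothesis~(\ref{it:pole}). Since it is not a singular value, it moves holomorphically as $p_\la$, and so do its iterated preimages. Because $v_0$ is active and isolated, a Montel-type argument in parameter space (as in Section~\ref{sec:natfam}) gives parameters $\la_1$ arbitrarily close to $0$ with $f_{\la_1}^{n}(v_{\la_1})=p_{\la_1}$. At such a parameter $f_{\la_1}^{n+1}(v_{\la_1})=\infty$, and $f_{\la_1}^{n+2}$ has an essential singularity at $v_{\la_1}$. The choice of $\la_1$ is arranged so that the tract $W$ above $v$ (a quasidisk by hypothesis~(\ref{it:tractqd})) is mapped by $f^{n+2}$ onto a punctured neighbourhood of $v$, universally covering and with a definite overlap. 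We fix it so that the image of a small Jordan domain $V$ around $v_{\la_1}$ contains the relevant pullback. Concretely, $f$ maps the tract $U'$ onto $D\setminus\{v\}$ for a small disk $D$ around $v$. We then pull $V$ back by the univalent branch of $f^{n+1}$ along the orbit $v\mapsto\cdots\mapsto p\mapsto\infty$, using that the pole is simple and that, by hypothesis~(\ref{it:vc}), the orbit avoids critical points. This branch sends a tiny neighbourhood $B$ of $v$ onto a neighbourhood of $\infty$ containing a tail of the tract. Hence $f^{n+2}$ maps a component $U\subset B$ of the preimage onto $V\setminus\{v\}$ as a universal cover, with essential singularity $u\in\partial U$ close to $v$. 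Shrinking scales gives $U\Subset V$, so $(f^{n+2},U,V,u,v)$ is tangent-like.

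\emph{Step 2: a family of tangent-like maps.} By the holomorphic motions $\phi_\la,\psi_\la$ of the natural family, this construction persists for $\la$ in a small disk $\Lambda$ around $\la_1$. This gives a holomorphic family of tangent-like maps $(g_\la,U_\la,V_\la,u_\la,v_\la)$ with quasidisk boundaries moving holomorphically.

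\emph{Step 3: straightening and the embedding.} Apply the parameter version of the Straightening Theorem to obtain $\Theta:\Lambda\to\C\setminus\{1\}$, with $\Theta(\la)$ the $\alpha$ given by Theorem~A. We need $\Theta$ to be continuous on the connectedness locus $\{\la: v_\la\in K(g_\la)\}$. Injectivity there follows from uniqueness in Theorem~A together with the fact that hybrid-equivalent maps are conjugate on the whole natural family only for isolated parameters; the latter can be arranged by shrinking $\Lambda$. We also need $\Theta$ to be proper of nonzero degree, via an argument-principle count: as $\la$ winds around $\partial\Lambda$, the asymptotic value $v_\la$ winds once around $\partial U_\la$ relative to $u_\la$, in the same way that $\alpha$ does around $\partial\tcal$ in the model family. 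This gives surjectivity onto a neighbourhood of $\tcal$, and indeed $\tcal$ is compact in $\D$ by Theorem~B.

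Then $\chi:=\Theta^{-1}|_{\partial\tcal}$ is a continuous injection of a compact set, hence a topological embedding. Hybrid conjugacies preserve whether the asymptotic value is captured, so $\partial\tcal$ corresponds to activity of $v_\la$ for $g_\la$, hence for $f_\la$. Thus $\chi(\partial\tcal)\subset\acal(v_0)$ after reparametrizing.

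\emph{Main obstacle.} The delicate part is continuity of $\Theta$ at boundary parameters and the degree computation: the tangent-like setting has no Böttcher coordinate, so the winding argument must use the tract structure. I would first attempt continuity via the uniqueness in Theorem~A plus compactness of normalized quasiconformal conjugacies. This gives continuity at least on $\partial\tcal$, which is all that is needed here.
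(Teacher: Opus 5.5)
Your Steps 1--2 have a genuine gap. The tangent-like restriction you describe need not exist, and shrinking scales works against you, not for you.

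\textbf{The restriction $(f^{n+2},U,V)$.} Your map is $f^{n+2}=f\circ\Phi$ near $v$. Here $\Phi=f^{n+1}$ is univalent near $v$ with a fixed nonzero spherical derivative $C$ at $\la_1$, and $V$ is a disk of radius $\delta$ about $v$. For $U=\Phi^{-1}(\text{tract above }V)\Subset V$, the whole tract above $\D(v,\delta)$ must lie within spherical distance about $C\delta$ of $\infty$. A logarithmic tract above a $\delta$-disk reaches much farther than that:
\begin{itemize}
\item for $e^z$ it is the half-plane $\{\Re z<\log\delta\}$, of spherical size about $1/\log(1/\delta)$;
\item in general, the expansion estimate in logarithmic coordinates bounds it below by a negative power of $\log(1/\delta)$.
\end{itemize}
So $U\Subset V$ fails for small $\delta$, and for a generic landing parameter nothing guarantees it at any scale. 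Saying that ``the choice of $\la_1$ is arranged'' names the problem without solving it.

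\textbf{The missing idea.} You need arbitrarily strong expansion that is decoupled from the tract scale. The paper gets it from the essential singularity of $f$. Near a lower-order virtual cycle parameter, $G(\la)=\psi_\la^{-1}\circ f^n_\la(v_\la)$ is a branched cover over a neighbourhood of $\infty$. One can therefore choose parameters with $f_\la^n(v_\la)\in\psi_\la(\Omega)$, where $\Omega$ is an arbitrarily small domain near $\infty$ that $f_{\la_0}$ maps univalently onto a fixed neighbourhood $W$ of the pole. The order of choices is:
\begin{itemize}
\item first fix $V=\D(\infty,r/2)$ and $\delta=\delta(r)$ from Lemma~\ref{lem:smalltracts};
\item then shrink $\Omega$ until the pullback $V_\la^{(n+1)}$ of the neighbourhood $V_\la(r/2)$ of the pole has diameter below $\delta$.
\end{itemize}
The tract $U_\la$ above $V_\la^{(n+1)}$ then lies in $V$. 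So $g_\la=f_\la^{n+3}:U_\la\to V\setminus\{f_\la^{n+2}(v_\la)\}$ is tangent-like, with the essential singularity and the moving asymptotic value both inside a \emph{fixed} $V$.

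\textbf{Properness and equippedness.} Theorem D needs both, and you establish neither.
\begin{itemize}
\item In your normalization $v_\la$ stays near the centre of $V$, so condition (2) of Definition~\ref{def:proper} ($v_\la\in\partial V_\la$ on $\partial\Lambda$) can never hold.
\item A small disk around $\la_1$ cannot carry a whole copy of the connectedness locus.
\item Along a small circle about $\la_1$, the winding of $v_\la-u_\la$ equals the order of the zero of $\la\mapsto 1/f^{n+1}_\la(v_\la)$ at $\la_1$. You have not shown this is $1$.
\end{itemize}
The paper instead takes $\Lambda=\{\la\in\hat\Lambda: f^{n+2}_\la(v_\la)\in\D(\infty,r/2)\}$, for a component $\hat\Lambda$ of $G^{-1}(\Omega)$. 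The argument principle then shows that $\la\mapsto f^{n+2}_\la(v_\la)$ is a conformal isomorphism $\Lambda\to\D(\infty,r/2)$, which gives properness with winding number one. The equivariant motion of the fundamental annulus comes from Lemma~\ref{lem:tractmoveholo} on $\partial U_\la$, glued to the identity on $\partial V$ and extended by Slodkowski.

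\textbf{Smaller points.}
\begin{itemize}
\item Hypothesis~(1) only excludes a \emph{persistent} critical point on the orbit of $v$. You must perturb the landing parameter as in Lemma~\ref{lem:exsimplevc}.
\item Your Step~3 injectivity argument does not work. Uniqueness in Theorem~A says nothing about two distinct parameters having hybrid equivalent restrictions, and shrinking $\Lambda$ destroys properness.
\item Step~3 is also unnecessary. Once a proper equipped family exists, Theorem~D directly gives the homeomorphism $\tcal(\mathbf f)\to\tcal$. Its boundary $\partial\tcal(\mathbf f)$ is the activity locus of $f^{n+2}_\la(v_\la)$ for $g_\la$, which lies in the activity locus of $v_\la$.
\end{itemize}
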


As a consequence of Theorem C, { and the results proved by McMullen in \cite{mcmullen2000mandelbrot}, } copies of either Mandelbrot sets or Tandelbrot sets are dense in bifurcation loci of natural families of finite type meromorphic maps.

\begin{defi}[Multibrot sets]
	The degree $d \geq 2$ Mandelbrot set $\mcal_d$ is defined by 
	$$\mcal_d := \{ \la \in \C : J(z \mapsto z^d+\la) \text{ is connected}  \}.$$
\end{defi}

\begin{coro}\label{coro:C}
	Let $\{f_\la\}_{\la \in \Lambda}$ be  a natural family of finite type meromorphic maps. Assume that the following conditions hold:
	\begin{enumerate}
		\item No critical point is persistently a Picard exceptional value.
		\item All tracts of the maps $f_\la$ above quasidisks are quasidisks.
		\item For all $\la \in \Lambda$, there exists a simple pole $p_\la$ which is not a singular value.
	\end{enumerate}	
	Then, near any $\la_1 \in \bif$, there exists a topological embedding of  $\partial \mcal_d$ (for some $d \geq 2$) or $\partial \tcal$ contained in $\bif$.
\end{coro}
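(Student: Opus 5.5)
Near a bifurcation parameter $\la_1 \in \bif$, the family is not $J$-stable. The plan is to use the characterization of $J$-stability for natural families of finite type meromorphic maps (Section \ref{sec:natfam}): the bifurcation locus is the union of the activity loci of the singular values. The proof then splits according to which kind of singular value is active near $\la_1$, and we treat each case with the appropriate universality result.

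First, we locate a good parameter. Finite type means finitely many singular values, so after shrinking $\Lambda$ and passing to a finite branched cover we may assume they are marked holomorphically as $c_j(\la)$ and $v_k(\la)$. Then $\bif$ near $\la_1$ is the union of the $\acal(c_j)$ and $\acal(v_k)$. Pick one active singular value and a parameter $\la_*$ arbitrarily close to $\la_1$ in its activity locus. Perturbing within this locus, we may also assume that the forward orbit of that singular value does not persistently meet another singular value. If it did, we would pass to a lower-dimensional analytic subset, or replace the singular value by the one it lands on, which is then active itself. Finally, restrict to a one-dimensional disk $D \ni \la_*$ transverse to the non-active locus, reparametrized as $\D$ with $f_{\la_*}=f_0$.

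Case 1: the active singular value is a critical value $c(\la)$ of a critical point of local degree $d$. We invoke McMullen's argument \cite{mcmullen2000mandelbrot}. Hypothesis (1) rules out the degenerate situation where the critical point is persistently Picard exceptional, i.e. omitted, which would prevent its activity from producing bifurcations. So Montel-type arguments give parameters close to $\la_*$ where the critical point is periodic with multiplier $0$ and of exact local degree $d$. From there McMullen's construction builds a family of degree $d$ unicritical polynomial-like restrictions of $f^n_\la$. The construction is local near the critical orbit, so it does not use rationality. The parameter straightening theorem then gives an embedding of $\partial\mcal_d$ into $\acal(c)\subset\bif$.

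Case 2: only asymptotic values are active near $\la_1$, and we take an active one $v$. It is isolated because the family is of finite type. The condition in Theorem C on the orbit not persistently containing a critical point comes from the perturbation step above: if the orbit did persistently hit a critical point, that critical value would be active and we would be in Case 1. Hypothesis (2) of the corollary, combined with the fact that a small disk around $v$ is a quasidisk, gives a tract above $v$ that is a quasidisk. Hypothesis (3) gives the simple non-singular pole. Theorem C then yields the embedding of $\partial\tcal$ into $\acal(v)\subset\bif$, arbitrarily close to $\la_1$.

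The main obstacle is the reduction step: showing that activity at $\la_1$ can be moved to a nearby parameter where the relevant singular value is simple, unmarked collisions are avoided, and the hypotheses of Theorem C or McMullen's theorem hold exactly. I would handle this by working off a proper analytic subset of $\Lambda$ where the orbit relations change, using the fact that activity loci have no isolated points in one-dimensional slices.
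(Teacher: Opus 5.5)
Your case split matches the paper's, and Case 2 is fine as written. Isolatedness comes from finite type, a quasidisk tract from hypothesis (2), and the simple non-singular pole from (3). If the orbit of $v_\la$ persistently contains a critical point, that critical point is active and you are back in Case 1, so the reduction you flag as the main obstacle is in fact immediate.

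The gap is in Case 1. You produce superattracting parameters by a Montel argument and then say McMullen's construction builds degree-$d$ polynomial-like restrictions from there. Superattracting parameters are not the input of McMullen's theorem, and you give no argument that such a family exists around them. Near a superattracting critical point, $f_\la^n$ is close to $z\mapsto c_\la+a(z-c_\la)^d$ only on a neighbourhood of some fixed size. Nothing in a general family supplies the expansion needed to get $U_\la\Subset V_\la$ on a parameter disk along whose boundary the critical value exits $V_\la$ with winding number one. In McMullen's Theorem 4.1 (Theorem \ref{th:mcmu} here) this expansion comes from the repelling cycle on which a \emph{Misiurewicz} critical point lands. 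The univalent return to a neighbourhood of $c_\la$, which makes the return map unicritical of degree $d$, comes from \emph{unramified} preimages of $c_\la$ of arbitrarily high order.

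The missing step is Lemma \ref{lem:unramif}: find $\la_2$ near $\la_1$ at which $f_{\la_2}$ has an unramified Misiurewicz critical point. The paper does this in three moves.
\begin{enumerate}
\item Take a Misiurewicz parameter from the density result of \cite{ABF}, which needs the non-omitted pole of hypothesis (3).
\item Arrange that $c_{\la_2}$ is not Picard exceptional. By naturality, the parameters where $c_\la$ is Picard exceptional solve $\phi_\la(e_i)=\psi_\la(c_{\la_0})$, where $e_1,e_2$ are the Picard exceptional values of $f_{\la_0}$ at a parameter $\la_0$ supplied by hypothesis (1). These equations fail at $\la_0$, so their solutions form a discrete set.
\item Use finite type to pick a preimage $y_1$ of $c_{\la_2}$ that is not a singular value; $y_1$ then has infinitely many unramified preimages. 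If $y_1$ is not critical, $c_{\la_2}$ is unramified. If $y_1$ is critical, $y_1$ itself is an unramified Misiurewicz critical point, possibly of a different local degree, which is why the corollary says ``some $d\geq 2$''.
\end{enumerate}

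This also shows your reading of hypothesis (1) is off. Activity concerns the forward orbit of $c_\la$, and an omitted critical point can perfectly well be active (e.g.\ $0$ for $z\mapsto\la e^{z^2}$). What (1) excludes is a critical point with no preimages at all. Such a point can never be unramified, so McMullen's theorem would not apply to it.
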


Another immediate consequence of Theorem C is the following { self-similarity} property of the Tandelbrot set, similarly to the Mandelbrot set. See Figure \ref{fig:copies_tan}.

\begin{coro}[Self-similarity of $\tcal$]
	For any open set $W$ intersecting $\partial \tcal$, there is a topological embedding $\chi: \partial \tcal \to W \cap \partial \tcal$.
\end{coro}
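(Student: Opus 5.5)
The plan is to apply Theorem C to the model family $\{T_\alpha\}$ itself, re-centred at a parameter of $\partial\tcal$ lying in $W$. Fix $\alpha_*\in W\cap\partial\tcal$. By Theorem B, $\partial\tcal$ is the bifurcation locus of $\{T_\alpha\}_{\alpha\in\D}$ and $\tcal\subset\D$. Since $1$ is always in the basin $\Omega_\alpha$ of $0$ (Lemma \ref{lem:disk mapping inside}), the only free singular value is $1/\alpha$. Hence the activity locus of the marked asymptotic value $v_\alpha=1/\alpha$ coincides with $\partial\tcal$.

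Choose a small disk $D\subset W\cap\D$ centred at $\alpha_*$. Reparametrize by $\D$ so that $\alpha_*$ corresponds to $0$, as explained before Theorem C. This is a natural family: since $T_\alpha$ is a Möbius map composed with an exponential, one can write $T_\alpha=\phi_\alpha\circ T_{\alpha_*}\circ\psi_\alpha^{-1}$. Here $\phi_\alpha$ is a Möbius (or quasiconformal) map sending the asymptotic values $1,1/\alpha_*$ to $1,1/\alpha$, and $\psi_\alpha$ is affine, absorbing the change in the factor $(\alpha-1)/8$. One can also simply invoke the fact, from Section \ref{sec:natfam}, that families of finite type maps of fixed topological type are natural.

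Next, verify the hypotheses of Theorem C at $\alpha_*$.
\begin{enumerate}
\item The value $v=1/\alpha_*$ is isolated, since $T_{\alpha_*}$ has only two singular values and no critical points. It is active, because $\alpha_*\in\partial\tcal$ is in its activity locus. Its orbit contains no critical point, since there are none.
\item A tract above $1/\alpha_*$ is a half-plane: it is the preimage of a small disk around $1/\alpha_*$ under the Möbius part, which is a neighbourhood of $\infty$, pulled back by $e^{(\alpha_*-1)z/8}$. It is therefore a quasidisk.
\item Poles are the solutions of $e^{(\alpha_*-1)z/8}=1/\alpha_*$. They are simple because $T_{\alpha_*}$ has no critical points, so the derivative of the exponential is nonzero. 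They are not singular values, because the singular values are $1$ and $1/\alpha_*$, and only countably many poles exist. At worst, one could have $1$ or $1/\alpha_*$ equal to a pole; infinitely many other poles remain.
\end{enumerate}

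Theorem C then gives a topological embedding $\chi:\partial\tcal\to\acal(v)$, where $\acal(v)$ is computed in the reparametrized disk. Transported back, this activity locus equals $\partial\tcal\cap D\subset W\cap\partial\tcal$, which gives the corollary.

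The main point requiring care is the identification of the activity locus of $1/\alpha$ with $\partial\tcal$ inside $D$. By Montel-type arguments, normality of $\alpha\mapsto T^n_\alpha(1/\alpha)$ on the interior of $\tcal$ and on its complement (where the orbit converges to $0$) follows from Theorem B. Conversely, at points of $\partial\tcal$, non-normality holds because the orbit converges to $0$ on one side and does not on the other. The remaining checks are routine.
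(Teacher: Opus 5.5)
Your approach is the one the paper intends: the corollary is stated as an immediate consequence of Theorem C applied to the model family itself, re-centred at a point of $W\cap\partial\tcal$. Your naturality check is fine: $\phi_\alpha=M_\alpha\circ M_{\alpha_*}^{-1}$ and $\psi_\alpha=N_\alpha^{-1}\circ N_{\alpha_*}$ are Möbius/affine and holomorphic in $\alpha\neq 1$. The identification $\acal=\partial\tcal$ is the paper's Proposition on the activity locus of $1/\alpha$.

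However, one step fails as written. You fix an \emph{arbitrary} $\alpha_*\in W\cap\partial\tcal$, but $0\in\partial\tcal$. Indeed, the main hyperbolic component contains an interval $(\alpha_0,0)$ by Lemma~\ref{lem:maincardioid}, $\tcal$ is closed, and $(0,\infty)\cap\tcal=\emptyset$. At $\alpha_*=0$ the map $T_0(z)=1-e^{-z/8}$ is entire, so hypothesis (3) of Theorem C fails: there are no poles, and your equation $e^{(\alpha_*-1)z/8}=1/\alpha_*$ is meaningless. Moreover, the marked asymptotic value $1/\alpha_*=\infty$ is the essential singularity itself. So when, say, $W$ is a small disk around $0$, you must choose $\alpha_*\neq 0$.

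This is easy because $\partial\tcal$ has no isolated points. Suppose $B$ is a disk with $B\cap\partial\tcal=\{\alpha\}$. The connected set $B\setminus\{\alpha\}$ misses $\partial\tcal$, so it lies in $\mathring{\tcal}$ or in $\C\setminus\tcal$. In the first case $B\subset\tcal$, so $\alpha\in\mathring{\tcal}$, a contradiction. In the second case $\alpha$ is an isolated point of the connected set $\tcal$, which is not a singleton. So take $\alpha_*\in(W\cap\partial\tcal)\setminus\{0\}$. Then $T_{\alpha_*}$ has infinitely many simple poles, at most two of which are singular values, and your argument goes through.

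Two minor inaccuracies. First, a tract over a round disk $\D(1/\alpha_*,\delta)$ is not literally a half-plane. Under the involution $M_{\alpha_*}$ that disk becomes the exterior of a circle centred at $1/\alpha_*$, not at $0$. Its logarithmic lift is bounded by a periodic analytic curve lying in a strip, and that is still a quasidisk. The same reasoning works over any quasidisk $Q\ni 1/\alpha_*$ with $1\notin\overline{Q}$. This is the form of hypothesis (2) actually used in the proof of Theorem C, which needs the tract over $V^{(n+1)}_{\lambda_0}$. Second, the reason some pole is not a singular value is that there are \emph{infinitely} many poles, not that there are countably many.
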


One of the two main ingredients in the proof of Theorem C is a way to construct a one-parameter family of tangent-like maps from suitable restrictions of some iterates of the maps $f_\la$. In \cite{mcmullen2000mandelbrot},  the mechanism leading to the construction of polynomial-like restrictions is a careful analysis of perturbations of maps with a Misiurewicz relation, i.e. maps which have a critical point whose forward orbit eventually lands on a repelling cycle.
In our case, Misiurewicz relations are replaced with so-called \emph{virtual cycles}, i.e. asymptotic values whose forward orbit eventually lands on an essential singularity. { See Figure \ref{fig:renormalization}}. Virtual cycles are linked to bifurcations of families of meromorphic maps, see Section \ref{sec:prelim} for a short review of the results from \cite{ABF}, {  \cite{ABFAhlfors}}.
Assumption \eqref{it:tractqd}  in Theorem C is necessary to ensure the existence of these tangent-like restrictions. 
Additionnally, the existence of non-constant families of tangent-like restrictions  imply the existence of poles, which is why we need assumption
\eqref{it:pole}. This explains why topological copies of $\partial \tcal$ are not observed in  pictures of parameter spaces of entire maps (such as the exponential family $f_\la(z):=\la e^z$ for instance).

\begin{figure}[htb!]
\centering
\includegraphics[width=0.35\textwidth]{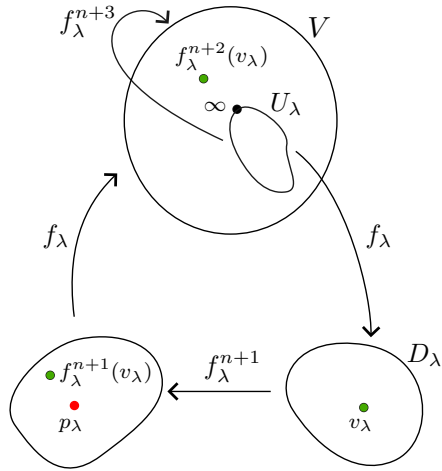}
\setlength{\unitlength}{0.35\textwidth}
\put(-0.28,1.05){$V$}
\put(-0.37,0.87){\small $U_\la$}
\put(-0.14,0.55){\small $f_\la$}
\put(-0.92,0.55){\small $f_\la$}
\put(-0.53,0.23){\small $f_\la^{n+1}$}
\put(-0.04,0.26){\small $D_\la$}
\put(-0.88,0.1){\scriptsize $p_\la$}
\put(-0.18,0.1){\scriptsize $v_\la$}
\put(-0.88,0.22){\scriptsize $f_\la^{n+1}(v_\la)$}
\put(-0.6,0.99){\scriptsize $f_\la^{n+2}(v_\la)$}
\put(-0.88,1.07){\small $f_\la^{n+3}$}
\put(-0.53,0.87){\scriptsize $\infty$}
\caption{\small \label{fig:renormalization} Sketch of the tangent-like renormalization scheme. Suppose $\la_0$ is a virtual cycle parameter for which $f_{\la_0}^n(v_{\la_0})=\infty$. Then, for nearby parameters $\la$ the domain $U_\la$ of the tangent-like restriction is a tract above a disk $D_\la$ containing the asymptotic value $v_\la$. The iterate $f_\la^{n+1}$ maps $D_\la$ to another disk containing the pole $p_\la$, which is then mapped to a fixed disk $V$ centered at infinity. Hence, the map $f_\la^{n+3}: U_\la \to V \setminus \{f_\la^{n+2}(v_\la)\}$ is tangent-like.}
\end{figure}

The second main ingredient in the proof of Theorem C is the following parameter version of the Straightening Theorem, which is of  independent interest. 
Informally, a tangent-like family is a holomorphic family of tangent-like maps $\{f_\la : U_\la \to V_\la\}_{\la \in \Lambda}$. As for families of quadratic-like maps, we say that such a family is \emph{proper} if $\Lambda$ is a Jordan domain, if $\{f_\la\}_{\la \in \Lambda}$ is a restriction of another tangent-like family defined over a slightly larger parameter space $\tilde \Lambda$ and phase spaces $\tilde U_\la$, and if the asymptotic value $v_\la$ winds once around the essential singularity as $\la$ turns around $\partial \Lambda$. We say that it is \emph{equipped} if there is a holomorphic motion of the fundamental annulus $\overline{V_\la} \setminus U_\la$ commuting with $f_\la$. 
See Definitions \ref{def:tlfamily}, \ref{def:equipped} and  \ref{def:proper}.

\begin{thmD}[Parameter version of the Straightening Theorem]
	Let $\{f_\la\}_{\la \in \Lambda}$ be a proper equipped tangent-like family. Let
	$$\tcal_0 = \{\la \in \Lambda : v_\la \in K(f_\la)\}.$$
	Then, there exists a continuous map $\chi: \Lambda \to \C \setminus \{1\}$ such that 
	\begin{enumerate}
		\item For all $\la \in \Lambda$, $f_\la$ is hybrid equivalent to $T_{\chi(\la)}$.
		\item $\chi: \tcal_0 \to \tcal$ is a homeomorphism.
		\item $\chi$ is holomorphic on $\mathring{\tcal_0}$. 
	\end{enumerate}
\end{thmD}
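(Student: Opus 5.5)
We follow the Douady--Hubbard strategy for quadratic-like families, replacing the Böttcher coordinate outside $K$ by the dynamics of $T_\alpha$ on its basin $\Omega_\alpha$.

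\textbf{Step 1: construct $\chi$.} Fix a base parameter $\la_0$ and the equipment, i.e.\ a holomorphic motion $\tau_\la$ of the fundamental annulus $A_\la=\overline{V_\la}\setminus U_\la$ commuting with $f_\la$ on $\partial U_\la$. For each $\la$ we run the construction in the proof of Theorem A, but with the \emph{same} external model for all $\la$. Choose once and for all a quasiconformal identification $\Phi_{\la_0}$ of $A_{\la_0}$ with a fundamental annulus of a fixed model near the attracting basin of $0$, and set $\Phi_\la=\Phi_{\la_0}\circ\tau_\la^{-1}$. We glue $f_\la$ on $U_\la$ to the model dynamics outside $V_\la$ and pull back the standard structure outside $K(f_\la)$. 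Integrating with the measurable Riemann mapping theorem then yields a meromorphic map with one free asymptotic value and no critical points. By Lemma \ref{lem:rigidity}, and after normalizing the fixed point and the asymptotic values, this map is $T_{\chi(\la)}$. The normalization must be uniform in $\la$; it should come from the fixed multiplier $1/8$ together with marking $v_\la\mapsto 1/\alpha$.

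\textbf{Step 2: continuity and holomorphy.} The Beltrami coefficients $\mu_\la$ depend continuously on $\la$, since the equipment and $f_\la$ do. On $\mathring{\tcal_0}$ they depend holomorphically on $\la$: there we may choose the coefficient to vanish on $K(f_\la)$, and on the complement it is a pullback of a holomorphically moving structure by the holomorphic maps $f_\la^n$. Parameter dependence in the measurable Riemann mapping theorem then gives (1) and (3), and continuity of $\chi$ on all of $\Lambda$. On $\tcal_0$, Theorem A guarantees that $\chi(\la)$ is independent of choices, and that $\chi(\tcal_0)\subset\tcal$ by Theorem B, since $K$ is connected if and only if $v\in K$.

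\textbf{Step 3: injectivity on $\tcal_0$.} Suppose $\chi(\la_1)=\chi(\la_2)$ with both parameters in $\tcal_0$. Then $f_{\la_1}$ and $f_{\la_2}$ are hybrid equivalent. Inside $K$ we compose the two hybrid conjugacies. Outside $K$ both maps are conjugate to the same model via the equipment, so the conjugacies agree on the fundamental annuli; hence the gluing is a hybrid equivalence between $f_{\la_1}$ and $f_{\la_2}$. We then argue as Douady--Hubbard do: in a proper family the map is determined by the relative position of $v_\la$ with respect to the annulus, measured in the external coordinate. Equal hybrid class plus equal external data forces $\la_1=\la_2$. The cleanest route is to show that outside $\tcal_0$ the map $\chi$ is described by an explicit "external" map $\la\mapsto$ position of $v_\la$ in the basin coordinate $\Phi_\la$. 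This map is a covering of degree equal to the winding number of $v_\la$ around the essential singularity, which is $1$ by properness.

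\textbf{Step 4: surjectivity onto $\tcal$ and homeomorphism.} The degree argument gives surjectivity. On $\partial\Lambda$, $\chi$ lands outside $\tcal$ and $\chi|_{\partial\Lambda}$ winds once around $\tcal$, using properness and the explicit external description of Step 3. Since $\tcal$ is full and connected (Theorem B), every $\alpha\in\tcal$ has exactly one preimage counted with the appropriate local degree. Holomorphy on interiors, together with positivity of local degree on $\partial\tcal_0$ (by a perturbation argument as in Douady--Hubbard), gives exactly one preimage. Compactness of $\tcal_0$ then upgrades the continuous bijection to a homeomorphism.

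\textbf{Main obstacle.} We expect the hardest step to be Step 3/4 at points of $\partial\tcal_0$. Holomorphy fails there, and uniqueness in Theorem A has to be transported into injectivity. The argument requires a careful, $\la$-uniform description of the external coordinate near the essential singularity, where, unlike for polynomial-like maps, the fundamental annulus touches $u$ via infinitely many preimages. We would first try to replace Böttcher coordinates by a Koenigs-type linearizer of $T_\alpha$ at $0$, applied to the basin $\Omega_\alpha$.
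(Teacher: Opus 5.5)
Your architecture matches the paper's: define $\chi$ through the equipment, read off the degree of $\chi$ from the external position of $v_\lambda$ near $\partial\Lambda$ using the winding hypothesis, and conclude by the topological argument principle. However, two load-bearing steps are missing or wrongly justified.

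\emph{Continuity.} The map $\lambda\mapsto\mu_\lambda$ is \emph{not} continuous merely because the equipment and $f_\lambda$ are. The coefficient $\mu_\lambda$ is set to $0$ on $K(f_\lambda)$, and $K(f_\lambda)$ does not move continuously across $\partial\mathcal{T}_0$; limits of hybrid conjugacies are only quasiconformal conjugacies. Off $\mathcal{T}_0$ you still have to show that the area of $\bigcap_{k\le n}f_\lambda^{-k}(U_\lambda)\setminus K(f_\lambda)$ tends to $0$ locally uniformly; the paper does this with a dynamical holomorphic motion and subharmonicity. At $\lambda_*\in\partial\mathcal{T}_0$ a rigidity input is needed, which the paper supplies in four steps:
\begin{enumerate}
\item show $\chi(\lambda_*)\in\partial\mathcal{T}$ using virtual-cycle parameters;
\item for $\lambda_n\to\lambda_*$, extract a limit quasiconformal conjugacy between $f_{\lambda_*}$ and $T_\alpha$, where $\alpha=\lim\chi(\lambda_n)$;
\item promote the resulting local quasiconformal conjugacy between $T_\alpha$ and $T_{\chi(\lambda_*)}$ to a global one;
\item use that every parameter of $\partial\mathcal{T}$ is alone in its quasiconformal class.
\end{enumerate}
Your holomorphy argument on $\mathring{\mathcal{T}_0}$ also has to be run in a fixed plane, for instance via a dynamical holomorphic motion, or via multipliers on hyperbolic components as the paper does. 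At a fixed $z$, the pulled-back coefficient on the moving domain is not holomorphic in $\lambda$.

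\emph{Discreteness of fibres.} This is the more serious gap: you never show that the fibres of $\chi$ over $\mathcal{T}$ are discrete. Yet $i_\lambda(\chi)$ and the identity $1=\sum_{\chi(\lambda)=\alpha}i_\lambda(\chi)$ in your Step 4 only make sense for isolated preimages. In the interior, holomorphy (with properness onto components) gives discreteness, but at $\partial\mathcal{T}_0$ this is the crux of the paper.

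Suppose $\chi(\lambda_n)=\chi(\lambda_\infty)$ with $\lambda_n\to\lambda_\infty\in\partial\mathcal{T}_0$. One must show that the equipment $h_{\lambda_n}$ extends to a conjugacy equal to $\phi_{\lambda_n}^{-1}\circ\phi_{\lambda_\infty}$ on $J(f_{\lambda_\infty})$. Lifting through an infinite-degree covering is only determined up to infinitely many deck transformations (integer translations of the model $C\tan(\pi z)$ on $\Omega_\alpha$). So this requires a rigidity lemma with no real polynomial-like counterpart: a quasiconformal self-conjugacy of $T_\alpha$ near $J(T_\alpha)$ that is close to the identity near the marked pole $p_0$ is the identity on $J(T_\alpha)$. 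With that in hand, the Measurable Riemann Mapping Theorem and the identity theorem give $f_\lambda=H_\lambda\circ f_{\lambda_\infty}\circ H_\lambda^{-1}$ for $\lambda$ near $\lambda_\infty$, contradicting $\lambda_\infty\in\partial\mathcal{T}_0$.

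Your claim that ``equal hybrid class plus equal external data forces $\lambda_1=\lambda_2$'' cannot replace this. For $\lambda\in\mathcal{T}_0$, $v_\lambda\in K(f_\lambda)$ has no external position. A hybrid conjugacy matching the equipment at two parameters gives no contradiction by itself; injectivity only comes from the degree count, which needs discrete fibres. A Koenigs linearizer does not remove the $\mathbb{Z}$-ambiguity either. The paper uses the Riemann map of $\Omega_\alpha$ conjugating $T_\alpha$ to $C\tan(\pi z)$ on $\mathbb{H}$, together with the marked pole.
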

See Fig. \ref{fig:copies_tan}.

\begin{figure}[h]
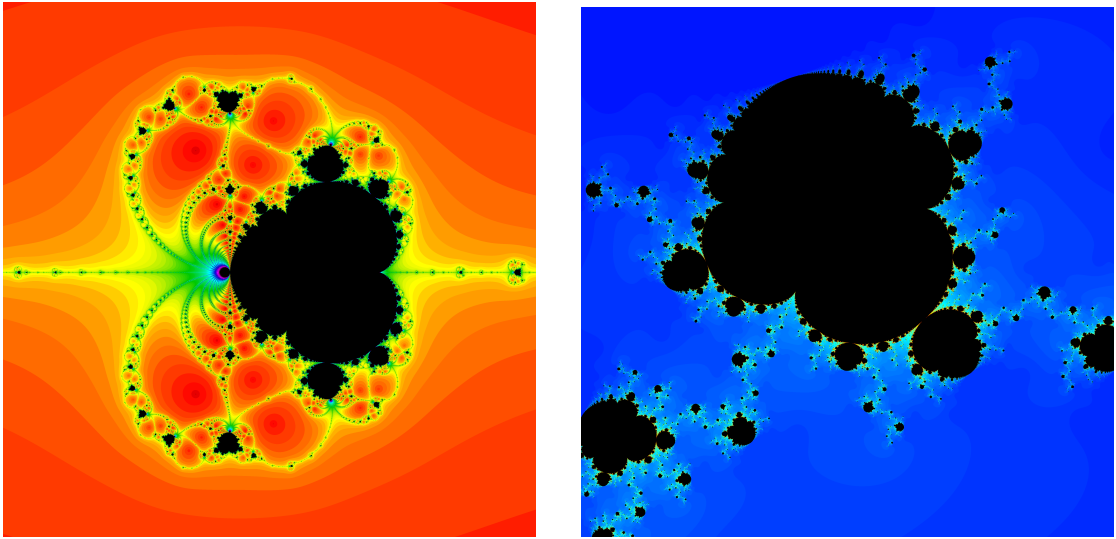

	\begin{center}
		\includegraphics[width=0.45\textwidth]{newton_par_zoom.pdf}
		\hfil
		\includegraphics[width=0.45\textwidth]{baby_tandelbrot}
		\caption{\label{fig:copies_tan} \small Left: A copy of $\tcal$ in the parameter space of the Newton's method of $f_a(z)=z+ae^z$, illustrating Theorem C. Right: A small copy of $\tcal$ inside $\tcal$, illustrating Theorem D.   } 
	\end{center} 
\end{figure}

\medskip

\textbf{Organization of the paper}

We begin by recalling some background results on holomorphic motions and polynomial-like maps in Section \ref{sec:prelim}, and some of the results from \cite{ABF}. In Section \ref{sec:TLmaps}, we define tangent-like maps, hybrid equivalence  and we prove some general properties of tangent-like maps. In Section \ref{sec:model}, we introduce the model family $\{T_\alpha\}_{\alpha \in \C}$, and prove Theorem B. 
Section \ref{sec:straight} is devoted to the proof of Theorem A. In Section \ref{sec:tlfam}, we define tangent-like families and we prove Theorem D.
Finally, Section \ref{sec:existtlfam} is devoted to the proof of Theorem C and its Corollary.

\medskip

\textbf{Acknowledgements}

\thanks{
	The first author is partially supported by the ANR PADAWAN /ANR-21-CE40-0012-01, ANR DynAtrois / ANR-24-CE40-1163, ANR
	TIGerS/ANR-24-CE40-3604 and Galileo program, under the project {\sl From rational to transcendental: complex dynamics and parameter spaces, by GNAMPA, INDAM,  by PRIN {\sl Real and Complex
			Manifolds: Topology, Geometry and holomorphic dynamics} n.2017JZ2SW5}. \\
	The second author is partially supported  by the French Italian University and Campus France through the Galileo program, under the project {\sl From rational to transcendental: complex dynamics and parameter spaces, by GNAMPA, INDAM,  by PRIN {\sl Real and Complex
			Manifolds: Topology, Geometry and holomorphic dynamics} n.2017JZ2SW5}.\\
	The third author is partially supported by the Spanish State Research Agency, through the grant  PID2023-147252NB-I00, and the Severo Ochoa and María de Maeztu Program for Centers and Units of Excellence in R\&D (CEX2020-001084-M); and by the Catalan Government through the excellence award  ICREA Acadèmica 2020.
}

\textbf{Notations} 

Troughout the paper, $\C$ denotes the complex plane, $\D$ the unit disk, $\rs$ the Riemann sphere, $\H$ the upper half plane, and $S^1$ the unit circle.

\section{Preliminaries}\label{sec:prelim}

\subsection{Holomorphic motions, quasiconformal maps}

We start by collecting some technical facts about holomorphic motions and quasiconformal homeomorphisms, that will be used throughougt the paper.

\begin{defi}[Holomorphic motion]
	A {\em holomorphic motion} of a set $X \subset \hat\C$ over a complex manifold $U$ with basepoint $\la_0\in U $ is a map  $H: U  \times X \rightarrow \chat$ given by 
	$(\la,x) \mapsto H_\la(x) $ such that
	\begin{enumerate}
		\item  for each $x \in X$ , $H_\la(x)$ is holomorphic in $\la$, 
		\item   for each  $\la \in U$,  $H_\la(x) $ is an injective function of  $x \in X$, and,
		\item  at $\la_0$, $H_ {\la_0} \equiv {\rm Id}$.
	\end{enumerate}
\end{defi}

We will make repeated use of the following classical extension theorems on holomorphic motions:

\begin{prop}[Mañé-Sad-Sullivan's $\la$-lemma, \cite{mss}]
	Let $H: U \times X \to \rs$ be a holomorphic motion of a set $X \subset \rs$ over $U$. Then $H$ extends uniquely to a holomorphic motion $H: U \times \overline{X} \to \rs$ of the closure of $X$ over $U$.
\end{prop}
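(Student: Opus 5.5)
We argue in three steps: first extend the motion to the closure, then show the extension still satisfies the three axioms, and finally prove uniqueness. The whole argument rests on the fact that a holomorphic motion over $U$ forms a normal family of holomorphic maps, and that injectivity survives under limits by Hurwitz's theorem.

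\emph{Reductions.} Extension and uniqueness are both local in $\la$, and $U$ is connected, so we may assume $U=\D$ and $\la_0=0$. If $X$ has at most two points, then $\overline{X}=X$ and there is nothing to prove. Otherwise, after a Möbius change of coordinates depending holomorphically on $\la$, we may assume that three points of $X$ are $0,1,\infty$ and that $H_\la$ fixes each of them for every $\la$.

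\emph{Normality.} Take any $x\in X\setminus\{0,1,\infty\}$. By injectivity of $H_\la$, the function $\la\mapsto H_\la(x)$ never takes the values $0,1,\infty$. Montel's theorem then shows that the family $\mathcal F=\{\la\mapsto H_\la(x): x\in X\}$ is normal on $\D$. A sharper statement, which we will need, is the following. For $x\neq y$ in $X$, the function
$$\la\mapsto \frac{H_\la(x)-H_\la(y)}{H_\la(x)-H_\la(0)}\cdot\frac{H_\la(1)-H_\la(0)}{H_\la(1)-H_\la(y)}$$
(the cross-ratio, with the obvious interpretation when one of the points is $\infty$) omits $0,1,\infty$. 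Schwarz--Pick for the hyperbolic metric on the thrice-punctured sphere then says that the map $(\la,x)\mapsto H_\la(x)$ is uniformly continuous in $x$, with respect to the spherical metric, locally uniformly in $\la$. This is the key point and the main technical step. The estimate should give a modulus of continuity of Hölder type, $d(H_\la(x),H_\la(y))\le C\, d(x,y)^{(1-|\la|)/(1+|\la|)}$.

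\emph{Extension.} For $z\in\overline{X}$, choose a sequence $x_n\in X$ with $x_n\to z$. The uniform equicontinuity in $x$ implies that $H_\la(x_n)$ is Cauchy, locally uniformly in $\la$. Its limit $H_\la(z)$ does not depend on the chosen sequence, and it is holomorphic in $\la$ as a locally uniform limit of holomorphic functions. At $\la=0$ we get $H_0(z)=z$.

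\emph{Injectivity.} Let $z\neq w$ in $\overline{X}$. The function $\la\mapsto H_\la(z)-H_\la(w)$ is a limit of functions $\la\mapsto H_\la(x_n)-H_\la(y_n)$ with $x_n\to z$ and $y_n\to w$, and these approximating functions never vanish. By Hurwitz's theorem the limit either vanishes identically or never vanishes. It is nonzero at $\la=0$, where it equals $z-w$, so it never vanishes. The same argument works in spherical coordinates near $\infty$. Hence each $H_\la$ is injective on $\overline{X}$.

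\emph{Uniqueness.} Any extension must agree with the one constructed above. Indeed, the same cross-ratio argument applied to an arbitrary holomorphic motion of $\overline{X}$ shows that $x\mapsto H_\la(x)$ is continuous, so its values on $\overline{X}$ are determined by its values on the dense subset $X$.

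The main obstacle is the equicontinuity estimate in $x$. Pointwise normality alone gives limits only along subsequences and does not yield a well-defined extension, so the hyperbolic-metric (Schwarz--Pick) bound on cross-ratios is the essential ingredient.
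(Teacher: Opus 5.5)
The paper gives no proof of this statement and only cites Ma\~n\'e--Sad--Sullivan. Your argument (three-point normalization, Schwarz--Pick applied to cross-ratios to get equicontinuity, extension by locally uniform limits, Hurwitz for injectivity, continuity for uniqueness) is essentially that classical proof and is correct, with two small corrections. First, near $0$ or $1$ the particular cross-ratio you wrote does not become small, so for such pairs you should use the cross-ratio whose two reference points are the other two of $0,1,\infty$. Second, contrary to your closing remark, Montel plus Hurwitz alone already gives a well-defined extension: two subsequential limits at $z$ agree at the basepoint, and their difference is a limit of nonvanishing functions, so by Hurwitz it vanishes identically.
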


The following much deeper result is due to Slodkowski.

\begin{theo}[Slodkowski's extension \cite{slo}]
	Let $H: U \times X \to \rs$ be a holomorphic motion of a set $X \subset \rs$ over a domain $U \subset \rs$. Then $H$ extends to a holomorphic motion $H: U \times \rs \to \rs$. Moreover, for all $\la \in U$, $h_\la:=H(\la, \cdot): \rs \to \rs$ is a quasiconformal homeomorphism. 
\end{theo}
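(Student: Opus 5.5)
The plan follows the now-standard route of Chirka and Astala–Martin, organised around a one-point extension step. First, since the statement is only meaningful (and only true in general) when $U$ is simply connected, I would assume this and uniformize so that $U=\D$ with basepoint $\la_0=0$. Post-composing with the Möbius maps sending $H_\la(x_1),H_\la(x_2),H_\la(x_3)$ to $0,1,\infty$, for three fixed points $x_i\in X$, we may also assume these three points stay fixed. This normalization is holomorphic in $\la$, so it preserves holomorphic motions and the quasiconformality conclusion. The argument then has three stages.
\begin{enumerate}
\item Extend the motion of a finite set by one point.
\item Pass to a countable dense set by induction and compactness.
\item Take the closure with the $\la$-lemma.
\end{enumerate}

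The key step, and the one I expect to be the main obstacle, is the \emph{one-point extension}. Let $X=\{x_1,\dots,x_n\}$ be finite and $z\notin X$. We want a holomorphic $w:\D\to\rs$ with $w(0)=z$ and $w(\la)\neq H_\la(x_j)$ for all $j$ and all $\la$. I would follow Chirka's $\dbar$-approach. Work first over a slightly smaller disk $\D_r$ and build a smooth extension of the motion to all of $\rs$ whose $\la$-Beltrami data is controlled, for instance by interpolating $H_\la$ near the points with the identity and a partition of unity. One then seeks $w$ as a fixed point of an equation
$$w_{\bar\la}=\phi(\la,w)\,\overline{w_\la}\quad\text{or}\quad w_{\bar\la}=\phi(\la,w),$$
where $\phi$ vanishes identically along the graphs $\{w=H_\la(x_j)\}$. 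The fixed point is obtained via the Cauchy transform on $\D_r$ and Schauder's theorem. Uniqueness for this equation (a Lipschitz/Gronwall argument in $w$) then shows the solution cannot meet the graphs, since the graphs themselves solve the equation. Making this solution holomorphic rather than merely a leaf of a smooth foliation is the delicate point. Following Slodkowski, the cleaner route here is to solve the Riemann–Hilbert problem over the torus-like fibres, i.e. to find a holomorphic disk with boundary in the totally real family of circles; this is the genuinely deep input. Finally, let $r\to1$ and use normality (the graphs avoid three fixed values, so Montel applies) to get the extension over all of $\D$.

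With one-point extension available, pick a countable dense set $\{z_k\}\subset\rs$, containing $X$'s dense subset, and extend inductively to $X\cup\{z_1,\dots,z_k\}$. If $X$ is infinite, first take a countable dense subset of $X$ and apply the construction to finite truncations. Uniform Hölder continuity from the $\la$-lemma, applied over $\D$ with the hyperbolic metric, together with a diagonal argument produces a holomorphic motion of $X\cup\{z_k\}_k$ that agrees with $H$ on $X$. The Mañé–Sad–Sullivan $\la$-lemma then extends it uniquely to the closure, which is all of $\rs$, and uniqueness guarantees it still restricts to $H$ on $\overline X\supset X$. The same $\la$-lemma gives that each $h_\la$ is quasiconformal, with dilatation bounded by $\frac{1+|\la|}{1-|\la|}$ after uniformization. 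Undoing the uniformization of $U$ and the Möbius normalization yields the statement.
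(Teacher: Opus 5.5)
The paper does not prove this statement; it quotes it from Slodkowski and uses it as a black box. Your remark that $U$ has to be simply connected is correct: the phrase ``over a domain'' in the statement is too generous. Your Stages 2 and 3 are the standard reduction, with one addition. Montel alone does not keep limits of the normalized extensions injective; you also need Hurwitz's theorem, applied to differences of leaves, which never vanish and are not identically zero because they differ at $\la=0$.

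The genuine gap is Stage 1, which is the entire content of the theorem. As written, it does not produce a holomorphic one-point extension.
\begin{itemize}
\item If $\phi$ only vanishes along the graphs, a solution of $w_{\bar\la}=\phi(\la,w)$ satisfies $w_{\bar\la}=\phi(\la,w(\la))\neq 0$ wherever it is off the graphs. So it is not holomorphic.
\item Your disjointness argument fails. A $\dbar$-equation on $\D_r$ with prescribed value $w(0)=z$ has infinitely many solutions. Already for $\phi\equiv 0$, any holomorphic $w$ with $w(0)=z$ is a solution, and many of these meet the graphs. So there is no Gronwall-type uniqueness that keeps $w$ off the graphs.
\item Your fallback, ``solve the Riemann--Hilbert problem over a torus-like family of circles'', names Slodkowski's method without carrying it out. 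You do not say which circles. Since the fibres $\rs\setminus\{H_\la(x_j)\}$ are not simply connected, building a suitable totally real family, and showing the resulting disk avoids the graphs and passes through $(0,z)$, is exactly what must be proved.
\end{itemize}

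The $\dbar$ route does work if $\phi$ is placed differently; this is Chirka's argument.
\begin{enumerate}
\item Require $\phi\equiv 0$ on $\D_r\times\C$, and move all the non-holomorphicity outside $\D_r$.
\item For $|\la|\geq r$, set $\tilde w_j(\la)=w_j\bigl(\rho(|\la|)\la/|\la|\bigr)$, with $\rho$ decreasing from $r$ to $0$ on $[r,R]$. Then the $\tilde w_j$ stay uniformly $2\delta$-separated and equal $x_j$ for $|\la|\geq R$.
\item Let $\phi(\la,w)=\sum_j \partial_{\bar\la}\tilde w_j(\la)\,\chi\bigl((w-\tilde w_j(\la))/\delta\bigr)$, with $\chi$ a bump equal to $1$ at $0$ and supported in the unit disk. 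This $\phi$ is bounded, Lipschitz in $w$, and supported in $r\leq|\la|\leq R$.
\item Solve $w=\zeta+T[\phi(\cdot,w)]$ by Schauder, with $T$ the Cauchy transform over all of $\C$.
\item For two solutions, $u=w_1-w_2$ satisfies $u_{\bar\la}=a\,u$ with $a\in L^\infty$ compactly supported. Hence $e^{-Ta}u$ is entire and tends to $\zeta_1-\zeta_2$ at $\infty$, so by Liouville it is constant.
\item Consequently, leaves with distinct labels never meet, and the leaf with label $x_j$ is $\tilde w_j$. The map $\zeta\mapsto w_\zeta(\la)$ is a proper continuous injection, hence a homeomorphism of $\C$.
\item Over $\D_r$ the leaves are holomorphic, because $\phi$ vanishes there. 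This gives a holomorphic motion of $\rs$ over $\D_r$ extending the finite motion.
\end{enumerate}
Letting $r\to 1$ with Montel and Hurwitz, your Stages 2 and 3 then finish the proof.
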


We shall use the properties of the boundary extension of maps defined in domains.

\begin{lem}[{Boundary extension of conformal maps of $\D$ \cite[Sect. 5.1]{pom}}]\label{lem:quasidisk}
	Let $D \subset \C$ be a quasidisk, and let $\phi: \D \to D$ be a Riemann map. Then $\phi$ extends continuously to $S^1$, and $\phi: S^1 \to \partial D$	is quasisymmetric.
\end{lem}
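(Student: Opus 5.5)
The statement is Lemma~\ref{lem:quasidisk}: a Riemann map $\phi:\D\to D$ onto a quasidisk extends continuously to $S^1$, and the extension $\phi:S^1\to\partial D$ is quasisymmetric. I would prove it in two steps.

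\emph{Continuous extension.} A quasidisk is by definition the image of $\D$ under a quasiconformal homeomorphism $g$ of $\rs$. Hence $\partial D=g(S^1)$ is a Jordan curve. Carathéodory's theorem then gives that $\phi$ extends to a homeomorphism $\overline{\D}\to\overline{D}$. In particular $\phi|_{S^1}$ is a homeomorphism onto $\partial D$.

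\emph{Quasisymmetry.} The plan is to extend $\phi$ to a quasiconformal map of the whole sphere and then read off quasisymmetry on the circle.
\begin{enumerate}
\item Use Ahlfors' characterization: $\partial D$ is a quasicircle exactly when it satisfies the three-point (bounded turning) condition. This condition also holds for the complementary domain $D^*=\rs\setminus\overline{D}$.
\item Take a Riemann map $\psi:\rs\setminus\overline{\D}\to D^*$. The conformal welding $h=\psi^{-1}\circ\phi|_{S^1}$ is then quasisymmetric, by the classical Ahlfors--Beurling argument relating moduli of quadrilaterals on both sides of a quasicircle.
\item By Beurling--Ahlfors, $h$ extends to a quasiconformal self-map $\tilde h$ of $\rs\setminus\D$.
\item Define $\Phi=\phi$ on $\overline{\D}$ and $\Phi=\psi\circ\tilde h$ outside $\D$. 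These agree on $S^1$, so $\Phi$ is a homeomorphism of $\rs$. It is quasiconformal off $S^1$, and circles are removable for quasiconformality, so $\Phi$ is a global quasiconformal map.
\end{enumerate}
A quasiconformal map of $\rs$ is quasisymmetric on compact sets, with respect to the chordal metric or the Euclidean metric away from $\infty$; this is Väisälä's theorem. Restricting $\Phi$ to $S^1$ gives the claim.

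A more direct route for the last step, avoiding welding, would be to estimate the ratios $|\phi(\zeta_1)-\phi(\zeta_0)|/|\phi(\zeta_2)-\phi(\zeta_0)|$ for adjacent equal arcs. One would use extremal length in $\D$ together with the bounded turning property of $\partial D$.

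I expect the main obstacle to be the quasisymmetry of the welding $h$, or equivalently the quasiconformal reflection across $\partial D$. In practice I would cite \cite[Sect.~5.1]{pom} for this, as the paper does, rather than reprove it.
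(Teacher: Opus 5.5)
Your argument is correct, modulo the classical theorems you cite. The paper gives no proof of this lemma and only cites \cite[Sect.~5.1]{pom}, so there is no argument to compare against step by step.

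Your route is, however, longer than it needs to be. You already use the definition $D=g(\D)$ with $g$ a quasiconformal homeomorphism of $\rs$ for the continuous extension, and it suffices for the quasisymmetry as well. The map $k:=\phi^{-1}\circ g|_{\D}$ is a quasiconformal self-map of $\D$. Reflecting it across $S^1$ gives a quasiconformal map $K$ of $\rs$, since $S^1$ is removable. Then $\Phi:=g\circ K^{-1}$ is a quasiconformal homeomorphism of $\rs$ that coincides with $\phi$ on $\D$. Restricting $\Phi$ to $S^1$ and applying V\"ais\"al\"a's theorem finishes the proof, with no welding and no Beurling--Ahlfors extension.

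Your step 2 is also not independent of this shortcut. The standard proof that the welding $h=\psi^{-1}\circ\phi$ is quasisymmetric goes through the boundary values of the two quasiconformal self-maps $\phi^{-1}\circ g$ and $\psi^{-1}\circ g$ of the two complementary disks. That is exactly the fact which gives the result directly, so your steps 3 and 4 rebuild a quasiconformal extension of $\phi$ that was already available.

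What your route buys is independence from the definition of quasidisk. It is essentially Ahlfors' proof that the three-point condition implies quasiconformal extendability, so it applies unchanged if quasidisks are defined intrinsically by bounded turning.

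Keep your remark about the chordal metric. The paper applies the lemma to domains such as $U$ with $\infty\in\partial U$, for example the lift $t_2$ in the proof of Theorem A. There quasisymmetry has to be read in the spherical metric, or on $\partial U\setminus\{\infty\}$, rather than Euclidean quasisymmetry on all of $\partial U$.
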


It follows that if $D_1, D_2$ are quasidisks and $\phi:D_1\ra D_2$ is conformal, then $\phi$ extends quasisimmetrically as a map from $\partial D_1$ to $\partial D_2$.

\begin{lem}[{Interpolation in quasi-annuli \cite[Theorem 3.4]{EFGP}}]\label{lem:interpolation}
	Let $U,V, U', V'$ be Jordan domains, with $U \Subset V$ and $U' \Subset V'$. Let $\psi_2: \partial U \to \partial U'$ and $\psi_1: \partial V \to \partial V'$ be quasisymmetric homeomorphisms.
	Then there exists a quasiconformal homeomorphism 
	$\psi: \overline{V} \setminus U \to \overline{V'} \setminus U'$ 
	such that $\psi_{|\partial U} = \psi_2$ and $\psi_{|\partial V} = \psi_1$. Moreover, the quasiconformality constant of $\psi$ depends only on the moduli of the annuli $\overline{V} \setminus U$ and  $\overline{V'} \setminus U'$ {and the quasisymetric constants of $\psi_1$ and $\psi_2$}.
\end{lem}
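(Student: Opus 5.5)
The plan is to uniformize both annuli conformally to round annuli, lift to vertical strips, smooth the two boundary maps using the Beurling--Ahlfors extension in thin collars, and interpolate linearly in the middle. Set $A=\overline V\setminus U$ and $A'=\overline{V'}\setminus U'$.

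\textbf{Step 1: reduction to round annuli.} Since $\partial U,\partial V$ are Jordan curves, a conformal map $\Phi:\inter A\to\{1<|z|<R\}$ with $\log R=2\pi\,\mathrm{mod}(A)$ extends, by Carathéodory's theorem (applied locally near each boundary component), to a homeomorphism of closures. It sends $\partial U$ to $\{|z|=1\}$ and $\partial V$ to $\{|z|=R\}$. Take $\Phi'$ for $A'$ in the same way, with modulus $R'$. For arbitrary Jordan curves, I read the hypothesis that $\psi_1,\psi_2$ are quasisymmetric as a statement in these conformal boundary coordinates; this is the reading under which the claimed dependence of constants is meaningful. On quasicircles it agrees with metric quasisymmetry, by Lemma \ref{lem:quasidisk}. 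So $g_1=\Phi'\circ\psi_1\circ\Phi^{-1}$ and $g_2=\Phi'\circ\psi_2\circ\Phi^{-1}$ are $k$-quasisymmetric circle homeomorphisms. Since $\Phi,\Phi'$ are conformal, it suffices to build a quasiconformal map between the round annuli extending $g_1,g_2$, with dilatation controlled by $k$, $R$ and $R'$.

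\textbf{Step 2: lift to strips and smooth near the boundary.} Pass to logarithmic coordinates $w=x+iy$. The annuli become the strips $S=\{0\le x\le L\}$ and $S'=\{0\le x\le L'\}$ modulo $2\pi i$, with $L=\log R$ and $L'=\log R'$. The maps $g_j$ lift to increasing homeomorphisms $h_j:\R\to\R$ with $h_j(y+2\pi)=h_j(y)+2\pi$, and these lifts are $k'$-quasisymmetric on $\R$ with $k'=k'(k)$. Let $E_j$ be the Beurling--Ahlfors extension of $h_j$ to a half-plane, oriented into the strip. It commutes with the translation $w\mapsto w+2\pi i$, because the defining integrals are translation-equivariant, and its dilatation depends only on $k'$. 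Fix $\delta=\tfrac13\min(L,L')$. On the collar $\{0\le x\le\delta\}$, use $(x,y)\mapsto (xL'/L,\ \mathrm{Im}\,E_2(x,y))$ suitably rescaled, or more simply $E_2$ itself composed with an affine rescaling of the first coordinate. Do the same with $E_1$ near $x=L$. At the inner edges $x=\delta$ and $x=L-\delta$, the maps $y\mapsto E_j(\delta,y)$ are bi-Lipschitz periodic diffeomorphisms, with constants depending only on $k'$ and $\delta$. This is the standard estimate that the Beurling--Ahlfors extension is bi-Lipschitz, with respect to the hyperbolic metric, at height $\delta$, combined with periodicity, which gives compactness.

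\textbf{Step 3: interpolate in the middle.} On $\{\delta\le x\le L-\delta\}$, I will interpolate linearly in the second coordinate between the two bi-Lipschitz periodic curves obtained at the edges, and affinely in the first coordinate. A convex combination of two increasing periodic bi-Lipschitz maps is again bi-Lipschitz, with derivative ratio bounded by the worse of the two. Hence the interpolation is a bi-Lipschitz homeomorphism, and therefore quasiconformal, with constant depending on $k'$, $L$ and $L'$. The pieces agree on the seams $x=\delta$ and $x=L-\delta$, and quasiconformality is preserved across analytic arcs. The resulting map descends modulo $2\pi i$, and conjugating back by $\exp$ and $\Phi,\Phi'$ gives $\psi$.

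\textbf{Main obstacle.} The delicate point is Step 2: showing that the restriction of the equivariant Beurling--Ahlfors extension to a fixed height is uniformly bi-Lipschitz, with constants depending only on $k'$ and $\delta$. I expect to get this from the quasisymmetry doubling estimates applied to the averaging integrals, together with $2\pi$-periodicity, which reduces the estimate to a compact fundamental domain. A second, more interpretive obstacle is the meaning of quasisymmetry on non-quasicircles, which Step 1 resolves by working in conformal boundary coordinates.
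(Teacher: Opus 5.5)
The paper does not prove this lemma (it is quoted from \cite{EFGP}), so your argument has to stand on its own. The architecture is standard and sound: uniformize, lift to periodic maps, use Beurling--Ahlfors near each boundary, interpolate in the middle. Reading quasisymmetry in conformal boundary coordinates is a reasonable normalization, and it is consistent with how the lemma is applied in the paper (inner boundary a quasicircle, outer boundary analytic). There is, however, a genuine gap: as written, the collar maps of Step 2 and the interpolation of Step 3 do not glue.

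Your first collar map, $(x,y)\mapsto(xL'/L,\mathrm{Im}\,E_2(x,y))$, is not quasiconformal.
\begin{itemize}
\item With $v=\mathrm{Im}\,E_2$ one has $v(x,y)=\frac{1}{2x}\int_{y-x}^{y+x}h_2(s)\,ds$, hence $\partial_y v=\frac{h_2(y+x)-h_2(y-x)}{2x}$.
\item The horizontal stretch is the constant $a=L'/L$, so the dilatation is at least $\max\{a/\partial_y v,\ \partial_y v/a\}$.
\item This is unbounded as $x\to 0$ wherever $h_2$ is not bi-Lipschitz, e.g.\ where $h_2(y)\approx\mathrm{sgn}(y)|y|^{\beta}$ locally with $\beta\neq 1$.
\end{itemize}
Near the boundary the height coordinate has to scale like $h_2(y+x)-h_2(y-x)$, which is exactly what $\mathrm{Re}\,E_2$ does.

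Your second option, $E_2$ followed by a constant rescaling of the first coordinate, is quasiconformal. But it sends the seam $\{x=\delta\}$ onto the curve $y\mapsto E_2(\delta,y)$, whose first coordinate $\mathrm{Re}\,E_2(\delta,y)\asymp h_2(y+\delta)-h_2(y-\delta)$ varies with $y$. Step 3 is affine in the first coordinate, so it sends $\{x=\delta\}$ to a vertical line, and the two pieces disagree on the seam. Without the rescaling there is also an overflow problem: with $\delta=\tfrac13\min(L,L')$, quasisymmetry only gives $\mathrm{Re}\,E_2(\delta,y)\lesssim\delta^{\alpha(k')}$, which can exceed $L'$.

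The missing step is a vertical straightening of the collar image.
\begin{itemize}
\item Since $\partial_y\,\mathrm{Im}\,E_2(\delta,y)=\frac{h_2(y+\delta)-h_2(y-\delta)}{2\delta}>0$, the seam image is a graph $u=\gamma_2(v)$.
\item The map $(u,v)\mapsto(\delta' u/\gamma_2(v),v)$ sends the collar image onto the straight strip $\{0\le u\le\delta'\}$, fixes $\{u=0\}$ pointwise, and commutes with $v\mapsto v+2\pi$.
\item It is bi-Lipschitz as soon as $\gamma_2$ is bounded above and below and Lipschitz.
\item Those bounds follow from the formula above, together with $\partial_y\,\mathrm{Re}\,E_2(\delta,y)=\frac{c}{\delta}\bigl(h_2(y+\delta)-2h_2(y)+h_2(y-\delta)\bigr)$ and the normalization $h_2(y+2\pi)=h_2(y)+2\pi$. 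This is the real content of what you call the main obstacle.
\end{itemize}
After straightening, and doing the same at $x=L$, the seam maps are the increasing, $2\pi$-equivariant, bi-Lipschitz functions $y\mapsto\mathrm{Im}\,E_2(\delta,y)$ and $y\mapsto\mathrm{Im}\,E_1(L-\delta,y)$. Step 3 then works as you describe, with two further points:
\begin{itemize}
\item You must choose the lifts with, say, $|h_1(0)-h_2(0)|\le 2\pi$. Otherwise the shear entry $\bigl(\phi_1(y)-\phi_2(y)\bigr)/(L-2\delta)$ of the differential, $\phi_j$ being the two seam maps, records an arbitrary Dehn twist and is not controlled by $k$, $L$, $L'$.
\item You are tacitly assuming that $\psi_1$ and $\psi_2$ induce compatible orientations; without this no homeomorphism of the annuli extends them.
\end{itemize}
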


\subsection{Polynomial-like maps}

Next, we recall some basic definitions about polynomial-like maps. General references are \cite{douhub} and \cite{LyubichBook}.

\begin{defi}[Polynomial-like map]
	A polynomial-like map is a proper holomorphic map $f: U \to V$, where $U,V$ are Jordan domains and $U \Subset V$.	
\end{defi}

Up to restricting a little bit $V$ and $U$, we may always assume without loss of generality that $f$ extends continuously to $\overline{U}$.

In particular, a polynomial-like map $f$ is a branched cover; it therefore has a topological degree $d \in \N$, and if $d \geq 2$, then it must have at least one critical point. We will say that a polynomial-like map is unicritical if it has exactly one critical point.

\begin{defi}[Filled Julia set]
	Let $f: U \to V$ be a polynomial-like map. The filled Julia set of $f$ is $K(f):=\bigcap_{n \geq 0} f^{-n}(U)$.
	The Julia set of $f$ is $J(f):=\partial K(f)$.
\end{defi}

\begin{defi}[Hybrid equivalence]\label{def:hybrid}
	Let $(f_1, U_1, V_1)$ and $(f_2, U_2, V_2)$ be two polynomial-like. We say that they are hybrid equivalent if there exists a quasiconformal homeomorphism $\phi: V_1 \to V_2$ such that for all $z \in U_1$, 
	$$\phi \circ f_1(z) = f_2 \circ \phi(z)$$
	and moreover $\dbar \phi = 0$ a.e. on $K(f_1)$.
\end{defi}

The relation between polynomial-like maps and actual polynomials is settled in the celebrated Straightening Theorem.

\begin{theo}[Straightening Theorem for PL maps, \cite{douhub}]
	Let $f: U \to V$ be a polynomial-like map. There exists a polynomial $P$ such that $f$ is hybrid equivalent to $P$. Moreover, if $K(f)$ is connected, then $P$ is unique up to affine conjugacy.
\end{theo}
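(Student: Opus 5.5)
The plan is the classical Douady--Hubbard surgery: build an $f$-invariant Beltrami coefficient that glues the polynomial model $z\mapsto z^d$ at infinity onto $f$ near $K(f)$, straighten it with the Measurable Riemann Mapping Theorem, and read off a polynomial. Uniqueness in the connected case comes from promoting a hybrid conjugacy between two polynomials to a conformal one.

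\textbf{Existence.} Let $d\ge 2$ be the degree of $f$; the case $d=1$ is trivial.
\begin{enumerate}[(i)]
\item After shrinking $V$ slightly, assume $\partial U$ and $\partial V$ are analytic Jordan curves and $f$ extends to a degree $d$ covering $\partial U\to\partial V$.
\item Fix $R>1$ with $\log R = d\,\mathrm{mod}(V\setminus\overline U)/(d-1)$ in suitable normalization. Choose a conformal map $\phi_1: \rs\setminus \overline V\to \rs\setminus \overline{\D_{R^d}}$, extending analytically to $\partial V$.
\item Lift $\phi_1\circ f$ through $z\mapsto z^d$ on $\partial U$. This gives a real-analytic, hence quasisymmetric, map $\psi_2:\partial U\to\{|z|=R\}$ with $\psi_2(z)^d=\phi_1(f(z))$.
\item Apply Lemma \ref{lem:interpolation} to get a quasiconformal homeomorphism $\psi$ from the annulus $\overline V\setminus U$ to $\{R\le |z|\le R^d\}$, equal to $\psi_2$ on $\partial U$ and $\phi_1$ on $\partial V$.
\item Define a quasiregular map $g:\rs\to\rs$ by $g=f$ on $\overline U$, $g=\phi_1^{-1}\circ(\psi)^d$ on $\overline V\setminus U$, and $g=\phi_1^{-1}\circ z^d\circ\phi_1$ outside $V$. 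The boundary conditions make $g$ continuous, and it has degree $d$.
\end{enumerate}
Next, define $\sigma$ to be the standard structure on $\rs\setminus V$ and on $K(f)$, and $\psi^*\sigma_0$ on the fundamental annulus. Spread it by pullback under $g^n$ to all of $\bigcup_n g^{-n}(V\setminus U)$.

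Every orbit passes through the non-holomorphic annulus at most once. So the dilatation of $\sigma$ stays bounded by that of $\psi$, and $\sigma$ is $g$-invariant. The Measurable Riemann Mapping Theorem gives a quasiconformal $h:\rs\to\rs$ with $h^*\sigma_0=\sigma$, normalized so that $h(\infty)=\infty$.

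Then $P=h\circ g\circ h^{-1}$ is holomorphic of degree $d$ on $\rs$ with $P^{-1}(\infty)=\{\infty\}$, so $P$ is a polynomial. By construction $h$ conjugates $f$ to $P$ on $U$. Moreover $\dbar h=0$ a.e.\ on $K(f)$, because $\sigma=\sigma_0$ there. Hence $h$ is a hybrid equivalence.

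\textbf{Uniqueness when $K(f)$ is connected.} Suppose $f$ is hybrid equivalent to $P_1$ and $P_2$. Composing the conjugacies gives a hybrid equivalence $\phi$ between $P_1$ and $P_2$ near their filled Julia sets. Both $K(P_i)$ are connected, so there are Böttcher coordinates $B_i:\rs\setminus K(P_i)\to\rs\setminus\overline{\D}$ conjugating $P_i$ to $z^d$. Define $\Phi=B_2^{-1}\circ B_1$ outside $K(P_1)$, after choosing the correct $(d-1)$-th root of unity, and $\Phi=\phi$ on $K(P_1)$.

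The main obstacle is showing that $\Phi$ is a homeomorphism of $\rs$, i.e.\ that the two definitions glue continuously across $J(P_1)$.
\begin{enumerate}[(i)]
\item First, $\phi$ maps external rays in the fundamental annulus to curves that are lifted consistently by the dynamics. So, after conjugating by a rotation, the two definitions match in the combinatorics of angles.
\item Next, define a sequence of homeomorphisms $\Phi_n$. Each equals $\phi$ on $P_1^{-n}(U)$ and the Böttcher conjugacy outside. They are obtained by pulling back a quasiconformal interpolation in the annulus $\{R\le|B_1|\le R^d\}$, and all share the same dilatation bound.
\item By compactness of uniformly quasiconformal maps, $\Phi_n$ converges to a quasiconformal $\Phi$. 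The limit is conformal off $J(P_1)$, since on each point it eventually agrees with a conformal map or with $\phi$ on $K$, where $\dbar\phi=0$.
\item $\dbar\Phi=0$ a.e.\ on $J(P_1)$ as well, since $\Phi$ agrees with $\phi$ there in the limit.
\end{enumerate}
Thus $\Phi$ is a conformal automorphism of $\rs$ fixing $\infty$, hence affine, and it conjugates $P_1$ to $P_2$.
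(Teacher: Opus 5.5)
Your proposal is correct. It is the classical Douady--Hubbard argument, which is exactly the scheme the paper transposes to tangent-like maps: for existence, glue a model map outside $U$, pull back the Beltrami form and straighten (as in the proof of Theorem \ref{th:straight}); for uniqueness, lift to get uniformly quasiconformal maps that equal the hybrid conjugacy on deeper preimages and the conformal conjugacy at infinity further out, and take their limit, which is conformal hence affine (as in Lemma \ref{lem:lift} and Theorem \ref{thm:uniqueness of straightening}). There are two harmless imprecisions: the outer radius of your interpolation annulus must be large enough to surround $\phi(\{|B_1|\le R\})$, which $R^d$ need not do (take $P_1=P_2=z^d$ with $\phi=\mathrm{id}$ on $\overline{\D}$ and $\phi(z)=z|z|^{a}$ off $\overline{\D}$, $a\ge d-1$); and step (i) can be dropped, since without it the $\Phi_n$ coincide, outside neighbourhoods of $K(P_1)$ shrinking to it, with $B_2^{-1}\circ(\zeta_n B_1)$ where $\zeta_n$ converges to a $(d-1)$-th root of unity, so the limit is still a conformal conjugacy off $K(P_1)$.
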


This relation extends also to parameter spaces.

\begin{defi}[Polynomial-like family, c.f.~{\cite[p.~545]{LyubichBook}}]\label{def:plfamily}
	Let $\Lambda\subset\C$ be a domain. A holomorphic family of unicritical polynomial-like maps is a  family of polynomial-like maps
	$f_\la : U_\la \to V_\la$, $\la \in \Lambda$, such that:
	\begin{enumerate}
		\item There exists $d \geq 2$ such that for all $\la \in \Lambda$, $f_\la$ is unicritical of degree $d$
		\item $\mathcal{U}:=\{(\la,z) : z \in U_\la\}$ and $\mathcal{V}:=\{(\la,z) : z \in V_\la\}$ are domains in $\C^2$
		\item $\mathbf{f}: \mathcal{U} \to \rs$ defined by $\mathbf{f}(\la,z)=f_\la(z)$ is holomorphic.	
	\end{enumerate}
\end{defi}

\begin{defi}[Equipped family]\label{def:plequipped}
	A holomorphi holomorphic family of unicritical polynomial-like maps  is \emph{equipped} if there exists $\la_0 \in \Lambda$ and a holomorphic motion $h_\la: \overline{V_{\la_0}} \setminus U_{\la_0} \to \overline{V_\la} \setminus U_\la$ such that for all $\la \in \Lambda$, for all $z \in \partial U_{\la_0}$, 
	$$h_\la \circ f_\la(z) =  f_\la \circ h_\la(z).$$ 
	The holomorphic motion $(h_\la)_{\la \in \Lambda}$ is called {\em the equivariant holomorphic motion} of the fundamental annulus.
\end{defi}

\begin{defi}[Proper family]\label{def:plproper}
	A  holomorphic family of unicritical polynomial-like maps  is proper if:
	\begin{enumerate}
		\item $\Lambda$ is a Jordan domain, and $\{f_\la\}_{\la \in \Lambda}$ is the restriction  of a slightly larger  holomorphic family of unicritical polynomial-like maps  $\{f_\la\}_{\la \in \tilde \Lambda}$, where $\Lambda \Subset \tilde \Lambda$.
		\item For all $\la \in \partial \Lambda$,  $v_\la \in \partial V_\la$.
		\item The winding number of $f_\la(c_\la) - c_\la$ is 1, as $\la$ turns once along $\partial \Lambda$ and where $c_\la$ is the unique critical point of $f_\la$.
	\end{enumerate}
\end{defi}

In \cite{LyubichBook}, the third condition in the above definition is called being \emph{unfolded}.

The following technical lemma will be useful in our proof of the connectivity of the Tandelbrot set $\tcal$. It is proved in \cite{LyubichBook} for quadratic-like families, but the proof is valid for families of unicritical polynomial-like maps.

\begin{lem}[{\cite[p.~549]{LyubichBook}} ]\label{lem:lyubichtransversality}
	Let $\{f_\la\}_{\la \in \Lambda}$ be a proper equipped  holomorphic family of unicritical polynomial-like maps.
	Let $P_n:=\{ \la \in \Lambda : f_\la^n(c_\la) \in U_\la  \}$. Then $P_n$ is a Jordan domain.
\end{lem}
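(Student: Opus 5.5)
We argue by induction on $n$, following the scheme of \cite[p.~549]{LyubichBook}. The idea is to realize $P_n$ as the set of parameters for which the critical value $v_\la:=f_\la(c_\la)$ lies inside a Jordan curve $\Gamma^n_\la$ that moves holomorphically with $\la$, and then to use the argument principle together with the unfolding (winding number one) condition of Definition \ref{def:plproper}.

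\emph{Setup.} Normalize so that $c_\la\in U_\la$, so that $P_0=\Lambda$. Then
$$f_\la^n(c_\la)\in U_\la \iff v_\la\in W^n_\la,$$
where $W^n_\la$ is the component of $f_\la^{-(n-1)}(U_\la)$ containing $v_\la$. Let $\Gamma^n_\la:=\partial W^n_\la$. The equivariant holomorphic motion $h_\la$ of the fundamental annulus $\overline{V_\la}\setminus U_\la$ (Definition \ref{def:plequipped}) is lifted by the branches of $f_\la^{-1}$. By equivariance these lifts glue along $\partial U_\la$, and we obtain a holomorphic motion of the annuli $A^k_\la=f_\la^{-k}(\overline{V_\la}\setminus U_\la)$, valid over the set of parameters where the critical value has not yet entered $A^{k}_\la$ in the relevant way. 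For $\la$ in a neighborhood of $\overline{P_{n-1}}$ we pull $\partial U_\la$ back $n-1$ times. This gives curves $\Gamma^n_\la$ which are images $\Gamma^n_\la=H_\la(\Gamma^n_{\la_0})$ of a fixed Jordan curve under a holomorphic motion. For $\la\in P_{n-1}$ the critical value lies in $W^{n-1}_\la$, so the last pullback is unbranched off $\Gamma^n_\la$ and $\Gamma^n_\la$ is a Jordan curve.

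\emph{Counting solutions.} Fix a point $z$ on the curve $\Gamma^n_{\la_0}$ and consider the holomorphic function
$$g_z(\la)=v_\la-H_\la(z).$$
By properness, on $\partial\Lambda$ the point $v_\la$ lies on $\partial V_\la$, outside $\Gamma^n_\la$. Deform $\partial\Lambda$ to $\partial P_{n-1}$, along which $v_\la\in\Gamma^{n-1}_\la$ lies outside $\Gamma^n_\la$ since $\Gamma^n_\la$ sits strictly inside $\Gamma^{n-1}_\la$. Along this homotopy $g_z$ never vanishes, so the winding number of $g_z$ along $\partial P_{n-1}$ equals that of $v_\la-c_\la$ along $\partial\Lambda$, which is $1$ by the unfolding assumption. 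The argument principle then says that each $g_z$ has exactly one zero $\la(z)$ in $P_{n-1}$, which is a Jordan domain by induction. The same count applied to points $z$ strictly inside $\Gamma^n_{\la_0}$, after extending the motion to the interior by Slodkowski's extension, shows that $v_\la$ lies inside $\Gamma^n_\la$ exactly when $\la$ is one of these solutions.

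\emph{Conclusion.} The map $z\mapsto\la(z)$ from $\Gamma^n_{\la_0}$ to $\Lambda$ is injective, since $\la(z)=\la(z')$ forces $H_\la(z)=H_\la(z')$ and hence $z=z'$. It is continuous, by Hurwitz's theorem, because the zeros are simple. Its image is exactly $\partial P_n$. Therefore $\partial P_n$ is a Jordan curve, and $P_n$ is the bounded complementary component of this curve lying in $P_{n-1}$; in particular $P_n$ is a Jordan domain.

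\emph{Main obstacle.} The delicate step is constructing the holomorphic motion $H_\la$ of $\Gamma^n_\la$ over a full neighborhood of $\overline{P_{n-1}}$, including the boundary $\partial P_{n-1}$ where the critical value touches $\Gamma^{n-1}_\la$ and the pullback might branch. I would first try to handle this by working only on the nested inner curve $\Gamma^n_\la$, which stays away from $v_\la$ there, so no branching occurs along it. If that is insufficient, I would shrink $V_\la$ slightly using the larger family over $\tilde\Lambda$, so that all curves involved avoid the critical value along $\partial P_{n-1}$.
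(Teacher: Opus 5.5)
The paper gives no proof of this lemma; it cites Lyubich's book and notes that the quadratic-like argument carries over to the unicritical case. Your overall scheme is the standard one: induction on $n$, lifting the equivariant motion, and an argument-principle count driven by the unfolding condition. Your last step is also fine, namely that $\la\mapsto H_\la^{-1}(v_\la)$ is a continuous bijection onto $\overline{W^n_{\la_0}}$, hence a homeomorphism.

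\textbf{The gap.} The counting step fails at exactly the point you flagged, and neither of your remedies works.
\begin{itemize}
\item There is no holomorphic motion of the Jordan curve $\Gamma^n_\la$ over a neighborhood of $\overline{P_{n-1}}$. On $\partial P_{n-1}$ one has $f^{n-1}_\la(c_\la)\in\partial U_\la$, i.e.\ $v_\la\in\Gamma^{n-1}_\la$. So the last pullback $\Gamma^n_\la=f_\la^{-1}(\Gamma^{n-1}_\la)$ \emph{is} branched there: it passes through $c_\la$ and degenerates to a bouquet of $d$ loops. This already happens for $n=2$, with $f_\la^{-1}(\partial U_\la)$ when $v_\la\in\partial U_\la$.
\item Just outside $\overline{P_{n-1}}$ this pullback splits into $d$ disjoint curves, permuted by monodromy around $P_{n-1}$. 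Hence for $z\in\Gamma^n_{\la_0}$ the function $g_z$ has no single-valued definition on $\overline{\Lambda}\setminus P_{n-1}$. Deforming $\partial\Lambda$ to $\partial P_{n-1}$ through parameter space while tracking $g_z$ therefore makes no sense.
\item Your first remedy confuses $v_\la$ with $c_\la$. Branching is governed by whether the curve being pulled back, $\Gamma^{n-1}_\la$, contains $v_\la$, which is precisely what happens on $\partial P_{n-1}$. Whether $\Gamma^n_\la$ avoids $v_\la$ is irrelevant.
\item Your second remedy cannot help either. $\partial P_{n-1}$ is by definition the locus where $f^{n-1}_\la(c_\la)\in\partial U_\la$, so the pinching occurs for any choice of domains.
\end{itemize}

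\textbf{The repair.} Count zeros on curves inside $P_{n-1}$, and do the homotopy in the dynamical plane rather than in parameter space.
\begin{enumerate}
\item Over the simply connected domain $P_{n-1}$, with a basepoint $\la_0\in P_{n-1}$, you do get a holomorphic motion of $\Gamma^n_{\la_0}\cup\{c_{\la_0}\}$. Extend it by Slodkowski, so that $H_\la(\overline{W^n_{\la_0}})=\overline{W^n_\la}$.
\item On $\partial P_{n-1}$ one has $f^n_\la(c_\la)\in\partial V_\la$. By continuity, $v_\la\notin\overline{W^n_\la}$ on a collar of $\partial P_{n-1}$ inside $P_{n-1}$. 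Choose Jordan curves $\gamma_k$ in this collar exhausting $P_{n-1}$.
\item Take $z\in\overline{W^n_{\la_0}}$ and a path $z_s$ in $\overline{W^n_{\la_0}}$ from $z$ to $c_{\la_0}$. The maps $\la\mapsto v_\la-H_\la(z_s)$ do not vanish on $\gamma_k$, so $\ind(g_z\circ\gamma_k,0)=\ind((v_\la-c_\la)\circ\gamma_k,0)$.
\item This last index is $1$. By the unfolding hypothesis, $v_\la-c_\la$ has exactly one zero in $\Lambda$. Any such zero makes $c_\la$ a fixed point, so it lies in $P_n$, which is inside $\gamma_k$ for $k$ large.
\item Hence each $g_z$ has exactly one zero in $P_{n-1}$, and it is simple. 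All zeros of $g_z$ in $P_{n-1}$ satisfy $v_\la\in\overline{W^n_\la}$, so they lie inside $\gamma_k$. With this count in place, your concluding argument goes through unchanged.
\end{enumerate}
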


\subsection{Meromorphic maps, families and bifurcations}\label{sec:natfam}
 We say that $f:\C\to\chat$ is a {\em meromorphic} map if it is holomorphic and $\infty$ is an essential singularity.

In this section we collect properties of natural families meromorphic maps and results about their bifurcations. A standard reference for iteration of meromorphic maps is \cite{bergweiler} and stability results for natural family can be found in \cite{ABF,ABFAhlfors}.

A point $v \in \rs$ is a critical value if it is the image of a critical point, i.e. if there exists $c \in \C$ such that $f'(c)=0$ and $f(c)=v$. 
A point $v \in \rs$ is an asymptotic value if there exists a curve $\gamma: [0,+\infty) \to \C$ such that $\lim_{t \to +\infty} \gamma(t)=\infty$ and $\lim_{t \to +\infty} f \circ \gamma(t)=v$.

The singular value set of $f$, $S(f)$,  is the closure of the set of critical or asymptotic values. 
It may also be caracterized as the smallest closed set $S \subset \rs$ such that $f: \C \setminus f^{-1}(S) \to \rs \setminus S$ is a covering map.

We say that $f$ is  of {\em finite type} if $S(f)$ is finite. In that case, $S(f)$ contains only critical or asymptotic values.

\begin{defi}[Natural families]
	A holomorphic family $\{f_\la\}_{\la \in \Lambda}$ of meromorphic maps is called a \emph{natural family} if there exists $\la_0 \in \Lambda$ and  quasiconformal homeomorphisms $\phi_\la, \psi_\la : \rs \to \rs$ depending holomorphically on $\la \in \Lambda$ such that  for all $\la \in \Lambda$, and
	$$f_\la = \phi_\la \circ f_{\la_0} \circ \psi_{\la}^{-1}.$$
\end{defi}

The maps $\phi_\la(z),\psi_\la(z)$ can be thought of as holomorphic motions of the Riemann sphere over $\Lambda$, with base point $\la_0$.

We will often use the following observation: if $\{f_\la\}_{\la \in \Lambda}$ is a natural family and $f_\la= \phi_\la \circ f_{\la_0} \circ \psi_{\la}^{-1}$ for some $\la_0 \in \Lambda$, then we can choose any $\la_1 \in \Lambda$ as a new basepoint without changing $f_\la$, by replacing $\phi_\la$ with $\phi_\la \circ \phi_{\la_1}^{-1}$ and $\psi_\la$ with $\psi_\la \circ \psi_{\la_1}^{-1}$.

We also note the following basic but important properties of natural families: if $f_\la= \phi_\la \circ f_{\la_0} \circ \psi_{\la}^{-1}$, then
\begin{enumerate}
	\item  $\psi_\la$ fixes $\infty$, since this is  the only essential singularity of $f_\la$ for all $\la$.
	\item $\psi_\la$ maps critical points of $f_{\la_0}$ to critical points of $f_\la$, preserving multiplicites. In particular, all critical points move holomorphically over $\Lambda$ and have constant multiplicity.
	\item $\phi_\la$ maps critical values of $f_{\la_0}$ to critical values of $f_\la$, and asymptotic values of $f_{\la_0}$ to asymptotic values of $f_\la$. In particular, singular values move holomorphically over $\Lambda$.
	\item 	Let $\mathcal{V}_\la$ denote the set of Picard exceptional values of $f_{\la}$. Then  $\mathcal{V}_\la=\phi_\la(\mathcal{V}_{\la_0})$. Indeed,  $f_{\la_0}(z)=w$ iff  $f_\la\circ \psi_\la(z)=\phi_\la(w)$, hence since $\psi_\la,\phi_\la$ are homeomorphisms, $w $ has infinitely many preimages under $f_{\la_0}$ iff $\phi_\la(w)$ has infinitely many preimages under $f_\la$.
\end{enumerate}

The following is a criterion for proving that a family of finite type meromorphic maps is natural.

\begin{theo}[Characterization of natural families, {\cite[Theorem 2.6]{ABF}}] \label{thm:natural}
	Let $\{f_\lam\}_{\lam \in M}$ be a holomorphic family of finite type meromorphic maps, for which
	$S(f_\lam)$ and $f_\lam^{-1}(S(f_\lam))$ both move holomorphically. Then for every $\lam \in M$, there is a neighborhood $V$ of $\lam$ such that $\{f_\lam\}_{\lam \in V}$ is a natural family.
\end{theo}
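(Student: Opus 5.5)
Fix $\lam_0 \in M$ and work on a small polydisk neighborhood $V$ of $\lam_0$ in a chart; shrinking $V$ is allowed throughout. The plan is to first build the quasiconformal motion $\phi_\lam$ of the target sphere, extending the motion of the finite set $S(f_\lam)$. Then I obtain $\psi_\lam$ by lifting $\phi_\lam \circ f_{\lam_0}$ through the covering map $f_\lam$, and finally check that the lift is a holomorphic motion of the plane.

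\emph{Step 1: the motion $\phi_\lam$.} Write $S(f_\lam)=\{s_1(\lam),\dots,s_k(\lam)\}$, with each $s_j$ holomorphic and the $s_j(\lam)$ pairwise distinct. After a Möbius change of coordinates I may assume all $s_j(\lam_0)\in\C$. Choose smooth bump functions $\chi_j$ with disjoint supports, each equal to $1$ near $s_j(\lam_0)$. Set
$$\phi_\lam(z)=z+\sum_j \chi_j(z)\bigl(s_j(\lam)-s_j(\lam_0)\bigr).$$
This is holomorphic in $\lam$ and satisfies $\phi_\lam(s_j(\lam_0))=s_j(\lam)$. For $\lam$ close enough to $\lam_0$ it is a $C^1$-small perturbation of the identity, hence a quasiconformal diffeomorphism of $\rs$. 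This avoids Slodkowski's theorem, which would otherwise be needed only over one-dimensional parameter spaces.

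\emph{Step 2: the lift $\psi_\lam$.} Set $S_\lam=S(f_\lam)$ and $X_\lam=f_\lam^{-1}(S_\lam)$. Then $f_\lam:\C\setminus X_\lam\to\rs\setminus S_\lam$ is a covering map, and $\phi_\lam\circ f_{\lam_0}$ maps $\C\setminus X_{\lam_0}$ into $\rs\setminus S_\lam$. Consider the map $(\lam,z)\mapsto \phi_\lam(f_{\lam_0}(z))$ together with the family of coverings $(\lam,w)\mapsto (\lam,f_\lam(w))$ over $\{(\lam,y): y\notin S_\lam\}$. I claim the latter is a covering of total spaces: $X_\lam$ moves holomorphically, so locally one can trivialize. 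Since $V$ is simply connected, the map $(\lam,z)\mapsto(\lam,\phi_\lam f_{\lam_0}(z))$ lifts uniquely to a map $(\lam,z)\mapsto(\lam,\psi_\lam(z))$ with $\psi_{\lam_0}=\Id$. By construction $f_\lam\circ\psi_\lam=\phi_\lam\circ f_{\lam_0}$.

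Holomorphy of $\psi_\lam(z)$ in $\lam$ follows from the implicit function theorem, because $f_\lam'\neq0$ off $X_\lam$ and the right-hand side is holomorphic in $\lam$. Injectivity of each $\psi_\lam$ is most easily seen by lifting $\phi_\lam^{-1}\circ f_\lam$ symmetrically to obtain an inverse. Alternatively: a point moving holomorphically can never collide with another, since collisions would be isolated and would contradict the local-homeomorphism property of the lift. This gives a holomorphic motion of $\C\setminus X_{\lam_0}$.

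\emph{Step 3: extension and quasiconformality, the main obstacle.} I must show that $\psi_\lam$ extends over $X_{\lam_0}$ compatibly with the given holomorphic motion of $f_\lam^{-1}(S_\lam)$, and that it fixes $\infty$.

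Near a point $x(\lam_0)\in X_{\lam_0}$ of local degree $m$, $f_\lam$ is conjugate to $w\mapsto w^m$ around $x(\lam)$, uniformly for $\lam$ near $\lam_0$. The lift of a small punctured disk around $x(\lam_0)$ is therefore a punctured neighborhood of some preimage of $s_j(\lam)$. By continuity in $\lam$ starting from $\lam_0$, that preimage is $x(\lam)$. So setting $\psi_\lam(x(\lam_0))=x(\lam)$ is continuous, and the motion stays injective and holomorphic.

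Doing this uniformly over the infinitely many points of $X_{\lam_0}$ is the delicate point. I would handle it by working one point at a time, since the definition is local and consistency is forced by uniqueness of lifts. Then I apply the $\lambda$-lemma to extend the motion to $\overline{\C}$, with $\infty$ necessarily fixed because the motion of $\C$ already exhausts $\C$. This yields a holomorphic motion of $\rs$ over $V$, and the $\lambda$-lemma gives that each $\psi_\lam$ is quasiconformal. Hence $f_\lam=\phi_\lam\circ f_{\lam_0}\circ\psi_\lam^{-1}$ on $V$, so $\{f_\lam\}_{\lam\in V}$ is a natural family.
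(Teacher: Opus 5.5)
The gap is in Step 2, in the sentence asserting that $F(\lam,w)=(\lam,f_\lam(w))$ is a covering map from $\mathcal W=\{(\lam,w): w\in\C\setminus X_\lam\}$ onto $\mathcal Y=\{(\lam,y): y\in\rs\setminus S_\lam\}$ ``because $X_\lam$ moves holomorphically, so locally one can trivialize''. This is not a routine check; it is essentially the whole content of the theorem.

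At a point $(\lam_1,y_1)$, the implicit function theorem lets you follow each of the infinitely many preimages $w_i(\lam)$ of $y_1$, but only on a parameter neighborhood that depends on $i$. Local triviality of $F$ needs one neighborhood that works for all sheets at once. In other words, you must rule out that some sheet, continued in $\lam$, escapes to the essential singularity at $\infty$; otherwise a component of $F^{-1}(B\times D)$ fails to project onto $B\times D$.

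A holomorphic motion of the discrete set $X_\lam$ only gives information near the points of $X_\lam$, and says nothing about this. For instance, when all singular values are omitted (as for the maps $T_\alpha$), $X_\lam=\emptyset$, so your justification is vacuous, while the covering property of $F$ is exactly what has to be proved.

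Your injectivity argument depends on the same point. The symmetric lift needs $\mathcal W$ to be trivialized already. The ``collision'' alternative is not valid: a local homeomorphism may identify points, and a collision at $\lam_1$ simply reappears at nearby parameters through other pairs of points. That is perfectly compatible with the zeros of $\lam\mapsto\psi_\lam(z_1)-\psi_\lam(z_2)$ being isolated for each fixed pair.

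The argument the paper relies on (from \cite{ABF}; it is reproduced in the proof of Proposition~\ref{prop:TL implies natural}) never needs a covering statement for a family whose total space moves. It proceeds in three steps.
\begin{enumerate}
\item One extends \emph{both} given motions, that of $S(f_\lam)$ and that of the infinite set $f_\lam^{-1}(S(f_\lam))$, to motions $\phi_\lam$ and $\tilde{\psi}_\lam$ of the whole sphere, via Slodkowski's theorem. Your bump-function construction handles the finite set, but not $X_\lam$, which accumulates at $\infty$. So an extension theorem is needed anyway if you want to trivialize.
\item Then $g_\lam:=\phi_\lam^{-1}\circ f_\lam\circ\tilde{\psi}_\lam$ is a continuous family of coverings between the \emph{fixed} spaces $\C\setminus X_{\lam_0}\to\rs\setminus S_{\lam_0}$. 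One lifts $(\lam,z)\mapsto g_\lam(z)$ through the \emph{single} covering $g_{\lam_0}=f_{\lam_0}$. The lifting criterion holds because the parameter neighborhood is simply connected and the map on the $\lam_0$-slice is $g_{\lam_0}$ itself.
\item This yields homeomorphisms $k_\lam$ with $g_\lam=f_{\lam_0}\circ k_\lam$ and $k_{\lam_0}=\Id$. Setting $\psi_\lam:=\tilde{\psi}_\lam\circ k_\lam^{-1}$, holomorphy in $\lam$ follows from the implicit function theorem.
\end{enumerate}

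By contrast, your Step 3, which you single out as the main obstacle, is routine once Step 2 is secured. The $\lambda$-lemma extends a holomorphic motion of the dense set $\C\setminus X_{\lam_0}$ to all of $\rs$. Since $\C\setminus X_{\lam_0}$ is carried onto $\C\setminus X_\lam$, each point of the countable set $X_{\lam_0}\cup\{\infty\}$ moves continuously inside $X_\lam\cup\{\infty\}$. It must therefore follow the given motion, and $\infty$ stays fixed.
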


Meromorphic maps with asymptotic values may have a special type of cycles which play a special role in the dynamics of the map. 

\begin{defi}[Virtual cycle]\label{def:virtualcycle}
	Let $f: \C \to \chat$ be a meromorphic map. 
	A {\em virtual cycle} of length $n$ is a finite, cyclically ordered set $z_0, z_1, \ldots, z_{n-1}$   such that for all $i$, either $z_i \in \C$ and $z_{i+1}=f(z_i)$,
	or $z_i=\infty$ and $z_{i+1}$ is an asymptotic value for $f$, and at least one of the $z_i$ is equal to $\infty$.  If $z_i=\infty$ only for one value of $i$ then we say that the virtual cycle has {\em minimal length} $n$.
\end{defi}

If a virtual cycle remains after perturbation within the family, then it is called {\em persistent}.

\begin{defi}[Persistent virtual cycle]\label{def:persistentvc}
	Let $\{f_\lam\}_{\lam \in M}$ be a holomorphic family of meromorphic maps,
	let $\lam_0 \in M$ and assume that $f_{\lam_0}$ has a virtual cycle
	$z_0, \ldots, z_{n-1}$.
	If there exist holomorphic germs $\lam \mapsto z_i(\lam)$ defined near $\lam_0$ in $M$
	such that
	\begin{enumerate}
		\item $z_i(\lam_0)=z_i$
		\item  $z_i(\lam)\equiv\infty$ if $z_i=\infty$
		\item and $z_0(\lam), \ldots, z_{n-1}(\lam)$ is a virtual cycle for $f_\lam$,
	\end{enumerate}
	then we say that $z_0, \ldots, z_{n-1}$ is a \emph{persistent} virtual cycle.
\end{defi}

In particular in a holomorphic family, if $v(\lam_0)$ is an asymptotic value such that $f_{\lam_0}^n(v(\lam_0))=\infty$ for some $n \geq 0$, then  $(v(\lam_0), f_{\lam_0}(v(\lam_0)), \ldots, \infty)$ is a virtual cycle of  minimal length $n+1$. (The case $n=0$ corresponds to the situation where $\infty$ itself is an asymptotic value).  This  virtual cycle is persistent if and only if the singular relation  $f_{\lam}^n(v(\lam))=\infty$ is satisfied in all of $M$.  If this is not the case, i.e. if a virtual cycle for $\la_0$ is non-persistent, we will say that  $\la_0$ is  a {\em virtual cycle parameter}.

Our next  definition concerns the concept of activity or passivity of a singular value
(see also \cite{Levin}, \cite{mcmbook}).

\begin{defi}[Passive and active singular value]\label{defn:active singular values}
	Let $\{f_\lam\}_{\lam \in M}$ be a holomorphic family of 
	 meromorphic maps. Let $v(\lam)$ be a singular value (or a critical point) of $f_\lam$ 
	depending holomorphically on $\lam$ near some $\lam_0 \in M$.
	We say that $v(\lam)$ is \emph{passive} at $\lam_0$ if there exists a neighborhood $V$ of $\lam_0$ in $M$ such that:
	\begin{enumerate}
		\item	either $f_\lam^n(v(\lam))=\infty$ for all $\lam \in V$; or
		\item the family
		$\{\lam \mapsto f_\lam^n(v(\lam)) \}_{n \in \N}$ is well-defined and normal on $V$.
	\end{enumerate}
	We say that  $v(\la)$ is {\em active} at $\la_0$ if it is not passive. 
\end{defi}

Recall that a family of meromorphic maps is said to be of {\em finite type} if $S(f_\la)$ is finite. For such natural families, the following are equivalent definitions of $J-$stability.

\begin{theo}[Characterizations of $\JJ$-stability, {\cite[Theorem E]{ABF}}]\label{thm:J stability}
	Let $\{f_\lam\}_{\lam \in M}$ be a natural family of finite type meromorphic maps.
	Let $U \subset M$ be a simply connected domain of parameters. The following are equivalent:
	\begin{enumerate}[\rm (1)]
		\item The Julia set moves holomorphically over $U$ (i.e. $f_\la$ is $\JJ$-stable for all $\la\in U$)
		\item Every singular value is passive on $U$.
		\item  The maximal period of attracting cycles is bounded on $U$.
		\item  The number of attracting cycles is constant in $U$.
		\item  For all $\la\in U$, $f_\la$ has no non-persistent parabolic cycles.
	\end{enumerate}
\end{theo}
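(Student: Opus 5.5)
The plan is to prove the equivalences in the cycle $(1)\Rightarrow(5)\Rightarrow(4)\Rightarrow(3)\Rightarrow(2)\Rightarrow(1)$, adapting the Mañé--Sad--Sullivan and Lyubich argument for rational maps to finite type meromorphic natural families. The transcendental features that need care are poles, prepoles and asymptotic values. Throughout, I will use the structure of a natural family $f_\la=\phi_\la\circ f_{\la_0}\circ\psi_\la^{-1}$. Singular values move holomorphically, and their number $q:=\#S(f_\la)$ is constant and finite.

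\textbf{(1)$\Rightarrow$(5).} Suppose the Julia set moves holomorphically over $U$. Then, by the $\la$-lemma and Slodkowski's extension, the motion $h_\la$ is a quasiconformal homeomorphism of $J(f_{\la_0})$ conjugating $f_{\la_0}$ to $f_\la$ there. Periodic points in $J$ therefore move holomorphically with constant period. Their multipliers $\rho(\la)$ are holomorphic and satisfy $|\rho(\la)|\ge 1$. By the maximum principle applied to $1/\rho$, $|\rho|$ is either constant equal to $1$ or $>1$ on all of $U$. Hence no non-persistent parabolic cycles can occur.

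\textbf{(5)$\Rightarrow$(4)$\Rightarrow$(3).} Every attracting cycle attracts a singular value, so there are at most $q$ attracting cycles. Attracting cycles persist under perturbation by the implicit function theorem. An attracting cycle can only be created or destroyed where a multiplier crosses the unit circle at a root of unity, which gives a parabolic cycle. Under (5) this happens only persistently, so the number of attracting cycles is locally constant and hence constant on the connected set $U$. Each of these cycles is continued holomorphically with its period. On a simply connected $U$ there is no monodromy, and the continued cycles stay attracting, so their periods, and hence the maximal period, are bounded.

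\textbf{(3)$\Rightarrow$(2).} This is where I expect the main obstacle. Suppose a singular value $v(\la)$ is active at some $\la_1\in U$. I will first choose three points $a_i(\la)$ moving holomorphically near $\la_1$, say repelling periodic points of large period together with their preimages, avoiding the singular orbits. If $\la\mapsto f_\la^n(v(\la))$ never met the $a_i(\la)$, Montel's theorem (after normalising the $a_i$ by a Möbius map) would make the family normal, contradicting activity. So there are parameters arbitrarily close to $\la_1$ where $v$ lands on a repelling cycle. The same argument with a pole in place of the $a_i$ also produces virtual cycle parameters, where $f^n_\la(v(\la))=\infty$.

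From these parameters I need to produce attracting cycles of arbitrarily large period near $\la_1$. For critical values I would follow McMullen's argument: a Misiurewicz-type relation, perturbed, gives superattracting cycles of high period. For an asymptotic value, $v$ cannot be periodic. Instead I would use a tract above $v$, which is a logarithmic singularity since $f$ has finite type. Inverse branches of the logarithm then give contraction: choose $\la$ with $f_\la^{n}(v(\la))$ deep inside the tract, so that the orbit returns arbitrarily close to $v(\la)$. An argument principle or Rouché argument in the parameter then produces an attracting cycle of period about $n$ through a small disk around $v$, with $n\to\infty$ as $\la\to\la_1$. This contradicts (3). The delicate point is controlling the transversality needed to solve $f_\la^n(v(\la))\in D_\la$, where $D_\la$ is a small disk around $v(\la)$ on which a branch $f_\la^{-1}$ is contracting; I would obtain it from the non-constancy that activity gives, again via Montel.

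\textbf{(2)$\Rightarrow$(1).} Assume every singular value is passive on $U$. Consider a prepole or a repelling periodic point $z_0$ of $f_{\la_0}$ whose backward orbit avoids singular values. It can be continued holomorphically along any path in $U$, because a collision with a singular value orbit, or a change of type of a cycle, would contradict normality of $\{f^n_\la(v(\la))\}$. Here I use Hurwitz's theorem on the graphs of $\la\mapsto f_\la^n(v(\la))$ versus the moving point. The continuations are also injective in $\la$. Since $U$ is simply connected, there is no monodromy, and this defines a holomorphic motion of a dense subset of $J(f_{\la_0})$, for example the backward orbit of $\infty$ minus finitely many exceptional points. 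The $\la$-lemma extends it to a holomorphic motion of $J(f_{\la_0})$, and it conjugates the dynamics by continuity, which gives (1).
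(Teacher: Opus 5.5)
The paper does not prove this statement; it quotes it from \cite[Theorem E]{ABF}. So I am judging your outline on its own. The Ma\~n\'e--Sad--Sullivan skeleton is reasonable, and the steps (1)$\Rightarrow$(5) and (4)$\Rightarrow$(3) are fine. The argument breaks, however, exactly where meromorphic maps differ from rational ones.

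\textbf{The gap in (5)$\Rightarrow$(4), and the same issue in (2)$\Rightarrow$(1).} You claim that an attracting cycle can only be created or destroyed when its multiplier reaches the unit circle. The ``at a root of unity'' part is false but repairable: a non-constant multiplier of modulus $1$ somewhere takes root-of-unity values nearby. The real problem is different. For transcendental meromorphic maps, the holomorphic continuation of an attracting cycle can also end because one point of the cycle tends to a pole and the next one to the essential singularity. The cycle then degenerates into a virtual cycle, and its multiplier tends to $0$ without ever approaching the unit circle. This is exactly what happens at the virtual centres of the hyperbolic components of $\tcal$, on which the multiplier is a universal covering onto $\D^*$. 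So (5) together with the implicit function theorem does not give local constancy of the number of attracting cycles. What is missing is the genuinely transcendental input: a non-persistent virtual cycle parameter in $U$, at which the asymptotic value is active, must force non-persistent parabolic cycles in $U$, and persistent virtual cycles must be shown not to absorb cycles. The same escape mechanism is unaccounted for in (2)$\Rightarrow$(1). A repelling cycle can be lost by leaving through $\infty$, which is neither a collision nor a change of type. You need the fact that this only happens over virtual cycle parameters (\cite[Lemma 3.4]{ABF}, as used in the proof of Proposition \ref{prop:msstl}); passivity excludes these when they are non-persistent, and persistent ones need a separate check.

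\textbf{The gap in (3)$\Rightarrow$(2) for an active asymptotic value.} This is the heart of the theorem, and your outline does not yet supply it. Montel's theorem gives parameters where $f^n_\la(v_\la)$ lands deep in a tract above $v_\la$. It gives no control of $(f^n_\la)'$ on a disc $D_\la$ around $v_\la$, and that control is exactly what $f^{n+1}_\la(D_\la)\Subset D_\la$ requires: the contraction of $f_\la$ deep in the tract must beat the expansion of $f^n_\la$ on $D_\la$. Non-constancy or ``transversality'' of $\la\mapsto f^n_\la(v_\la)$, obtained via Montel, gives no such bound.

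You do produce virtual cycle parameters, but you never use them, and they are what provides the control. The tools the paper imports from \cite{ABF} (density of virtual cycle parameters of arbitrarily large order \cite[Proposition 5.5]{ABF}, the Shooting Lemma \cite[Proposition 2.13]{ABF}, and accumulation of attracting parameters \cite[Corollary 5.9]{ABF}) rest on this mechanism. Near a parameter $\la^*$ with $f^{n}_{\la^*}(v_{\la^*})=\infty$, the orbit segment from $v_\la$ to the pole has uniformly bounded derivative. Shooting lets you choose $\la$ so that $f^{n+1}_\la(v_\la)$ is a prescribed point close to $v_\la$. The only large expansion then comes from the passage near the pole, polynomial in the inverse distance to it, and for suitably placed targets it is overwhelmed by the contraction deep in a logarithmic tract. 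This yields attracting cycles of period $n+1$ capturing $v_\la$, and hence unbounded periods as $n\to\infty$.

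This route needs a non-omitted pole. When $f_\la$ has no poles (finite type entire maps are covered by the statement), there are no virtual cycles at all, and a different argument is required, which your sketch does not give either.
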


In \cite[Proposition 5.5]{ABF} it is shown that parameters with virtual cycles are dense. We will use this result several times.
\begin{prop}[Density of parameters with virtual cycles]\label{prop:density_of_virtual cycles}
	Let $\{f_\lam\}_{ \in M}$ be a natural family of finite type meromorphic maps, and assume that there exists $\lam_* \in M$ 
	such that $f_{\lam_*}$ has at least one non-omitted pole. Assume that an asymptotic value is active at some $\lam_0 \in M$.
	Then $\lam_0$ can be approximated by virtual cycle parameters of arbitrarily large order.
\end{prop}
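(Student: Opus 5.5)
The plan is a Montel-type argument applied to the sequence $g_n(\la):=f_\la^n(v(\la))$, using three holomorphically moving iterated preimages of $\infty$ as the omitted targets.

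\emph{Step 1: non-omitted poles near $\la_0$.} We may assume $M$ is connected. The Picard exceptional values of $f_\la$ are $\mathcal V_\la=\phi_\la(\mathcal V_{\la_*})$. There are at most two of them and they move holomorphically. Hence the set of $\la$ for which $\infty\in\mathcal V_\la$ is an analytic subset of $M$. It is proper, because $\infty\notin\mathcal V_{\la_*}$. So, arbitrarily close to $\la_0$, there are small disks $W$ on which $\infty$ is not omitted.

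\emph{Step 2: three moving targets.} On such a $W$, after shrinking it and possibly moving $\la_0$ slightly inside the activity locus (the activity locus is closed and $\la_0$ lies in it, so activity persists at nearby points of the same disk), I would choose three distinct points $a_\la,b_\la,c_\la$ with the following properties:
\begin{itemize}
\item they depend holomorphically on $\la$;
\item they are pairwise distinct for every $\la$;
\item each of them satisfies $f_\la^{k}(\cdot)=\infty$ for some fixed $k\ge 1$.
\end{itemize}
For example, take a pole $p_\la$ which is not a critical point, together with two non-critical preimages of $p_\la$ that are not singular values. Such preimages exist because $f_\la$ has only finitely many singular values, while $p_\la$ (or a suitable pole) has infinitely many preimages once we avoid the at most two exceptional values. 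Holomorphic dependence on $\la$ then follows from the implicit function theorem, or directly from the natural family structure via $\psi_\la$.

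\emph{Step 3: Montel.} Fix $N$ and consider the tail family $\{g_n\}_{n\ge N}$ on $W$. If some $g_m$ with $m<N$ already takes the value $\infty$ somewhere in $W$, we have found a virtual cycle parameter directly; its order is then handled as in the last paragraph. Otherwise the $g_n$ are well defined on $W$. Suppose every $g_n$, $n\ge N$, avoided $a_\la,b_\la,c_\la$ on $W$. Composing with the Möbius maps $M_\la$ that send $(a_\la,b_\la,c_\la)$ to $(0,1,\infty)$, which depend holomorphically on $\la$, Montel's theorem would make $\{g_n\}$ normal on $W$. This contradicts activity, since removing finitely many maps does not affect normality. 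Hence there exist $n\ge N$ and $\la'\in W$ with $g_n(\la')\in\{a_{\la'},b_{\la'},c_{\la'}\}$, and therefore $f_{\la'}^{n+k}(v(\la'))=\infty$.

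\emph{Step 4: non-persistence and large order.} Suppose the relation $f_\la^{n+k}(v(\la))=\infty$ held on a neighborhood of $\la'$. By the identity principle it would hold on all of $M$, so $v$ would be passive at $\la_0$ by condition (1) of the definition of passivity, which is a contradiction. So $\la'$ is a virtual cycle parameter. Its order is at least the first time $m$ with $g_m(\la')=\infty$, and I still need to make this large. I would do so by choosing $N$ large and arguing that, for each fixed $m<N$, the non-persistent solutions of $g_m=\infty$ form a discrete set. Shrinking $W$ so as to avoid these finitely many points then forces the hitting time to be at least $N$.

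The main obstacle I expect is Step 2: guaranteeing three pairwise distinct, holomorphically moving iterated preimages of $\infty$ on a whole disk accumulating at $\la_0$. This is where the hypothesis of a non-omitted pole and the finite type assumption, which keeps preimages away from critical points and collisions, are both really used.
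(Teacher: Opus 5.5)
The paper does not prove this statement; it quotes it from \cite[Proposition 5.5]{ABF}. Your strategy, Montel for $g_n(\la)=f_\la^n(v_\la)$ against three holomorphically moving prepoles, is the natural one. Step 3 is sound \emph{provided} you have a disk $W$ that (i) contains an active parameter, (ii) carries three pairwise distinct holomorphically moving prepoles, and (iii) contains no solution of $f_\la^m(v_\la)=\infty$ with $m<N$. The genuine gap is that you never produce such a $W$. The reasons you give for (i) and (iii) fail exactly in the cases the proposition must cover.

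For (iii), the set $S_N:=\{\la : f_\la^m(v_\la)=\infty \text{ for some } m<N\}$ is closed and countable, but in general it is not discrete. Higher-order solutions accumulate at lower-order virtual cycle parameters, which is the very content of the proposition. For example, in the model family the solutions of $T_\alpha(1/\alpha)=\infty$, i.e.\ $\alpha e^{(\alpha-1)/(8\alpha)}=1$, accumulate at $\alpha=0$, where $1/\alpha=\infty$. So there are no ``finitely many points'' to avoid.

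When $\la_0\notin S_N$ this is harmless: closedness gives a neighbourhood of $\la_0$ disjoint from $S_N$, and Step 3 there automatically yields order at least $N$, so no Step 4 is needed. But when $\la_0$ is itself a virtual cycle parameter of order $<N$, every admissible $W$ must exclude $\la_0$. Nothing in your argument then shows the tail family is still non-normal on $W$.

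For (i), ``the activity locus is closed, so activity persists at nearby points'' is a non sequitur. Closedness says nothing about the existence of other active parameters near $\la_0$. Yet you really must leave $\la_0$ in general. In the model family, $\alpha=0\in\partial\tcal$ is active, but $T_0=1-e^{-z/8}$ is entire. The poles of $T_\alpha$ escape to $\infty$ with monodromy around $\alpha=0$, so no prepole moves holomorphically over any neighbourhood of $0$. Moreover $0\in S_N$ for every $N\geq 1$.

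What is missing is an argument producing active parameters arbitrarily close to $\la_0$ outside the countable set $S_N\cup B$. Here $B$ is the (discrete) set of parameters at which your three targets cannot be chosen, for example where $\infty$ is omitted. One way is to prove that the activity locus has no isolated points:
\begin{itemize}
\item At a virtual cycle parameter, use Proposition~\ref{shooting} to land $v_\la$ on a non-singular repelling periodic point. This gives nearby active Misiurewicz parameters.
\item Otherwise, suppose the $g_n$ were normal on a punctured disk but not at its centre. The maximum principle forces a subsequence to tend to $\infty$ on a circle, hence to hit, inside it, any holomorphically moving repelling point chosen off the orbit of $v_{\la_0}$.
\end{itemize}
The activity locus is then uncountable near $\la_0$, and you can run Step 3 on a small disk around an active $\la_1\notin S_N\cup B$.

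Separately, Step 1 misreads the hypothesis. A non-omitted pole is a pole that is not an omitted value, so $\infty$ may well be Picard exceptional for $f_{\la_*}$; for instance $e^z/(z-1)$ has a single, non-omitted pole. Thus ``$\infty\notin\mathcal V_{\la_*}$'' is unjustified. Argue instead with the at most two omitted values $o_i$ of $f_{\la_*}$ and the holomorphic maps $\la\mapsto\phi_\la(o_i)$. Use the non-omitted pole only to guarantee prepoles of every order.
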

In \cite[Proposition 2.23]{ABFAhlfors} is proven a version of  Proposition~\ref{prop:density_of_virtual cycles} which applies to  meromorphic maps (even if they are not of finite type). We will use this in Section~\ref{sec:tlfam}.

Misiurewicz parameters are also dense (\cite{ABF}, Corollary 5.6). Recall that for a given $\la$, a singular value $v_\la$ is Misiurewicz if there exists $k$ such that $f_\la^k(v_\la)=p$ with $p$ repelling periodic point,  non-persistently in $\la$. 
\begin{prop}[Density of Misiurewicz paramteres]\label{prop: density Misiurewicz}
	Let $\{f_\lam\}_{ \in M}$ be a natural family of finite type meromorphic maps, and assume that there exists $\lam_* \in M$ 
	such that $f_{\lam_*}$ has at least one non-omitted pole.  Assume that a singular value $v_\la$ is active at some $\lam_0 \in M$. Then $\la_0$ can be approximated by  parameters for which $v_\la $ is Misiurewicz. 
 \end{prop}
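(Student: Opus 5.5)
The idea is the classical Montel argument used for rational maps. Repelling cycles move holomorphically, so an active singular value, whose orbit cannot form a normal family, must hit one of three moving repelling periodic points. The only new issue in the meromorphic setting is that the iterates $g_n(\la):=f_\la^n(v_\la)$ may fail to be defined, because some iterate lands on the essential singularity. I would handle this separately, using that poles are mapped onto neighbourhoods of $\infty$.

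\textbf{Setup.} Fix a small disk $D\ni\la_0$ in $M$; it suffices to find a Misiurewicz parameter in $D$. Repelling periodic points are dense in $J(f_{\la_0})$, which is infinite and contains $\infty$. Pick three distinct repelling periodic points $p_1,p_2,p_3\in\C$ of $f_{\la_0}$. Their multipliers are different from $1$, so by the implicit function theorem, after shrinking $D$ they extend to holomorphic germs $p_j(\la)$ that remain distinct and repelling periodic for $f_\la$. If $f_\la^n(v_\la)\equiv p_j(\la)$ held on an open set, then $v_\la$ would be passive there, since its orbit would be a holomorphically moving cycle. Hence any solution of $g_n(\la)=p_j(\la)$ at which $g_n-p_j\not\equiv0$ is a genuine (non-persistent) Misiurewicz parameter.

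\textbf{Case 1: every $g_n$ is defined on all of $D$.} Then each $g_n$ is holomorphic from $D$ to $\rs$. Let $M_\la$ be the Möbius map sending $p_1(\la),p_2(\la),p_3(\la)$ to $0,1,\infty$; it depends holomorphically on $\la$. Suppose no $g_n$ takes a value $p_j(\la)$ in $D$. Then the maps $M_\la(g_n(\la))$ omit $0,1,\infty$, and by Montel's theorem they form a normal family. Composing back with $M_\la^{-1}$, which depends continuously on $\la$, shows that $\{g_n\}$ is normal on $D$. This contradicts activity at $\la_0$, since the escaping alternative $f_\la^n(v_\la)\equiv\infty$ is excluded in this case. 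So some $g_n$ meets some $p_j$ in $D$, and by the remark above that parameter is Misiurewicz.

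\textbf{Case 2: some $g_k$ is not defined on all of $D$.} Take $k$ minimal, so $g_{k-1}$ is holomorphic on $D$. There are two possibilities.
\begin{enumerate}
\item $g_{k-1}\equiv\infty$ on $D$. Then the first alternative in the definition of passivity holds near $\la_0$, so this is excluded by activity.
\item $g_{k-1}$ is non-constant and takes the value $\infty$ at some $\la'\in D$. Using $g_{k-1}=f_\la\circ g_{k-2}$, this means $g_{k-2}(\la')$ equals a pole $q_{\la'}$ of $f_{\la'}$.
\end{enumerate}
In the second situation, I choose a small disk $D'\ni\la'$ on which $g_{k-1}$ is an open map. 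Then $g_{k-1}(D')$ contains a neighbourhood $N$ of $\infty$, and since $g_{k-1}$ is non-constant, it is not locally constant anywhere. Because $\infty\in J(f_{\la'})$ and repelling periodic points are dense in the Julia set, $N$ contains a repelling periodic point $p$ of $f_{\la'}$. It moves holomorphically as $p(\la)$ near $\la'$, and by continuity $p(\la)$ stays in $g_{k-1}(D')$ for $\la$ near $\la'$. By the argument principle (Hurwitz) applied to $g_{k-1}(\la)-p(\la)$, this difference has a zero near $\la'$. This zero is isolated because the difference is not identically $0$, so it is again a non-persistent Misiurewicz parameter in $D$.

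\textbf{Main obstacle.} The delicate step is Case 2. There one must check that $g_{k-1}$ is genuinely an open map near $\la'$ with $\infty$ in its image, and make the Hurwitz argument rigorous for the moving target $p(\la)$. For this I would work in the coordinate $1/z$ near $\infty$ and apply the argument principle to $g_{k-1}-p$ on a small circle around $\la'$. The hypothesis that some $f_{\la_*}$ has a non-omitted pole enters here: it guarantees that poles exist and move holomorphically in the natural family, so landing on $\infty$ is a non-degenerate event.
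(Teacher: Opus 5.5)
Your Case~1 is fine. Montel's theorem with three holomorphically moving repelling points, together with the identity-theorem argument for non-persistence, does the job whenever all iterates $g_n$ are defined on $D$.

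The gap is in Case~2(2), at exactly the step you flag. The repair you sketch (Rouch\'e in the coordinate $1/z$ on a small circle around $\lambda'$) does not close it.

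\begin{itemize}
\item If $g_{k-1}$ has a pole of order $d$ at $\lambda'$, the solutions of $g_{k-1}(\lambda)=w$ for $w$ near $\infty$ lie at distance of order $|w|^{-1/d}$ from $\lambda'$.
\item To compare $1/g_{k-1}$ with $1/p(\lambda)$ on a circle of radius $\rho$, the continuation $p(\lambda)$ must exist on the closed disk of radius $\rho$ and satisfy $|p(\lambda)|\gg\rho^{-d}$ there.
\item So you need repelling points $p\to\infty$ whose domain of continuation has radius $\rho_p$ with $|p|\,\rho_p^{\,d}\to\infty$. Nothing provides this.
\item The parameter $\lambda'$ is a virtual cycle parameter, hence active. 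So there is no holomorphic motion of $J(f_\lambda)$ near it. Nothing controls the continuation of periodic points very close to $\infty$ on disks of size $|p|^{-1/d}$.
\item The sentence ``by continuity $p(\lambda)$ stays in $g_{k-1}(D')$'' only gives, for each $\lambda$ near $\lambda'$, some $\mu\in D'$ with $g_{k-1}(\mu)=p(\lambda)$. It does not give $\mu=\lambda$.
\end{itemize}

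The missing idea is to go one iterate further and use the essential singularity rather than aiming close to $\infty$.

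\begin{itemize}
\item Take a repelling periodic point $p$ of $f_{\lambda'}$ that is neither in the finite set $S(f_{\lambda'})$ nor Picard exceptional.
\item Its preimages under $f_\lambda$ near $\infty$ can be lifted through the natural-family covering. They therefore move holomorphically on a \emph{fixed} disk around $\lambda'$, and tend to $\infty$ uniformly on that disk as you go further out.
\item This is exactly the uniformity your Rouch\'e step lacks. Apply Rouch\'e to $g_{k-1}$ minus such a preimage.
\item Equivalently: since $g_{k-1}(\lambda')=\infty$ and $g_{k-1}\not\equiv\infty$, the Shooting Lemma (Proposition~\ref{shooting}) with $n=k-1$ and $\gamma(\lambda)=p(\lambda)$ gives $\lambda''$ arbitrarily close to $\lambda'$ with $g_k(\lambda'')=p(\lambda'')$.
\item The relation is non-persistent. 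Otherwise $g_{k-1}$ would coincide near $\lambda''$ with a holomorphic branch of $f_\lambda^{-1}(p(\lambda))$. That branch extends to the disk around $\lambda'$ and is finite at $\lambda'$, which the identity theorem makes incompatible with $g_{k-1}(\lambda')=\infty$.
\end{itemize}

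With this repair, your route is a genuine alternative. The paper does not prove the statement but quotes \cite[Corollary 5.6]{ABF}. That result is obtained from the density of virtual cycle parameters (Proposition~\ref{prop:density_of_virtual cycles}, where the non-omitted pole is used) followed by shooting, as in the proof of Proposition~\ref{prop:activity locus}. Your Montel dichotomy produces the needed virtual cycle parameter directly whenever the iterates fail to be globally defined, so it bypasses that density result. Consequently your closing remark about the role of the pole hypothesis is unnecessary.
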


Finally, we will make use several times of the following Shooting Lemma (\cite[Proposition 2.13]{ABF}).

\begin{prop}[Shooting Lemma]\label{shooting}
Let $\{f_\la\}_{\la\in M}$ be a natural family of meromorphic maps.\ 
Let $\la_0\in M$  be such that a singular value $v_\la$ satisfies $f_{\lam_0}^n(v_{\la_0})=\infty$,  but this relation is not satisfied for all $\la\in M$. 
 Let $\lambda \mapsto \gamma(\la)$ be a  holomorphic map  such that $\gamma(\la_0) \notin S(f_{\la_0})$. Then we can find $\la'$ arbitrarily close to $\la_0$  such that
$f_{\lam'}^{n+1}(v_{\la'}) = \gamma(\la')$.
\end{prop}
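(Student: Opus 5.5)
Since $\gamma(\la_0)\notin S(f_{\la_0})$, I will build holomorphic preimages of $\gamma$ in a tract and then turn them into solutions of $f_{\la}^{n+1}(v_\la)=\gamma(\la)$ by an argument principle count. I work in a small disk $D\subset M$ around $\la_0$, reparametrized by a one-dimensional disk, on which the relation $f_\la^n(v_\la)=\infty$ does not hold identically.

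\textbf{Step 1: preimages of $\gamma$.} Because $S(f_{\la_0})$ is closed, I can choose $r>0$ with $\overline{B(\gamma(\la_0),r)}\cap S(f_{\la_0})=\emptyset$. Since singular values move holomorphically (property (3) of natural families), shrinking $D$ keeps $\gamma(\la)$ outside $S(f_\la)$ for $\la\in D$. Hence $f_{\la_0}^{-1}(\gamma(\la_0))$ is an infinite discrete set accumulating only at $\infty$: $\gamma(\la_0)$ is not omitted, and indeed, since $\gamma$ is holomorphic, I can perturb $\la_0$ slightly so that $\gamma(\la_0)$ is not a Picard exceptional value, as these are finitely many and move via $\phi_\la$. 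I then write $f_\la=\phi_\la\circ f_{\la_0}\circ\psi_\la^{-1}$. For each preimage $w$ of $\gamma(\la_0)$, the holomorphic function $(\la,z)\mapsto f_\la(z)-\gamma(\la)$ has a simple zero in $z$ at $(\la_0,w)$. By the implicit function theorem this gives holomorphic germs $w_k(\la)$ with $f_\la(w_k(\la))=\gamma(\la)$.

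\textbf{Step 2: a uniform neighbourhood of $\infty$.} This is the main obstacle: I need all the $w_k$ defined on a common disk. I plan to use the natural-family structure. I set $\tilde\gamma(\la):=\phi_\la^{-1}(\gamma(\la))$, which is a continuous curve near $\gamma(\la_0)$, and lift it under the covering $f_{\la_0}$ over $B(\gamma(\la_0),r)$. The points are then $w_k(\la):=\psi_\la(\tilde w_k(\la))$, where $\tilde w_k$ is the lift. These are defined for all $\la\in D$ simultaneously, continuous, and holomorphic by Step 1. Moreover $w_k(\la)\to\infty$ uniformly in $\la$ as $k\to\infty$, because the lifts stay in shrinking neighbourhoods of $\infty$ and $\psi_\la$ fixes $\infty$ and is uniformly continuous in the spherical metric.

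\textbf{Step 3: counting.} I set $g(\la):=f_\la^n(v_\la)$. It is meromorphic on $D$ (after shrinking, it is defined, since the earlier iterates are finite near $\la_0$), not constant, and $g(\la_0)=\infty$. Then $h:=1/g$ is holomorphic, nonconstant, and $h(\la_0)=0$. Fix a small circle $\partial D'$ around $\la_0$ on which $|h|\ge\delta>0$ and $h$ has no further zeros inside. By Step 2, for $k$ large, $|1/w_k(\la)|<\delta$ uniformly on $\overline{D'}$. Rouché's theorem then shows that $h(\la)-1/w_k(\la)$ has a zero $\la_k\in D'$, so $f_{\la_k}^n(v_{\la_k})=w_k(\la_k)$. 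Applying $f$ once more gives $f_{\la_k}^{n+1}(v_{\la_k})=\gamma(\la_k)$, and $\la_k\to\la_0$ as I shrink $D'$.
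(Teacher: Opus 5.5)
Your proof is correct once one small slip is repaired (second paragraph), and it takes a different route from the one the paper relies on. The paper does not reprove this statement; it quotes \cite[Proposition 2.13]{ABF}, whose mechanism is replayed in the proof of Theorem \ref{th:existtlfam}:
\begin{itemize}
\item straighten the $n$-th iterate via $G(\la)=\psi_\la^{-1}(f_\la^n(v_\la))$;
\item use that $G$ is a topological branched cover onto a neighbourhood of $\infty$ \cite[Remark 2.12]{ABF};
\item pick a domain $\Omega$ near $\infty$ that the fixed map $f_{\la_0}$ sends univalently onto a disk around $\gamma(\la_0)$;
\item apply the argument principle to $F-\gamma$, where $F(\la)=f_\la^{n+1}(v_\la)=\phi_\la\circ f_{\la_0}\circ G(\la)$, on a component of $G^{-1}(\Omega)$.
\end{itemize}
You instead move the preimages of $\gamma$ rather than the orbit. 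Lifting $\phi_\la^{-1}\circ\gamma$ through $f_{\la_0}$ and pushing forward by $\psi_\la$ (the device of Lemmas \ref{lem:tractmoveholo} and \ref{lem:liftholmot}) gives holomorphic curves $w_k$ on a common disk, escaping uniformly to $\infty$. Rouch\'e applied to $1/f_\la^n(v_\la)-1/w_k(\la)$ on a fixed small circle then concludes.

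What each route buys:
\begin{itemize}
\item Your route stays entirely within holomorphic functions of $\la$ and never needs the branched-cover property of the merely continuous map $G$. It also counts solutions: for each large $k$, as many (with multiplicity) as the order of the pole of $\la\mapsto f_\la^n(v_\la)$ at $\la_0$.
\item The ABF route gives more geometric control: a parameter domain on which $f_{\la_0}\circ G$ is a homeomorphism onto a disk. Hence the solution there is unique, whatever the order of that pole, and this is exactly what is exploited for properness in Theorem \ref{th:existtlfam}.
\end{itemize}

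The slip: you cannot perturb $\la_0$ to make $\gamma(\la_0)$ non-exceptional, since $\la_0$ is pinned by $f_{\la_0}^n(v_{\la_0})=\infty$. You also do not need to. Since $\overline{B(\gamma(\la_0),r)}$ avoids $S(f_{\la_0})$, $f_{\la_0}$ is a trivial covering over this disk, so all its points have the same number of preimages. That number is infinite by Picard's great theorem; equivalently, by Iversen's theorem a Picard exceptional value is an asymptotic value, hence lies in $S(f_{\la_0})$. For the uniform escape of the $w_k$, keep $\tilde\gamma(D)$ inside a compact subdisk of $B(\gamma(\la_0),r)$. Then the lifts lie in compact pieces of the sheets, and only finitely many of these pieces meet any given compact subset of $\C$.
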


\subsection{Tracts over asymptotic values}\label{sec:tracts}

In our proofs it will be important to control the size of {\em tracts} and how they move with respect to parameters. 

\begin{defi}[Tracts]\label{defi:tracts}
	Let $f: \C \to \hat{\C}$ be a meromorphic map,  let $v \in \hat{\C}$ be an asymptotic value of $f$, and $D$ be a punctured disk centered at $v$ of radius $r>0$. We say that a simply connected unbounded set $T$ is a \emph{logarithmic tract above $v$} if $f:T\ra D $ is an  infinite degree unbranched covering.
\end{defi}

Notice that by decreasing the value of the radius $r$, we obtain a nested sequence of logarithmic tracts shrinking to infinity. We say that two logarithmic tracts (or sequences thereof) are different, if they are disjoint for sufficiently small values of $r$. The number of different logarithmic tracts lying over an asymptotic value $v$ is the {\em multiplicity} of $v$.

If $v$ is an isolated asymptotic value one can see that there is always a logarithmic tract above $v$ (see e.g. \cite{BE08}). In particular, if $f$ is of finite type all of its asymptotic values have logarithmic tracts lying over them. For this reason, in this paper we will call them simply {\em tracts}. 

The following properties of tracts for natural families will be used only in Section~\ref{sec:tlfam} for the proof of Theorem C.

\begin{lem}[Tracts move holomorphically]\label{lem:tractmoveholo}
	Let $\{f_\la\}_{\la \in \Lambda}$ be a natural family of meromorphic maps, where $\Lambda\subset \C$ is a simply connected domain. Let $v_\la$ be an asymptotic value for $f_\la$, and assume that there exists a holomorphic motion $w_\la: D_{\la_0} \to D_\la$ of a Jordan domain $D_{\la_0}$ such that for all $\la \in \Lambda$, $D_\la \cap S(f_\la) = \{v_\la\}$ and $w_\la(v_{\la_0})=v_\la$. Let $T_{\la_0}$ denote a tract above $D_{\la_0}$ for $f_{\la_0}$. Then there is a holomorphic motion { $m_\la: T_{\la_0} \to T_\la$}
	over $M$ such that for all $\la \in \Lambda$, $T_\la$ is a tract above $D_\la$ for $f_\la$ and 
	$$f_\la \circ m_\la = w_\la \circ f_{\la_0}.$$
\end{lem}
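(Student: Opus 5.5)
The plan is to lift the motion $w_\la$ through the covering maps, using the natural family structure $f_\la=\phi_\la\circ f_{\la_0}\circ\psi_\la^{-1}$ to pass between parameters.

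\textbf{Step 1: moving the base domain by $\phi_\la$.} Choose $\la_0$ as basepoint. Since $\phi_\la$ maps singular values of $f_{\la_0}$ to those of $f_\la$, the Jordan domain $D'_\la:=\phi_\la(D_{\la_0})$ satisfies $D'_\la\cap S(f_\la)=\{\phi_\la(v_{\la_0})\}=\{v_\la\}$. Here I use that $v_\la$ is the marked asymptotic value moving with $\phi_\la$; both $w_\la$ and $\phi_\la$ follow the same holomorphic germ of the asymptotic value.

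\textbf{Step 2: the naive candidate and why it must be corrected.} The set $\psi_\la(T_{\la_0})$ is a tract of $f_\la$ above $D'_\la$. However, $\psi_\la$ intertwines $f_{\la_0}$ with $\phi_\la$, not with $w_\la$. So I correct the motion: set $\eta_\la:=w_\la\circ\phi_\la^{-1}$ on $D'_\la$. This is a family of injections $D'_\la\to D_\la$ fixing $v_\la$. The cleanest route avoids $\eta_\la$ altogether and lifts $w_\la$ directly, as follows.

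\textbf{Step 3: the direct lift.} Consider
$$\mathcal T=\{(\la,z): \la\in\Lambda,\ f_\la(z)\in D_\la\setminus\{v_\la\}\},\qquad \mathcal D^*=\{(\la,w): w\in D_\la\setminus\{v_\la\}\}.$$
I would show that the map $F(\la,z)=(\la,f_\la(z))$, restricted to a suitable connected component $\mathcal T_0\supset\{\la_0\}\times T_{\la_0}$, is a covering $\mathcal T_0\to\mathcal D^*$. This holds because fibrewise (via $\psi_\la,\phi_\la$) it is a covering with no singular values in $D_\la\setminus\{v_\la\}$, and local triviality in $\la$ follows from the holomorphic dependence of $\phi_\la,\psi_\la$.

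Next, for $z\in T_{\la_0}$, the map $\la\mapsto(\la,w_\la(f_{\la_0}(z)))$ is a holomorphic section $\Lambda\to\mathcal D^*$; it misses $v_\la$ because $w_\la$ is injective and $w_\la(v_{\la_0})=v_\la$. Since $\Lambda$ is simply connected, this section lifts uniquely to $\mathcal T_0$ starting at $(\la_0,z)$. Call the lift $m_\la(z)$; it is holomorphic in $\la$ because $F$ is a local biholomorphism. By construction $f_\la\circ m_\la=w_\la\circ f_{\la_0}$ and $m_{\la_0}=\Id$.

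\textbf{Step 4: injectivity and identification of the image.} Injectivity in $z$ comes from uniqueness of lifts: if $m_\la(z)=m_\la(z')$ for some $\la$, then the two lifted paths, run backwards along a path from $\la$ to $\la_0$, lift the same section, so $z=z'$. Finally, $T_\la:=m_\la(T_{\la_0})$ is a connected component of $f_\la^{-1}(D_\la\setminus\{v_\la\})$, again by path lifting in both directions. The restriction $f_\la:T_\la\to D_\la\setminus\{v_\la\}$ is then a covering of infinite degree, being conjugate along the motion to $f_{\la_0}$ on $T_{\la_0}$. Hence $T_\la$ is a tract.

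\textbf{Main obstacle.} The delicate point is Step 3: proving that $F$ is a genuine covering over the varying domain $\mathcal D^*$, uniformly near each $\la$. I would prove this by writing $F$ in the coordinates $(\la,\psi_\la(z))$, $(\la,\phi_\la(w))$, where it becomes the product map $\Id\times f_{\la_0}$. Under this change of coordinates $\mathcal D^*$ corresponds to $\{(\la,\zeta):\zeta\in\phi_\la^{-1}(D_\la)\setminus\{v_{\la_0}\}\}$. Since $\phi_\la^{-1}(D_\la)$ contains no singular values of $f_{\la_0}$ except $v_{\la_0}$, local triviality then follows from that of $f_{\la_0}$ as a covering over $\hat{\C}\setminus S(f_{\la_0})$, which is open.
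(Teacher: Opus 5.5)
Your proposal is correct and follows essentially the same route as the paper. Once you trivialize by $(\lambda,\psi_\lambda(z))$ and $(\lambda,\phi_\lambda(w))$, lifting the section $\lambda\mapsto w_\lambda(f_{\lambda_0}(z))$ through $F(\lambda,z)=(\lambda,f_\lambda(z))$ is exactly the paper's lifting of $\lambda\mapsto\phi_\lambda^{-1}\circ w_\lambda\circ f_{\lambda_0}(z_0)$ through the covering $f_{\lambda_0}$ over $\hat{\mathbb{C}}\setminus S(f_{\lambda_0})$, followed by $\psi_\lambda$; holomorphy then comes from the local biholomorphism (implicit function theorem) and injectivity from uniqueness of lifts, as in the paper. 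Your Steps 1--2 are an unnecessary detour.
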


\begin{proof}
	(See also the proof of  Lemma \ref{lem:liftholmot}.)
	We may write $f_\la = \phi_\la \circ f_{\la_0} \circ \psi_\la^{-1}$, 
	where $\phi_\la, \psi_\la$ are holomorphic motions of $\rs$, 
	with basepoint $\la_0$. Let $z_0 \in  T_{\la_0}$. 
Consider the 	continuous map $H:\Lambda\ra\C$ defined as $H(\la )=\phi_\la^{-1} \circ w_\la \circ f_{\la_0}(z_0).$
	
	By the lifting property for covering maps, there exists a unique lift $\gamma_{z_0}(\la)$  of $H$ by $f_{\la_0}$ 
	such that $\gamma_{z_0}(\la_0)=z_0$.

	\[
\begin{tikzcd}[row sep=10ex, column sep=10ex]
&  \C \setminus f_{\la_0}^{-1} (S(f_{\la_0})) 
 \arrow[d,"f_{\la_0}"] \\
\Lambda \arrow[r, "H"]  \arrow[ur, "\gamma_{z_0}" ] 
 & \C\setminus S(f_{\la_0}).
\end{tikzcd}
\]

	Indeed, since $\Lambda$ is simply connected, we only need to check that $v_{\la_0}\notin H(\Lambda)$, so that $f_{\la_0}$ is a covering over $H(\Lambda)$.	This is true because by assumption $\phi_\la^{-1}\circ w_\la (v_{\la_0})= v_{\la_0}$, $\phi_\la^{-1}\circ w_\la$ is injective, and $f_{\la_0}(z_0)\neq v_{\la_0}$. 
By definition of lifting, $f_{\la_0}\circ\gamma_{z_0}(\lambda)=H(\la)=\phi_\la^{-1}\circ w_\la\circ f_{\la_0}(z_0)$.

	We then let $m_\la(z_0):=\psi_\la\circ \gamma_{z_0}(\la)  $. By construction, we have {
	\begin{equation}\label{eq:commute}
		f_\la \circ m_\la(z_0) 
		= f_\la\circ \psi_\la \circ \gamma_{z_0}(\la) 
		=\varphi_\la \circ f_{\la_0} \circ \gamma_{z_0}(\la)
		 =\varphi_\la \circ H 
		= w_\la \circ f_{\la_0}(z_0)
	\end{equation}
	and  $m_{\la_0}(z_0)=z_0$ since $\psi_{\la_0}={\rm id}$}.
	By construction, $\gamma_{z_0}(\la) \notin f_{\la_0}^{-1}(S(f_{\la_0}))$, for all $\la \in \Lambda$. In particular, it is never a critical point of $f_{\la_0}$.
	Since $\psi_\la$ maps critical points of $f_{\la_0}$ to critical points of $f_\la$, we therefore have $f_\la'(m_\la(z_0)) \neq 0$ for all $\la \in \Lambda$.
	By the Implicit Function Theorem, we have that  $\la \mapsto m_\la(z_0)$ is holomorphic in $\Lambda$.
	By the uniqueness of the lifting property, the map $z \mapsto m_\la(z)$ is injective for all $\la \in \Lambda$.
	Thus, $m_\la: \Lambda \times T_{\la_0} \to \rs$ is indeed a holomorphic motion, and it follows from \eqref{eq:commute} that $T_\la:=m_\la(T_{\la_0})$ is a tract for $f_\la$ above $D_\la$.	
\end{proof}

\begin{lem}[Tracts above small disks are small]\label{lem:smalltracts}
	Let $\{f_\la\}_{\la\in \Lambda}$ be a natural family of  meromorphic maps and let $v_\la$ be an isolated asymptotic value.
	For any $\delta>0$, we denote by $T_\la(\delta)$ a nested family of tracts above $\D(v_\la,\delta)$, i.e. $T_\la(\delta_1) \subset T_\la(\delta_2)$ if $0<\delta_1 \leq \delta_2$.
	 Let $\la_0 \in \Lambda$. For every $r>0$, there exists $\delta>0$ and $\eta>0$ such that for any $\la \in \B(\la_0,\eta)$,  any tract $T_\la(\delta)$ above $\D(v_\la,\delta)$ for $f_\la$ is compactly contained in $\D(\infty,r)$.
\end{lem}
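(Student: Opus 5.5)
The plan is to reduce to the base parameter using the structure of natural families, and then use a compactness argument in $\la$. Write $f_\la = \phi_\la\circ f_{\la_0}\circ\psi_\la^{-1}$, where $\phi_\la,\psi_\la$ are holomorphic motions of $\rs$ with basepoint $\la_0$, and $\psi_\la(\infty)=\infty$. A tract $T_\la(\delta)$ of $f_\la$ above $\D(v_\la,\delta)$ is then exactly $\psi_\la(T')$, where $T'$ is a tract of $f_{\la_0}$ above the Jordan domain $\phi_\la^{-1}(\D(v_\la,\delta))$. Note that $\phi_\la^{-1}(v_\la)=v_{\la_0}$. It therefore suffices to control two things: how small the disk $\phi_\la^{-1}(\D(v_\la,\delta))$ is, and how much $\psi_\la$ distorts neighbourhoods of $\infty$.

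\textbf{Step 1: the base map.} Since $v_{\la_0}$ is an isolated asymptotic value, there is $\delta_0>0$ such that $\D(v_{\la_0},\delta_0)\setminus\{v_{\la_0}\}$ contains no singular values. Over this punctured disk, $f_{\la_0}$ restricted to each component of the preimage is either a finite-degree covering or a universal covering. The components of the second kind are the logarithmic tracts, and each of them is unbounded, with closure in $\rs$ meeting the finite plane only away from the fibre of $v_{\la_0}$.

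The key claim is: for every $r'>0$ there is $\delta'>0$ such that every tract of $f_{\la_0}$ above $\D(v_{\la_0},\delta')$ lies in $\D(\infty,r')$. I will argue by contradiction. Suppose there are points $z_n$ in tracts above $\D(v_{\la_0},1/n)$ with $|z_n|\le R$ (in the chart near $\infty$). Passing to a subsequence, $z_n\to z^*\in\C$ and $f_{\la_0}(z^*)=v_{\la_0}$. Near $z^*$, $f_{\la_0}$ is a finite-degree branched map, so for small $\delta$ the component of $f_{\la_0}^{-1}(\D(v_{\la_0},\delta))$ containing $z^*$ is a bounded disk mapped with finite degree. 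Hence it is not a tract, and $z_n$ eventually lies in that component, which is the desired contradiction.

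One subtlety is that there may be infinitely many tracts over $v_{\la_0}$, and the preimages of $v_{\la_0}$ may accumulate at $\infty$. The compactness argument above only used the bounded region $|z|\le R$, where only finitely many preimages of $v_{\la_0}$ occur, so it still works.

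\textbf{Step 2: uniformity in $\la$.} By continuity of holomorphic motions (the $\la$-lemma gives joint continuity and uniform Hölder/quasiconformal control on compact parameter sets), take $\eta$ small. Then for $\la\in\B(\la_0,\eta)$ we have $\phi_\la^{-1}(\D(v_\la,\delta))\subset\D(v_{\la_0},\delta')$ provided $\delta$ is small enough. This uses that $\phi_\la^{-1}$ is uniformly equicontinuous for $\la$ near $\la_0$ and that $\phi_\la^{-1}(v_\la)=v_{\la_0}$. It follows that $T'\subset\D(\infty,r')$.

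Finally, $\psi_\la$ fixes $\infty$ and is uniformly equicontinuous near $\la_0$. So we may choose $r'$ such that $\psi_\la(\D(\infty,r'))\Subset\D(\infty,r)$ for all $\la\in\B(\la_0,\eta)$.

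The parameters should be chosen in the order $r\to r'\to\delta'\to(\delta,\eta)$, shrinking $\eta$ as needed. With this order, $T_\la(\delta)=\psi_\la(T')\Subset\D(\infty,r)$ for every tract, as required. I expect the main obstacle to be the uniform compactness claim in Step 1, in particular justifying that finite-degree components near finite preimages cannot be tracts while handling possibly infinitely many tracts. The equicontinuity of the motions in Step 2 is standard.
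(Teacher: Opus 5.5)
Your proof is correct, but it reaches the contradiction differently from the paper and proves more of the parameter dependence.

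\emph{Base map.} The paper also argues by contradiction with points $z_n\in T_n$ at a definite distance from $\infty$. It then uses the nesting: the limit point lies in $\overline{T_0}\cap\C$, and $f_{\la_0}$ does not take the value $v_{\la_0}$ there. You instead use that the limit point $z^*$ satisfies $f_{\la_0}(z^*)=v_{\la_0}$, and that for small $\delta$ the component of $f_{\la_0}^{-1}(\D(v_{\la_0},\delta))$ containing $z^*$ is bounded. Hence no unbounded connected set through $z_n$ can lie in $f_{\la_0}^{-1}(\D(v_{\la_0},1/n))$ for large $n$. This needs no nesting, so it controls all tracts above $\D(v_{\la_0},\delta)$ at once, which matters if $v_{\la_0}$ has infinitely many tracts. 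The paper's argument is shorter but only controls one nested chain.

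\emph{Uniformity in $\la$.} The paper only appeals to ``continuity of the tracts with respect to the parameters''. Your reduction makes this an actual argument: you write $T_\la(\delta)=\psi_\la(T')$ and use $\phi_\la^{-1}(v_\la)=v_{\la_0}$ together with equicontinuity of $\phi_\la^{-1}$ and $\psi_\la$ near $\la_0$. The order of choices $r\to r'\to\delta'\to(\delta,\eta)$ is consistent.

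One step you should spell out. $T'$ is a tract above the non-round domain $\phi_\la^{-1}(\D(v_\la,\delta))$, so Step 1 as you stated it does not literally apply to $T'$. Either of the following closes this:
\begin{itemize}
\item $T'$ is a connected unbounded subset of $f_{\la_0}^{-1}(\D(v_{\la_0},\delta'))$, and your Step 1 contradiction argument works verbatim for such sets.
\item $T'$ lies in an unbounded component of $f_{\la_0}^{-1}(\D(v_{\la_0},\delta')\setminus\{v_{\la_0}\})$. Such a component cannot be a finite-degree covering, since those are bounded punctured disks, so it is a tract.
\end{itemize}
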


\begin{proof}
	We first prove the statement for   $f=f_{\la_0}$ and $v=v_{\la_0}$.  
	Suppose  for a contradiction that there exists $r>0$,  a decreasing sequence $\delta_n \to 0$,  a sequence of tracts $T_n:=T_{\la_0}(\delta_n)$ over $\D(v,\delta_n)$   and points $z_n \in T_n$ such that for all $n$,  the spherical distance satisfies $d(z_n,\infty) \geq r$. Up to extracting a subsequence, we may assume that $z_n \to z \in \C$. 
	Moreover, since the tracts are nested, we have $z \in T_n\subset T_0$ for all $n \in \N$, hence $z\in \ov{T_0}\cap \C$. Since  	$f(z_n) \in \D(v,\delta_n)$, by continuity  $f(z)=v$, which 
	 gives a contradiction because $f(\ov{ T_0}\cap\C)= \ov{\D(v,\delta_0)}\setminus \{v\}$.
	 
	The statement  for $\lambda$ in a neighborhood of $\lambda_0$ then follows from the continuity of the tracts with respect to the parameters.
\end{proof}

\section{Tangent-like maps} \label{sec:TLmaps}

In this section we introduce tangent-like maps, filled Julia sets,   and  the notion of hybrid equivalence.  We then show that the filled Julia set is connected if and only if it contains the singular value.

\subsection{First definitions: tangent-like maps and hybrid equivalence} \label{sec:defshyb}

The definition of tangent-like maps (TL for short)  first appeared in \cite{GaKo}.

\begin{defi}[Tangent-like map]\label{def:tlmap}
	A tangent-like map is a tuple $(f,U,V,u,v)$ such that 
	\begin{enumerate}
		\item $U,V \subset \rs$ are Jordan domains, and $\overline{U} \subset V$
		\item $u \in \partial U$, $v \in V$
		\item $f: \overline{U}\setminus \{u\} \to \overline{V} \setminus \{v\}$ is a covering map
		\item $f: U \to V$ is holomorphic
		\item $\partial U$ is a quasicircle and $\partial V$ is real-analytic. 
	\end{enumerate}
\end{defi}

See Figure \ref{fig:TLmap}. 
Since $U$ is simply connected and $V\setminus \{v\} \cong \D^*$, it follows from  the classification of coverings of  $\D^*$ that $f:U\ra V\setminus\{v\}$ is equivalent to the exponential map, so $u$ is an essential singularity (in the sense that $f$ cannot be continuously extended at $u$), $v$ is an asymptotic value, and   $f$ has infinite degree.  
Note that, up to restricting  $V$ slightly, we can always assume that $\partial V$ is real-analytic; this will imply that $\partial U$ is analytic except possibly at $u$.

\begin{defi}[Filled Julia set ]
	Let  $(f,U,V,u,v)$ be a tangent-like map. The filled Julia set is 
	$$K(f):=\bigcap_{n \geq 0} \overline{ f^{-n}(U) }.$$
	The Julia set is $J(f):=\partial K(f)$.
\end{defi}

\begin{rem}
	By definition, we always have $u \in J(f)$. Indeed, $f$ is equivalent to an exponential map where $u$  is an essential singularity, hence  for any $z \in V$, $u \in \overline{f^{-1}(\{z\})}$, and $u \in \partial U$ so $u \in \partial K(f)$.
\end{rem}

The fact that $f:U\ra V$ is neither surjective nor proper, and does not extend continuously at $u$, calls for a little caution in the proof of the next proposition.

\begin{prop}[Complete invariance of $K(f)$]\label{lem:K}
	The set  $K(f)$ is completely invariant, in the sense that $f^{-1}(K(f)) = K(f) \setminus \{u\}$. Moreover, we have
	$$K(f) = \left(\bigcup_{n \geq 0} f^{-n}(\{u\}) \right) \sqcup \left( \bigcap_{n \geq 0} f^{-n}(U) \right).$$
\end{prop}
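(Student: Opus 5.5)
The plan is to prove the decomposition first and then read off complete invariance from it. Throughout, $f^{-n}(U)$ denotes the set of $z$ such that $z, f(z), \dots, f^{n-1}(z)$ all lie in the domain $\overline{U}\setminus\{u\}$ of $f$ and $f^n(z)\in U$. Since $U$ is open and $f$ is a covering on $\overline U\setminus\{u\}$, these sets are relatively open. They are also nonempty: $f^{-1}(W)$ is a nonempty open subset of $U$ for every nonempty open $W\subset V\setminus\{v\}$, and we induct on $n$. The key observation is
$$f(\partial U\setminus\{u\})\subset \partial V,$$
which holds because $f:\overline U\setminus\{u\}\to \overline V\setminus\{v\}$ is a covering restricting to $U\to V\setminus\{v\}$. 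Since $\partial V\cap\overline U=\emptyset$, a point of $\partial U\setminus\{u\}$ is mapped outside $\overline U$ and cannot be iterated further.

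\textbf{Step 1: describing $\overline{f^{-n}(U)}$.} Let $z\in\overline{f^{-n}(U)}$, the limit of points $z_j\in f^{-n}(U)$. Follow the orbit of $z$ as long as it stays in $\overline U\setminus\{u\}$; this is possible by continuity of $f$ there.
\begin{itemize}
\item Either some $f^k(z)=u$ with $k<n$.
\item Or $f^k(z)\in\overline U\setminus\{u\}$ for all $k<n$ and $f^n(z)\in\overline U$.
\end{itemize}
Conversely, I must show that every point of $\bigcup_{k} f^{-k}(\{u\})$ lies in every $\overline{f^{-m}(U)}$. For $u$ itself this uses the essential singularity. Since $f|_U$ is equivalent to the exponential, every nonempty open subset of $V\setminus\{v\}$ has $f$-preimages accumulating at $u$. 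Applying this to the nonempty open set $f^{-(m-1)}(U)\subset V\setminus\{v\}$ gives $u\in\overline{f^{-m}(U)}$ for all $m$. For a point $z$ with $f^k(z)=u$, the map $f^k$ is a local homeomorphism near $z$, since each $f^i(z)$ with $i<k$ lies in $\overline U\setminus\{u\}$ where $f$ is a covering. Pulling back small neighbourhoods of $u$ shows $z\in\overline{f^{-(m+k)}(U)}\subset\overline{f^{-m}(U)}$, using that the sets $f^{-m}(U)$ decrease in $m$.

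\textbf{Step 2: intersecting over $n$.} Suppose $z\in K(f)$ never lands on $u$. By Step 1, $f^k(z)\in\overline U\setminus\{u\}$ for all $k$. If some $f^k(z)\in\partial U$, then $f^{k+1}(z)\in\partial V$, which lies outside $\overline U$; this is a contradiction. Hence $f^k(z)\in U$ for all $k$, so $z\in\bigcap_n f^{-n}(U)$. Together with Step 1 this gives the stated equality. The union is disjoint because points of $\bigcap_n f^{-n}(U)$ have their whole orbit in $U$, while $u\notin U$.

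\textbf{Step 3: complete invariance.} Apply $f^{-1}$ to both pieces of the decomposition. Preimages of the backward orbit of $u$ stay in that backward orbit. Also $f^{-1}\bigl(\bigcap_n f^{-n}(U)\bigr)=\bigcap_{n\ge1}f^{-n}(U)$, which equals $\bigcap_{n\ge0} f^{-n}(U)$ since $f^{-n}(U)\subset U$ for $n\ge 1$. The only point of $K(f)$ lost is $u$, because $u\notin\operatorname{dom} f$ and it is the $0$-th term of the backward orbit. This gives $f^{-1}(K(f))=K(f)\setminus\{u\}$.

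The main obstacle is Step 1, and specifically showing that $u$, and hence all its iterated preimages, lies in $\overline{f^{-m}(U)}$ for every $m$. This requires the exponential-type accumulation at the essential singularity and the nonemptiness of $f^{-(m-1)}(U)$, rather than mere continuity.
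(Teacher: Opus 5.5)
Your proof is correct, but it runs in the opposite direction from the paper's.

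\textbf{The paper's route.} It proves complete invariance first, working directly from $K(f)=\bigcap_n\overline{f^{-n}(U)}$.
\begin{itemize}
\item For $z$ with $f(z)\in K(f)$, it lifts approximating sequences $y_{k,n}\to f(z)$ through a local inverse branch of $f$ at $f(z)$.
\item For the reverse inclusion, it pushes approximating sequences forward by continuity.
\end{itemize}
The decomposition then comes cheaply. The inclusion $\subseteq$ uses the same dichotomy as your Step 2: a first exit from $U$ must happen at $u$, since $\partial U\setminus\{u\}$ is mapped to $\partial V$. The inclusion $\supseteq$ follows from invariance together with $u\in K(f)$, which is taken from the preceding Remark.

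\textbf{Your route.} You describe each $\overline{f^{-n}(U)}$ orbit-wise and use the inverse-branch argument only at iterated preimages of $u$. Complete invariance then becomes a formal set identity.

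\textbf{Comparison.} The paper's argument never needs to understand the closures, since the lifting works at any point. Yours gives a finer description of $\overline{f^{-n}(U)}$ as a by-product. It also makes explicit that $u\in\overline{f^{-m}(U)}$ for every $m$, via nonemptiness of $f^{-(m-1)}(U)$; the paper's Remark leaves this step implicit.

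\textbf{One point to state.} In Step 1, the intermediate points $f^i(z)$ with $i<k$ lie in $U$, not merely in $\overline U\setminus\{u\}$. This follows from your boundary observation. It is what makes $f^k$ a local homeomorphism onto a full neighbourhood of $u$, which you need to pull back points of $f^{-m}(U)$.

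Both proofs rest on the same three facts: $f(\partial U\setminus\{u\})\subset\partial V$, univalent local inverse branches away from $v$, and accumulation of preimages at the essential singularity $u$.
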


Of course, since the map $f$ is not defined at $u$, $u$ cannot belong to $f^{-1}(K(f))$.

\begin{proof}
	Let us first prove that $f^{-1}(K(f)) \subset  K(f) \setminus \{u\}$. 
	
	Let $z \in f^{-1}(K(f)) \subset \overline{U} \setminus \{u\}$, so that $f(z) \in K(f)$. Then by definition, for every $n \in \N$, there exists a sequence 
	$y_{k,n}$ such that $\lim_{k \to +\infty} y_{k,n}=f(z)$, and $f^n(y_{k,n}) \in U$. Let $g$ be an inverse branch of $f$ mapping $f(z)$ to $z$ (it exists because $f: U \to V \setminus \{v\}$ is a covering map).
	For $k$ large enough, $y_{k,n}$ is in the domain of $g$, and we let $x_{k,n}:=g(y_{k,n})$. Then 
	$\lim_{k \to +\infty} x_{k,n}=z$, and $f^n(x_{k,n}) = f^{n-1}(y_{k,n}) \in U$. Therefore $z \in \bigcap_{n \geq 0} \overline{f^{-n}(U)} = K(f)$.

	Conversely, let $z \in K(f) \setminus \{u\}$. By definition, for every $n  \geq 1$ there exists a sequence $x_{k,n}$ 
	such that $\lim_{k \to +\infty} x_{k,n}=z$, and $f^{n}(x_{k,n}) \in U$. Let $y_{k,n}:=f(x_{k,n})$: then 
	$\lim_{k \to +\infty} y_{k,n}=f(z)$, and $f^{n-1}(y_{k,n}) \in U$. This proves that $f(z) \in K(f)$, and therefore that 
	$$f^{-1}(K(f)) \supset  K(f) \setminus \{u\}$$
	as required.

	We now turn to the second assertion in the Lemma. Let $z \in K(f)$, such that $z \notin \bigcap_{n \geq 0} f^{-n}(U)$.
	Then again, by definition of $K(f)$, for every $n  \geq 0$ there exists a sequence $x_{k,n}$ 
	such that $\lim_{k \to +\infty} x_{k,n}=z$, and $f^{n}(x_{k,n}) \in U$. Morever, there exists $n_0 \in \N$ such that 
	$f^{n_0}(z) \notin U$. Let us prove that $f^{n_0}(z) = u$. 
	First observe that $f^{n_0}(z) \in \partial U$. Indeed, otherwise we would have $f^{n_0}(z) \in V \setminus \overline{U}$ which is open, so for $k$ large enough we would have $f^{n_0}(x_{k,n_0}) \in V \setminus \overline{U}$, a contradiction. Next, if $f^{n_0}(z) \in \partial U \setminus \{u\}$, then we would have $f^{n_0+1}(z) \in \partial V$, and so by the same reasonning, $f^{n_0+1}(x_{k,{n_0+1}}) \notin U$ for $k$ large enough, again a contradiction.
	Therefore $f^{n_0}(z) = u$.
	
	This proves that $K(f) \subset  \left(\bigcup_{n \geq 0} f^{-n}(\{u\}) \right) \sqcup \left( \bigcap_{n \geq 0} f^{-n}(U) \right).$

	Finally, the inclusion $ \left(\bigcup_{n \geq 0} f^{-n}(\{u\}) \right) \sqcup \left( \bigcap_{n \geq 0} f^{-n}(U) \right) \subset K(f)$ follows from the relation $f^{-1}(K(f)) =  K(f) \setminus \{u\}$ and the fact that $u \in K(f)$.
	
\end{proof}

\begin{rem}
	In particular, there is  exactly a countable set of points in $K(f)$ with finite orbit (which act as prepoles), unless $u=v$, in which case 
	there are none.
\end{rem}

The concept of hybrid equivalence can be defined in this context analogously as for polynomial-like maps.

\begin{defi}[Hybrid equivalent tangent-like maps]
	Two tangent-like maps $(f_1, U_1,V_1, u_1, v_1)$ and $(f_2, U_2, V_2, u_2, v_2)$ are hybrid equivalent if there exists a quasiconformal homeomorphism $\phi: V_1 \to V_2$ such that for all $z \in U_1$, 
	$$
	\psi \circ f_1(z) = f_2 \circ \phi(z)
	$$
	and $\dbar \psi=0$ a.e. on $K(f_1)$.
\end{defi}

\subsection{Connectedness of $K(f)$}

As in the quadratic-like case, the topology of $K(f)$ is determined by the dynamics of the singular value. 

\begin{prop}\label{prop:Kconnected}
	Let $(f,U,V,u,v)$ be a tangent-like map. Then $K(f)$ is connected if and only if $v \in K(f)$.
\end{prop}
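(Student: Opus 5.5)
We follow the classical argument for polynomial-like maps, working with the nested sets $K_n:=\overline{f^{-n}(U)}$. Then $K(f)=\bigcap_{n\ge 0} K_n$ by definition. The plan is to show that each $K_n$ is a closed connected (in fact full) set exactly when the asymptotic value $v$ keeps landing in the appropriate preimages. We then pass to the decreasing intersection of compact connected sets, which is connected.

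\emph{Direction $v\in K(f)\Rightarrow K(f)$ connected.} First note that $U_n:=f^{-n}(U)\cap U$ is open, and $U_{n+1}=f^{-1}(U_n)$ (within $U$). We prove by induction that each $U_n$ is a simply connected domain whose closure contains $u$. Suppose $U_n$ is simply connected and $v\in U_n$; this holds because $v\in K(f)$ and $v\notin\bigcup_k f^{-k}(u)$ would give $v\in\bigcap f^{-k}(U)$. The prepole case $f^k(v)=u$ has to be handled separately: there $v\in\partial U_n$ for some $n$, and we use closures instead.

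With $v\in U_n$, the set $U_n\setminus\{v\}$ is a punctured disk. Since $f:U\to V\setminus\{v\}$ is a universal covering, the preimage $f^{-1}(U_n\setminus\{v\})$ is connected and simply connected: it is the universal cover of a punctured disk, i.e.\ a half-plane in exponential coordinates. This uses only the lifting/classification of coverings of $\D^*$ mentioned after Definition \ref{def:tlmap}.

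Hence $U_{n+1}$ is a simply connected domain, and its closure contains $u$ because $f$ behaves like $\exp$ near the essential singularity. Consequently each $\overline{U_n}$ is a continuum, and these continua are nested. Their intersection equals $K(f)$: one checks that $\overline{f^{-n}(U)}=\overline{U_n}$, since $f^{-n}(U)\subset U$ by the domain of definition. So $K(f)$ is connected.

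In the prepole case $f^k(v)=u$, we get $v\in U_n$ for $n\le k$, and $v\in \partial U_{k+1}$ with $v\notin U_{k+1}$. Then $U_{k+1}$ is an open set in $V\setminus\{v\}$ which is simply connected but has $v$ on its boundary, namely a Jordan region touching $v$ (the image $f^{k+1}$ of $U$ near $u$). Its preimage under the universal cover splits into infinitely many components, each a simply connected domain accumulating only at $u$. All their closures contain $u$, so the union of closures is still connected. Iterating, $\overline{f^{-n}(U)}$ remains connected (a union of continua all containing $u$ or preimages glued through points accumulating at $u$). We expect this bookkeeping to be the main technical obstacle; we would formalize it via the fact that every component of $f^{-1}(W)$, for $W$ connected with $v\notin W$, has $u$ in its closure (the components are lifts under $\exp$ translated by $2\pi i\mathbb{Z}$, which accumulate at $u$).

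\emph{Direction $v\notin K(f)\Rightarrow K(f)$ disconnected.} Let $n$ be minimal with $v\notin \overline{U_n}$, so that $v\in \overline{U_{n-1}}\setminus\overline{U_n}$ (with $U_0=U$, or $v\in V\setminus\overline U$ when $n=0$). Then $\overline{U_n}$ avoids $v$, and we may take a simply connected neighborhood $W\subset V\setminus\{v\}$ of a component structure. Each component $C$ of $U_n$ is simply connected and misses $v$, so $f^{-1}(C)$ consists of infinitely many disjoint homeomorphic copies $g_j(C)$, $j\in\Z$, where the $g_j$ are distinct inverse branches. These copies have disjoint closures in $\overline U\setminus\{u\}$, because the branches of the logarithm have disjoint images separated by lifts of a curve from $v$ to $\partial V$ avoiding $\overline{U_n}$.

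Each copy contains points of $K(f)$. Indeed, $K(f)\cap \overline{C}\neq\emptyset$ for components meeting $K(f)$, by complete invariance (Proposition \ref{lem:K}), and $K(f)\setminus\{u\}$ is nonempty, as it contains preimages of $u$. Therefore $K(f)\setminus\{u\}$ splits into at least two relatively clopen pieces, separated by the lifted curves. These lifted curves all land at $u$, so we must still argue that $K(f)$ itself, with $u$ added back, is disconnected. To do so we separate one copy $g_0(\overline{C})$ by a Jordan curve made from two lifts of the cut curve together with $u$. Intersecting $K(f)$ with the complement of this Jordan curve gives a nontrivial decomposition, since $K(f)$ meets that Jordan curve only at $u$ and the chosen copy is compact away from $u$. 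Here one uses that each $g_j(\overline C)$ has closure not containing $u$ when $\overline C\subset V$ is bounded away from $\partial V$. Hence $K(f)$ is disconnected.
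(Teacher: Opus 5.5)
Your route is the paper's: nested continua $\overline{f^{-n}(U)}$, the tract argument while $v\in f^{-n}(U)$, a separate prepole case, and disjoint lifts for the converse. The non-prepole half of the forward direction is fine; the gap is in the prepole case.

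First, an index slip. If $f^k(v)=u$, then $v\in U_n$ only for $n\le k-1$, and it is $U_k$ that has $v$ on its boundary. So the components $D_j$ of $U_{k+1}=f^{-1}(U_k)$ are the ones that all touch $u$.

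The real problem is ``iterating''. You intend to use the fact that every component of $f^{-1}(W)$, for $W$ connected with $v\notin W$, has $u$ in its closure. This is false whenever $\overline W$ is a closed Jordan disk avoiding $v$ (more generally, whenever it lies in a simply connected subdomain of $V\setminus\{v\}$). In that case inverse branches are continuous on $\overline W$, and the components of $f^{-1}(W)$ have closures $g_j(\overline W)$ compact in $U$; this is exactly what your converse direction uses.

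It is also the typical situation one level down. If $u\neq v$, then $v\in\overline{D_j}$ for exactly one $j=j_0$. This holds because $\overline{D_j}\setminus\{u\}=g_j(\overline{U_k}\setminus\{v\})$, $f(v)\in\partial U_k\setminus\{v\}$, and the points $g_j(f(v))$, $j\in\Z$, are pairwise distinct. Hence for $j\neq j_0$, every component of $f^{-1}(D_j)\subset U_{k+2}$ has compact closure in $U$ and misses $u$. Such a component is linked to $u$ only through a chain: its closure contains a pole $g(u)$, which also lies in the closure of a lift of $D_{j_0}$, and that lift reaches $u$. Deeper levels need longer chains through prepoles. The paper's one-line induction makes the same claim (every component of $f^{-n}(U)$, $n>n_0$, has $u$ in its closure), and it likewise fails when $u\neq v$. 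Only its conclusion, that the closures are connected, is right.

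Here is a short argument that replaces this bookkeeping and covers both cases at once.
\begin{itemize}
\item Put $K_n:=\overline{f^{-n}(U)}$. Since $f\colon\overline U\setminus\{u\}\to\overline V\setminus\{v\}$ is a covering, hence open, closure commutes with preimage, and $K_{n+1}=f^{-1}(K_n\setminus\{v\})\cup\{u\}$.
\item Assume $K_n$ is connected with $v\in K_n$, and suppose $K_{n+1}=A\sqcup B$ with $A,B$ closed and nonempty and $u\in A$.
\item Then $B$ is compact and open in $Y:=f^{-1}(K_n\setminus\{v\})$. Since $f|_Y$ is again a covering, $f(B)$ is open in $K_n\setminus\{v\}$, hence open in $K_n$, and it is compact.
\item So $f(B)$ is a nonempty clopen subset of $K_n$ missing $v$, a contradiction.
\end{itemize}
As $K_0=\overline U$ and $v\in K(f)\subset K_n$ for all $n$, every $K_n$ is a continuum, and so is $K(f)$.

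Your converse is correct in substance, and more careful than the paper's sketch. Choosing $n$ minimal with $v\notin\overline{U_n}$ covers the case $f^{n-1}(v)\in\partial U\setminus\{u\}$. Your separating Jordan curve through $u$ handles the accumulation of the lifted pieces at $u\in K(f)$. Two corrections are needed there:
\begin{itemize}
\item The reason $u\notin g_j(\overline C)$ is that $\overline C$ avoids $v$ and lies in the simply connected domain $V\setminus\gamma$, not that it is bounded away from $\partial V$.
\item When $v\in\partial U_{n-1}$, the cut curve $\gamma$ exists because $v$ is accessible from $V\setminus\overline{U_{n-1}}$, which avoids $\overline{U_n}$.
\end{itemize}
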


\begin{rem*}
In fact, if $v\notin K(f)$, then $K(f)$ is totally disconnected. This will follow from the Straightening theorem and the analogue result for the model family proven in \cite[Lemma 3.3]{GaKo2}, although it could probably be proven directly.  
\end{rem*}

\begin{proof}
	Let us first assume that $v \in K(f)$  and let us prove that $K(f)$ is connected.  Either 
for all $n \in \N$, $f^n(v) \neq u$, or there exists $n_0 \in \N$ such that $f^{n_0}(v)=u$.
In the first case, by Lemma \ref{lem:K}, $v \in f^{-n}(U)$ for all $n \in \N$. We prove by induction on $n$ that   $f^{-n}(U)$ is a simply connected domain. For $n=0$, $U$ is simply connected by definition; now let $n \in \N$ and assume that $f^{-n}(U)$ is a simply connected domain.  Then $f^{-(n+1)}(U)$ is a logarithmic tract above $f^{-n}(U)$ (since $v \in f^{-n}(U)$). Therefore, $f^{-(n+1)}(U)$ is indeed simply connected.
	The closure of a connected set is connected, and the decreasing intersection of connected compact sets is connected.
	Therefore $K(f) = \bigcap_{n \geq 0} \overline{f^{-n}(U)}$ is connected.
	
	\begin{figure}[htb!]
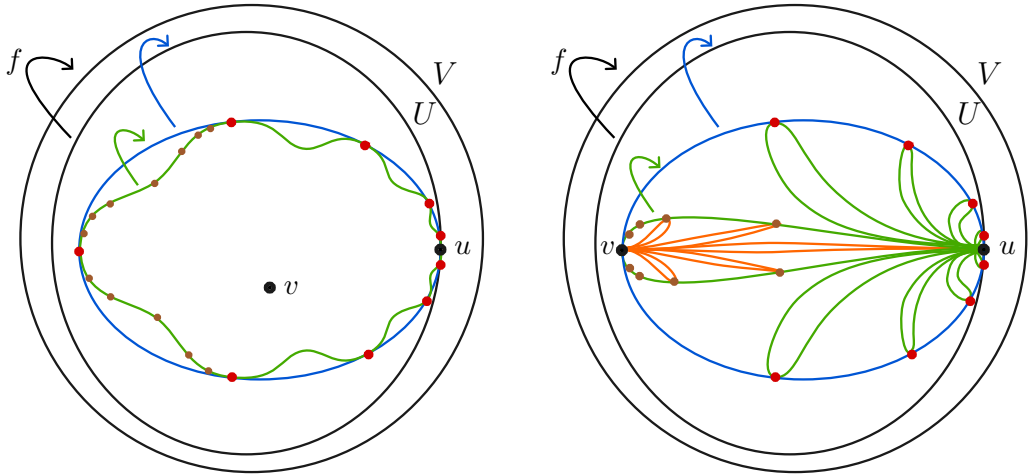

	\centering
	\includegraphics[height=0.4\textwidth]{connectedness2-1.pdf} 
	\hspace*{0.05\textwidth}
	\includegraphics[height=0.4\textwidth]{connectedness2-2.pdf}
	\setlength{\unitlength}{0.85\textwidth}
	\put(-0.028,0.22){$u$}
	\put(-0.57,0.22){$u$}
	\put(-0.425,0.22){$v$}
	\put(-0.74,0.18){$v$}
	\put(-0.05,0.39){\large $V$}
	\put(-0.07,0.35){\large $U$}
	\put(-0.592,0.39){\large $V$}
	\put(-0.612,0.35){\large $U$}
	\put(-0.475,0.405){$f$}
	\put(-1.017,0.405){$f$}
	\caption{\label{connected case} \small Sketch of the proof of Proposition \ref{prop:Kconnected}, in the case when $v\in K(f)$. In blue, $f^{-1}(\partial U)$, an unbounded  Jordan curve which contains all poles (preimages of $u$) of $f$ (red dots). In green, $f^{-2}(\partial U)$, which contains all prepoles of order one or two of $f$ (red and brown dots).  Left: $f^n(u)\neq v$ for all $n\geq 0$, hence $f^{-n}(\partial U)$ is a single (unbounded) Jordan curve.  Right: $f(v)=u$ ($n_0=1$). Hence $f^{-2} (\partial U)$ consists of infinitely many unbounded curves mapping one-to-one to $\partial U$. In orange, $f^{-3}(\partial U)$, illustrating the pattern that repeats inside every preimage of $U$.}
	\end{figure}
	
	Let us now assume that there exists $n_0 \in \N$ such that $f^{n_0}(v)=u$. By the same argument as above, we have that for all $n \leq n_0$,
	$f^{-n}(U)$ is simply connected. But now $f^{-n_0}(U)$ is simply connected, and { $v$ belongs to the boundary of $ f^{-n_0}(U)$}
	; so $f^{-n_0-1}(U)$ is a countable union of disjoint simply connected sets, which also do not contain $v$. Moreover, the closure of each connected component of $f^{-n_0-1}(U)$ contains $u$.
	By a quick induction on $n$, we have that for all $n > n_0$, $f^{-n}(U)$ is a disjoint union of countably many simply connected domains, each having $u$ in their closure. In particular, even though $f^{-n}(U)$ is disconnected, $\overline{f^{-n}(U)}$ is connected. Since the decreasing intersection of a sequence of connected sets is connected, $K(f)$ is again connected.
	
	Conversely, assume that $v \notin K(f)$. Then  there exists $n_0 \in \N$ (minimal) such that $v \notin f^{-n_0}(U)$.  By the same reasoning as above, $f^{-n_0}(U)$ is simply connected, and {every connected component of  $f^{-(n_0+1)}(U)$ maps one to one to $f^{-n_0}(U)$; however in this case, $v $ is not in the boundary of $f^{-n_0}(U)$, so the aforementioned connected components do not contain $u$ in their boundary} . 
	In particular,  { $\ov{f^{-(n_0+1)}(U)}$} is disconnected. Since each component of $f^{-(n_0+1)}(U)$ (open) contains some Julia set (closed) (by backward invariance) and since $J \subset f^{-(n_0+1)}(U)$, the Julia set is disconnected.
\end{proof}
\begin{figure}[htb!]
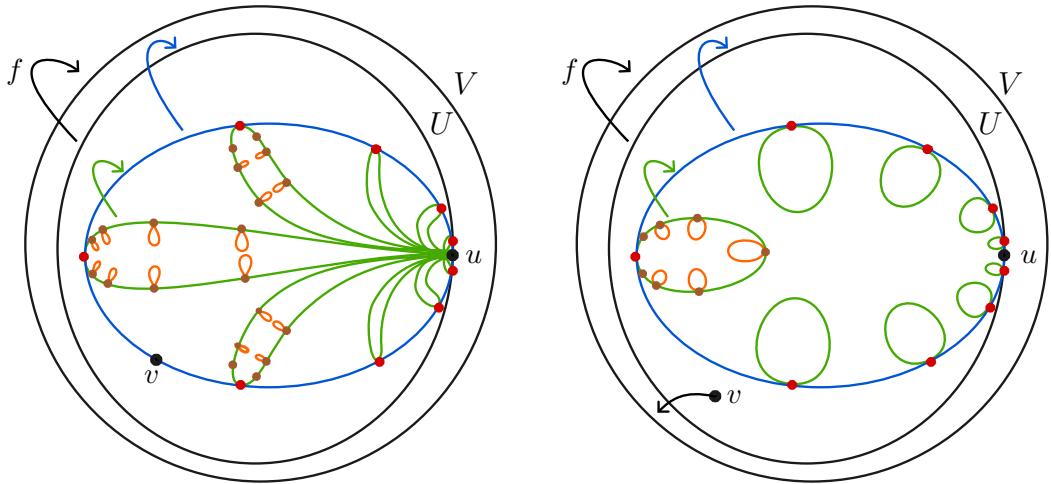

\centering
\includegraphics[width=0.4\textwidth]{disconnected11.pdf} 
\hspace*{0.05\textwidth}
\includegraphics[width=0.4\textwidth]{disconnected12.pdf} 
	\setlength{\unitlength}{0.85\textwidth}
	\put(-0.028,0.22){$u$}
	\put(-0.58,0.22){$u$}
	\put(-0.9,0.1){$v$}
	\put(-0.32,0.08){$v$}
	\put(-0.05,0.39){\large $V$}
	\put(-0.07,0.35){\large $U$}
	\put(-0.592,0.39){\large $V$}
	\put(-0.616,0.35){\large $U$}
	\put(-0.485,0.405){$f$}
	\put(-1.037,0.405){$f$}
\caption{\label{connected case} \small Sketch of the proof of Proposition \ref{prop:Kconnected}, in the case $n_0=1$, that is when $f^2(v)\notin \overline{U} $. Left: $f(v) \in \overline{U}$. Right: $f(v)\notin \overline{U}$. }
\end{figure}

\section{The model family}\label{sec:model}

 In analogy to quadratic polynomials, viewed  as the slice of $\operatorname{Rat}_2$ (degree two rational maps) for which one of the two critical points is superattracting,  we consider here the one-dimensional slice of $\mathbb{F}_2$ (meromorphic maps with exactly two ommitted asymptotic values, known as {\em generalized tangent maps}, (see the discussion in \cite[Appendix]{FK} and also \cite{DeKePS,KK,EreGab}) with an attracting fixed point of constant multiplier. For simplicity purposes, we fix the value of the multiplier to be $1/8$, although it can be shown that this choice is irrelevant.

Maps in this slice are of the form 
	\begin{equation} \label{model}
	T_\alpha(z):= \frac{e^{(\alpha-1)z/8}-1}{\alpha e^{(\alpha-1)z/8}-1},
	\end{equation}
where $\alpha \in \C \setminus \{1\}$.

In a way,  $\{T_\alpha\}_\alpha$ is the simplest family whose members are naturally tangent-like maps, in the same way that quadratic polynomials are the simplest functions which are naturally quadratic-like maps.

In this section, we shall study the general properties of the maps $T_\alpha$, both in dynamical and in parameter plane. In the latter, we shall introduce the {\em Tandelbrot set}, an infinite analogue of the Mandelbrot set.

\begin{rem*} The slice(s) that we consider in this paper, up to taking a different normalization, have been extensively studied in \cite{chen2022accessible, chen2023slices, GaKo, GaKo2}, where a detailed combinatorial description of both dynamical and parameter plane is given. There is little overlap with the results in the present paper; we indicate explicitely where such overlap occurs.

\end{rem*}

\subsection{First properties}

Let $T_\alpha$ be defined as in (\ref{model}).  For $\alpha \in \C \setminus \{1\}$, the map $T_\alpha$ has two asymptotic values, $1$ and $\frac{1}{\alpha}$.
It also fixes $0$ with multiplier equal to $\frac{1}{8}$, { hence at least one of the two asymptotic values must always belong to the basin of $0$}. In particular, their activity loci are disjoint. Finally, observe that for $\alpha=0$, 
$T_0(z)=1-e^{-z/8}$ is an exponential map of disjoint type  (i.e. the Fatou set of $T_0$ is connected and consists of a completely invariant basin of attraction of a fixed point). This is the only parameter value for which $T_\alpha$ is an entire map.

We shall see that when $\alpha \in \overline{\D} \setminus \{1\}$, the asymptotic value $1$ is always contained in the basin of attraction of $0$, so that $1/\alpha$ acts as a {\em free} asymptotic value.

\begin{lem}[{$1/\alpha$ is free if $\alpha$ is small}]\label{lem:disk mapping inside}
	For all { $\alpha \in \overline{\D}\setminus\{1\}$}, we have $T_\alpha(\D(0,2)) { \subset \D} \Subset \D(0,2)$. Therefore, $\overline{\D}(0,2)$ (and in particular the asymptotic value $1$) lie in the basin of attraction of $z=0$. 
\end{lem}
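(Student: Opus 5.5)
The plan is to reduce the statement to an elementary estimate on the Möbius factor. Write $s:=(\alpha-1)z/8$ and $w:=e^{s}$, so that
$$T_\alpha(z)=\frac{w-1}{\alpha w-1}.$$
Showing $|T_\alpha(z)|<1$ is then the same as showing $|w-1|<|\alpha w-1|$. The key observation is that both quantities are naturally measured in units of $|\alpha-1|$. This matters because $\alpha$ may be close to $1$, so a naive bound of the form $|\alpha w-1|\geq c>0$ is not available.

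Concretely, I will carry out three estimates, taking $|z|\leq 2$ (the closed disk, which also covers the claim about $\overline{\D}(0,2)$) and $|\alpha|\leq 1$.
\begin{enumerate}
\item Bound the exponent: $|s|\leq |\alpha-1|/4$. Since $|\alpha-1|\leq 2$, this gives $|s|\leq 1/2$.
\item Bound the numerator: using $|e^{s}-1|\leq |s|e^{|s|}$, we get $|w-1|\leq e^{1/2}|\alpha-1|/4$. Since $e^{1/2}/4<1/2$, this yields $|w-1|<|\alpha-1|/2$. The inequality is strict because $\alpha\neq 1$.
\item Bound the denominator from below: write $\alpha w-1=\alpha(w-1)+(\alpha-1)$ and use $|\alpha|\leq1$ to get $|\alpha w-1|\geq |\alpha-1|-|w-1|>|w-1|$.
\end{enumerate}
Together these give $|T_\alpha(z)|<1$ for all $|z|\leq 2$. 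In particular the denominator never vanishes there, so $T_\alpha$ has no poles in $\overline{\D}(0,2)$. We conclude $T_\alpha(\overline{\D}(0,2))\subset\D\Subset\D(0,2)$.

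For the dynamical consequence, $T_\alpha$ restricts to a holomorphic self-map of $\D(0,2)$ whose image is relatively compact in $\D(0,2)$. By the Schwarz–Pick lemma it therefore strictly contracts the hyperbolic metric of $\D(0,2)$, with a uniform contraction factor on the compact set $\overline{\D}$. Since $0$ is a fixed point, every orbit starting in $\D(0,2)$ converges to $0$. The closed disk is handled by the same reasoning, because its points are mapped into $\D$ after one iterate. In particular the asymptotic value $1\in\overline{\D}(0,2)$ lies in the basin of $0$.

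The only delicate point is the uniformity as $\alpha\to1$: the estimate must not deteriorate there. Factoring everything in terms of $|\alpha-1|$, as in the three estimates above, handles this. The remaining check is just the numerical constant $e^{1/2}/4<1/2$, and this is exactly where the multiplier $1/8$, through the factor $8$ in the exponent, is used.
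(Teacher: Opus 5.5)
Your proof is correct, but it takes a genuinely different route from the paper.

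The paper argues structurally. The singular values $1$ and $\frac{1}{\alpha}$ are omitted values lying outside $\D$, so the covering $T_\alpha:\C\to\rs\setminus\{1,\frac{1}{\alpha}\}$ has a univalent inverse branch $g$ on $\D$ with $g(0)=0$ and $g'(0)=8$. Koebe's one-quarter theorem then gives $g(\D)\supset\D(0,2)$, i.e.\ $T_\alpha(\D(0,2))\subset\D$.

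You instead work with the explicit formula, measuring the numerator and denominator of $(w-1)/(\alpha w-1)$ in units of $|\alpha-1|$. All three estimates are valid: $|e^s-1|\le|s|e^{|s|}$, $e^{1/2}/4<\frac12$, and $|\alpha w-1|\ge|\alpha-1|-|w-1|$ (using $|\alpha|\le1$). They are uniform as $\alpha\to1$, and they exclude poles in $\overline{\D}(0,2)$.

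The paper's argument buys conceptual clarity and portability. It uses only the location of the singular values and the multiplier, and it explains the radius $2=8/4$ directly. It also rescales verbatim to other disks; the paper reuses it this way in Lemma~\ref{lem:tbdd} for $|\alpha|>1$, whereas your computation would have to be redone for each such variant. Your argument buys complete elementarity, with no covering theory or distortion theorem. It also gives a slightly sharper conclusion: $T_\alpha(\overline{\D}(0,2))\subset\D$ rather than only $\subset\overline{\D}$.

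For the dynamical consequence, your hyperbolic-contraction argument is fine. A cleaner option is to apply the Schwarz lemma to $\zeta\mapsto T_\alpha(2\zeta)$, which maps $\D$ into $\D$ and fixes $0$. This gives the explicit bound $|T_\alpha(z)|\le|z|/2$ on $\D(0,2)$.
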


\begin{proof}
	Let { $\alpha \in \overline{\D}\setminus\{1\}$}. Then $T_\alpha$ has no singular values in $\D$, since its singular values are
	$1$ and $\frac{1}{\alpha}$. Therefore, there exists a well-defined univalent inverse branch $g$ of $T_\alpha^{-1}$ fixing $0$ and defined on $\D$. We have $g'(0)=8$; thus, by Koebe's distortion theorem, 
	$g(\D) \supset \D(0,2)$. In particular, this implies that $T_\alpha( \D(0,2) ) \subset \D$ and therefore that 
	$T_\alpha(\overline{\D}(0,2)) \subset \overline{\D} \Subset \D(0,2)$.
\end{proof}

Next, we prove that our family is characterized by its dynamical features: up to an affine conjugacy, any meromorphic map with exactly two singular values and a fixed point of multiplier $\frac{1}{8}$ belongs to the model family.
Additionnally, the parameter space is invariant under the involution $\alpha \mapsto \frac{1}{\alpha}$.

\begin{lem}[Affine conjugacy classes] \label{lem:rigidity}
	Any meromorphic transcendental map with  exactly two singular values 	and a fixed point with multiplier equal to $\frac{1}{8}$ is affinely conjugate to a map $T_\alpha$. Moreover, if $\alpha_1, \alpha_2\in \C \setminus \{0,1\}$, then $T_{\alpha_1}$ is affinely conjugate to $T_{\alpha_2}$  if and only if $\alpha_2=\frac{1}{\alpha_1}$ or $\alpha_2=\alpha_1$.
\end{lem}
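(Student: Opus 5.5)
The plan is to use the classical description of meromorphic maps with exactly two asymptotic values and no critical points. Let $f$ be transcendental meromorphic with exactly two singular values $a\neq b$. If $f$ had a critical point, its critical value would be a singular value, so there would be at most one asymptotic value. But $f:\C\setminus f^{-1}(S)\to\rs\setminus S$ would then be a covering of infinite degree over a sphere minus at most two points. I would argue this forces $f$ to have no critical points and both singular values to be asymptotic. Post-composing with a Möbius map $M$ sending $a\mapsto 0$ and $b\mapsto\infty$, the map $M\circ f:\C\to\C^*$ is a covering without critical points. Since $\C$ is simply connected, it is a universal covering of $\C^*$, so $M\circ f=e^{h}$ with $h:\C\to\C$ a biholomorphism, i.e. $h(z)=cz+d$ is affine. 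Hence $f(z)=M^{-1}(e^{cz+d})$, a Möbius image of an exponential. (The critical-point case is ruled out because the classification of unbranched coverings of $\C^*$, or Iversen-type arguments, shows a critical value cannot coexist; it is simplest to note that with two singular values and a critical point the map would be a finite branched cover, hence rational.)

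Next I would normalize. Conjugating by a translation, put the fixed point of multiplier $1/8$ at $0$. Writing $f(z)=\frac{Ae^{cz}+B}{Ce^{cz}+D}$, the condition $f(0)=0$ gives $A=-B$, so up to scaling the numerator is $e^{cz}-1$. Conjugating by a dilation $z\mapsto tz$ does not change the multiplier but rescales the asymptotic values; I would use it to send one asymptotic value, $A/C=1/C$, to $1$. Then $f(z)=\frac{e^{cz}-1}{\alpha e^{cz}+\beta}$. Imposing that the other asymptotic value $-B/D$ is finite and nonzero, together with the correct normalization, should yield $\beta=-1$. The multiplier equation $f'(0)=\frac{c(\alpha+\beta)\cdot(-1)\cdots}{(\alpha+\beta)^2}$ simplifies to $c/(1-\alpha)\cdot(-1)=1/8$, giving $c=(\alpha-1)/8$. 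This is routine computation which I will check carefully, keeping track of which asymptotic value was sent to $1$.

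For the second part, an affine conjugacy $L$ between $T_{\alpha_1}$ and $T_{\alpha_2}$ must fix $0$ because $0$ is the unique fixed point of multiplier $1/8$. I need uniqueness here: two such fixed points would need justification. Instead, I would argue that a conjugacy sends the fixed point to a fixed point of the same multiplier. Combined with the normalization freedom, the conjugated map must again be of the stated form, with the asymptotic value set $\{1,1/\alpha_1\}$ mapped by $L(z)=tz$ onto $\{1,1/\alpha_2\}$. Then either $t=1$, giving $\alpha_2=\alpha_1$ by comparing formulas, or $t=\alpha_1$, giving $1/\alpha_2=\alpha_1$. Conversely, I would verify directly that $z\mapsto\alpha z$ conjugates $T_\alpha$ to $T_{1/\alpha}$. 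This is the key identity, obtained by substituting and multiplying numerator and denominator by $e^{-(\alpha-1)z/8}$.

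The main obstacle is the fixed-point bookkeeping. A given $T_\alpha$ may a priori have other fixed points of multiplier $1/8$, so the conjugacy need not fix $0$. To handle this, I would instead use the general first part: any conjugacy preserves the structure, and I would determine $t$ from the requirement that $T_{\alpha_2}(tz)=tT_{\alpha_1}(z)$ identically. Comparing the periods of the exponentials, $2\pi i\cdot 8/(\alpha_i-1)$ scaled by $t$, together with the asymptotic values, should pin down the two cases, including translations.
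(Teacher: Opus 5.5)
\textbf{The gap: the ``only if'' direction.} Your main argument assumes an affine conjugacy fixes $0$ because $0$ is the only fixed point of multiplier $\frac18$. That is false.

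\emph{Why uniqueness fails.} Solving $T_\alpha(z)=z$ and $T_\alpha'(z)=\frac18$ gives $e^{(\alpha-1)z/8}\in\{1,\alpha^{-2}\}$, hence a second candidate $z_\alpha=1+\frac1\alpha$. For the parameters of Remark~\ref{rem:switch} with $\alpha\neq-1$, this is a genuine nonzero fixed point of multiplier $\frac18$. Then $z\mapsto -z+1+\frac1\alpha$ is a self-conjugacy of $T_\alpha$ that moves $0$. The paper handles this case by exactly that computation: it writes $A$ explicitly from $A(0)=z_{\alpha_2}$, $A(z_{\alpha_1})=0$, and then imposes $A(\{1,\frac1{\alpha_1}\})=\{1,\frac1{\alpha_2}\}$.

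\emph{Why your fallback is not enough as stated.} You flag the problem, but the fallback is not carried out. Matching period lattices and the \emph{sets} of asymptotic values only gives $a=\pm\frac{\alpha_1-1}{\alpha_2-1}$ and $A(\{1,\frac1{\alpha_1}\})=\{1,\frac1{\alpha_2}\}$. For example, the choice $A(1)=1$, $a=-\frac{\alpha_1-1}{\alpha_2-1}$ leads to $\alpha_1(1-\alpha_2)^2+\alpha_2(1-\alpha_1)^2=0$. This holds for $\alpha_1=-1$, $\alpha_2=3-2\sqrt2$, which is neither $\alpha_1$ nor $1/\alpha_1$. What is missing is the link between the sign of $a$ and which asymptotic value goes where.

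\emph{How to close it.} Your idea of using the full identity supplies that link. Write $T_{\alpha_i}=M_i(e^{c_iz})$ with $M_i(w)=\frac{w-1}{\alpha_iw-1}$, $c_i=\frac{\alpha_i-1}{8}$, and $A(z)=az+b$. Then $A\circ T_{\alpha_1}=T_{\alpha_2}\circ A$ reads $N(e^{c_1z})=Ke^{c_2az}$, where $N:=M_2^{-1}\circ A\circ M_1$ is Möbius and $K=e^{c_2b}$. Since $e^{c_1z}$ takes every value in $\C^*$ and the right-hand side omits $0$ and $\infty$, $N$ preserves $\{0,\infty\}$. Hence $N(w)=Kw^{s}$ with $s=\pm1$ and $c_2a=sc_1$.

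Evaluating $A\circ M_1=M_2\circ N$ at $w=0,\infty$ shows that $s=1$ forces $A(1)=1$, $A(\frac1{\alpha_1})=\frac1{\alpha_2}$, and $s=-1$ forces the swapped assignment. In both cases $a(\frac1{\alpha_1}-1)=s(\frac1{\alpha_2}-1)$. Combined with $a=s\frac{\alpha_1-1}{\alpha_2-1}$, this gives $\alpha_2(1-\alpha_1)^2=\alpha_1(1-\alpha_2)^2$, i.e.\ $(\alpha_1-\alpha_2)(1-\alpha_1\alpha_2)=0$.

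Completed this way, your route is cleaner than the paper's. It treats all affine conjugacies at once and never needs to locate the fixed points of multiplier $\frac18$.

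\textbf{Two smaller points in the first part,} where you otherwise follow the paper. First, your reason for excluding critical points (``a finite branched cover, hence rational'') is not an argument. The right step, used in the paper, is that $f:\C\setminus f^{-1}(S)\to\rs\setminus S\cong\C^*$ is a covering of infinite degree (by Picard), hence the universal one. So its domain is simply connected and $f^{-1}(S)=\emptyset$. Second, normalizing $A/C$ (the limit as $e^{cz}\to\infty$) to $1$ puts $1$ at the opposite end from $T_\alpha$, where $1$ is the limit as $e^{(\alpha-1)z/8}\to0$. You must normalize the other asymptotic value or rewrite in terms of $e^{-cz}$; the claim $\beta=-1$ does not follow as written.
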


\begin{proof}
Let $\mu,\la\in \C$ denote the two distinct singular values and $M(z):=\frac{z-\mu}{z-\la}$. Then, the map $M\circ f: \C\setminus \{f^{-1}(\mu),f^{-1}(\la)\} \longrightarrow \C\setminus\{0,\infty\}$ is a covering.
Coverings of $\C^*$ are either equivalent to $z^d$ and then they have finite degree, or to the exponential map in which case the domain is simply connected.

In this case, since the degree is infinite, the domain is simply connected, i.e. $0$ and $\infty$ are ommitted by $M\circ f$, and $M\circ f$ is a universal covering of $\C\setminus \{0,\infty\}$. By uniqueness of universal coverings, $f=M^{-1}(\exp (L(z))$, where $L$ is a conformal map of $\C$ and hence afine.

Thus $f(z)=\frac{\la e^{az+b}-\mu}{e^{az+b}-1}$, for some $a\in\C^*$, $b\in\C$.  Let $A$ denote an affine map sending $\mu$ to 1 and the fixed point of $f$ with multiplier $1/8$ to 0. It is then straightforward to check that $ A\circ f\circ A^{-1}=T_\alpha$, where $\alpha=1/\la$.

Finally, if $\la=\infty$ or $\mu=\infty$, the same procedure results in $T_0(z)$.

Let us now prove that for all $\alpha \in \C \setminus \{0,1\}$, $T_\alpha$ is linearly conjugated to $T_{1/\alpha}$ by the linear map $L(z):=\alpha z$. Indeed,
\[
	L \circ T_\alpha \circ L^{-1}(z)=  \alpha \frac{e^{z(\alpha-1)/(8\alpha)}-1}{\alpha e^{z(\alpha-1)/(8\alpha)}-1 }
	=\frac{e^{(1-1/\alpha)z/8} -1 }{e^{(1-1/\alpha)z/8} - \frac{1}{\alpha}} 
	=\frac{e^{(1/\alpha-1)z/8}-1}{\frac{1}{\alpha}e^{(1/\alpha-1)z/8} -1}
	=T_{1/\alpha}(z).
\]

Finally, let us prove that if  $T_{\alpha_1}$ is affinely conjugated to $T_{\alpha_2}$ , then $\alpha_2= \frac{1}{\alpha_1}$ or $\alpha_1=\alpha_2$. Let $A$ denote the affine conjugacy such that $A \circ T_{\alpha_1} \circ A^{-1}=T_{\alpha_2}$ and set $A(z):=az+b$, $a \in \C^*$, $b\in\C$. Then $A$ must map $S(T_{\alpha_1})=\{ 1, \frac{1}{\alpha_1}\}$ to $S(T_{\alpha_2})=\{1, \frac{1}{\alpha_2}  \}$.

Suppose first that $A(0)=0$ and hence $b=0$. If $A(1)=1$, then $A$ fixes two points and hence it is the identity. If $A(1)=1/\alpha_2$, then $a=1/\alpha_2$. But then, $A(1/\alpha_1)=1$ and hence $a=\alpha_1$. It follows then that $\alpha_1=1/\alpha_2$. 

Nevertheless, it could also happen that $A(0)=z_2\neq 0$, where $z_2$ is a fixed point of $T_{\alpha_2}$ with multiplier $1/8$. (This situation could only happen for a countable number of parameters, for which the Fatou set consists of two completely invariant basins which are Jordan domains, see Remark \ref{rem:switch}.) More precisely, for $\alpha\in\C\setminus\{0,1\}$, one can check that the fixed points of $T_\alpha$ must satisfy
\begin{equation}\label{fp}
e^\frac{(\alpha-1)z}{8}=\frac{z-1}{\alpha z -1},
\end{equation}
while points of derivative $1/8$ satisfy
\begin{equation}\label{der18}
e^\frac{(\alpha-1)z}{8}\in \{1, 1/\alpha^2\}. 
\end{equation}
From the two equations together, it follows that there is only one possible nonzero fixed point with multiplier $1/8$, and that is  $z_\alpha=1+\frac{1}{\alpha}$. Thus, an affine conjugacy switching the basin of $0$ and the basin of $z_{\alpha_2}$ must satisfy $A(0)=1+\frac{1}{\alpha_2}$ and $A(1+\frac{1}{\alpha_1})=0$ and hence has the form 
\[
A(z)=-\frac{\alpha_1(1+\alpha_2)}{\alpha_2(1+\alpha_1)}z + 1+\frac{1}{\alpha_2}.
\]
On the other hand, $A$ must map $S(T_{\alpha_1})=\{ 1, \frac{1}{\alpha_1}\}$ to $S(T_{\alpha_2})=\{1, \frac{1}{\alpha_2}  \}$. From the expression of $A$, one can check that if $A(1)=1$ then $\alpha_1=\alpha_2$ while, if $A(1)=1/\alpha_2$, then $\alpha_2=1/\alpha_1$, as we wanted to show.
\end{proof}

\begin{rem}\label{rem:switch} (Maps with a symmetry switching the basins) If $\alpha$ is a parameter satisfying 
\[
e^{\frac{\alpha^2-1}{8\alpha}}=1/\alpha^2,
\]
then $z_\alpha=1+1/\alpha$ is a fixed point of multiplier $1/8$. The affine conjugacy $A(z)= -z + 1 + 1/\alpha$ conjugates $T_\alpha$ to itself switching the two attracting basins. Composing with $z\mapsto\alpha z$, we obtain an affine conjugacy to $T_{1/\alpha}$ switching the two basins and mapping $1$ to itself. Examples of filled Julia sets for these parameter values can be seen in Figure \ref{switch}.
\end{rem}
\begin{figure}[hbt!]
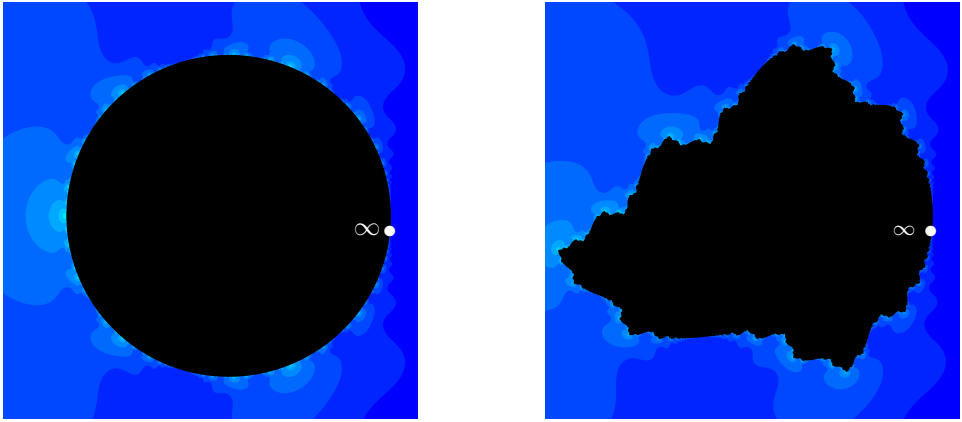

\centering
 \includegraphics[width=0.35\textwidth]{CIRCLE.png}\hspace*{0.1\textwidth}
  \includegraphics[width=0.35\textwidth]{SECONDONE.png}
  \setlength{\unitlength}{0.8\textwidth}
  	\put(-0.602,0.2){\color{white}{\circle*{0.01}}}
 	\put(-0.64,0.195){\color{white}{\small$\infty$}}
	\put(-0.03,0.2){\color{white}{\circle*{0.01}}}
	\put(-0.07,0.195){\color{white}{\scriptsize$\infty$}}	
 \caption{\small \label{switch} Filled Julia sets of $T_\alpha$ for $\alpha=-0.01484108...$ (left) and $\alpha=-0.00801734...+0.00675639... i$ (right).}
 \end{figure}

Next we show that for $\alpha \in \D$, $T_\alpha$ is a tangent-like map when seen from the complement of the basin of $0$ (see Figure \ref{fig:modelTL}).

\begin{lem}[The maps $T_\alpha$ are tangent-like] \label{lem:tlmodel}
	For all $\frac{1}{2}\leq r<1$, for all $\alpha \in \D(0,r)$, the map 
	$T_\alpha : \rs \setminus T_\alpha^{-1}(\D(0,\frac{1}{r})) \to \rs \setminus \D(0,\frac{1}{r})$ is TL, with $u=\infty$ and $v=\frac{1}{\alpha}$. 
\end{lem}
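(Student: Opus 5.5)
We check the five conditions of Definition \ref{def:tlmap} with $V:=\rs\setminus\overline{\D}(0,\frac1r)$, $U:=\rs\setminus T_\alpha^{-1}(\overline{\D}(0,\frac1r))$, $u=\infty$, $v=\frac1\alpha$. We read the closed disk in the complement so that $V$ is open; the statement's open/closed convention is immaterial.

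First, $V$ is a round disk centered at $\infty$, so it is a Jordan domain with real-analytic boundary. Since $|\alpha|<r$, we have $|1/\alpha|>1/r$, so $v\in V$. The other asymptotic value $1$ lies in $\D(0,\frac1r)$ because $r<1$.

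Next we show that $U$ is a Jordan domain with $\overline U\subset V$, i.e.\ that $W:=T_\alpha^{-1}(\overline{\D}(0,\frac1r))$ is a closed, unbounded, simply connected set whose complement is a Jordan domain in $\rs$. Write $T_\alpha=M_\alpha\circ E$, with $E(z)=e^{(\alpha-1)z/8}$ and $M_\alpha(w)=\frac{w-1}{\alpha w-1}$ a Möbius map. Then $M_\alpha^{-1}(\rs\setminus\overline{\D}(0,\frac1r))$ is a round disk $B$ containing $M_\alpha^{-1}(1/\alpha)=\infty$. It omits $M_\alpha^{-1}(1)=0$, which is the image of $1$; more precisely, $0\notin\overline B$ because $1$ lies in the open disk $\D(0,\frac1r)$. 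Hence $B$ is the exterior of a closed round disk $\overline\Delta\subset\C^*$ with $0\notin\overline\Delta$. The preimage $E^{-1}(\overline\Delta)$ is then a countable disjoint union of closed topological disks, one for each branch of the logarithm.

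This is the wrong picture. It shows that the roles must be swapped: the preimage of the disk around $1/\alpha$ (around $\infty$ in $w$) under $E$ is a half-plane-like tract. Concretely, $U=E^{-1}(B\setminus\{\infty\})=\{z:\operatorname{Re}((\alpha-1)z/8)>\log R\}$ up to the smooth boundary curve given by the log of a circle not surrounding $0$. Since $\partial\Delta$ does not surround $0$, $\log$ of $\C\setminus\overline\Delta$ is simply connected and periodic. Its boundary is the curve $\log(\partial\Delta)$ lifted over all branches, and this is a single analytic curve through $\infty$, because $\partial\Delta$ winds zero times around $0$ and so its lifts join into one curve. Thus $U$ is a Jordan domain in $\rs$ whose boundary is an analytic curve plus the point $\infty$.

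The main obstacle is showing that $\partial U$ is a quasicircle, including at $\infty$. For this we use that the curve is $\frac{8}{\alpha-1}\log(\gamma)$ with $\gamma$ an analytic circle omitting $0$. It is therefore invariant under translation by $\frac{16\pi i}{\alpha-1}$ and lies in a strip of bounded width. A periodic analytic curve in a strip satisfies the Ahlfors three-point condition uniformly, including near $\infty$.

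For $\overline U\subset V$, note that Lemma \ref{lem:disk mapping inside} gives $T_\alpha(\overline{\D}(0,2))\subset\overline\D\subset\D(0,\frac1r)$, so $\overline{\D}(0,2)\cap\overline U=\emptyset$. This is not yet enough, since $V$ is the exterior of $\D(0,\frac1r)$ with $\frac1r\le2$. We still have to check that $\overline{\D}(0,\frac1r)\subset\rs\setminus\overline U$. Since $\frac1r\le 2$, every $z$ with $|z|\le\frac1r$ satisfies $T_\alpha(z)\in\overline{\D}\subset\D(0,\frac1r)$, so $z\in W\setminus\partial W$, which gives the inclusion.

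Finally, $T_\alpha=M_\alpha\circ E$ restricted to $U$ is the exponential covering of $\C\setminus\overline\Delta$ composed with a Möbius map. Hence it is a covering onto $\overline V\setminus\{v\}$ on the closure minus $\infty$, and it is holomorphic.
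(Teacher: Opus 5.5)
Your overall plan matches the paper's: write $T_\alpha=M_\alpha\circ E$, observe that $U=T_\alpha^{-1}(V)$ is a logarithmic tract over $V\setminus\{v\}$, and get $\overline U\subset V$ from Lemma \ref{lem:disk mapping inside}. You are also right to treat the quasicircle property at $\infty$ separately; the paper only asserts it from the explicit formula. However, the topological core of your argument rests on a false claim.

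Since $1=M_\alpha(0)\in\D(0,\frac1r)$, the closed disk $\overline\Delta=M_\alpha^{-1}(\overline{\D}(0,\frac1r))$ contains $0$ in its \emph{interior}. Your assertion ``$0\notin\overline\Delta$'' therefore contradicts your own correct observation that $0\notin\overline B$ with $B=\rs\setminus\overline\Delta$. You then justify that $E^{-1}(\partial\Delta)$ is a single curve through $\infty$ by saying that $\partial\Delta$ has winding number zero about $0$. This is backwards. The lifts under $\exp$ of a closed curve with winding number zero are themselves closed curves. That would give countably many disjoint Jordan curves and a non--simply connected $E^{-1}(\C\setminus\overline\Delta)$, and $T_\alpha$ would then omit $1\in V$. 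The ``swap the roles'' paragraph does not repair this, since it keeps the same false winding claim. As written, ``$U$ is a Jordan domain'' is not proved.

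The fix is short. $\C\setminus\overline\Delta$ is a round annulus in $\C^*$ whose core curve $\partial\Delta$ has winding number one about $0$, so it generates $\pi_1(\C^*)$. Hence $E\colon E^{-1}(\C\setminus\overline\Delta)\to\C\setminus\overline\Delta$ is connected and is the universal cover. Equivalently, $T_\alpha\colon U\to V\setminus\{v\}$ is a universal cover, which is how the paper phrases it. It also follows that $E^{-1}(\partial\Delta)$ is one curve, invariant under $z\mapsto z+\frac{16\pi i}{\alpha-1}$.

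For the quasicircle property: $\overline\Delta$ is convex with $0$ in its interior, so $\partial\Delta=\{\rho(\theta)e^{i\theta}\}$ with $\rho$ real-analytic and periodic. Thus $\log\partial\Delta=\{\log\rho(\theta)+i\theta\}$ is a Lipschitz graph over the imaginary axis. After the similarity $w\mapsto\frac{8}{\alpha-1}w$ it is still a Lipschitz graph over a line, hence a quasiline. This gives the quasicircle property including at $\infty$, and is cleaner than the unproved ``three-point condition for periodic curves in a strip''.

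Two minor points. First, with your literal definition $U=\rs\setminus T_\alpha^{-1}(\overline{\D}(0,\frac1r))$ you get $\infty\in U$, and this set is not open. You need $U=T_\alpha^{-1}(V)$ with $\infty\in\partial U$. Second, your first observation $\overline{\D}(0,2)\cap\overline U=\emptyset$ already gives $\overline U\subset V$, because $\overline{\D}(0,\frac1r)\subset\overline{\D}(0,2)$. The ``not yet enough'' detour is unnecessary.
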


\begin{proof}

	Let  $\frac{1}{2}\leq r<1$ and $\alpha \in \D(0,r) \subset \D$.
	Let  $V:=  \rs \setminus \overline{\D}(0,\frac{1}{r})$ and $U:=T_\alpha^{-1}(V)$, and let $u:=\infty$ and $v:=\frac{1}{\alpha}$. 
	Since $|\alpha|<r<1$, $V$ contains $v=\frac{1}{\alpha}$ but not $1$. Moreover, $V$ is a Jordan domain, whose boundary is real-analytic (in particular, $\partial V$ is a quasicircle). The map $T_\alpha: U \to V \setminus \{v\}$ is a universal cover, hence $U$ is simply connected. From the expression of $T_\alpha$, it is also clear that $\partial U$ is a real-analytic Jordan curve, in particular a quasicircle.
	Finally, by Lemma \ref{lem:disk mapping inside} and the fact that $r>\frac{1}{2}$, we have $T_\alpha(\D(0,\frac{1}{r}) )\subset \D  \Subset \D(0,\frac{1}{r})$, from which it follows that   $U \Subset V$.%
\begin{figure}[hbt!]
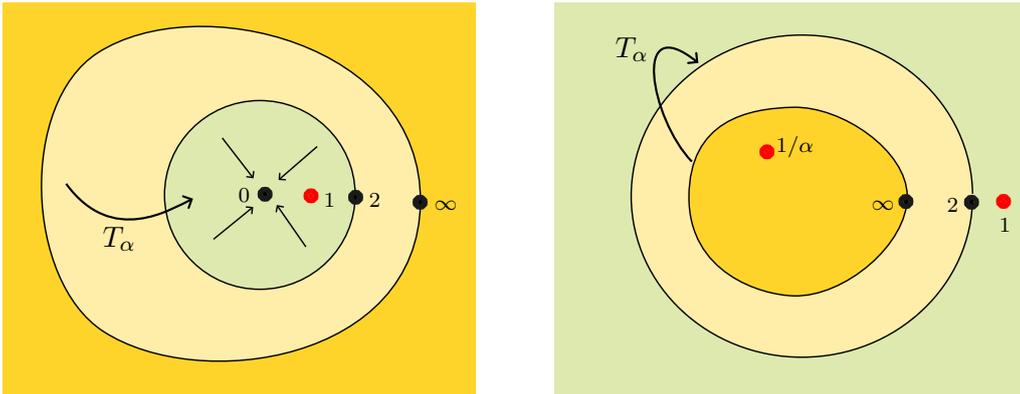

	\centering
\includegraphics[width=0.4\textwidth]{tlrestriction.pdf}
\hspace*{0.05\textwidth}
\includegraphics[width=0.4\textwidth]{tlrestrictioninv.pdf}
\setlength{\unitlength}{0.85\textwidth}
\put(-0.028,0.165){\scriptsize$1$}
\put(-0.08,0.185){\scriptsize$2$}
\put(-0.155,0.187){\scriptsize$\infty$}
\put(-0.25,0.245){\scriptsize$1/\alpha$}
\put(-0.41,0.34){$T_\alpha$}
\put(-0.59,0.187){\scriptsize$\infty$}
\put(-0.655,0.19){\scriptsize$2$}
\put(-0.7,0.19){\scriptsize$1$}
\put(-0.785,0.195){\scriptsize$0$}
\put(-0.92,0.15){$T_\alpha$}
\caption{\label{fig:modelTL} The Tangent-like restriction of the model family, seen from the complement of the basin of attraction of 0. }
\end{figure}
\end{proof}

As it is the case for polynomials and polynomial-like maps, the choice of $U$ and $V$ is not really relevant and produce the same filled Julia set, which in fact is the whole Julia set of the model map, in this case $T_\alpha$. 

\begin{prop}[Choices of $U$ and $V$] \label{prop:choicesUV}
	For any $\alpha \neq 1$,  for any TL restriction  $T_\alpha: U \to V$ with $u=\infty$ and $v=\frac{1}{\alpha}$:
	\begin{enumerate}
		\item the filled Julia set $K(T_\alpha)$ is $\rs \setminus \Omega_\alpha$, where $\Omega_\alpha$ is the attracting basin of $0$.
		\item the Julia set of $T_\alpha$ is also the Julia set of its TL restriction.
	\end{enumerate}
	In particular, Julia sets and filled Julia sets of the TL restriction do not depend on the choice of $U$ and $V$.
\end{prop}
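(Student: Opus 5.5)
The plan is to show that a TL restriction $T_\alpha: U \to V$ with $u=\infty$, $v=\frac{1}{\alpha}$ forces $\rs\setminus V$ to contain a neighborhood of $0$ whose points all escape under the restriction, and then to compare $\bigcap_n \overline{T_\alpha^{-n}(U)}$ with $\rs\setminus\Omega_\alpha$.

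First, a structural fact. Since $T_\alpha: U \to V\setminus\{\frac1\alpha\}$ is a universal covering and $\overline U\subset V$, the set $V$ contains no singular value other than $\frac1\alpha$. Hence $1\notin V$. Moreover $0\notin\overline V$, for otherwise $0\in U$ as well, and the inverse branch of $T_\alpha$ fixing $0$ on the simply connected region would contradict $\overline U\Subset V$ (Schwarz lemma type argument). So the compact set $E:=\rs\setminus V$ contains $0$ and $1$.

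Second, I show that every point of $E$ lies in $\Omega_\alpha$, so $E\subset\Omega_\alpha$. Consider the annulus $A=V\setminus\overline U$. Points of $\rs\setminus\overline U$ other than $u$ are exactly those which leave $U$ at the first step, and $T_\alpha(\rs\setminus \overline{U})\subset \rs\setminus\overline V=E^\circ$ up to boundary. Indeed, $T_\alpha^{-1}(V)=U$ because $T_\alpha:U\to V\setminus\{v\}$ is a full universal cover and $v$ is omitted. Therefore $T_\alpha(\rs\setminus(\overline U\cup T_\alpha^{-1}\{\infty\}))\subset E$, so $E$ is forward invariant under $T_\alpha$ except at poles. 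It contains no pole since $\infty\in\overline V$, hence $T_\alpha(E)\subset E$. The component $W$ of $\inter E$ containing $0$ is mapped into itself. Since $E$ omits $\overline U\ni\infty$, the family $\{T_\alpha^n|_W\}$ omits an open set, hence is normal. Every limit is then constant equal to $0$, since $0$ is attracting and $W$ is connected, so $W\subset\Omega_\alpha$. Points of $E\setminus W$, as well as boundary points, need a little more care. I would handle them by noting that $T_\alpha(E)\subset T_\alpha(\rs\setminus\overline U)$ lies in the interior of $E$, and arguing that $T_\alpha$ maps $E$ into the single invariant component $W$. The argument is that $E$ is connected, being the complement of a Jordan domain, and $T_\alpha(E)$ is connected, contains $0$, and lies in $\inter E$. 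This gives $E\subset\Omega_\alpha$.

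Third, I conclude. A point $z$ whose orbit never leaves $U$ never enters $E$. Such a point is not in $\Omega_\alpha$, because an orbit converging to $0$ eventually lies in $W\subset E$. Conversely, if $z\notin\Omega_\alpha$ is not a prepole, then no iterate lies in $E$, so all iterates lie in $\overline U\setminus\{u\}$. Boundary points of $U$ map to $\partial V\subset E$, so in fact all iterates lie in $U$. Prepoles of $\infty=u$ are in $K$ by Proposition \ref{lem:K} and are not in $\Omega_\alpha$. By Proposition \ref{lem:K} this yields $K(T_\alpha)=\rs\setminus\Omega_\alpha$, which is part (1). For part (2), $J$ of the restriction is $\partial K=\partial\Omega_\alpha$. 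It remains to identify this with $J(T_\alpha)$, that is, to show that the Fatou set equals $\Omega_\alpha\cup\inter K$. Both are standard: $\partial\Omega_\alpha\subset J(T_\alpha)$ because basin boundaries are in the Julia set, and conversely $J(T_\alpha)$ is closed, invariant and disjoint from $\Omega_\alpha$. Points of $\inter K$ have orbits staying in $U$, which omits $E$, so they are normal by Montel.

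The main obstacle I expect is the second step, namely proving that all of $E$, including its boundary, lies in $\Omega_\alpha$ without quantitative estimates. The connectivity argument showing $T_\alpha(E)\subset W$ is where I would focus care.
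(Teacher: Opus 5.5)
Your Step 1 is a genuine gap, and Steps 2 and 3 depend on it: the normality argument needs $0\in W$, and Step 3 needs $0$ to lie in the open set $W\subset E$. The claim $0\notin\overline V$ cannot come from a Schwarz-lemma argument that uses only $\overline U\Subset V$ and the multiplier $\frac18$. Such an argument would show that every fixed point of $T_\alpha$ in $U$ is repelling, and that is false:
for $\alpha$ in the main hyperbolic component (Lemma \ref{lem:maincardioid}), the nonzero attracting fixed point is not in $\Omega_\alpha$, so it lies in $U$; and for the parameters of Remark \ref{rem:switch} with $|\alpha|<1$, the point $1+\frac1\alpha$ is a fixed point of multiplier exactly $\frac18$ lying in $U$.
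A Schwarz-type comparison works only when $v\notin U$. Then $U\subsetneq V\setminus\{v\}$, and the covering $T_\alpha\colon U\to V\setminus\{v\}$ expands the hyperbolic metric of $V\setminus\{v\}$. In the connected case $v\in U$ there is no such inclusion. What distinguishes $0$ from these other fixed points is which singular value it attracts, and your argument never uses this.

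Without that input the statement is in fact false. Take $\alpha=1/\beta$ with $\beta\in\D$ as in Remark \ref{rem:switch}. The asymptotic value $1$ of $T_\alpha$ is then attracted to the fixed point $1+\frac1\alpha\neq 0$. Transporting $\overline{\D}(0,\frac32)$ by the conjugacies of Lemma \ref{lem:rigidity} and Remark \ref{rem:switch}, and using Lemma \ref{lem:disk mapping inside}, gives a small closed disk $E$ about $1+\frac1\alpha$ that contains $1$ and satisfies $T_\alpha(E)\subset\inter E$. Then $V:=\rs\setminus E$ gives a TL restriction with $u=\infty$, $v=\frac1\alpha$ and $0\in U$, so $K(T_\alpha)\neq\rs\setminus\Omega_\alpha$. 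The proposition must therefore be read with $\alpha\in\overline{\D}\setminus\{1\}$; for $|\alpha|>1$ one swaps the roles of the two asymptotic values.

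The repair is the paper's argument, and most of it is already in your Step 2.
You showed $T_\alpha(E)\subset T_\alpha(\rs\setminus\overline U)\subset\rs\setminus\overline V=\inter E$. Since $\inter E$ is a Jordan domain, $T_\alpha$ maps it into a compact subset of itself, so by Schwarz--Pick it has a unique fixed point $q\in\rs\setminus\overline V$, which attracts all of $E$ (indeed all of $\rs\setminus\overline U$).
Since $1$ is omitted, $1\in E$. By Lemma \ref{lem:disk mapping inside}, $1\in\Omega_\alpha$ for $\alpha\in\overline{\D}\setminus\{1\}$, hence $q=0$.
This is exactly what the paper means by ``the unique attracting fixed point in $\rs\setminus\overline V$ must be $0$''.

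With Step 1 replaced by this, the component and connectivity discussion in Step 2 becomes unnecessary, and the rest of your proof goes through. Your Step 3 in fact handles the inclusion $\Omega_\alpha\subset\rs\setminus K(T_\alpha)$ more carefully than the paper's one-line appeal to backward invariance.

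One minor point: $T_\alpha^{-1}(V)=U$ does not follow merely from $T_\alpha|_U$ being a universal cover. You also need $T_\alpha^{-1}(V\setminus\{v\})$ to be connected. This holds because $T_\alpha\colon\C\to\rs\setminus\{1,\frac1\alpha\}$ is a universal covering and, as $1\notin V$, a small loop around $v$ in $V\setminus\{v\}$ generates $\pi_1(\rs\setminus\{1,\frac1\alpha\})$.
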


\begin{proof}
	Let us first prove that  $K(T_\alpha)= \rs \setminus \Omega_\alpha$. Indeed, observe 
	that since $U \Subset V$, we have $\rs \setminus \overline{V} \Subset \rs \setminus \overline{U}$; and 
	$T_\alpha$ maps $\rs \setminus \overline{U}$ into $\rs \setminus \overline{V}$. Therefore, the map $f$ has a unique attracting fixed point $\rs \setminus \overline{V}$, which must be the attracting fixed point $0$; and $\rs \setminus \overline{U} \subset \Omega_\alpha$. But by definition, if $z \notin K(T_\alpha)$, then there exists $n \in \N$ such that $T_\alpha^n(z) \in \rs \setminus \overline{U}$, so $z \in \Omega_\alpha$. This proves $\rs \setminus K(T_\alpha) \subset \Omega_\alpha$. Moreover, $\rs \setminus K(T_\alpha)$ is backward invariant, therefore, 
	 $\rs \setminus K(T_\alpha) = \Omega_\alpha$.

	 The second claim follows from the fact that the Julia set of $T_\alpha$ is $\partial \Omega_\alpha$.
\end{proof}

In particular, the general properties in Section \ref{sec:TLmaps} apply to maps $T_\alpha$ in the model family, for $\alpha\in \D$ and, by switching $\alpha$ and $1/\alpha$, for all $\alpha\in \C\setminus\overline{\D}$.

\subsection{Conformal conjugacy on the attracting basin of $0$} \label{sec:confconj}

In this section we show that every tangent-like map $T_\alpha$ with connected filled Julia set  is conformally conjugate on its attracting basin $\Omega_0$ to the model map $g(z)=C\tan(\pi z):\H\ra\H\setminus\{C i\}$, where $C$ is a constant independent of $\alpha$. In particular, all such tangent maps $T_\alpha$ are conformally conjugate to each other on the attracting basins of zero. This  is an analogue of B\"ottcher's Theorem, with the model map $z^d$ replaced by the map $g$. In fact this is more general, since it applies to completely invariant  basins of attraction of a fixed point with given multiplier, containing exactly one asymptotic value and no other singular values.  The constant $C$ depends only on the multiplier. This was essentially proved already in \cite[Lemma 5.2]{ERS}.

\begin{prop}[Conjugacy on the basin of $0$]\label{lem:confconjH}
Let $\alpha\in \D(0,1/2)$ such that   $K(T_\alpha)$ is connected, and let  $\Omega_\alpha$ be its  attracting basin of $0$. Then $T_\alpha$  is  conformally conjugate on  $\Omega_\alpha$ to the map  $g:\H\ra\H\setminus \{Ci\}$ defined as 
$$
g(z)=C\tan(\pi z), 
$$
 where $C>0$ is a  constant independent of $\alpha$. The conformal conjugacy is given by the Riemann map $\phi_\alpha:\H\ra\Omega_\alpha$,  normalized so that  $\phi_\alpha^{-1}(1)=Ci$ and $p:=\phi^{-1}_\alpha(0) \in (0,Ci)$. 
 Consequently, any two tangent-like maps satisfying the hypothesis of this proposition are conformally conjugate on their basins of attraction of $0$. 
\end{prop}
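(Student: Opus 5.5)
Since $K(T_\alpha)$ is connected, $1/\alpha\notin\Omega_\alpha$. Hence the only singular value of $T_\alpha$ in $\Omega_\alpha$ is the asymptotic value $1$, which lies in $\Omega_\alpha$ by Lemma \ref{lem:disk mapping inside}. The basin $\Omega_\alpha$ is completely invariant by Proposition \ref{prop:choicesUV}, since it is the complement of $K(T_\alpha)$.

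The first step is to show that $\Omega_\alpha$ is simply connected and that $T_\alpha:\Omega_\alpha\to\Omega_\alpha\setminus\{1\}$ is a universal covering. Note that $\Omega_\alpha\ni\infty$ is open in $\rs$, and $T_\alpha$ is undefined at $\infty$. So I will argue on $\Omega_\alpha\cap\C$ and check that $\Omega_\alpha$ is a simply connected domain with $\infty$ on its boundary. The cleanest route is to use the complement: $K(T_\alpha)$ is a connected compact set by Proposition \ref{prop:Kconnected}, applied to a TL restriction as in Lemma \ref{lem:tlmodel}. It contains $\infty$, and $\Omega_\alpha$ is an open set whose boundary is $J(T_\alpha)\ni\infty$.

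Rather than using complements, I would argue as in the proof of Proposition \ref{prop:Kconnected}. Exhaust $\Omega_\alpha$ by the sets $T_\alpha^{-n}(\D(0,2))$. Each one is simply connected and contains $1$ or not: a preimage of a disk containing the asymptotic value $1$ is a tract, hence simply connected, and otherwise the preimage components map univalently. The increasing union is then simply connected. A little care is needed to see that these pieces are connected, since $\Omega_\alpha$ is the immediate basin.

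Restricted to $\Omega_\alpha\cap\C$ minus the preimages of $1$, the map $T_\alpha$ is a covering of $\Omega_\alpha\setminus\{1\}$, because there are no other singular values there. Here $1$ is omitted, being an asymptotic value of the Möbius-exponential form. It has infinite degree, since $\Omega_\alpha$ contains a tract over a neighbourhood of $1$. Therefore $T_\alpha:\Omega_\alpha\to\Omega_\alpha\setminus\{1\}$ is a universal cover of a once-punctured disk.

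Next, take the Riemann map $\phi_\alpha:\H\to\Omega_\alpha$ normalized by $\phi_\alpha(Ci)=1$. The map $G:=\phi_\alpha^{-1}\circ T_\alpha\circ\phi_\alpha:\H\to\H\setminus\{Ci\}$ is a universal covering. The maps $z\mapsto C\tan(\pi z)$ and $G$ are both universal coverings of $\H\setminus\{Ci\}$, so by uniqueness of universal covers $G=C\tan(\pi M(z))$ for some $M\in\mathrm{Aut}(\H)$. Here $\tan$ maps $\H$ onto $\H\setminus\{i\}$ as a universal cover.

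The remaining freedom is the choice of $\phi_\alpha$ up to the stabilizer of $Ci$ (a rotation group conjugate to $S^1$), together with translation by $1$ of the tangent cover. I would use it to reduce $M$ to the identity. This comes down to matching the attracting fixed point and its multiplier. The fixed point $p=\phi_\alpha^{-1}(0)$ of $G$ has multiplier $1/8$. The hyperbolic distance from $p$ to $Ci$ is determined by the multiplier: indeed $d_\H(p,G(\text{anything}))$ and the geometry of the universal cover force the fixed point of $z\mapsto C\tan(\pi M z)$ to sit at a specific distance from $Ci$.

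Concretely, I will choose $C$ so that $g=C\tan(\pi z)$ has an attracting fixed point on the segment $(0,Ci)$ with multiplier exactly $1/8$; this is where $C$ is independent of $\alpha$. I then show that the family $\{C\tan(\pi M z)\}$ contains, up to conjugation by $\mathrm{Aut}(\H)$ fixing $Ci$, a unique map with a fixed point of multiplier $1/8$. This is a one-real-parameter computation after symmetry reduction, and I expect it to be the main obstacle: the existence and uniqueness of such a fixed point as $M$ varies, which needs a monotonicity argument along $(0,Ci)$ (for instance via the real-symmetric case and Schwarz–Pick). After that, rotating $\phi_\alpha$ about $Ci$ places $p$ on $(0,Ci)$ and gives $M=\mathrm{Id}$. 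The final assertion follows by composing conjugacies.
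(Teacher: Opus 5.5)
Your reduction is sound, and in one respect more complete than the paper. The paper simply takes for granted that $\Omega_\alpha$ is simply connected and that $T_\alpha:\Omega_\alpha\to\Omega_\alpha\setminus\{1\}$ is a universal cover; your exhaustion by $T_\alpha^{-n}(\D(0,2))$ supplies this. Two small fixes there. First, $\infty$ lies in $K(T_\alpha)$, not in $\Omega_\alpha$. Second, there is no dichotomy: every $T_\alpha^{-n}(\D(0,2))$ contains $1$ and omits $1/\alpha$. Since $\rs\setminus\{1,1/\alpha\}\cong\C^*$, each preimage is therefore a single logarithmic tract, and connectedness needs no extra care. Writing $G=g\circ M$ with $M\in\mathrm{Aut}(\H)$ is also correct.

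The gap is the step that carries the actual content of the proposition: why the multiplier $1/8$ determines $M$, and hence why $C$ can be chosen independently of $\alpha$. You only announce this step, and you flag it yourself as the main obstacle. The sentence invoking $d_\H(p,G(\cdot))$ is not an argument. Schwarz--Pick only gives $|G'(p)|<1$, which says nothing about uniqueness. Your claim that $C$ exists with $g$ having a fixed point of multiplier $1/8$ on $(0,Ci)$ is also left unverified.

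The paper closes this step by noting that a universal covering $\H\to\H\setminus\{i\}$ is a local isometry of hyperbolic metrics. Hence $|G'(p)|=\rho_\H(p)/\rho_{\H\setminus\{i\}}(p)$ at the fixed point. This is a strictly monotone function of $d_\H(p,i)$ (it becomes $x\mapsto 2x\log(1/x)/(1-x^2)$), so $p$ is pinned down, and the covering is then written explicitly as a Möbius map composed with $e^{iaz}$.

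Within your own set-up you can finish more elementarily, as follows.
\begin{enumerate}
\item Fix $s>0$ by $2s/\sinh(2s)=1/8$. It is unique, since $s\mapsto 2s/\sinh 2s$ decreases strictly from $1$ to $0$. Set $C:=s/(\pi\tanh s)$.
\item With $p=i\tau\in(0,Ci)$, the $g$-preimages of $p$ are exactly $i\sigma+k$, $k\in\Z$. After a deck translation, $M(p)=i\sigma$.
\item Since $g'>0$ on $i\R_+$ and $G'(p)=T_\alpha'(0)=1/8>0$, you get $M'(p)>0$. This forces $M(z)=\lambda z$.
\item The fixed-point and multiplier equations $C\tanh(\pi\lambda\tau)=\tau$ and $C\pi\lambda/\cosh^2(\pi\lambda\tau)=1/8$ give $2s'/\sinh(2s')=1/8$ for $s'=\pi\lambda\tau$. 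So $s'=s$ and $\lambda=s/(\pi C\tanh s)=1$.
\end{enumerate}
This shows both existence and uniqueness, with $C$ manifestly independent of $\alpha$. Note that it is the positivity of the multiplier, not merely $|G'(p)|=1/8$, that rules out automorphisms not fixing $\infty$. This is also what justifies the paper's assertion that the covering $\H\to\D^*$ has the form $e^{i(az+b)}$.
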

\begin{proof}
Let $\tilde \phi_\alpha: \H\to \Omega_\alpha $ be the unique  Riemann map satisfying $\tilde \phi_\alpha(i)=1$ and $\tilde p:=\tilde \phi^{-1}_\alpha(0) \in (0,i).$  Then 
$$
\tilde g:=\tilde \phi_\alpha^{-1}\circ T_\alpha\circ \tilde \phi_\alpha
$$
is a universal cover from $\H\ra\H\setminus \{i\}$ which fixes a unique point  $\tilde p$ with multiplier $1/8$. A priori, $\tilde p$ and $\tilde g$ depend on $\alpha$: let us show that this is not the case. Consider the Moebius map 
\begin{equation}\label{eq:M basin}
 M(z)=\frac{i(1-z)}{z+1}
 \end{equation}
  mapping $\D$ to $\H$. Since $M(-1)=\infty$, $M(0)=i$, and $M(1)=0$, we have that $M([-1,1])=i\R_+$, hence $p_*:=M^{-1}(\tilde p)\in[0,1]$. Since $M$ does not depend on $\alpha$, if we show that $p_*$ does not depend on $\alpha$, we will obtain that $\tilde p$ also does not depend on $\alpha$. Observe that $|M'(p_*)|=\frac{2}{(p_*+1)^2},$ so
  $$
\rho_{\H\setminus\{i\}}(\tilde p)=\frac{1}{|M'(p_*)|}\rho_{\D\setminus\{0\}}(p_*)=\frac{(p_*+1)^2}{2}\frac{1}{p_*\ln(p_*^{-1})}.
$$
Imposing that $\tilde p$ has multiplier $1/8$ and using that $\rho_\H(\tilde p)=\frac{1}{\im \ \tilde p}$ and that $\tilde p=\frac{i(1-p_*)}{1+p_*}$ we obtain the equation
$$
\frac{1}{8}=|\tilde g'(\tilde p)|=\frac{\rho_\H(\tilde p)}{\rho_{\H\setminus\{i\}}(\tilde p)}=\frac{2p_*\ln(p_*^{-1})}{1-p_*^2}.
$$
One can check that the function { $x \mapsto \frac{2x \log x^{-1}}{1-x^2}$} is strictly increasing from $(0,1)$ to $(0,1)$, so that this  relation uniquely determines $p_*$ and hence $\tilde p$.

Write now  $\tilde g$ as $\tilde g=M\circ f$, where $f:\H\ra\D_*$ is a universal covering mapping $\tilde p$ to $p_*$. The map $f$ is well defined because $M$ is conformal. Since $e^{iz}:\H\ra\D_*$ is also a universal covering and the automorphisms of $\H$ fixing infinity are of the form $az+b$ with $a>0,b\in\R$ we have that $f(z)=e^{i(az+b)}$ (with $a>0,b\in\R$ to be determined). It follows from this explicit expression that  $f$ extends to $\R$ and   maps vertical lines in $\H$ to radii in $\D_*$. So, since $\tilde p\in (0,i)$ and $p_*=f(\tilde p)\in (0,1)$, we have that $f(i\R_+)=(0,1)$, and hence that $f(0)=1$. It follows that $b=0$ and $f(z)=e^{iaz}$ with $a>0$ {and that}
 
$$
\tilde g=M\circ f
=\frac{i(1-e^{iaz})}{e^{iaz}+1}
=-\frac{i(e^{iaz/2}-e^{-iaz/2})}{e^{iaz}+e^{-iaz/2}}
=\tan(\frac{az}{2})=:\tan(C\pi z),
$$
with $C=\frac{a}{2\pi}>0$ yet to be determined.

Imposing that $\tilde p\in (0,i)$ is a fixed point of $\tilde g$, we obtain that 
$$
C=\frac{\operatorname{arctanh}(|\tilde p|)}{\pi|\tilde p|}.
$$ 
Since $\tilde p$ is independent of $\alpha$, 
this  determines $C$  uniquely. 
 By conjugating $\tilde g$ with $L: z\mapsto Cz$ we get that
$$
g(z):=(L\circ\tilde g\circ L^{-1})=C\tan(\pi z), 
$$
and that the conjugacy between $T_\alpha$ and $g$ is given by $\phi_\alpha:=L\circ \tilde \phi_\alpha $ which satisfies the claim of the lemma. 
\end{proof}

The conjugacy $\varphi_\alpha$ normalized as above allows us to mark some special points in $J(T_\alpha)$. In particular, there is a unique labeling for the poles independent of $\alpha$, as long as $\alpha\neq 0$. See Figure \ref{fig:Riemann}.

\begin{lem}[Labelling of poles and fixed point] \label{lem:labelling}
Let $\alpha \in\D(0,1/2)\setminus\{ 0\}$ and $\varphi_\alpha, g$ be defined as in Lemma \ref{lem:confconjH}. Then 
\begin{equation}\label{eq:radial}
\lim_{t\to\infty} \varphi_\alpha(it) = \infty.
\end{equation}
Consequently, all poles of $T_\alpha$ are accessible from $\Omega_\alpha$ and can be consistently labelled as 
\begin{equation} \label{eq:labelling}
p_k:=p_k(\alpha):= \lim_{t\to 0} \varphi_\alpha \left(\frac12+k + i \,t \right), k\in\Z.
\end{equation}
Moreover, we can consistently define a special fixed point, which we will call the {\em $\beta_0$-fixed  point}, as
\[
\beta_0:=\beta_0(\alpha):= \lim_{t\to 0} \varphi_\alpha(i \, t).
\] 
\end{lem}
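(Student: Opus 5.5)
The plan is to transport the problem to the model map $g(z)=C\tan(\pi z)$ on $\H$ via the conjugacy $\varphi_\alpha$ of Proposition \ref{lem:confconjH}, and then read off the boundary behaviour of $\varphi_\alpha$ from the dynamics. The key observation is that $g$ maps the imaginary axis $i\R_+$ into itself, since $\tan(\pi i t)=i\tanh(\pi t)$. The fixed point $p\in(0,Ci)$ attracts the whole axis. The point $g(it)$ tends to $Ci$ as $t\to\infty$, and to $0$ linearly as $t\to 0$. Likewise, the vertical lines $\frac12+k+i\R_+$ are mapped by $g$ onto $i(C\coth(\pi t))$, which is the segment $(Ci, i\infty)$ of the imaginary axis. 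In particular $g(\frac12+k+it)\to\infty$ as $t\to 0$ and $\to Ci$ as $t\to\infty$.

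For \eqref{eq:radial}, I would set $\gamma(t)=\varphi_\alpha(it)$ for $t\geq \Im p$. Then $T_\alpha(\gamma(t))=\varphi_\alpha(g(it))$, and $g(it)\to Ci$ as $t\to\infty$, so $T_\alpha(\gamma(t))\to \varphi_\alpha(Ci)=1$. Hence $\gamma$ is a curve in $\Omega_\alpha$ along which $T_\alpha$ tends to the asymptotic value $1$. The curve $\gamma(t)$ must leave every compact subset of $\C$: its accumulation set as $t\to\infty$ lies in $\partial\Omega_\alpha\cup\{\infty\}$, because $\varphi_\alpha$ is proper. At a finite accumulation point $z$, continuity of $T_\alpha$ would give $T_\alpha(z)=1$. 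That point $z$ would then lie in the basin $\Omega_\alpha$ (the preimage of the basin is in the basin), not on its boundary. If $z$ were a pole, instead, $T_\alpha(z)=\infty\neq 1$. Either way there is a contradiction, so the only accumulation point is $\infty$. This is the step I expect needs the most care: one must make sure that finite accumulation points genuinely contradict, including near prepoles where $T_\alpha$ is still continuous into $\rs$. The argument above handles this.

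For the poles, I would apply the same argument to $\sigma_k(t)=\varphi_\alpha(\frac12+k+it)$ as $t\to 0$. Its image $T_\alpha(\sigma_k(t))=\varphi_\alpha(g(\frac12+k+it))=\varphi_\alpha(iC\coth\pi t)$ tends to $\infty$ by \eqref{eq:radial}. The accumulation set of $\sigma_k$ is a connected subset of $\partial\Omega_\alpha$ on which $T_\alpha=\infty$ (using continuity of $T_\alpha$ into $\rs$ away from $\infty$). It is therefore contained in the discrete set of poles together with $\infty$, so it is a single point. To see that this point is finite, I would use that for $\alpha\neq0$ the map is not entire. Moreover, the curves $\sigma_k$ separate $\H$ into fundamental strips, each mapped by $g$ univalently onto $\H\setminus[Ci,i\infty)$, with the curve mapped onto the slit. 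Their images under $\varphi_\alpha$ are exactly the preimage components of the curve $\varphi_\alpha([Ci,i\infty))$, which joins $1$ to $\infty$. Each component of $T_\alpha^{-1}$ of this curve lands at a distinct pole, because near a simple pole $T_\alpha$ is a local homeomorphism onto a neighbourhood of $\infty$ (here $\alpha\neq 0$). The component also tends to the essential singularity along the other end.

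This bijection between the preimage curves and the poles gives both accessibility and the labelling $p_k(\alpha)$, with every pole obtained. The labelling is consistent because $\varphi_\alpha$ is uniquely normalized. Finally, $\beta_0$ is treated as $\sigma$ was above, applied to $\varphi_\alpha(it)$ as $t\to0$. Since $g(it)$ stays on the same ray and goes to $0$, the landing point $\beta_0$ satisfies $T_\alpha(\beta_0)=\beta_0$ by continuity. Landing itself follows from a standard hyperbolic contraction argument: near $t=0$ the inverse branch of $g$ along $i\R_+$ is a contraction for the hyperbolic metric of $\Omega_\alpha$, which forces the curve pieces $\varphi_\alpha(i[t/\lambda,t])$ to have spherical diameters tending to $0$.
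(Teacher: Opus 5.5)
Your arguments for \eqref{eq:radial} and for the poles are sound, and they take a more elementary route than the paper. The paper accesses $\infty$ through $V\setminus\overline{U_\alpha}$, applies Lindel\"of's theorem, and identifies the landing point via the essential singularity. You instead use the invariance of $i\R_+$ under $g$ and the fact that no finite point of $\partial\Omega_\alpha$ can map to $1\in\Omega_\alpha$ (indeed $1$ is omitted). One correction: finiteness of the landing point of $\sigma_k$ does not follow from ``$T_\alpha$ is not entire''. What you need is that $\infty$ is not an asymptotic value of $T_\alpha$ for $\alpha\neq 0$, since its asymptotic values are $1$ and $1/\alpha$. Otherwise $T_\alpha(\sigma_k(t))\to\infty$ along $\sigma_k(t)\to\infty$ would produce one.

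The genuine gap is the landing of $\varphi_\alpha(it)$ as $t\to 0$. First, the mechanism you cite is backwards. The map $g:\H\to\H\setminus\{Ci\}$ is a non-surjective self-map of $\H$, hence a strict contraction of the hyperbolic metric by Schwarz--Pick. So its inverse branches, and the corresponding branch of $T_\alpha^{-1}$ in $\Omega_\alpha$, expand it. What is true, with no dynamics at all, is that $i[t/\lambda,t]$ has hyperbolic length $\log\lambda$. Since the pieces $\varphi_\alpha(i[t/\lambda,t])$ also leave every compact subset of $\Omega_\alpha$, their spherical diameters tend to $0$.

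Second, and more importantly, shrinking pieces do not imply landing. This shrinking holds along every radius of every Riemann map, yet radial limits can fail to exist. Also, your ``$T_\alpha(\beta_0)=\beta_0$ by continuity'' presupposes the landing it is meant to support.

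The missing step is to run the fixed-point argument on the accumulation set. Let $L$ be the accumulation set of $\varphi_\alpha(it)$ as $t\to0$; it is a continuum in $\partial\Omega_\alpha$. Take $w\in L\cap\C$ and $\varphi_\alpha(it_n)\to w$. Then $T_\alpha(\varphi_\alpha(it_n))=\varphi_\alpha(iC\tanh(\pi t_n))$ is joined to $\varphi_\alpha(it_n)$ by the image of a segment of $i\R_+$ of hyperbolic length bounded independently of $n$, because $t_n<C\tanh(\pi t_n)\le C\pi t_n$. Hence it also tends to $w$, and continuity gives $T_\alpha(w)=w$. Fixed points of $T_\alpha$ are isolated in $\C$, so $L$ is a connected subset of the countable closed set $\mathrm{Fix}(T_\alpha)\cup\{\infty\}$, hence a single point.

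You must still exclude $L=\{\infty\}$ explicitly, by the asymptotic-value argument: in that case $T_\alpha(\varphi_\alpha(it))=\varphi_\alpha(iC\tanh(\pi t))\to\infty$. This is exactly how the paper rules out $\infty$. With this step added, your proof is self-contained. It replaces the external landing theorem the paper invokes for $\beta_0$, which additionally gives that $\beta_0$ is weakly repelling, something not needed for the statement.
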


\begin{figure}[hbt!]
\centering
\includegraphics[width=0.9\textwidth]{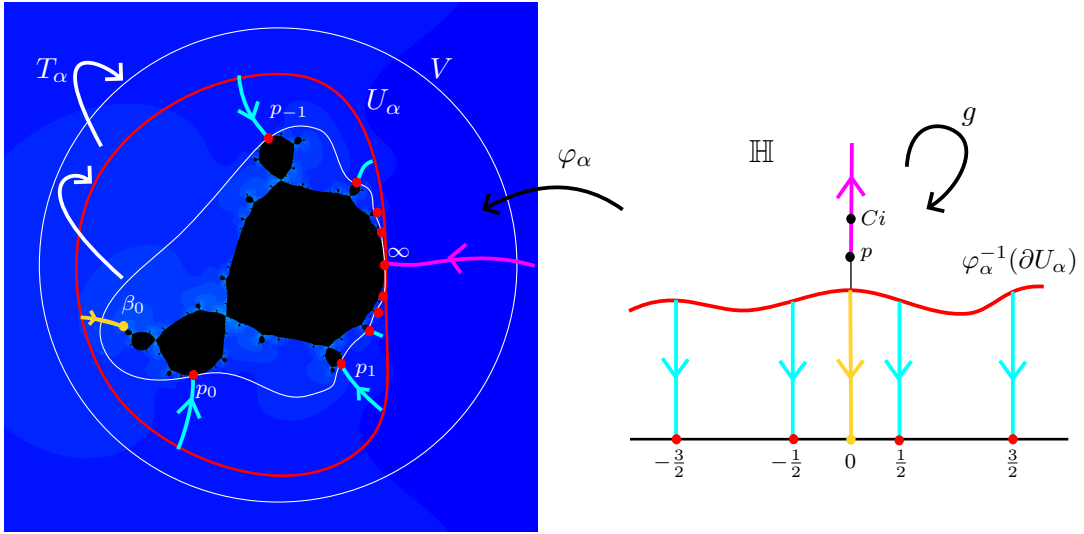}
\setlength{\unitlength}{0.9\textwidth}
\put(-0.3,0.35){\large$\H$}
\put(-0.1,0.39){$g$}
\put(-0.195,0.26){\scriptsize $p$}
\put(-0.195,0.29){\scriptsize $C i$}
\put(-0.21,0.06){\scriptsize $0$}
\put(-0.165,0.06){\scriptsize $\frac12$}
\put(-0.06,0.06){\scriptsize $\frac32$}
\put(-0.28,0.06){\scriptsize $-\frac12$}
\put(-0.39,0.06){\scriptsize $-\frac32$}
\put(-0.1,0.25){\small $\varphi_\alpha^{-1}(\partial U_\alpha)$}
\put(-0.48,0.35){$\varphi_\alpha$}
\put(-0.64,0.26){\color{white} \scriptsize $\infty$}
\put(-0.82,0.13){\color{white} \scriptsize $p_0$}
\put(-0.89,0.21){\color{white} \scriptsize $\beta_0$}
\put(-0.67,0.15){\color{white} \scriptsize $p_1$}
\put(-0.75,0.395){\color{white} \scriptsize $p_{-1}$}
\put(-0.97,0.43){\color{white}  $T_\alpha$}
\put(-0.66,0.4){\color{white}  $U_\alpha$}
\put(-0.6,0.43){\color{white}  $V$}

\caption{\small The conformal conjugacy on the basin of 0 (blue) and the labelling $p_k$ of the poles (red dots)  ($1/2$ corresponds to $k=0$). The origin corresponds to the fixed point $\beta_0$.}
\label{fig:Riemann}
\end{figure}

 \begin{proof}
 Since $\partial U_\alpha$ is a Jordan curve and $V\setminus \overline{U_\alpha} \subset \Omega_\alpha$, it follows that $\infty$ is accessible from $\Omega_\alpha$ for all $\alpha$. Let $\gamma\in\Omega_\alpha$ be an arc landing at $\infty$.  Then, by Lindeloff's Theorem (see e.g. \cite{pom}) $\varphi_\alpha^{-1}(\gamma)$ is a curve in $\H$ landing at some point $w\in\partial \H$. Since $\infty$ is an essential singularity of $T_\alpha$ (and for example, all preimages of a given point accumulate at $\infty$), it follows that $w$ is the only singularity of $g$ in $\partial\H$, which is $\infty$. Again by Lindeloff's Theorem, the radial limit of $\varphi_\alpha$ at $\infty$ exists and equals $\infty$, hence (\ref{eq:radial}) holds.
 
 Let us now consider $T_\alpha^{-1}(\partial U_\alpha)$, which is a Jordan curve going through $\infty$ and containing all the poles of $T_\alpha$. Since $T_\alpha$ is a covering from the pinched annulus $U_\alpha\setminus \overline{T_\alpha^{-1}(U_\alpha)}$ to $V \setminus \overline{U_\alpha}$, the curve $\gamma$ has infinitely many disjoint preimages which are Jordan arcs joining $\partial U_\alpha$ and $\partial T_\alpha^{-1}( U_\alpha)$, ending at each pole of $T_\alpha$. Again by Lindeloff's Theorem, the image of these segments under $\varphi_\alpha^{-1}$ must land at the distinct poles of $g$ (since $\varphi_\alpha$ is a conjugacy), at which the radial limits exist and hence (\ref{eq:labelling}) holds.
 
To define $\beta_0$ observe that $w=0$ is a repelling fixed point of $g$. By \cite[Thm. A]{bfjk_accesses}, the radial limit of $\varphi_\alpha$ at $w=0$ exists and equals either $\infty$ or a weakly repelling fixed point of $T_\alpha$. To rule out infinity, suppose that $\lim_{t\to 0} \varphi_\alpha(i t)=\infty$. Then, since {  $g(it)=C i \tanh(\pi \, t) \to 0$} as $t\to 0$, and $\varphi_\alpha$ is a conjugacy, it follows that $T_\alpha(\varphi_\alpha(it))\to \infty$ as $t\to 0$, which implies that $\infty$ is an asymptotic value for $T_\alpha$, a contradiction, since that occurs only for $\alpha=0$. Hence $\beta_0$ is well defined as the landing point of the invariant arc  $\varphi_\alpha(it)$ as $t\to 0$.

  \end{proof}

\subsection{Rigidity of conjugacies near the Julia set}

{ The main result in this Section is Proposition \ref{prop:extend}, which shows  that every quasiconformal conjugacy between $T_\alpha$ and itself defined near the filled Julia set, which fixes the prefered pole, extends to $K(f)$ as the identity. These result will only be used in Section \ref{prop:keytool}.}

We shall use notation and definitions from Section \ref{sec:confconj}.  We start by proving that topological conjugacies of $g$ to itself near the real line extend to the real line as the identity, as long as they are close to the identity near one pole.

\begin{lem}[Real maps commuting with the tangent] \label{lem:noaut}
	 Suppose that $h: \R\cup \{\infty\}\to \R\cup \{\infty\}$ is an increasing  homeomorphism such that $h \circ g= g \circ h$ on $\R$ and  $|h(\frac12) -\frac12 |<\frac{1}{2}$.  Then $h= \id$.
\end{lem}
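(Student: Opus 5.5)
The idea is to show that $h$ preserves every symbolic itinerary under $g$. Since $g$ is uniformly expanding on $\R$, an itinerary determines its point, which forces $h=\id$.

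\emph{Step 1: basic properties of $g$ on $\R$.} Write $I_k:=(k-\frac12,k+\frac12)$ for $k\in\Z$. The restriction of $g(x)=C\tan(\pi x)$ to $I_k$ is an increasing homeomorphism onto $\R$. Its poles form the set $P:=\{\frac12+k : k\in\Z\}=g^{-1}(\infty)$. We have $g'(x)=C\pi\sec^2(\pi x)\geq C\pi$. From the proof of Proposition \ref{lem:confconjH}, $C=\operatorname{arctanh}(|\tilde p|)/(\pi|\tilde p|)$ with $|\tilde p|\in(0,1)$. Since $\operatorname{arctanh}(x)>x$ on $(0,1)$, this gives $C\pi>1$, so $g$ is uniformly expanding on $\R\setminus P$. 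Consequently the inverse branches $g_k:=(g|_{I_k})^{-1}:\R\to I_k$ are contractions with Lipschitz constant at most $(C\pi)^{-1}<1$.

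\emph{Step 2: $h$ fixes $\infty$ and every pole.} We read the relation $h\circ g=g\circ h$ as holding wherever both sides are defined. The point $\infty$ is the only accumulation point in $\R\cup\{\infty\}$ of the set $E:=P\cup\{\infty\}$ of points where $g$ does not take a finite value. The commutation relation shows that $h$ maps points where $g$ is finite to points where $g$ is finite, so $h(E)=E$. Being a homeomorphism, $h$ maps the unique accumulation point of $E$ to itself, so $h(\infty)=\infty$ and hence $h(P)=P$.

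Now $h(\frac12)\in P$ and $|h(\frac12)-\frac12|<\frac12$, which force $h(\frac12)=\frac12$. Since $h$ is an increasing bijection of $P$ onto itself fixing $\frac12$, it fixes every pole. Therefore $h(I_k)=I_k$ for every $k$.

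\emph{Step 3: itineraries are preserved.} Take $x\in\R$ whose orbit never meets $P$, and let $x\in I_{k_0}$, $g(x)\in I_{k_1}$, and so on. Since each $I_k$ is $h$-invariant and $h$ commutes with $g$, the point $h(g^j(x))=g^j(h(x))$ lies in $I_{k_j}$ for every $j$. Hence both $x$ and $h(x)$ belong to
$$g_{k_0}\circ\cdots\circ g_{k_{n-2}}(I_{k_{n-1}})$$
for every $n$. This interval is the image of an interval of length $1$ under a composition of $n-1$ contractions, so its length is at most $(C\pi)^{-(n-1)}\to0$. It follows that $h(x)=x$.

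The points whose orbit meets $P$ form a countable set, so their complement is dense. By continuity of $h$ we conclude $h=\id$ on $\R\cup\{\infty\}$.

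The only delicate point is Step 2, namely justifying that $h$ preserves $\infty$ and the set of poles, since $g$ is not defined there. The accumulation-point argument above handles it. Alternatively, one can note that $h(\infty)=\infty$ is part of the intended hypothesis, as $h$ comes from a conjugacy fixing the essential singularity.
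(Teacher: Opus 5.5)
Your Step 2 argument for $h(\infty)=\infty$ is circular as written. To get from $h(g(x))=g(h(x))$ that $x\in\R\setminus P$ implies $h(x)\in\R\setminus P$, you need $h(g(x))\neq\infty$, that is $g(x)\neq h^{-1}(\infty)$, and this is exactly what you are trying to prove. Concretely, if $b:=h^{-1}(\infty)$ were real, every $x\in g^{-1}(b)\setminus\{b\}$ would satisfy $g(h(x))=h(g(x))=h(b)=\infty$. So $h$ would send points where $g$ is finite into $P$, and the claim $h(E)=E$ cannot be established before knowing $h(\infty)=\infty$.

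A correct argument goes as follows. Suppose $b\in\R$. Then $h\circ g$ is continuous at $b$ as a map into $\R\cup\{\infty\}$. On the other hand, on a one-sided neighborhood of $b$ the point $h(x)$ sweeps out a half-line $(M,+\infty)$ or $(-\infty,M)$, on which $g$ takes every value infinitely often. Hence $g\circ h$ has no limit at $b$, contradicting $h\circ g=g\circ h$ on $\R\setminus\{b\}$. Your fallback, reading ``increasing'' as including $h(\R)=\R$, is also legitimate and matches the paper, which simply asserts $h(\infty)=\infty$. Once this is known, $h(P)=P$ is immediate: for $x\in P$ one has $g(h(x))=h(\infty)=\infty$, and for $x\notin P$ one has $g(h(x))=h(g(x))\in\R$.

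With this repair your proof is correct. It follows the paper's overall scheme (fix all poles, deduce $h(I_k)=I_k$, use the inverse branches of $g$), with two differences.

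\begin{itemize}
\item The paper first derives $h(x+1)=h(x)+k$ from the periodicity of $g$ and combines it with the pole bijection to get $k=1$. You bypass this by noting that an increasing bijection of $\frac12+\Z$ fixing $\frac12$ is the identity.
\item In the last step, the paper shows that all iterated preimages $k_{j_n}\circ\cdots\circ k_{j_2}(x_{j_1})$ of poles are fixed, and then asserts without proof that they are dense in $\R$. You instead show that every point whose orbit avoids $P$ is fixed, using preserved itineraries and the uniform contraction of the branches $g_k$. This needs only the trivial density of the complement of a countable set.
\end{itemize}

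Both routes rest on the expansion $C\pi>1$. If $C\pi<1$, then $0$ would be an attracting fixed point of $g|_{\R}$, and one could build non-identity conjugacies on its real basin. Your version makes this ingredient explicit and verifies it from the formula for $C$, which supplies the justification the paper's density claim leaves implicit.
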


\begin{proof}	
	First, observe that since $g(x+1)=g(x)$ for all $x \in \R$, we have $g \circ h(x+1) = g \circ h(x)$ for all $x \in \R$. But we also have that $g(x_1) = g(x_2)$ if and only if $x_1-x_2 \in \Z$. Therefore, for every $x \in \R$, there exists $k_x \in \Z$ such that $h(x+1) - h(x) = k_x$.  Since $k_x$ is continuous and takes values in  a discrete set, it is constant, hence  there exists $k \in \Z$ such that for all $x \in \R$, 
	\begin{equation}\label{eq:periodicity}
	h(x+1)=h(x) + k.
	\end{equation}

	On the other hand, $h(\infty)=\infty$, and $h$ must map poles of $g$ to other poles; so $h : \frac{1}{2}+\Z \to \frac{1}{2} + \Z$ is a monotone bijection. This implies that $|k|=1$,  and since $h$ is increasing, $k=1$. In particular, there exists $n_0 \in \Z$ such that for all $n \in \Z$, we have 
	$h(\frac{1}{2}+n) = \frac{1}{2}+n_0 + n$. But $|h(\frac12) -\frac12 |<\frac{1}{2}$ by hypothesis and hence $n_0=0$, i.e., all poles are fixed by $h$, and every interval $I_n:=[-1/2+n,1/2+n]$ is mapped bijectively to itself. 
		
	Now for any $j \in \Z$, let  $k_j: \R\to I_j$ be the corresponding inverse branch of $g^{-1}$ (explicitly, $k_j(z)=\frac{1}{\pi} \arctan(\frac{y}{C}) +j$).
	By the above, $h(I_j)=I_j$.
	Therefore, for any $j \in \Z$ and for any $x \in I_j$, 
	$$
	h(x) = k_j \circ h \circ g(x).
	$$
	Let $x_{j}:=\frac{1}{2}+j$ and $x_{j_2,j_1} := k_{j_2}(x_{j_1}) \in I_{j_2}$.
	Then 
	$$h(x_{j_2,j_1}) = k_{j_2} \circ h(x_{j_1}) = k_{j_2}(x_{j_1}) =  x_{j_2,j_1}.$$
	Proceeding inductively, it follows that if we let $x_{j_n, \ldots, j_1}:=k_{j_n} \circ \ldots \circ k_{j_2}(x_{j_1})$, then $h( x_{j_n, \ldots, j_1}  ) = x_{j_n, \ldots, j_1}$ for all $n \in \N^*$ and all 
	$(j_1, \ldots, j_n) \in \Z^n$. 
	
	Since these points are dense in $\R$, we deduce $h=\id$.
\end{proof}

\begin{lem}[Maps commuting in a neighborhood of $\R$]\label{lem:strip}
	Let $W$ be some  strip in $\H$ with one boundary component being $\R$,
	 such that $g^{-1}(W) \subset W$.
	Let $h: W \to W$ be quasiconformal homeomorphism such that 
	$g \circ h= h \circ g$ on $g^{-1}(W)$. Let $U$ be an open set in the upper half-plane $\mathbb H$ such that 
	$\frac12 \in \partial U$, and
	suppose that $ |h(z)- z| < 1/4$ for all $z \in U$.
	Then $h$ extends continuously to $\R$ as the identity.
\end{lem}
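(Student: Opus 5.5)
The plan is to first extend $h$ continuously to $\R\cup\{\infty\}$ as an increasing homeomorphism commuting with $g$, and then invoke Lemma \ref{lem:noaut}.

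\textbf{Step 1: extending $h$ to $\R$.} Since $W$ is a strip bounded by $\R$, a quasiconformal self-map $h:W\to W$ should send the boundary component $\R$ to itself. Here I would use that $h$ commutes with $g$ on $g^{-1}(W)$, and that $g$ maps $\R$ to $\R\cup\{\infty\}$ with $g(\H)\subset\H$. Then $h$ extends continuously to a homeomorphism of $\R\cup\{\infty\}$. One way to see this is to uniformize $W$ to a round annulus or to $\H$ via a conformal map fixing the $\R$ side, and use the standard boundary extension of quasiconformal maps between Jordan domains (quasiconformal maps of quasidisks extend to homeomorphisms of the closures).

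Orientation preservation gives that the extension is increasing. Here I would take $h$ orientation-preserving, as usual for quasiconformal homeomorphisms, and use that it maps the $\R$-end to itself; otherwise one would have to exclude $h$ swapping the two ends.

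\textbf{Step 2: commutation on $\R$.} The relation $g\circ h=h\circ g$ holds on $g^{-1}(W)$, which contains a one-sided neighborhood of $\R$ minus the poles. Taking limits as $z\to x\in\R$ by continuity gives $g\circ h=h\circ g$ on $\R$. At the poles $\frac12+\Z$, both sides tend to $\infty$ as long as $h(\infty)=\infty$ and $h$ permutes the poles. I would deduce this from the identity itself: near a pole, $g(z)\to\infty$ inside $W$, so $h(g(z))\to h(\infty)$ along the end of the strip at infinity. I expect to handle $\infty$ by noting that it is the unique point of $\partial W\cap\ov\R$ accumulated by $g(z)$ for $z$ near any pole.

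\textbf{Step 3: the normalization.} Since $\frac12\in\partial U$ and $|h(z)-z|<1/4$ on $U$, letting $z\to\frac12$ within $U$ gives $|h(\frac12)-\frac12|\le 1/4<1/2$. Lemma \ref{lem:noaut} then yields $h=\id$ on $\R$, which is the required continuous extension as the identity.

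\textbf{Main obstacle.} The delicate point is Step 1: justifying continuous boundary extension and monotonicity on $\R$ for a self-map of a strip that is only given on $W$ (rather than on a Jordan domain), including control at $\infty$. I would try conjugating by a Möbius map sending $W$ to a bounded domain whose relevant boundary arc is a quasiarc, and then apply Carathéodory-type extension for quasiconformal maps.
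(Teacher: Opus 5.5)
Your proposal is correct and is essentially the paper's proof: extend $h$ to $\R\cup\{\infty\}$ using boundary extension of quasiconformal maps (the paper simply cites this), pass $g\circ h=h\circ g$ to $\R$ by continuity, use $\frac12\in\partial U$ to get $|h(\frac12)-\frac12|\le\frac14<\frac12$, and conclude with Lemma \ref{lem:noaut}. The paper takes for granted the points you flag, namely that the extension preserves the $\R$-end, is increasing, and fixes $\infty$. The last of these does follow from the commutation relation: if $h(x_0)=\infty$ for some real $x_0$, then $g\circ h$ oscillates as $x\to x_0$ along $\R$, while $h\circ g$ has a limit there.
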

Notice that the condition of $g^{-1}(W) \subset W$ ensures that the conjugacy makes sense. 
\begin{proof}
Since $h$ is a  quasiconformal homeomorphism, it  extends as a homeomorphism to the boundary of $W$  (see e.g.  Proposition 2.17 in \cite{bf}). We still denote by $h$ the map $h:\overline{W}\ra\ov{W}$ obtained via this extension. 
By continuity, we have  that $h \circ g = g \circ h$ on $\R$ and, by hypothesis,   $|h(z)- z| <1/4$ for all $z \in U$. So $h$ restricted to $\R$ satisfies the hypothesis of		Lemma \ref{lem:noaut} and hence   $h=\id$ on $\R$.
\end{proof}

We are now ready to transport these theorems to the dynamical plane of $T_\alpha$ via the conformal conjugacies $\varphi_\alpha$. We will make use of the following theorem by Krzyz \cite{Krzyz} (see \cite[Theorem 2.19]{Solynin}, for the special case of the unit disk).
\begin{prop}\label{prop:krzyz}
Let $h:\D\ra\D$ be a $K$-quasiconformal map which extends as the identity to $\partial \D$. Then for any $z\in\D$,
\begin{equation}\label{eq: h hyperbolically small}
\dist_{\D}(h (z),z)\leq C,
\end{equation} 
Where $C$ is a constant depending only on $K$.

\end{prop}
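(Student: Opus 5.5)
The plan is to work with the hyperbolic metric and the Beltrami coefficient, and to reduce to the unit disk by lifting. Let $h:\D\to\D$ be $K$-quasiconformal with $h|_{\partial\D}=\id$. By the Ahlfors--Beurling extension (or simply by reflection) I extend $h$ to a $K$-quasiconformal homeomorphism $H$ of $\rs$ that is the identity outside $\D$: define $H=\id$ on $\rs\setminus\D$. This is a homeomorphism, and it is $K$-quasiconformal because $\partial\D$ is a removable set for quasiconformality (circles are quasiconformally removable).

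Given $z\in\D$, I pick a Möbius automorphism $A$ of $\D$ sending $z$ to $0$. Then $\tilde H:=A\circ H\circ A^{-1}$ is again $K$-quasiconformal on $\rs$ and is the identity on $\rs\setminus\D$, since $A$ preserves $\D$ and its complement. Moreover $\dist_\D(h(z),z)=\dist_\D(\tilde H(0),0)$, so it suffices to bound $|\tilde H(0)|$ away from $1$ uniformly.

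Now $\tilde H$ fixes $\infty$, $1$ and $-1$. By the compactness of normalized $K$-quasiconformal maps of the sphere (the family of $K$-qc maps fixing three given points is equicontinuous in the spherical metric), $\tilde H(0)$ lies in a compact set $E_K\subset\rs$ independent of $z$. I also need $E_K\cap\partial\D=\emptyset$. This follows from the same compactness: a limit of such maps is again a $K$-qc homeomorphism that is the identity off $\D$, so it maps $0$ into $\D$. Hence the images $\tilde H(0)$ stay in a compact subset of $\D$, that is, $|\tilde H(0)|\le r_K<1$. An argument by contradiction with a sequence $z_n$ makes this fully rigorous. Therefore $\dist_\D(h(z),z)\le C:=\dist_\D(0,r_K)$, which depends only on $K$.

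The main obstacle is the removability claim used to extend $h$ by the identity, and the closedness of the normalized family. Both are standard: the quasiconformal extension across a quasicircle follows from the ACL characterization, since a line meets a circle in at most two points. If this is unwelcome, one can instead use the Beurling--Ahlfors extension of the identity boundary map, or simply quote Krzyż as the paper does.
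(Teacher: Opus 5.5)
Your argument is correct. It differs from the paper, which gives no proof at all: it simply quotes Krzyz's theorem (via Solynin, Theorem~2.19). That theorem gives an explicit, quantitative bound on the hyperbolic displacement in terms of $K$.

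You instead give a soft normal-families argument, in three steps:
\begin{enumerate}
\item Extend $h$ by the identity to a $K$-quasiconformal map of $\rs$.
\item Move $z$ to $0$ by a disk automorphism $A$. This preserves both the ``identity on $\rs\setminus\D$'' property and hyperbolic distances.
\item Use compactness of the class $\mathcal{G}_K$ of $K$-quasiconformal maps of $\rs$ that equal the identity on $\rs\setminus\D$. This class sits inside the family normalized at $1,-1,\infty$. It is closed, because uniform limits are $K$-quasiconformal homeomorphisms that are still the identity off $\D$, hence map $\D$ onto $\D$. So $G\mapsto G(0)$ has compact image in $\D$.
\end{enumerate}
Your route yields a constant $C(K)$ with no explicit value. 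That is all the paper uses later, in the proof of Proposition~\ref{prop:extend}, where only finiteness of the hyperbolic displacement matters. Citing Krzyz buys an explicit constant, which the paper never needs. The ``compact set $E_K\subset\rs$'' sentence is vacuous, since everything lies in $\rs$; the real content is the closedness step, which you do supply.

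Two small points in your write-up. First, removability of $\partial\D$ via the ACL criterion needs more than ``a line meets a circle in at most two points''. You also need the partial derivatives of $h$ to be integrable on almost every line segment, so that absolute continuity survives across the crossing points. This follows from $\int_\D |Dh|^2\le K\,\mathrm{area}(\D)$ and Fubini. Alternatively, just quote that closed sets of $\sigma$-finite length are removable for quasiconformal homeomorphisms. Second, drop the suggested alternative of ``the Beurling--Ahlfors extension of the identity boundary map''. That extension is the identity and does not involve $h$, so it cannot replace the extension step.
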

\begin{prop}[Extension of conjugacies to $K(T_\alpha)$] \label{prop:extend}
	Let $\alpha \in \D(0,1/2)$ such that $K(T_\alpha)$ is connected. Let $U,V$ be such that $T_\alpha: U \to V$ is TL, and let $p_0=p_0(\alpha)$ the marked pole defined in (\ref{eq:labelling}) (Lemma \ref{lem:labelling}). Let $h: V \setminus K(T_\alpha) \to V \setminus K(T_\alpha)$ 
	be a quasiconformal homeomorphism such that
	\begin{equation} \label{eq:conmute}
	h \circ T_\alpha = T_\alpha \circ h \text{\ \ \ \ \  on $U \setminus K(T_\alpha)$}.
	\end{equation}
	Then, there exists $\epsilon>0$ such that:  if there exists a domain $W' \subset U$ with $p_0 \in \partial W'$ and 
	$$\sup_{z \in W'} |h(z)-z|<\epsilon,$$ then
	$h$ extends to a continuous homeomorphism $h: V \to V$ such that $h=\id$ on $K(T_\alpha)$.
\end{prop}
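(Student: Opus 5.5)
The plan is to transport $h$ to the upper half-plane via the Riemann map $\varphi_\alpha$ of Proposition \ref{lem:confconjH} and then apply Lemma \ref{lem:strip}. Since $K(T_\alpha)$ is connected and $\rs\setminus K(T_\alpha)=\Omega_\alpha$ (Proposition \ref{prop:choicesUV}), the set $V\setminus K(T_\alpha)=V\cap\Omega_\alpha$ is a neighbourhood of $\partial\Omega_\alpha=J(T_\alpha)$ inside the basin. Its preimage $W:=\varphi_\alpha^{-1}(V\cap\Omega_\alpha)\subset\H$ is therefore a region adjacent to $\R$, namely the part of $\H$ below the curve $\varphi_\alpha^{-1}(\partial V)$. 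Because $T_\alpha^{-1}(V)=U\Subset V$, we get $g^{-1}(W)\subset W$. Set $\tilde h:=\varphi_\alpha^{-1}\circ h\circ\varphi_\alpha: W\to W$. This is quasiconformal, and by \eqref{eq:conmute} it satisfies $g\circ\tilde h=\tilde h\circ g$ on $g^{-1}(W)$. (If $W$ is not literally a strip, I would either shrink to a genuine strip $\{0<\Im z<\eta\}\subset W$, which is $g^{-1}$-invariant for small $\eta$ by the contraction of $g^{-1}$ near $\R$, or note that the proof of Lemma \ref{lem:strip} only uses boundary extension along $\R$.)

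Next I would convert the smallness hypothesis near $p_0$ into the hypothesis of Lemma \ref{lem:strip} near $\tfrac12$. By Lemma \ref{lem:labelling}, $p_0$ is the landing point of $\varphi_\alpha(\tfrac12+it)$. I would take $W'$ to be a domain, which I may choose as a thin sector or a curvilinear region around the ray $\{\varphi_\alpha(\tfrac12+it)\}$ near $p_0$, so that $\tilde U:=\varphi_\alpha^{-1}(W'\cap\Omega_\alpha)$ has $\tfrac12\in\partial\tilde U$. The hypothesis says that $h$ moves points of $W'$ by less than $\epsilon$ in the Euclidean metric. I need $|\tilde h(z)-z|<1/4$ on some such $\tilde U$.

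This step is the main obstacle, because $\varphi_\alpha^{-1}$ is not uniformly continuous up to the boundary. I would argue by hyperbolic geometry. First extend $\tilde h$ to $\R$ as a homeomorphism (it is quasiconformal on the strip). It commutes with $g$ on $\R$, so by the first part of the proof of Lemma \ref{lem:noaut} it acts on the poles $\tfrac12+\Z$ as a translation by some $n_0\in\Z$. It then suffices to show $n_0=0$. If $n_0\neq0$, then points $\varphi_\alpha(\tfrac12+it)$ with $t\to0$, which converge to $p_0$, would be mapped by $h$ to points near $\varphi_\alpha(\tfrac12+n_0+it)$, which converge to the distinct pole $p_{n_0}$. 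This contradicts $|h(z)-z|<\epsilon$ on $W'$ once $\epsilon$ is smaller than half the distance from $p_0$ to every other pole. That distance is positive because poles accumulate only at $\infty$. Hence $n_0=0$.

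With $n_0=0$, the remaining part of the proof of Lemma \ref{lem:noaut} (preimages of poles are fixed and dense in $\R$) gives $\tilde h=\id$ on $\R$. Finally I transfer this back to $K(T_\alpha)$. Proposition \ref{prop:krzyz}, applied after a local quasiconformal reflection or on subdisks, shows that $\tilde h$ moves points a bounded hyperbolic distance in $W$. Since $\varphi_\alpha$ is an isometry of hyperbolic metrics and hyperbolic balls of bounded radius near $\partial\Omega_\alpha$ have Euclidean diameter tending to $0$, we obtain $|h(z)-z|\to0$ as $z\to J(T_\alpha)$. Therefore defining $h=\id$ on $K(T_\alpha)$ gives a continuous extension. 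It is injective and surjective because $h$ maps $V\setminus K$ bijectively onto itself, so it is a homeomorphism of $V$.
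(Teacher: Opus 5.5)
Your route is the paper's. You conjugate by $\varphi_\alpha$, show that the boundary map of $\tilde h$ fixes the marked pole $\tfrac12$, get $\tilde h=\id$ on $\R$ from Lemma \ref{lem:noaut}, and finish with Proposition \ref{prop:krzyz} and the hyperbolic isometry $\varphi_\alpha$. Ruling out a shift $n_0\neq 0$ of $\tfrac12+\Z$ is just the contrapositive of the paper's step ``$h(\sigma_0)$ lands at $p_0$, so $\sigma_0$ and $h(\sigma_0)$ define the same access''.

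\textbf{The gap: how you use $W'$.} $W'$ is part of the hypothesis. The conclusion must follow for \emph{any} domain $W'\subset U$ with $p_0\in\partial W'$ on which $|h-\id|<\epsilon$, so you cannot choose it to be a sector around $\sigma_0(t)=\varphi_\alpha(\tfrac12+it)$. Your contradiction for $n_0\neq0$ needs the points $\sigma_0(t)$, $t\to0$, to lie in $W'$, and for a general $W'$ they need not. Since $\varphi_\alpha^{-1}$ need not extend continuously to $p_0$ (the obstacle you identified yourself), closeness of $h$ to the identity on $W'$ then says nothing about $\tilde h$ near $\tfrac12$.

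The paper's proof makes the same tacit step (``$h(\D(p_0,\epsilon)\cap W')\subset D$ and hence $h(\sigma_0(t))$ lands at $p_0$''). The honest fix is to strengthen the hypothesis in one of two ways. Either require $W'$ to contain a terminal segment of $\sigma_0$, or require $W'\subset T_\alpha^{-1}(V\setminus\overline U)$. In the second case, $\varphi_\alpha^{-1}$ restricted to $T_\alpha^{-1}(V\setminus\overline U)$ near $p_0$ does extend continuously with value $\tfrac12$. The reason is that $\varphi_\alpha^{-1}|_{V\setminus\overline U}$ extends homeomorphically to $\overline V\setminus U$ with $\infty\mapsto\infty$, and $T_\alpha$ is a local homeomorphism at $p_0$. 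This is exactly how the proposition is applied (the domain $\mathcal N$ in Proposition \ref{prop:keytool}).

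\textbf{A related imprecision.} Your claim that $h$ sends $\sigma_0(t)$ to points near $\varphi_\alpha(\tfrac12+n_0+it)$ is not justified as written, for the same non-continuity reason. Here is what you can say instead:
\begin{itemize}
\item $T_\alpha(h(\sigma_0(t)))=h(T_\alpha(\sigma_0(t)))\to\infty$, using continuity of $h$ at $\infty$ on the closed fundamental annulus, as in the paper.
\item Hence $h\circ\sigma_0$ accumulates only on the closed countable set $T_\alpha^{-1}(\infty)\cup\{\infty\}$, so it converges to one of its points.
\item Since $\varphi_\alpha^{-1}(h(\sigma_0(t)))=\tilde h(\tfrac12+it)\to\tfrac12+n_0$, Lindel\"of's theorem identifies this limit with the radial limit $p_{n_0}$.
\end{itemize}
With these repairs, the rest of your argument (the Krzyz step and the final continuity and bijectivity check) matches the paper's and goes through.
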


\begin{proof}
The homeomorphism $h$ extends continuously to $\partial V$ and hence it conmutes with $T_\alpha$ also on $\partial U_\alpha \setminus \{\infty\}$, where it is well defined. Hence, by setting $h(\infty)=\infty$,  $h$ is a homeomorphism of the closed anulus $A:=\overline{V}\setminus U$, to itself, which conmutes with $T_\alpha$. Let $\gamma(t)=\varphi_\alpha(i \, t)$ for $t>t_0$ large, where $\varphi_\alpha$ is the conformal conjugacy from Proposition \ref{lem:confconjH}. Then, by Lemma \ref{lem:labelling}, $\gamma(t)\to \infty$ as $t\to \infty$. Since $h$ is continuous on the closed annulus, $h(\gamma(t))\to \infty$ as $t\to +\infty$. Let $\sigma_0(t)$ be the component of $T_\alpha^{-1}(\gamma(t))$ landing at $p_0$, which equals $\varphi_\alpha(\frac12 + it)$ for $t>0$ small (see Lemma \ref{lem:labelling}).  Since $h$ conmutes with $T_\alpha$, we have that $h(\sigma_0(t))$ must be a curve  whose image under $T_\alpha$ is $h(\gamma(t))$, hence it is landing at a pole.

Let  $\epsilon>0$ be such that the disc $D:=\D(p_0,2\epsilon)$ contains no other pole of  $T_\alpha$. Then $h(\D(p_0,\epsilon) \cap W')\subset D$ and hence $h(\sigma_0(t))$ lands at $p_0$.
  
Let  $W:=\varphi_\alpha^{-1}(V\setminus K(T_\alpha))$ which is a horizontal strip in $\H$ having $\R$ as its lower boundary component. Since $\varphi_\alpha$ conjugates $g$ to $T_\alpha$, it follows that $g^{-1}(W)=\varphi_\alpha^{-1}(U\setminus K_\alpha) \subset W$ and that  the quasiconformal homeomorphism $\tilde{h}=\varphi_\alpha^{-1}\circ h \circ \varphi_\alpha: W\to W$  commutes with $g$ on $g^{-1}(W)$. 

Since $\tilde{h}$ is quasiconformal, it extends continuously to $\R$. We claim that $\tilde{h}(\frac12)=\frac12$ and hence, by Lemma  \ref{lem:strip}(or \ref{lem:noaut})  $\tilde h$ extends continuously to $\R \cup \{\infty\}$ by the identity.
Indeed, both curves $\sigma_0(t)$ and $h(\sigma_0(t))$ are homotopic in $\Omega_\alpha$ (because they belong to the pinched annulus $U\setminus \overline{T_\alpha^{-1}(U)}$ which is disjoint from the Julia set) and hence belong to the same access to $p_0$. Hence $\varphi_\alpha^{-1}$ sends them both to curves landing at the same point of $\R\cup \{\infty\}$ which needs to be $1/2$ by definition of $\sigma_0$. Since $\tilde{h}$ commutes with $g$, the claim follows.

\medskip

We now need to show that $h$ extends to $J(T_\alpha)$ as the identity. 
Let us  first show that there exists a constant $L>0$ such that 
\begin{equation}\label{eq: h hyperbolically small}
\dist_{W}(\tilde h (z),z)\leq L \text{\ \ \ for all $z\in W$ with $\Im z$ sufficiently small}.
\end{equation} 

{ Since $\tilde h$ is the identity on $\R\cup \{\infty\}$,  by mapping $\H$ conformally to $\D$ we obtain a map $h_\D$ which is defined on an annulus whose outer boundary is $\partial \D$, which is quasiconformal and which  extends as  the identity   to $\partial \D$. { Up to restricting $V$ if necessary we can assume that $\partial V$ is real analytic and hence $\partial W$ is a quasicircle.} Hence  we can extend $h_\D$ as a quasiconformal map to all of $\D$, to which Proposition~\ref{prop:krzyz} applies.  This gives that  $\dist_{\H}(\tilde h (z),z)\leq C $ for $z\in W$. } Since the hyperbolic density in $\H$ and in $W$ are comparable for $z$ with small imaginary part, (\ref{eq: h hyperbolically small}) follows. 

Now, since $\phi_\alpha: W\ra V\setminus K(T_\alpha)$ is an isometry for the hyperbolic metric, \eqref{eq: h hyperbolically small} implies that  
\begin{equation}
\dist_{V\setminus K(T_\alpha)}(h (z),z)\leq L \text{\ \ \ for all $z\in V\setminus K(T_\alpha)$ close to $K(T_\alpha)$}
\end{equation} 
which implies that $|h(z)-z|\ra0$ as $z\ra J(T_\alpha)$. Thus $h\ra\Id$ as $z\ra J(T_\alpha)$.
\end{proof}

\subsection{Parameter plane: The Tandelbrot set. Proof of Theorem B}

As we saw in Lemma~\ref{lem:tlmodel}, for { $\alpha \in\D(0,\frac{1}{2})$} the asymptotic value $1$ belongs $\Omega_\alpha$,  the basin of attraction of $z=0$, and hence $1/\alpha$ acts as a free asymptotic value, whose orbit may or may not be captured by the basin of 0. In analogy to the Mandelbrot set, we define the following subset of the $\alpha-$parameter plane (see Figure \ref{fig:Tandelbrot}). 

\begin{defi}[The Tandelbrot set]
	The Tandelbrot set  is 
\[
	\tcal:=\{\alpha \in \C \setminus \{1\}: \text{ $T_\alpha^n \left(\frac{1}{\alpha}\right)\not\rightarrow 0$ as $n\to\infty$.}\}.
\]
\end{defi}

As expected, the Tandelbrot set lies in a compact subset of the plane.

\begin{lem}[$\tcal$ is bounded]\label{lem:tbdd}
	$\tcal \subset \D(0,\frac{1}{2})$.
\end{lem}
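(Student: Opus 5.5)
We need to show that if $|\alpha|\ge \frac12$ (with $\alpha\neq1$), then $T_\alpha^n(1/\alpha)\to 0$. The plan is to split into two regimes, $\frac12\le|\alpha|\le 1$ and $|\alpha|>1$, and in each case show that the free asymptotic value $\frac1\alpha$ lies in the basin $\Omega_\alpha$ of $0$.

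\emph{Case $\frac12\le|\alpha|\le1$, $\alpha\ne1$.} Here $|1/\alpha|\le 2$. Lemma \ref{lem:disk mapping inside} already gives $T_\alpha(\D(0,2))\subset\D$ and $\overline{\D}(0,2)\subset\Omega_\alpha$. The only delicate point is $|1/\alpha|=2$, where $1/\alpha$ sits on the boundary circle. For this point I would redo the Koebe argument with a slightly larger radius. The singular values are $1$ and $1/\alpha$, and for $|\alpha|\le1$ neither lies in $\D$. So the inverse branch $g$ fixing $0$, with $g'(0)=8$, is univalent on $\D$, and Koebe's $1/4$ theorem gives $g(\D)\supset\D(0,2)$. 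The inclusion is in fact strict in a neighbourhood of $\overline{\D}(0,2)$, because the Koebe bound is attained only by rotations of the Koebe function, whose image is a slit plane rather than the image of a disk under a map of the form $g$. Alternatively, one can avoid this: by continuity $T_\alpha$ maps $\overline{\D}(0,2)$ into $\overline{\D}$, and the conclusion $\overline{\D}(0,2)\subset\Omega_\alpha$ is already stated in that lemma. Either way $1/\alpha\in\Omega_\alpha$, so $\alpha\notin\tcal$.

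\emph{Case $|\alpha|>1$.} I would use the symmetry of Lemma \ref{lem:rigidity}. The map $L(z)=\alpha z$ conjugates $T_\alpha$ to $T_{1/\alpha}$, with $L(0)=0$ and $L(1/\alpha)=1$. Hence $T_\alpha^n(1/\alpha)\to0$ if and only if $T_{1/\alpha}^n(1)\to0$. Since $|1/\alpha|<1$, Lemma \ref{lem:disk mapping inside} applied to the parameter $1/\alpha$ shows that $1$ lies in the basin of $0$ for $T_{1/\alpha}$. So $\alpha\notin\tcal$.

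Combining the two cases gives $\tcal\subset\D(0,\frac12)$. I expect no real obstacle. The only points needing care are the boundary circle $|\alpha|=\frac12$ and the fact that $\alpha=0$ is excluded from Lemma \ref{lem:rigidity}; the latter is harmless, since $0\in\D(0,\frac12)$ anyway.
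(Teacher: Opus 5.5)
Your proof is correct. The first case ($\frac12\le|\alpha|\le 1$) is exactly the paper's argument: $1/\alpha\in\overline{\D}(0,2)\subset\Omega_\alpha$ by Lemma \ref{lem:disk mapping inside}. Your ``alternative'' is all that is needed there: by continuity $T_\alpha(\overline{\D}(0,2))\subset\overline{\D}\subset\D(0,2)$. So you can drop the digression on the extremal case of Koebe's theorem. As written it is also unjustified. The actual reason $g$ cannot be a rotated Koebe function is that $T_\alpha$ is periodic with period $16\pi i/(\alpha-1)$, so it cannot be injective on a slit plane.

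The second case ($|\alpha|>1$) takes a different route from the paper.
\begin{itemize}
\item The paper re-runs the Koebe argument directly for $T_\alpha$ on $D=\overline{\D}(0,1/|\alpha|)$. Since the open disk of radius $1/|\alpha|$ contains no singular value, this gives $T_\alpha(D)\Subset D$, hence $1/\alpha\in D\subset\Omega_\alpha$.
\item You use the linear conjugacy $L(z)=\alpha z$ from Lemma \ref{lem:rigidity}, which carries $(T_\alpha,0,1/\alpha)$ to $(T_{1/\alpha},0,1)$. You then quote Lemma \ref{lem:disk mapping inside} for the parameter $1/\alpha\in\D$.
\end{itemize}

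Your version needs no new estimate. It also makes transparent that the case $|\alpha|>1$ is just the statement ``the other asymptotic value $1$ is always captured when the parameter lies in $\D$''. The paper's version is self-contained and does not need the explicit conjugacy; it directly exhibits a forward-invariant disk around $0$ containing $1/\alpha$.

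The two are the same estimate in disguise. Koebe's bound is scale-invariant, and $L^{-1}$ maps the disk $\overline{\D}(0,2)$ for $T_{1/\alpha}$ onto $\overline{\D}(0,2/|\alpha|)$ for $T_\alpha$. That is precisely the disk the paper's direct argument would also place in $\Omega_\alpha$.
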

\begin{proof}
 We need to show that if $|\alpha|\geq 1/2$ then $1/\alpha\in\Omega_\alpha$. 
On the one hand, if $1/2\leq |\alpha|\leq 1$, $\alpha\neq 1$, then $1/\alpha \in \overline{\D}(0,2)$ which, by Lemma \ref{lem:disk mapping inside}, belongs to $\Omega_\alpha$. 
On the other hand, if  $\alpha>1$, consider the disk $D:=\overline{\D}(0,|1/\alpha|)$ and argue as in the proof of Lemma \ref{lem:disk mapping inside} to show that $T_\alpha(D)\Subset D$, and hence $D\subset \Omega_\alpha$. 
\end{proof}

We shall see next that the boundary of $\tcal$ coincides with the activity locus of the asymptotic value $1/\alpha$. { Note first that $T_\alpha$ is a natural family of meromorphic maps, with base map e.g.  $T_0(z)=1-e^{-z/8}$, and holomorphic motions $\psi_\alpha(z)=z/(1-\alpha)$ and $\varphi_\alpha(z)=\frac{z}{\alpha z+1-\alpha}$. We recall the definitions of activity and passivity of singular values in Section \ref{sec:prelim} and define} the {\em activity locus} of $1/\alpha$ as the set
\[
\acal=\{\alpha\in\C\setminus\{1\}\mid \text{$1/\alpha$ is active at the parameter $\alpha$}\}.
\]

\begin{prop}[Activity locus of $1/\alpha$]\label{prop:activity locus}
The boundary of $\tcal$ is the set of parameter values for which $1/\alpha$ is active. In other words
\[
 \partial \tcal = \acal.
 \]
\end{prop}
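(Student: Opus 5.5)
We prove the two inclusions $\partial\tcal\subset\acal$ and $\acal\subset\partial\tcal$, using that $\Omega_\alpha$ is an open attracting basin which moves continuously with $\alpha$. Write $v_\alpha:=1/\alpha$ and $F_n(\alpha):=T_\alpha^n(v_\alpha)$. By Lemma \ref{lem:tbdd} we only need to work in $\D(0,\frac12)$. Outside $\overline{\tcal}$, and near any point of $\C\setminus\{1\}$ with $|\alpha|\ge \frac12$, the asymptotic value is captured by $\Omega_\alpha$.

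\textbf{Step 1: the complement of $\tcal$ is open and passive.} Let $\alpha_0\notin\tcal$. Then $F_n(\alpha_0)\to 0$, so for some $N$ the point $F_N(\alpha_0)$ lies in a small disk $\D(0,\rho)$. By Lemma \ref{lem:disk mapping inside} and its Koebe argument, for $\alpha$ near $\alpha_0$ (and $|\alpha|<1$, or using the analogous disk of Lemma \ref{lem:tbdd}), $T_\alpha$ maps $\D(0,\rho)$ compactly into itself and contracts it uniformly.

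First we check that $F_N$ is holomorphic near $\alpha_0$. For this, none of the points $F_k(\alpha_0)$, $k<N$, may be a pole or $\infty$. If one of them were $\infty$, the orbit would stop, so this cannot happen because the orbit converges to $0$. Hence $F_N$ is holomorphic near $\alpha_0$, and $F_N(\alpha)\in\D(0,\rho)$ on a neighborhood. Consequently $F_n(\alpha)\in\D(0,\rho)$ for all $n\ge N$, the family $\{F_n\}$ is uniformly bounded and hence normal, and $\alpha$ is passive. This also shows that $\C\setminus\tcal$ is open, so $\tcal$ is closed, and it gives $\acal\cap(\C\setminus\tcal)=\emptyset$.

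\textbf{Step 2: the interior of $\tcal$ is passive.} Let $W$ be a component of $\mathring{\tcal}$. For $\alpha\in W$ the value $v_\alpha$ never enters $\Omega_\alpha$.

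We first show that $F_n$ is defined on $W$, i.e.\ that $F_n$ has no poles there. Suppose $F_n(\alpha_1)=\infty$ for some $\alpha_1\in W$, non-persistently. The Shooting Lemma (Proposition \ref{shooting}), applied with $\gamma(\alpha)\equiv 0$ (which is not a singular value), produces $\alpha'$ near $\alpha_1$ with $F_{n+1}(\alpha')=0$. Then $\alpha'\notin\tcal$, which is a contradiction. A persistent relation would force $F_n\equiv\infty$ on $W$, hence on all of $\C\setminus\{1\}$ by analytic continuation; this is impossible since $0\notin\tcal$.

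Next, the maps $F_n$ omit the values in $\overline{\D}(0,2)\subset\Omega_\alpha$ (Lemma \ref{lem:disk mapping inside}) on $W$, because $v_\alpha\notin\Omega_\alpha$ for $\alpha\in W$. So $\{F_n\}$ omits a whole disk and is normal by Montel's theorem. This gives $\acal\cap\mathring{\tcal}=\emptyset$. Together with Step 1, we obtain $\acal\subset\partial\tcal$.

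\textbf{Step 3: the boundary of $\tcal$ is active.} Let $\alpha_0\in\partial\tcal$ and suppose, for contradiction, that $v_\alpha$ is passive on a disk $B\ni\alpha_0$. The case where $F_n\equiv\infty$ on $B$ is excluded as in Step 2, so $\{F_n\}$ is normal on $B$. Choose $\alpha_1\in B\setminus\tcal$; then $F_n(\alpha_1)\to0$.

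Any limit $F$ of a subsequence $F_{n_j}$ on $B$ is either holomorphic or identically $\infty$, and it satisfies $F(\alpha_1)=0$. So $F$ is holomorphic near $\alpha_1$, and the limit is not identically $\infty$. Now pick $\alpha_2\in B\cap\tcal$ close to $\alpha_1$ along a path inside $B$.

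The main obstacle is to show that every limit function is in fact $\equiv 0$ on $B$. The plan for this step is as follows. Uniform convergence of $F_{n_j}$ to $F$ on compacta, together with $F(\alpha_1)=0$, forces $F_{n_j}(\alpha)\in\D(0,\rho)$ for $\alpha$ in a neighborhood of $\alpha_1$. The set $\{\alpha\in B : F\equiv 0 \text{ near } \alpha\}$ is open. Since $F_{n_j+1}=T_\alpha(F_{n_j})$ and $T_\alpha(0)=0$ with multiplier $\frac18$, any limit of the shifted sequence equals $T_\alpha\circ F$; iterating, every limit value lying in the basin is pushed to $0$. A connectedness argument on $B$, using that $F$ is continuous, then shows that $F\equiv0$ on all of $B$.

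Hence $F_{n}(\alpha_0)$ would eventually enter $\D(0,\rho)\subset\Omega_{\alpha_0}$, so $\alpha_0\notin\tcal$. This contradicts the fact that $\tcal$ is closed and $\alpha_0\in\partial\tcal\subset\tcal$. Therefore every point of $\partial\tcal$ is active, so $\partial\tcal\subset\acal$, which completes the proof of $\partial\tcal=\acal$.
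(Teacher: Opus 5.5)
Your proof is correct, but for the harder inclusion $\acal\subset\partial\tcal$ you take a different route from the paper. There is one slip to fix, described at the end.

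\textbf{Comparison with the paper.} The paper argues by approximation. It uses the density of virtual-cycle parameters in the activity locus (\cite[Prop.~5.5]{ABF}), and then the Shooting Lemma to produce, arbitrarily close to any active $\alpha_0$, parameters with $T_\alpha^{n+1}(1/\alpha)=\frac12\in\Omega_\alpha$. Thus $\alpha_0$ is accumulated by $\C\setminus\tcal$, and since that open set is passive, $\alpha_0\in\partial\tcal$.

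You instead prove passivity directly on $\C\setminus\partial\tcal$. On the complement you use the trapping disk around $0$. On $\mathring{\tcal}$ you use Montel (omitting $\overline{\D}(0,2)$), after applying the Shooting Lemma only locally, with $\gamma\equiv 0$, to exclude poles of $F_n$. This is more elementary: it avoids the global density result from \cite{ABF}. It is the classical Mandelbrot-set argument, and essentially what the paper itself does for TL families in Proposition~\ref{prop:msstl}. The paper's version is shorter once the ABF machinery is available.

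For $\partial\tcal\subset\acal$, which the paper calls clear, your Step~3 supplies the details, but the ``main obstacle'' is a one-liner. The limit $F$ is holomorphic and vanishes on the nonempty open set $B\setminus\tcal$, so $F\equiv 0$ by the identity principle. Your connectedness argument also works, but state it as a proof rather than a plan. The set $\{F=0\}$ is closed by continuity. It is open because $F(\alpha^*)=0$ puts $F(\alpha)$ into the trapping disk $\D(0,\rho)\subset\Omega_\alpha$ for $\alpha$ near $\alpha^*$, which forces $F(\alpha)=0$ there.

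\textbf{The slip.} In Step~2 you exclude a persistent relation $F_n\equiv\infty$ by saying $0\notin\tcal$. This is false. At $\alpha=0$ the value $1/\alpha=\infty$ is the essential singularity itself, so the orbit is undefined. The main hyperbolic component accumulates at $0$ (Lemma~\ref{lem:maincardioid}), and in fact $0\in\partial\tcal$.

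Use instead any $\alpha$ with $|\alpha|\ge\frac12$, for example $\alpha=\frac12$. There $1/\alpha=2\in\overline{\D}(0,2)\subset\Omega_\alpha$, so the whole orbit is finite and tends to $0$. Also, carry out the analytic continuation on the actual domain of definition of $F_n$. This domain is $\C\setminus\{0,1\}$ minus the discrete set where some earlier iterate equals $\infty$, which is connected, rather than ``all of $\C\setminus\{1\}$''.
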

\begin{proof}
Clearly $\partial \tcal \subset \acal$. To see the reverse inclusion suppose  that  $\alpha_0\in\acal$ 
and let us show that there are parameters $\alpha$ arbitrarily close to $\alpha_0$ such that $T^n_\alpha(1/\alpha) \to 0$ as $n\to\infty$. { Since parameters in the activity locus cannot have a neighborhood on which  $T^n_\alpha(1/\alpha) \to 0$, it will follow that $\alpha_0$ is indeed in the boundary of $\tcal$. }

To that end, observe that $T_\alpha$ is a {natural 
family}
 of meromorphic maps of finite type, which satisfies that  $T_\alpha=\varphi_\alpha \circ T\circ \psi_\alpha^{-1}$, with $T(z)=\exp(z/8)$ , $\psi_\alpha(z)=z/(\alpha-1)$, and $\varphi(z)=(z-1)/(\alpha z-1)$. By  {Proposition~\ref{prop:density_of_virtual cycles}, }

 every active parameter can be approximated by {\em virtual cycle parameters}, that is, by parameters $\alpha$ such that $T_\alpha^n(1/\alpha) =\infty$, for some $n\geq 1$ called the {\em order} of the virtual cycle. 

At the same time,  by the shooting lemma \ref{shooting}, 
 every such virtual cycle parameter of order $n$ can be approximated by parameters $\alpha$ such that $T_\alpha^{n+1}(1/\alpha)=1/2$ (or any other constant function different from 1). 
Since $1/2$ is always in the basin of $0$ for $\alpha$ in a neighborhood of $\tcal$, 
we get the result.
\end{proof}

\begin{rem}
By Lemma \ref{lem:rigidity}, the parameter space of the family $(T_{\alpha})_{\alpha \in \C \setminus \{1\}}$ is invariant under the involution $\alpha \mapsto \frac{1}{\alpha}$ (except for $\alpha=0$). 
Thus, the set $1/{\mathcal{T}}$ coincides with the set of $\alpha \in \C \setminus \{1\}$ such that 
the asymptotic value $1$ is not captured by $0$ for $T_\alpha$. The full bifurcation locus is the union 
$\mathcal{T} \sqcup \partial (1/{\mathcal T})$. For this reason, we will only consider $\alpha \in \D$.
\end{rem}

As for the Mandelbrot set, the interior of $\tcal$ consists of hyperbolic and {(perhaps)} non hyperbolic components; in the latter case, the Julia set supports an invariant line field.

\begin{theo}[Interior components of $\tcal$]\label{th:hyperbolicornon-hyperbolic}
	Let $U$ be a connected component of $\inter(\tcal)$. Then, one of the two following statements hold. 
	\begin{enumerate}[{\rm (1)}]
		\item For all $\alpha \in U$, $T_\alpha$  has an attracting cycle different from $z=0$, or
		\item for all $\alpha \in U$, $T_\alpha$ has an invariant line field.
	\end{enumerate}
	In the first case, we say that $U$ is a hyperbolic component; in that case, the multiplier map $\rho: U \to \D^*$ is a universal covering map. In the second case, we say that $U$ is a non-hyperbolic component.
\end{theo}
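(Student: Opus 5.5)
Throughout, $U$ is a component of $\inter(\tcal)$; by Proposition \ref{prop:activity locus} the asymptotic value $1/\alpha$ is passive on $U$. The asymptotic value $1$ lies in $\Omega_\alpha$ by Lemma \ref{lem:disk mapping inside}, so it is passive as well. Since $\{T_\alpha\}$ is a natural family of finite type, Theorem \ref{thm:J stability} applies on the simply connected domain $U$: the Julia sets $J(T_\alpha)$ move holomorphically, and the number of attracting cycles is constant on $U$. So either every $T_\alpha$ with $\alpha\in U$ has an attracting cycle besides $0$, or none does. This gives the dichotomy, and it remains to treat each case.

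\textbf{Non-hyperbolic case.} Fix $\alpha_0\in U$ and use the holomorphic motion of $J(T_{\alpha_0})$ given by Theorem \ref{thm:J stability}. Extend it $T$-equivariantly to the basin $\Omega_\alpha$ through the conformal conjugacies $\phi_\alpha$ of Proposition \ref{lem:confconjH}. This requires $K(T_\alpha)$ connected, which holds on $\tcal$ by Proposition \ref{prop:Kconnected}. Since these conjugacies have matching boundary behaviour by Lemma \ref{lem:labelling}, we get global quasiconformal conjugacies $h_\alpha$, conformal on $\Omega_{\alpha_0}$, and by the $\lambda$-lemma they are holomorphic in $\alpha$. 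If $h_\alpha$ had $\dbar h_\alpha=0$ a.e. on $K(T_{\alpha_0})$, then $h_\alpha$ would be conformal, hence affine. Lemma \ref{lem:rigidity} would then force $\alpha\in\{\alpha_0,1/\alpha_0\}$, which is impossible for all $\alpha$ in an open set. Therefore the Beltrami differential $\dbar h_\alpha/\partial h_\alpha$ is nonzero on a positive-measure invariant subset of $K(T_{\alpha_0})$. Normalizing it gives a $T_{\alpha_0}$-invariant line field. The same argument, based at any other point of $U$, gives invariant line fields for all $\alpha\in U$.

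Before this, one must check that in this case $K(T_\alpha)$ has positive area. To do so, I would show that no Fatou component other than those of $\Omega_\alpha$ exists. Fatou components of finite-type maps are classified: attracting, parabolic, Siegel, and preperiodic to these. There are no wandering or Baker domains in finite type. Parabolic and Siegel components would require the free value $1/\alpha$ to be non-persistently non-hyperbolic, contradicting the $J$-stability equivalences in Theorem \ref{thm:J stability}. So $K(T_\alpha)=J(T_\alpha)$, and if $J$ had measure zero the motion would be conformal, which gives the same contradiction as above.

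\textbf{Hyperbolic case.} The attracting cycle, with period $n$, moves holomorphically on $U$, so the multiplier $\rho:U\to\D^*$ is holomorphic. It never vanishes, since $T_\alpha$ has no critical points. To show that $\rho$ is a universal covering, I would do a quasiconformal surgery in the style of Douady--Hubbard. Given $\alpha_0\in U$ and a target multiplier $\rho'\in\D^*$, modify the dynamics on the immediate basin of the cycle, using a linearizing coordinate, so that the multiplier becomes $\rho'$. Pulling back the resulting invariant Beltrami form and straightening it produces a meromorphic map with two asymptotic values and a fixed point of multiplier $1/8$. By Lemma \ref{lem:rigidity} this map is some $T_\alpha$. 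This gives local holomorphic inverses of $\rho$ with path lifting, so $\rho$ is a covering. Since $\D^*$ has universal cover $\D$ and $U$ is simply connected, it remains to show that $U$ is simply connected. I would get this from fullness of $\tcal$ (Theorem B), or directly via the maximum principle applied to $T^n_\alpha(1/\alpha)$. Simple connectivity of $U$ then makes $\rho$ a universal covering.

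\textbf{Main obstacle.} The hardest step is properness of $\rho$ on $U$ (path lifting): as $\rho'$ varies in $\D^*$, the surgered parameter must stay inside $U$. I would attack it by showing that the surgery family defines a holomorphic map $\D^*$-cover $\to U$ inverse to $\rho$ along paths, with continuity coming from continuous dependence of the Beltrami solution.
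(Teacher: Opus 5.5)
There is a genuine gap in the non-hyperbolic case, at the construction of the global conjugacy $h_\alpha$. You define it as $\phi_\alpha\circ\phi_{\alpha_0}^{-1}$ on $\Omega_{\alpha_0}$ and as the dynamical motion on $J(T_{\alpha_0})$. You then assert two things that are not justified.

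First, ``by the $\lambda$-lemma they are holomorphic in $\alpha$''. The $\lambda$-lemma takes holomorphic dependence on $\alpha$ as its hypothesis and returns continuity and quasiconformality in $z$; it cannot produce holomorphy in $\alpha$. Whether $\alpha\mapsto\phi_\alpha\circ\phi_{\alpha_0}^{-1}(z)$ is holomorphic is real content. A holomorphic motion of the boundaries of a family of disks does not in general extend to a motion that is conformal inside; Zakeri's criterion, which the paper uses, is that the logarithm of the conformal radius be harmonic in the parameter.

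Second, Lemma \ref{lem:labelling} only gives the landing of the specific rays at $\infty$, at the poles $p_k$ and at $\beta_0$. It says nothing about continuous extension of $\phi_\alpha\circ\phi_{\alpha_0}^{-1}$ to all of $J(T_{\alpha_0})$, which need not be locally connected, nor about agreement with the dynamical motion there. So as written you do not know that $h_\alpha$ is a homeomorphism, let alone quasiconformal, and the Beltrami-coefficient argument has nothing to act on.

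The missing ingredient is exactly Lemma \ref{lem:zakeri}. Take $\ell_\alpha=\lim 8^nT_\alpha^n$ and let $\ell$ be the linearizer of $g$ at $p$. Both $\ell_\alpha\circ\phi_\alpha$ and $\ell$ linearize $g$ at $p$, so $\ell_\alpha\circ\phi_\alpha=(\ell_\alpha(1)/\ell(Ci))\,\ell$. Hence $\phi_\alpha'(p)$, equivalently the conformal radius of $\Omega_\alpha$ at $0$, depends holomorphically on $\alpha$, and the paper concludes via Zakeri's theorem. Alternatively, this identity gives holomorphy of $\alpha\mapsto\phi_\alpha(w)$ for $w$ near $p$. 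You can propagate it to all $w\in\H$ through $T_\alpha^n\circ\phi_\alpha=\phi_\alpha\circ g^n$, using that $T_\alpha$ has no critical points and that $\{\phi_\alpha\}$ is normal.

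Once this holds, the union of the motion of $\Omega_{\alpha_0}$ with the dynamical motion of $J(T_{\alpha_0})$ is a holomorphic motion of $\rs$. Injectivity is automatic since $\Omega_\alpha\cap J(T_\alpha)=\emptyset$. The $\lambda$-lemma then gives continuity across $J$ and quasiconformality, so no boundary matching is needed, and from there your rigidity argument coincides with the paper's.

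Two smaller points. Your list of periodic Fatou components omits Herman rings, which meromorphic maps of finite type can have; the paper excludes them because only $1/\alpha$ is free. In the hyperbolic case your route differs from the paper's. The paper cites \cite{FK} and uses the real-axis argument to exclude a component punctured at $\alpha=0$. You use surgery plus fullness of $\tcal$ from Theorem \ref{th:msconnected}, which is proved independently, so there is no circularity. Your route works, and the step you flag as hardest is easy: the surgery map is holomorphic and non-constant, so its image is open, connected, contained in $\tcal$ and contains $\alpha_0$, hence lies in $U$.
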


{The claim about hyperbolic components follows from the results in \cite{FK} (see also \cite[Theorem 4.4]{chen2023slices}). For the second case we shall need the following lemma. }

\begin{lem}[Holomorphic motion of $J$] \label{lem:zakeri}
	Let $U$ be a connected component of $\mathring{\tcal}$. Let $\alpha_* \in U$. Then the dynamical holomorphic motion of $J(T_{\alpha_*})$  over $U$ extends to a holomorphic motion $h_\alpha : \chat \to \chat$ which is holomorphic on $\Omega(T_{\alpha_*})$.
\end{lem}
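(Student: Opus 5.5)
Since $U$ is a component of $\mathring{\tcal}$, Proposition \ref{prop:activity locus} shows that $U \cap \acal = \emptyset$. Hence the asymptotic value $1/\alpha$ is passive on $U$. The other asymptotic value $1$ lies in $\Omega_\alpha$ by Lemma \ref{lem:disk mapping inside}, so it is passive as well. Since $U$ is simply connected ($\tcal$ is full), Theorem \ref{thm:J stability} gives that $J(T_\alpha)$ moves holomorphically over $U$. The motion is the dynamical one, $H_\alpha: J(T_{\alpha_*}) \to J(T_\alpha)$, conjugating $T_{\alpha_*}$ to $T_\alpha$ on the Julia set. The task is then to extend this motion to all of $\rs$ so that it is holomorphic in $z$ on $\Omega(T_{\alpha_*})$. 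Here $\Omega(T_{\alpha_*}) = \Omega_{\alpha_*}$ is the basin of $0$, which is the whole Fatou set component structure outside $K$.

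For the basin I would use the Böttcher-type coordinates of Proposition \ref{lem:confconjH}. For $\alpha \in U$, $K(T_\alpha)$ is connected, so the Riemann maps $\phi_\alpha: \H \to \Omega_\alpha$ conjugate $T_\alpha$ to the fixed model $g(z) = C\tan(\pi z)$. Set
\[
h_\alpha := \phi_\alpha \circ \phi_{\alpha_*}^{-1} \quad \text{on } \Omega_{\alpha_*}.
\]
This map is conformal in $z$ and conjugates $T_{\alpha_*}$ to $T_\alpha$ there. It remains to show that $\alpha \mapsto h_\alpha(z)$ is holomorphic and that $h_\alpha$ glues continuously with $H_\alpha$ on $J = \partial \Omega$.

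\textbf{Holomorphic dependence.} The normalization $\phi_\alpha^{-1}(1) = Ci$ is dynamical. Consider a point $w = \phi_{\alpha_*}(\zeta)$ and an iterated inverse branch of $g$ near $Ci$. Since $1$ is the asymptotic value and $\phi_\alpha(Ci) = 1$ for all $\alpha$, points of the form $\phi_\alpha(\zeta)$, with $\zeta$ an iterated $g$-preimage of a point near $Ci$, are obtained from a fixed neighbourhood of $1$ by pulling back along $T_\alpha$. I first need $\phi_\alpha$ to depend holomorphically on $\alpha$ near $Ci$. I would get this from the linearizing coordinate at $0$: $\phi_\alpha$ is determined by the Koenigs map of $T_\alpha$ at $0$, and the multiplier is constant $1/8$. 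So $\phi_\alpha^{-1} = \kappa^{-1} \circ \kappa_\alpha$, where $\kappa$ and $\kappa_\alpha$ are the Koenigs coordinates of $g$ at $p$ and of $T_\alpha$ at $0$, with the normalizations matched using $\kappa_\alpha(1)$ appropriately. Since $\kappa_\alpha$ depends holomorphically on $\alpha$, this gives holomorphy near $0$. It then propagates to all of $\Omega_\alpha$ by pulling back along inverse branches of $T_\alpha$, which depend holomorphically on $\alpha$ because the only singular value in $\Omega_\alpha$ is $1$, and it is fixed. Injectivity in $z$ is automatic, so $h_\alpha$ is a holomorphic motion of $\Omega_{\alpha_*}$ with $h_{\alpha_*} = \Id$.

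\textbf{Gluing with the Julia set.} Finally I would show that $h_\alpha$ and $H_\alpha$ together form a motion of $\rs$. The $\lambda$-lemma extends $h_\alpha$ to $\overline{\Omega_{\alpha_*}} = \rs$, since $\Omega$ is dense because $\mathring K = \emptyset$ or the complement is treated separately. The point to check is that this extension agrees with $H_\alpha$ on $J$, and I expect this to be the main obstacle. I would verify it on the dense set of landing points of $\phi_{\alpha_*}$-images of iterated $g$-preimages of $0$, namely the iterated preimages of $\beta_0$, and on the poles. By Lemma \ref{lem:labelling} these are the same points tracked by both motions, since $\beta_0(\alpha)$ and $p_k(\alpha)$ move holomorphically and are dynamically labelled. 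Density of these points in $J$ and continuity of both motions then give agreement on $J$.

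If $K(T_{\alpha_*})$ has interior, I would extend the motion to $\mathring K$ by Slodkowski's theorem applied to the motion of $\overline{\Omega_{\alpha_*}}$. The result is a holomorphic motion $h_\alpha$ of $\rs$ which is conformal on $\Omega(T_{\alpha_*})$.
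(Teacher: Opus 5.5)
Your route is workable but differs from the paper's. The paper never builds the motion on $\Omega$ by hand. It quotes Zakeri's theorem \cite{zak2}: the holomorphic motion of $\partial\Omega_\alpha$ extends to a motion that is conformal on $\Omega_{\alpha_*}$ as soon as the logarithm of the conformal radius of $(\Omega_\alpha,0)$ is harmonic. The paper then checks this condition by producing Riemann maps with $\alpha\mapsto\tilde\psi_\alpha'(p)=a_\alpha/a$ holomorphic, where $a_\alpha=\ell_\alpha(1)$. That is exactly your Koenigs normalization, so the analytic input is the same. The difference is that you prove the stronger statement that $\alpha\mapsto\phi_\alpha(\zeta)$ is holomorphic for every $\zeta\in\H$. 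You then have to do the boundary matching yourself, which is precisely what Zakeri's theorem supplies. Your version is self-contained and uses the dynamics instead of a general extension theorem; the paper's version is shorter.

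Two of your justifications need repair.

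\textbf{Propagation step.} What matters is that $\kappa_\alpha$ has no critical points and that $\alpha\mapsto\phi_\alpha$ is continuous. Continuity holds because the $\phi_\alpha$ omit $\infty$ and two holomorphically moving poles, so they form a normal family. A limit $\Phi$ is univalent by Hurwitz and satisfies $\kappa_\alpha\circ\Phi=c_\alpha\kappa$, hence $\Phi=\phi_\alpha$. The implicit function theorem applied to $\kappa_\alpha(w)=c_\alpha\kappa(\zeta)$ then gives holomorphy in $\alpha$. The fact that $1$ is fixed is not the relevant point, since the points $T_\alpha^n(1)$ do move.

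\textbf{Gluing step.} Lemma~\ref{lem:labelling} is a statement for each fixed $\alpha$. It says nothing about how $\beta_0(\alpha)$ or the labels $p_k(\alpha)$ depend on $\alpha$. So your identification $\beta_0(\alpha)=H_\alpha(\beta_0(\alpha_*))$ is an unproved stability-of-landing claim.

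It does, however, follow from your own construction:
\begin{enumerate}
\item The $\lambda$-lemma extension $\bar h_\alpha$ satisfies $\bar h_\alpha(\infty)=\infty$ by \eqref{eq:radial}.
\item Passing to the limit in $h_\alpha\circ T_{\alpha_*}=T_\alpha\circ h_\alpha$ gives the same relation on $J(T_{\alpha_*})\setminus\{\infty\}$.
\item Hence for a repelling periodic point $x$ of period $n$, both $\alpha\mapsto\bar h_\alpha(x)$ and $\alpha\mapsto H_\alpha(x)$ are holomorphic solutions of $T_\alpha^n(w)=w$ that agree at $\alpha_*$.
\item By the implicit function theorem and the identity principle they agree on $U$.
\item Density of repelling cycles in $J$ then gives $\bar h_\alpha=H_\alpha$ on $J$.
\end{enumerate}
Evaluating $\bar h_\alpha$ along $\phi_{\alpha_*}(it)$ also shows $\bar h_\alpha(\beta_0(\alpha_*))=\beta_0(\alpha)$. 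So holomorphic dependence of $\beta_0$ is a consequence of the construction, not an input to it. With these two repairs the argument is complete.
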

\begin{proof}
Let $C,p, g, \phi_\alpha$ be as in Lemma \ref{lem:confconjH}. Recall that $C,p, g$ do not depend on $\alpha$ and that  $\phi_\alpha:\H\ra\Omega_\alpha$ is the Riemann map normalized so that  $\phi_\alpha(Ci)=1$ and $\phi_\alpha(p)=0$,  conjugating $T_\alpha$ in $\Omega_\alpha$ to $g$ on $\H$. 

	By  $J$-stability the boundary of the family of punctured  disks $(\Omega(T_\alpha),0)$   moves holomorphically over $U$. 
	By \cite[Main Theorem]{zak2},
	it is enough to prove that the logarithm of conformal radius of $\Omega(T_\alpha)$  with respect to $0$ is harmonic on $U$, i.e. that there exists a family of Riemann uniformizing maps $\psi_\alpha: \D \to \Omega(T_\alpha)$ such that {  $\psi_\alpha(0)=0$} and  $\alpha \mapsto \psi_\alpha'(0)$ is holomorphic on $U$, or equivalently, uniformizin maps $\tilde \psi_\alpha: \H\ra\Omega_\alpha$ such that   $\tilde\psi_\alpha(p)=0$ and that $\alpha \mapsto \tilde \psi_\alpha'(p)$ is holomorphic on $U$. 
	
	Let $\ell:=\limn 8^n g^n$ and $\ell_\alpha:=\limn 8^n T_\alpha^n$ denote the respective linearizing coordinates of $g$ and $T_\alpha$.  Notice   that  $\ell'(p)=\ell_\alpha'(0)=1$.  Note also that  
	\[
	 \ell_\alpha: \Omega_\alpha \longrightarrow  \C\setminus (8^n a_\alpha)_{n\geq 1}
	\] 
	is a universal covering, where $a_\alpha:=\ell_\alpha (1)$ depends holomorphically in $\alpha$, since $\ell_\alpha$ does. Likewise
	\[
	 \ell: \H \longrightarrow  \C\setminus (8^n a)_{n\geq 1}
	\] 
	is also a universal covering, where $a:=\ell(Ci)$ is independent of $\alpha$. Finally define $L_\alpha(z)= \frac{a_\alpha}{a} z$ a linear map which sends the sequence $8^n a$ to $8^n a_\alpha$ for all $n$, and conjugates $z\mapsto z/8$ to itself. Let $\tilde \psi_\alpha:\H\to\Omega_\alpha$ be the lift of $L_\alpha$ under the universal coverings $\ell$ and $\ell_\alpha$, such that $\tilde \psi_\alpha(p)=0$, that is  
	\[
	\ell_\alpha \circ \tilde \psi_\alpha =L_\alpha \circ \ell.
	\] 
	Since $L_\alpha$ is a homeomorphism, so is $\tilde \psi_\alpha$. Using the chain rule (and recalling $\tilde \psi(p)=0$) we compute 
	$\tilde \psi_\alpha'(p)=\frac{L_\alpha'(\ell(0))\ell'(p)}{\ell_\alpha'(0)}=\frac{a_\alpha}{a}.$ It follows that $\tilde\psi_\alpha$ are  Riemann maps sending $p$ to $0$ and such that  $\tilde\psi_\alpha'(p)=a_\alpha/a$, which depends holomorphically on $\alpha$ as required. (In fact, $\tilde \psi_\alpha=\varphi_\alpha^{-1}$.)
\end{proof}

\begin{proof}[Proof of  Theorem~\ref{th:hyperbolicornon-hyperbolic}]
 Let $\alpha_0\in U$. 
By \cite[Corollary G]{ABF}, if  the orbit of $1/\alpha_0$ converges to  an attracting cycle, the same holds for every parameter $\alpha\in U$.{
 In that case, the period of the attracting cycle is constant throughout $U$ and, by \cite[Corollary 6.5]{FK}, the multiplier map $\rho:U\to \D^*$ is a holomorphic covering. Moreover, $U$ is either simply connected and $\rho$ has therefore infinite degree, or  $U$ is conformally equaivalent to $\D^*$ and $\lim_{|w|\to 0} \rho^{-1}(w) =\alpha^*$, where $\alpha^*=0$, the only parameter singularity in $ \D(0,1/2)$.  We claim that the latter case is not possible, which finishes the proof of (a). Indeed, elementary calculations show that if $\alpha\in (0,\infty)$, then $T_\alpha$ is continuous, strictly increasing and concave in $\R^+$,  a half line which is invariant under $T_\alpha$. This implies that $T_\alpha(x)<x$ if $\alpha, x>0$, and therefore $1/\alpha\in [0,\infty)\subset \Omega_\alpha$, the basin of $0$, as we wanted to show.
}

Assume now that  the orbit of $1/\alpha_0$ does not  converge to  an attracting cycle. The maps $T_\alpha$ are of finite type hence have no wandering domains.
By the classification of periodic Fatou components and the fact that there are no  Siegel disks or parabolic domains
and no Herman rings (since the maps $T_\alpha$ have at most one free singular value),
we must have $^c \Omega(T_\alpha) = J(T_\alpha)$ for all $\alpha \in U$.
 By \cite[Theorem E]{ABF}, there is a holomorphic motion $h_\alpha$ of the Julia set of $T_{\alpha_0}$, respecting the dynamics.
By Lemma \ref{lem:zakeri}, this holomorphic motion extends to a holomorphic motion $h_\alpha$ of the Riemann sphere which is holomorphic on $\Omega(T_{\alpha_*})$.
Let $\mu_\alpha$ denote the Beltrami coefficient of $h_\alpha$.
Then $\mu_\alpha$ is supported in the Julia set. It cannot vanish a.e., for otherwise we would have that all maps in $U$ would be linearly conjugated, 
contradicting Lemma \ref{lem:rigidity}. Therefore, $\mu_\alpha$ is an invariant line field.
\end{proof}

{
Hence, analogously to the Mandelbrot set, the interior of the Tandelbrot is organized in hyperbolic components (also known as {\em shell components}) of constant period, and (conjecturaly non-existent) non-hyperbolic or non-hyperbolic components. The ``main'' component'' visible in Figure \ref{fig:Tandelbrot}, corresponds to parameters for which $T_\alpha$ has an attracting fixed point, and contains $\alpha=0$ on its boundary, as we show in the following lemma. (See also \cite[Main Theorem]{chen2022accessible} for a more general statement).

\begin{lem}[The main shell component] \label{lem:maincardioid}
If $\alpha<0$, and $|\alpha|$ is small enough, $T_\alpha$ has a nonzero attracting fixed point. Hence the main hyperbolic component of $\tcal$ contains an interval $(\alpha_0,0)$ for some $\alpha_0<0$.
\end{lem}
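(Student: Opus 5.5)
The plan is to work on the real line. For $\alpha<0$ the map $T_\alpha$ has real coefficients, and the fixed point will come from an interval that $T_\alpha$ maps into itself while contracting.

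\textbf{Basic shape of $T_\alpha$ on $\R$.} Fix $\alpha<0$ and set $c:=(1-\alpha)/8>0$ and $E(x):=e^{-cx}>0$, so that
$$T_\alpha(x)=\frac{E-1}{\alpha E-1}.$$
Since $\alpha E<0$, the denominator never vanishes on $\R$, so $T_\alpha$ has no real poles and maps $\R$ to $\R$. A direct computation gives
$$T_\alpha'(x)=\frac{c(1-\alpha)E}{(\alpha E-1)^2}=\frac{c(1-\alpha)E}{(|\alpha|E+1)^2}>0.$$
Hence $T_\alpha$ is strictly increasing on $\R$. As $x\to-\infty$ we have $E\to\infty$, so $T_\alpha(x)\to 1/\alpha$ from above. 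In particular $T_\alpha(x)>1/\alpha$ for every real $x$, and so $T_\alpha(1/\alpha)>1/\alpha$.

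\textbf{An invariant interval.} Choose $x_0<0$ with $E(x_0)=|\alpha|^{-2}$, that is $x_0=-\frac{16}{1-\alpha}\log(1/|\alpha|)$. Then
$$T_\alpha(x_0)=-\frac{|\alpha|^{-2}-1}{|\alpha|^{-1}+1}=-\frac{1-|\alpha|}{|\alpha|}\approx-\frac1{|\alpha|}.$$
For $|\alpha|$ small, $-1/|\alpha|$ is much more negative than $-16\log(1/|\alpha|)$, so $T_\alpha(x_0)<x_0$. The same comparison shows $1/\alpha<x_0$, because $1/|\alpha|\gg 16\log(1/|\alpha|)$.

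Set $I:=[1/\alpha,x_0]$. By monotonicity,
$$T_\alpha(I)=[T_\alpha(1/\alpha),T_\alpha(x_0)]\subset(1/\alpha,x_0)\subset I.$$

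\textbf{Contraction on $I$.} For $x\le x_0$ we have $|\alpha|E\ge |\alpha|^{-1}$. Then
$$T_\alpha'(x)\le \frac{c(1-\alpha)}{|\alpha|}\cdot\frac{|\alpha|E}{(|\alpha|E+1)^2}\le \frac{c(1-\alpha)}{|\alpha|}\cdot\frac{1}{|\alpha|E}\le c(1-\alpha)=\frac{(1-\alpha)^2}{8}.$$
This is less than $1$ for $|\alpha|$ small, so $T_\alpha$ is a uniform contraction of $I$.

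\textbf{Conclusion.} By the contraction mapping principle, $T_\alpha$ has a unique fixed point $x^*\in I$, and its multiplier satisfies $0<T_\alpha'(x^*)<1$, so $x^*$ is attracting. Since $x^*\le x_0<0$, it is nonzero. Its basin contains all of $I$, and in particular the asymptotic value $1/\alpha$. Therefore $1/\alpha\notin\Omega_\alpha$ and $\alpha\in\tcal$.

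This holds for every $\alpha\in(\alpha_0,0)$ with $|\alpha_0|$ small, and $(\alpha_0,0)$ is connected. Hence the whole interval lies in a single hyperbolic component of $\mathring\tcal$ in the sense of Theorem \ref{th:hyperbolicornon-hyperbolic}, case (1), namely the component where the attracting cycle is a fixed point.

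\textbf{Main obstacle.} The only delicate step is choosing $x_0$ so that two things hold at once: $T_\alpha(x_0)<x_0$, and $T_\alpha'<1$ on $(-\infty,x_0]$. The choice $E(x_0)=|\alpha|^{-2}$ achieves both. It only requires checking two elementary asymptotic inequalities as $|\alpha|\to0$: first $16\log(1/|\alpha|)\ll 1/|\alpha|$, and second $(1-\alpha)^2/8<1$.
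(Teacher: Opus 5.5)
Your proof is correct, and it takes a genuinely different route from the paper's. The paper argues perturbatively. Since $T_\alpha(x)\to 1-e^{-x/8}$ as $\alpha\to0$, it picks $x_1$ below the nonzero real fixed point of this exponential, so that $T_\alpha(x_1)<x_1$ for all small $\alpha<0$. It combines this with $T_\alpha(x_2)>x_2$ for $x_2<1/\alpha$, and uses the intermediate value theorem to get a fixed point where the graph crosses the diagonal from above, hence with multiplier in $(0,1]$. The neutral case then has to be excluded separately, which the paper does with an inflection-point remark.

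You instead exhibit an explicit forward-invariant interval with a uniform derivative bound. Existence, uniqueness in $I$, attractiveness, and the capture of $1/\alpha$ (which the paper leaves implicit) then all follow at once from the contraction principle. Your choice of $x_0$ is the natural one: $E(x_0)=|\alpha|^{-2}$ is exactly relation \eqref{der18}, so $T_\alpha'(x_0)=1/8$. The condition $T_\alpha(x_0)<x_0$ is equivalent to $1-|\alpha|^2>16|\alpha|\log(1/|\alpha|)$. For $\alpha\in(-1,0)$ this holds exactly to the right of the real symmetric parameter $\alpha\approx-0.01484$ of Remark \ref{rem:switch}, so you even get an explicit $\alpha_0$.

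Sidestepping the neutral case is a real gain. The paper's claim that no inflection point of $T_\alpha$ is fixed unless $\alpha=-1$ is not correct as stated: at that same symmetric parameter, the centre $\frac{1+\alpha}{2\alpha}$ of the symmetry is a (repelling) fixed inflection point. The weaker fact the paper actually needs, that there is no real fixed point with $T_\alpha'=1$ and $T_\alpha''=0$, does hold.

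One small addition is needed in your last paragraph. Knowing that each real $\alpha\in(\alpha_0,0)$ lies in $\tcal$ and that the interval is connected does not by itself place the interval in the interior of $\tcal$. Add the following. For complex $\alpha$ near any $\alpha_1\in(\alpha_0,0)$, the fixed point continues holomorphically and stays attracting, since its multiplier is not $1$. Its immediate basin must contain a singular value, and this must be $1/\alpha$, because $1\in\Omega_\alpha$ by Lemma \ref{lem:disk mapping inside}. Hence a connected open neighbourhood of the interval is contained in $\tcal$ and carries a persistent attracting fixed point, so it lies in a single hyperbolic component. The paper is equally brief on this point.
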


\begin{proof}
For any given $x_0\in \R$ we have that $T_\alpha(x_0)\longrightarrow 1-e^{-x_0/8}$ as $\alpha\to 0$. Let $x^*$ be the unique nonzero fixed point of the map $h(x):=1-e^{-x/8}$, so that $h(x)<x$ for all $x<x^*$.

Choose $x_1 < x^*$ and let $\epsilon>0$ be small enough so that for all $\alpha \in (-\epsilon,0)$ we have $T_\alpha(x_1)<x_1$. Notice that $T_\alpha(x_1)<1/\alpha$ since it is easy to check that $T_\alpha$ is strictly increasing and $T_\alpha(x)\to 1/\alpha$ as $x\to -\infty$. 

Fix $\alpha \in (-\epsilon,0)$ and choose $x_2<1/\alpha$. Then $T_\alpha(x_2)>1/\alpha$ and thus $x_2<T_\alpha(x_2)<T_\alpha(x_1)<x_1$. It follows that $T_\alpha$ has one fixed point $p=p(\alpha)\in (x_2,x_1)$ such that $T_\alpha'(p)\in (0,1]$, since the graph must cross the diagonal at least one from above to below. However, if $T_\alpha'(p)=1$ then $T''(p)=0$ and it is easy to check that no inflection point can be a fixed point of $T_\alpha$ unless $\alpha=-1$. Hence $p(\alpha)$ is attracting and we have shown that $(\alpha_0,0)$ is contained in a hyperbolic component of period one, where $\alpha_0$ is the value up to which we can continue the fixed point $p(\alpha)$ before it becomes neutral.  
\end{proof}

The following is the main Theorem in \cite{chen2023slices}. 

\begin{theo}\label{th:msconnected}
	The set $\tcal$ is connected and full.
\end{theo}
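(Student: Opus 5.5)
The plan is to write $\tcal$ as a decreasing intersection of closed topological disks, in the spirit of Lyubich's Lemma~\ref{lem:lyubichtransversality}. Connectedness and fullness then follow at once: a nested intersection of compact, connected, full sets is compact, connected and full. Fix $r\in[\frac12,1)$ close to $1$. By Lemma~\ref{lem:tlmodel}, for $\alpha\in\D(0,r)$ the restriction $T_\alpha:U_\alpha\to V$ with $V=\rs\setminus\overline{\D}(0,\frac1r)$ is tangent-like, with $u=\infty$ and $v=\frac1\alpha$. Note that $V$ does not depend on $\alpha$. For $n\ge 0$ define
\[
P_n:=\{\alpha\in\D(0,r): T_\alpha^k(\tfrac1\alpha)\in U_\alpha \text{ for } 0\le k\le n\}.
\]
Proposition~\ref{lem:K} and Proposition~\ref{prop:choicesUV} give $\alpha\in\tcal$ if and only if $\frac1\alpha\in K(T_\alpha)$. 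Lemma~\ref{lem:tbdd} gives $\tcal\subset\D(0,\frac12)$. Hence $\tcal=\bigcap_n\overline{P_n}$, once we also know that the prepole parameters (those with $T_\alpha^k(1/\alpha)=\infty$) are accumulated by the $P_n$, which follows from the shooting-type arguments used in Proposition~\ref{prop:activity locus}. It therefore suffices to show that each $P_n$ is a Jordan domain, or at least a simply connected domain with connected complement, and that $\overline{P_{n+1}}\subset\overline{P_n}$.

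First, the family $\{T_\alpha\}_{\alpha\in\D(0,r)}$ is equipped. The boundary $\partial V$ is fixed, and $\partial U_\alpha=T_\alpha^{-1}(\partial V)$ moves holomorphically: lift the identity motion of $\partial V$ through the natural-family structure $T_\alpha=\varphi_\alpha\circ T\circ\psi_\alpha^{-1}$, exactly as in Lemma~\ref{lem:tractmoveholo}. Interpolating with Lemma~\ref{lem:interpolation} and Slodkowski's theorem yields a holomorphic motion $h_\alpha$ of the fundamental annulus $\overline V\setminus U_\alpha$ that commutes with $T_\alpha$ on $\partial U_\alpha$. Pulling back by the dynamics, we get for each $n$ a holomorphic motion of the "tiles" $T_\alpha^{-n}(\overline V\setminus U_\alpha)$, defined as long as $\frac1\alpha$ stays in the previous level, i.e.\ over $P_{n-1}$. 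Properness comes from the fact that $v_\alpha=\frac1\alpha$ winds exactly once around $u=\infty$ as $\alpha$ runs once around a circle $|\alpha|=\rho$, and that $\frac1\alpha\in\partial V$ precisely when $|\alpha|=r$. This already shows $P_0=\D(0,r)$.

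The inductive step proceeds as follows. Assume $P_{n-1}$ is a Jordan domain. On $P_{n-1}$, define $\Phi_n(\alpha):=h_\alpha^{-1}\bigl(T_\alpha^{n}(\tfrac1\alpha)\bigr)$ whenever $T_\alpha^{n}(\frac1\alpha)$ lies in the annulus $\overline V\setminus U_\alpha$, and use the pulled-back motions at deeper levels. We then show that $\alpha\mapsto\Phi_n(\alpha)$ restricted to $\partial P_{n}$ is a homeomorphism onto $\partial U_{\alpha_0}$ (minus the point at $\infty$, which corresponds to the prepole parameters) and has winding number one. The argument principle, applied to the holomorphic functions $\alpha\mapsto T_\alpha^n(\frac1\alpha)-h_\alpha(z)$ for $z\in U_{\alpha_0}$ as in Lyubich's proof, then shows that each such equation has exactly one solution in $P_n$. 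This gives $P_n\cong U_{\alpha_0}$, a Jordan domain. Nestedness is automatic from the definition.

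The main obstacle is this transversality/winding step. Unlike the polynomial-like case, the relevant target $U_\alpha$ is not compactly contained away from the singularity: $\partial U_\alpha$ passes through the essential singularity $\infty$, and $T_\alpha^n(1/\alpha)$ can hit $\infty$ at the prepole parameters. So $\partial P_n$ must be shown to be a Jordan curve passing through these parameters, with $\Phi_n$ extending continuously there. I would first try to handle this in the uniformizing coordinate of Proposition~\ref{lem:confconjH}, where $\partial U_\alpha$ becomes a horizontal line in $\H$ and the prepoles become the boundary points $\frac12+\Z$. There, winding number one should reduce to the once-around property of $\frac1\alpha$ around $\infty$, transported by the holomorphic motions.
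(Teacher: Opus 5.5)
Your reduction is fine: $\tcal$ is the nested intersection of the $\overline{P_n}$, so it would suffice that each $P_n$ be a Jordan domain. The problem is that the step you propose for proving this is false as stated, and that step carries the whole content of the theorem.

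For $n\ge 1$ the map $\alpha\mapsto T_\alpha^n(\frac1\alpha)$ is not of degree one on $P_n$. It has an essential singularity at every parameter where $T_\alpha^{n-1}(\frac1\alpha)=\infty$, and such parameters lie on $\partial P_{n-1}$. Already for $n=1$: for finite $y\neq\frac1\alpha$, the equation $T_\alpha(\frac1\alpha)=y$ is equivalent to $e^{-1/(8\alpha)}=e^{-1/8}\,\frac{1-y}{1-\alpha y}$. Hence for any $z\in U_{\alpha_0}$ the equation $T_\alpha(\frac1\alpha)=h_\alpha(z)$ has infinitely many solutions $\alpha_k\to 0$. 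All of them lie in $P_1$, since $T_{\alpha_k}(\frac1{\alpha_k})=h_{\alpha_k}(z)\in U_{\alpha_k}\subset V$. So there is no "exactly one solution", and $\Phi_1$ cannot restrict to a homeomorphism of $\partial P_1$ onto $\partial U_{\alpha_0}$ with winding number one.

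This is not a transcendental artefact. Even for unicritical polynomial-like families of degree $d$, the map $\lambda\mapsto f_\lambda^n(v_\lambda)$ has degree $d^n$. So a degree-one count for the $n$-th iterate is not what underlies Lemma \ref{lem:lyubichtransversality} either. A minor slip as well: $P_0\neq\D(0,r)$, since $0\notin P_0$, and $|T_\alpha(\frac1\alpha)|\le 1$ for $\frac12\le|\alpha|<r$ by Lemma \ref{lem:disk mapping inside}.

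The degree-one statement that does hold in a proper family concerns the singular value itself. It is the map $\alpha\mapsto (H^{(n)}_\alpha)^{-1}(\frac1\alpha)$, where $H^{(n)}_\alpha$ is the motion of $\overline V\setminus T_\alpha^{-n}(U_\alpha)$ obtained by lifting over $P_{n-1}$. This is the mechanism of Proposition \ref{prop:chiinj}, which the paper only needs at the first level. An induction based on it would require three things:
\begin{itemize}
\item constructing and extending these lifted motions;
\item computing winding numbers along curves exhausting $P_{n-1}$;
\item controlling the prepole parameters on $\partial P_{n-1}$, where the lifts degenerate.
\end{itemize}
That is the missing argument, and your last paragraph only indicates where one might look for it.

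The paper avoids this altogether. It replaces $e^{(\alpha-1)z/8}$ by $(1+\frac{(\alpha-1)z}{8k})^k$, which yields proper equipped unicritical polynomial-like families $T_{\alpha,k}$. Lemma \ref{lem:lyubichtransversality} applies to these verbatim, so the sets $A_{n,k}$ are Jordan domains. Lemma \ref{lem:nearcv} then shows that $\overline{A_{n,k}}$ and $\overline{A_n}$ are close enough to transfer connectedness. Fullness of $\overline{A_n}$ follows from the maximum modulus principle applied to $1/T_\alpha^n(\frac1\alpha)$.
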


The approach in \cite{chen2023slices} is to use quasiconformal surgery to prove that the complement of $\tcal \sqcup 1/\tcal$ is a topological annulus; the proof is relatively involved. We will give here a different proof, making use of an approximation of the maps $T_\alpha$ by rational maps and using the existing theory about unicritical polynomial-like maps.

{ As usual, let $V=\D(0,2)$. Our goal is to show that the sets
\[
A_n:=\{\alpha \in \D(0,1/2)\mid T_\alpha^n(1/\alpha)\in V\}
\]
are connected and full. Then $\overline{A_n}$ is a decreasing sequence of compact, full, connected  sets. Since
\[
\tcal = \bigcap_{n\geq 0} \overline{A_n},
\]
it follows that $\tcal$ is compact, connected and full. 

We will study the properties of the sets $A_n$ by approximating them with their rational analogues. }

\begin{defi}[A natural family of rational maps]
	For $\alpha \in \C \setminus \{1\}$ and $k \in \N^*$, let $T_{\alpha,k}:=M_\alpha \circ P_k \circ N_\alpha$, where 
	$M_\alpha(z):=\frac{z-1}{\alpha z-1}$, $N_\alpha(z):=\frac{(\alpha-1)z}{8}$, and $P_k(z):=(1+\frac{z}{k})^k$.
	Let $V:=\{ z \in \rs:  |z|>2\}$.
\end{defi}

The following properties are straightforward to check:

\begin{lem}[Properites of $T_{\alpha,k}$] 
Let $T_\alpha,k$ be defined as above. Then,
	\begin{enumerate}
		\item $T_{\alpha,k}$ has two critical points { of order $k-1$}, $\infty$ and $c_{\alpha,k}:={ -}\frac{8k}{\alpha-1}$; they are mapped to $\frac{1}{\alpha}$ and ${ 1}$ respectively.
		\item $0$ is a fixed point for $T_{\alpha,k}$, with multiplier { $\frac{1}{8}$}.
		\item $T_{\alpha,k}(z) \to T_\alpha(z)$ as $k \to +\infty$, 
		uniformly on every compact of $(\C \setminus \{1\}) \times \C$.
	\end{enumerate}
\end{lem}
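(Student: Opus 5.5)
All three properties come from the factorization $T_{\alpha,k}=M_\alpha\circ P_k\circ N_\alpha$, using that $N_\alpha$ and $M_\alpha$ are Möbius transformations for $\alpha\neq 1$. Indeed $N_\alpha$ is affine with nonzero slope $(\alpha-1)/8$, and $M_\alpha$ has determinant $-1+\alpha\neq 0$. Throughout we take $k\geq 2$, so that the critical points in (1) are genuine.

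For (1), Möbius maps have no critical points, so the critical points of $T_{\alpha,k}$ are exactly $N_\alpha^{-1}$ of the critical points of $P_k$, with the same local degrees. Since $P_k(w)=(1+w/k)^k$ is a conjugate of $w\mapsto w^k$, its critical points are $w=-k$ and $w=\infty$, both of order $k-1$. Now $N_\alpha^{-1}(\infty)=\infty$ and $N_\alpha^{-1}(-k)=-\frac{8k}{\alpha-1}=c_{\alpha,k}$. For the critical values, $P_k(\infty)=\infty$ and $M_\alpha(\infty)=\frac1\alpha$, while $P_k(-k)=0$ and $M_\alpha(0)=1$.

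For (2), we have $N_\alpha(0)=0$, $P_k(0)=1$ and $M_\alpha(1)=0$, so $0$ is fixed. By the chain rule,
$$T_{\alpha,k}'(0)=M_\alpha'(1)\,P_k'(0)\,N_\alpha'(0).$$
Here $P_k'(0)=1$ and $N_\alpha'(0)=\frac{\alpha-1}{8}$. Also $M_\alpha'(z)=\frac{\alpha-1}{(\alpha z-1)^2}$, so $M_\alpha'(1)=\frac{1}{\alpha-1}$. The product is $\frac18$.

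For (3), note that $T_\alpha=M_\alpha\circ\exp\circ N_\alpha$, and we measure convergence in the spherical metric, since both sides may take the value $\infty$. Fix a compact set $Q\subset(\C\setminus\{1\})\times\C$.
\begin{enumerate}
\item The map $(\alpha,z)\mapsto N_\alpha(z)$ is continuous, so its image over $Q$ lies in a compact set $B\subset\C$.
\item The classical limit $(1+w/k)^k\to e^w$ is uniform on $B$. Hence $P_k(N_\alpha(z))\to e^{N_\alpha(z)}$ uniformly on $Q$.
\item All these values, for large $k$, lie in a fixed compact set $B'\subset\C$.
\item The map $(\alpha,\zeta)\mapsto M_\alpha(\zeta)\in\rs$ is jointly continuous on $\{\alpha:(\alpha,z)\in Q\}\times B'$, because numerator and denominator never vanish simultaneously when $\alpha\neq 1$. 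It is therefore uniformly continuous there with respect to the spherical metric.
\end{enumerate}
Composing, $T_{\alpha,k}\to T_\alpha$ uniformly on $Q$.

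There is no real obstacle. The only point requiring care is to state (3) in the spherical metric, since $T_\alpha$ has poles.
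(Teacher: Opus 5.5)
Your proof is correct and is exactly the direct verification the paper has in mind; the paper gives no proof and only says these properties are "straightforward to check." You factor through the Möbius maps $M_\alpha$ and $N_\alpha$, apply the chain rule at $0$, and combine uniform convergence of $(1+w/k)^k\to e^w$ on compacts with uniform continuity of $(\alpha,\zeta)\mapsto M_\alpha(\zeta)$ into $\rs$. One wording slip: $P_k$ is $w\mapsto w^k$ precomposed with the affine map $w\mapsto 1+w/k$, not a conjugate of it, but this does not change the location or order of its critical points.
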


For small values of $\alpha$, maps in this rational family, are polynomial-like maps with good properties. 
\begin{lem}[Maps $T_{\alpha,k}$ are polynomial-like] 
	There exists $k_0 \in \N$ such that for every $k \geq k_0$, 
	the family  $T_{\alpha,k}: T_{\alpha,k}^{-1}(V) \to V$, $\alpha \in \D(0,\frac{1}{2})$, is a proper equipped  holomorphic family of unicritical polynomial-like maps of degree $k$ (see Definitions \ref{def:plproper} and \ref{def:plequipped}).
	The  unique critical point is $\infty$, and the unique critical value is 
	$\frac{1}{\alpha}$. 
\end{lem}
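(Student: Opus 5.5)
The plan is to verify each requirement of Definitions \ref{def:plfamily}, \ref{def:plproper} and \ref{def:plequipped} directly, working in the chart $w=1/z$ at $\infty$, where $V$ is a round disk. Throughout, $k\geq 2$, so that the degree is at least $2$. I expect $k_0=2$ to suffice; if some estimate fails, it can be taken larger.

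\emph{Invariance of the complementary disk.} First I would show $T_{\alpha,k}(\overline{\D}(0,2))\subset\overline{\D}\Subset\D(0,2)$ for all $|\alpha|\leq 1$, $\alpha\neq1$, copying the proof of Lemma \ref{lem:disk mapping inside}. The critical values of $T_{\alpha,k}$ are $1$ and $\frac1\alpha$, so neither lies in $\D$. Hence there is a univalent inverse branch $g$ of $T_{\alpha,k}$ on $\D$ with $g(0)=0$ and $g'(0)=8$. By Koebe, $g(\D)\supset\D(0,2)$, and since $g(\D)$ is a full component of $T_{\alpha,k}^{-1}(\D)$, we get $T_{\alpha,k}(\D(0,2))\subset\D$.

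\emph{Topology of $U_\alpha$.} Next I would describe $U_\alpha:=T_{\alpha,k}^{-1}(V)$. For $\alpha\in\D(0,\frac12)$ we have $|\frac1\alpha|>2$, so the closed disk $\overline{\D}(0,2)$ contains exactly one critical value, namely $1$. Its critical point $c_{\alpha,k}$ has multiplicity $k-1$, so it is the only preimage of $1$. Consequently $T_{\alpha,k}^{-1}(\overline{\D}(0,2))$ is a single closed topological disk, mapped with degree $k$ and branched only at $c_{\alpha,k}$. Its boundary is the preimage of the circle $\{|z|=2\}$, which contains no critical value, so this boundary is an analytic Jordan curve. Therefore $U_\alpha$ is a Jordan domain containing $\infty$. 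The map $T_{\alpha,k}:U_\alpha\to V$ is proper of degree $k$, and its only critical point in $U_\alpha$ is $\infty$, which is mapped to $\frac1\alpha$. The inclusion $\overline{U_\alpha}\subset V$ is equivalent to $\overline{\D}(0,2)\subset T_{\alpha,k}^{-1}(\D(0,2))$, and this follows from the first step.

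\emph{Holomorphic family and properness.} Holomorphy of $(\alpha,z)\mapsto T_{\alpha,k}(z)$ and openness of $\mathcal U$ and $\mathcal V$ are clear. For properness I take $\Lambda=\D(0,\frac12)$ and $\tilde\Lambda=\D(0,r)$ with $r$ slightly larger than $\frac12$. All the previous steps work for $|\alpha|<1$, except that $\frac1\alpha$ may then lie in $\overline{\D}(0,2)$; in that case I would simply shrink $V$ slightly, to $|z|>2-\delta$. For $|\alpha|=\frac12$ we have $v_\alpha=\frac1\alpha\in\partial V$. The unfolding condition is checked in the chart $w=1/z$: there the critical point is $w=0$ and the critical value is $w=\alpha$, so the difference $\alpha-0$ winds exactly once as $\alpha$ runs around $\partial\Lambda$.

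\emph{Equipping.} Finally I would construct the equivariant motion, which I expect to be the only step needing care. On $\partial V$ I take the identity motion. On $\partial U_{\alpha_0}$ I lift it: for each $z\in\partial U_{\alpha_0}$, I follow the point of $T_{\alpha,k}^{-1}(T_{\alpha_0,k}(z))$ by analytic continuation in $\alpha$. This is possible because $\partial V$ contains no critical value for $\alpha\in\Lambda$, so the implicit function theorem applies. Because $\Lambda$ is simply connected, the continuation is single-valued. It is injective because lifts of distinct points stay distinct, and it conjugates the dynamics on $\partial U$ by construction. Since $\partial U_\alpha\cap\partial V=\emptyset$, these two motions together form a holomorphic motion of $\partial V\cup\partial U_{\alpha_0}$. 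I would then extend it to the closed annulus $\overline V\setminus U_{\alpha_0}$ using Slodkowski's extension theorem, and check that the extension keeps the annulus mapped onto $\overline{V}\setminus U_\alpha$. This holds because the boundary curves move to the boundary curves and the extension consists of homeomorphisms of the sphere.
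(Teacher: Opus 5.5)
Your proof is correct. For unicriticality, for properness (enlarging the parameter disk, adjusting $V$, and noting that $1/\alpha$ winds once around $\infty$) and for equipping (lifting the identity motion of $\partial V$ through $T_{\alpha,k}$, then extending by Slodkowski), you follow the same outline as the paper. Your implicit-function-theorem lift is a cleaner version of the paper's lifting step.

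The genuine difference is in how you obtain $\overline{U_\alpha}\subset V$. The paper proves $T_{\alpha,k}(\D(0,2))\Subset\D(0,r)$ with $r<2$ by transferring Lemma \ref{lem:disk mapping inside} from $T_\alpha$. It does this through uniform convergence $T_{\alpha,k}\to T_\alpha$ on $\D(0,\frac12)\times\D(0,2)$, which is why a non-explicit $k_0$ appears in the statement. You instead rerun the Koebe argument directly on the rational map. This works because $T_{\alpha,k}$ also fixes $0$ with multiplier $\frac18$, and its only critical values $1,\frac1\alpha$ lie outside $\D$. Your route is self-contained and gives the explicit value $k_0=2$. The paper's route reuses what is already known for $T_\alpha$ and needs no separate check on $T_{\alpha,k}$.

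Two small points to add:
\begin{enumerate}
\item To apply Koebe you need the inverse branch $g$ to take values in $\C$. This holds because $T_{\alpha,k}(\infty)=\frac1\alpha\notin\D$, so $\infty\notin T_{\alpha,k}^{-1}(\D)$.
\item For properness, use the enlarged $V'=\{|z|>2-\delta\}$, with $2-\delta\le\frac1r$, uniformly over all of $\tilde\Lambda$, not only where $|\alpha|\ge\frac12$. Otherwise $\mathcal V$ is not open, and the family over $\Lambda$ would not be a restriction in the intended sense. This is what the paper does with $\rs\setminus\D(0,\frac1r)$. Also, this step enlarges $V$ rather than shrinking it.
\end{enumerate}
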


\begin{proof}
	Let us prove that there exists $k_0 \in \N$ such that  for all $k \geq k_0$ and for all $\alpha \in \D(0,\frac{1}{2})$, 
	$T_{\alpha,k}: T_{\alpha,k}^{-1}(V) \to V$, $\alpha \in \D(0,\frac{1}{2})$ 
	is a PL map with just $\infty$ as a critical point.
	
	We first claim that there exists $k_0 \in \N$ such that for all $k \geq k_0$ and for all $\alpha \in \D(0,\frac{1}{2})$, $T_{\alpha_k}(\D(0,2)) \Subset \D(0,r)$, for some $0<r<2$. Indeed, this is true for $T_\alpha$, and we know that $T_{\alpha,k}$ converges \emph{uniformly} in $(\alpha,z)$ to $T_\alpha(z)$
	for $(\alpha,z) \in \D(0,\frac{1}{2}) \times \D(0,2)$. 
	This proves  $T_{\alpha,k}^{-1}(V) \Subset V$.
	Moreover, since $T_{\alpha,k}(c_{\alpha,k}) = { 1}$ and ${ 1} \notin V$, and since $\frac{1}{\alpha} \in V$, $V$ contains exactly one critical value (which is $\frac{1}{\alpha}$) and $ T_{\alpha,k}^{-1}(V) $ contains exactly one critical point (which is $\infty$). 
	Finally, $T_{\alpha,k} :  T_{\alpha,k}^{-1}(V)  \to V$ is a proper map, 
	with exactly one critical value, hence $ T_{\alpha,k}^{-1}(V) $ is simply connected, and is a Jordan domain.
	All this proves that for all $\alpha \in \D(0,\frac{1}{2})$, $T_{\alpha,k} : T_{\alpha,k}^{-1}(V) \to V$, $\alpha \in \D(0,\frac{1}{2})$ 
	is indeed a unicritical PL map, whose critical point is $\infty$ and critical value is $\frac{1}{\alpha}$.

	 Next, let us prove that it is proper. By Lemma  By definition of $V$, the winding property is trivially satisfied.
	 By Lemma \ref{lem:tlmodel}, for any $r \in (\frac{1}{2}, 1)$ we can replace the parameter space $\D(0,\frac{1}{2})$ with $\D(0,r)$ and $V=\rs \setminus \D(0,2)$ with $\rs \setminus \D(0,\frac{1}{r})$.
	 Finally, it is clear from the definition of $V$ that $\alpha \mapsto \frac{1}{\alpha}$ winds once around $\infty$ as $\alpha$ turns once along $\partial \D(0,\frac{1}{2})$.

	 It remains to prove that this family of unicritical polynomial-like maps is equipped. 
	 Let $k \geq k_0$, and let $U_{\alpha,k}:=T_{\alpha,k}^{-1}(V)$. 
	 Let $z_0 \in  \partial U_{0,k}$. We make the following  observation: for all $\alpha \in \D(0,\frac{1}{2})$, for all $z \in\partial V$, $M_\alpha^{-1}(z) \neq \infty$.
	 
	 In particular, the continuous  map
	 $$\D\left(0,\frac{1}{2}\right) \ni \alpha \mapsto M_\alpha^{-1} \circ  T_{0,k}(z_0)$$
	 avoids $\infty$, which is the only critical value of $T_{0,k}$.
	 Therefore, by the lifting property for covering maps, there exists a unique lift $\gamma_{z_0}$  by $T_{0,k}$ of 	 
	 $$ \alpha \mapsto M_\alpha^{-1}  \circ T_{0,k}(z_0)$$
	 such that $\gamma_{z_0}(0)=z_0$.

	 We then let $h_\alpha(z_0):=N_\alpha(\gamma_{z_0}(\alpha)  )$. By construction, we have
	 $$T_{\alpha,k} \circ h_\alpha(z_0) = T_{0,k}(z_0)$$
	 and $h_0(z_0)=z_0$.
	 
	 By the Implicit Function Theorem, $\alpha \mapsto h_\alpha(z_0)$ is holomorphic on $\D\left(0,\frac{1}{2} \right)$.
	 By the uniqueness of the lifting property, the map $z \mapsto h_\alpha(z)$ is injective for all $\alpha \in \D\left(0,\frac{1}{2} \right)$.
	 Thus, $h: \D\left(0,\frac{1}{2} \right) \times \partial U_{0,k} \to \rs$ is  a holomorphic motion, and $h_\alpha(\partial U_{0,k}) = \partial U_{\alpha,k}$.
	 
	 Finally, since $\partial U_{\alpha,k}$ and $\partial V$ are disjoint for every $\alpha \in \D\left(0,\frac{1}{2} \right)$, we may extend $h_\alpha$ to $\partial V$ by setting
	 $$
	 \tilde h_\alpha=\left\{
	 \begin{array}{ll}
	 	h_\alpha(z) & \text{ if $z \in \partial U_{0,k}$ } \\
	 	z &  \text{ if $z \in \partial V$. }
 	\end{array}
	 \right.
	 $$
	 Applying Slodkowski's $\la$-lemma to $\tilde h_\alpha$, we obtain a holomorphic motion of $\rs$ over $\D(0,\frac{1}{2})$, which we still denote by $\tilde h_\alpha$, and which maps $\overline{V} \setminus U_{0,k}$ to $\overline{V} \setminus U_{\alpha,k}$. Moreover, it is equivariant by construction.
\end{proof}

The rational analogues of the sets $A_n$ are the following.

\begin{defi}[Rational approximants of $A_n$]
For $n\geq 0$ and $k\geq 2$, we define
\[ A_{n,k}:=\{\alpha \in \D(0,\frac{1}{2}): T_{\alpha,k}^n(\frac{1}{\alpha}) \in V\}.
\]
\end{defi}

By Lemma \ref{lem:lyubichtransversality}, for every $k \geq k_0$ and every $n \in \N$, 
$A_{n,k}$ is a Jordan domain.

\begin{defi}[Subsets of $A_n$]
	For $\delta>0$, we define 
$$
A_n(\delta):=\{ \alpha \in A_n : |T_\alpha^j(\frac{1}{\alpha})|<\frac{1}{\delta}, \quad \forall \ 0 \leq j \leq n \}.
$$
\end{defi}

Observe that $A_n(\delta)\subset A_n$ and clearly, we have $A_n=\bigcup_{\delta>0} A_n(\delta)$.

For any set $X \subset \C$, we denote by $X_\epsilon$ the set $\{z \in \C : d(z,X)\leq \epsilon\}$.

\begin{lem}[$A_{n,k}$ nearly converges to $A_n$]\label{lem:nearcv}
	Let $n \in \N$.  Then,
	\begin{enumerate}
		\item For all $\epsilon>0$ there exists $k_0 \in \N$ such that for all $k \geq k_0$,
		$$\overline{A_{n,k}} \subset \left(\,\overline{A_n}\,\right)_\epsilon$$
		\item For all $\delta>0$, for all  $\epsilon>0$ there exists $k_0 \in \N$ such that for all $k \geq k_0$,
		$$\overline{A_{n}(\delta)} \subset \left(\,\overline{A_{n,k}}\,\right)_\epsilon.$$
	\end{enumerate}
\end{lem}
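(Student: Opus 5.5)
Both inclusions come from the locally uniform convergence $T_{\alpha,k}(z)\to T_\alpha(z)$ on compact subsets of $(\C\setminus\{1\})\times\C$, applied to finitely many iterates of the free value. The difficulty is that orbits may pass near $\infty$, where this convergence is not uniform.

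\textbf{Part (2).} Here the bound $1/\delta$ is exactly what removes the difficulty. Let $\alpha_0 \in \overline{A_n(\delta)}$. By continuity, $|T_{\alpha_0}^j(1/\alpha_0)| \le 1/\delta$ for $0\le j\le n$, and $T_{\alpha_0}^n(1/\alpha_0)\in \overline{V}$. I first work with $\alpha_0\in A_n(\delta)$ itself and pass to closures by compactness.

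The orbit points stay in the compact set $\overline{\D}(0,1/\delta)$, on which $T_{\alpha,k}\to T_\alpha$ uniformly. Since $T_\alpha$ is continuous there in $(\alpha,z)$, induction on $j$ gives
\[
T^j_{\alpha,k}(1/\alpha)\to T^j_\alpha(1/\alpha)
\]
uniformly for $\alpha$ in a neighbourhood of the compact set $\overline{A_n(\delta)}\subset \D(0,1/2)$ and $j\le n$.

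For the $\epsilon$-neighbourhood I use openness. The set $A_n$ is open, so near any $\alpha_0\in\overline{A_n(\delta)}$ there is $\alpha'$ with $|\alpha'-\alpha_0|<\epsilon/2$ and $T^n_{\alpha'}(1/\alpha')$ strictly inside $V$ by a definite margin. I then cover $\overline{A_n(\delta)}$ by finitely many such balls. Uniform convergence then gives $\alpha'\in A_{n,k}$ for $k$ large, hence $\alpha_0\in (\overline{A_{n,k}})_\epsilon$.

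The step needing care is justifying the choice of $\alpha'$ when $T_{\alpha_0}^n(1/\alpha_0)\in\partial V$. If $\alpha_0\in\overline{A_n}$ is a limit of points of $A_n$, I can simply take $\alpha'$ in $A_n(2\delta)$ near $\alpha_0$ with image in $V$. The margin is then uniform on the finitely many chosen $\alpha'$.

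\textbf{Part (1).} This is the main obstacle, because orbits of $T_{\alpha,k}$ may approach $\infty$ (and $\frac{1}{\alpha}$ is large), so uniform convergence is not directly available. I argue by contradiction. Suppose there are $k_m\to\infty$ and $\alpha_m\in \overline{A_{n,k_m}}$ with $d(\alpha_m,\overline{A_n})\ge\epsilon$. Pass to a limit $\alpha_m\to\alpha^*\in\overline{\D}(0,1/2)$.

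Consider the orbit $z_{j,m}:=T^j_{\alpha_m,k_m}(1/\alpha_m)$. Extracting subsequences, $z_{j,m}\to w_j\in\rs$ for each $j\le n$.
\begin{itemize}
\item If every $w_j$ with $j<n$ is finite, the same induction as in Part (2) gives $w_j=T^j_{\alpha^*}(1/\alpha^*)$ and $w_n\in\overline{V}$. Hence $\alpha^*$ is in $\overline{A_n}$, or is approximated by points of $A_n$ using the openness and shooting-type argument below.
\item If some first $w_{j_0}=\infty$, then $T^{j_0}_{\alpha^*}(1/\alpha^*)$ is a pole or $\infty$, i.e.\ a virtual cycle. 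By the Shooting Lemma (Proposition~\ref{shooting}), $\alpha^*$ is approximated by parameters whose $(j_0+1)$st iterate is any prescribed value. Prescribing a point deep in $V$ whose further images stay in $V$, or simply noting that $\infty$ and preimages of $V$ accumulate, shows $\alpha^*\in\overline{A_n}$.
\end{itemize}
This contradicts $d(\alpha_m,\overline{A_n})\ge\epsilon$ once $\alpha_m$ is close to $\alpha^*$.

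Handling the iterates through $\infty$ is where I expect the real work. Near $\infty$, $T_{\alpha,k}$ is a degree-$k$ branched map while $T_\alpha$ has an essential singularity, so what the limit points of these iterates can do must be controlled. I would try first to show that any limit point lies in the closure of $A_n$ by exploiting that $\overline{V}$ is a neighbourhood of $\infty$ and that its $T_{\alpha}$-preimages accumulate at $\infty$.
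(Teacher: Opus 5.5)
Your proof is correct. It follows a somewhat different route from the paper's, most clearly in part~(2).

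\textbf{Part (1).} The paper argues directly. It asserts that the compact set $\{d(\alpha,\overline{A_n})\ge\epsilon\}$ stays a definite distance from virtual cycle parameters. From this it gets uniform bounds $|T_\alpha^j(1/\alpha)|\le 1/\eta$ for $j\le n$ and $|T_\alpha^n(1/\alpha)|\le 2-\eta$ there, and then applies uniform convergence. Your contradiction argument uses the same ingredients, but it actually proves the point the paper takes for granted. A limit parameter $\alpha^*$ whose limiting orbit first reaches $\infty$ at time $j_0\le n-1$ must be a virtual cycle parameter, or else $\alpha^*=0$. The Shooting Lemma (Proposition~\ref{shooting}) then places it in $\overline{A_n}$.

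\textbf{Part (2).} The paper fixes $\eta$ so that $|T^n_{\alpha,k}(1/\alpha)|>2-\eta$ forces $d(\alpha,\overline{A_{n,k}})\le\epsilon$, and only afterwards chooses $k_0$. That implication must hold uniformly in $k$, which the paper leaves implicit; justifying it would need a maximum-modulus argument using uniform convergence on a neighbourhood of $\overline{A_n(\delta)}$. Your finite-cover argument sidesteps this. Take finitely many $\alpha_i\in A_n(\delta)$ with $\overline{A_n(\delta)}\subset\bigcup_i\D(\alpha_i,\epsilon)$. At each $\alpha_i$ the orbit is bounded by $1/\delta$, so $T^n_{\alpha_i,k}(1/\alpha_i)\to T^n_{\alpha_i}(1/\alpha_i)$. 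Since $|T^n_{\alpha_i}(1/\alpha_i)|>2$ strictly, $\alpha_i\in A_{n,k}$ for $k$ large. Only pointwise convergence at finitely many parameters is needed.

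\textbf{Points to tighten.}
\begin{enumerate}
\item Take $\alpha'\in A_n(\delta)$, not $A_n(2\delta)$. The latter is the \emph{smaller} set, and a point of $\overline{A_n(\delta)}$ need not lie in its closure. With $\alpha'\in A_n(\delta)$, no special treatment of the case $T^n_{\alpha_0}(1/\alpha_0)\in\partial V$ is needed.
\item In Case~2, it is $T^{j_0-1}_{\alpha^*}(1/\alpha^*)$ that is a pole, not $T^{j_0}_{\alpha^*}(1/\alpha^*)$ (when $j_0=0$, the case is $\alpha^*=0$).
\item The shooting target should be made precise. ``Deep in $V$'' is irrelevant, since points near $\infty$ can map anywhere. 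What you need is a holomorphically moving $\gamma(\alpha)$ with $\gamma(\alpha^*)\notin\{1,1/\alpha^*\}$ whose forward orbit stays in $V$. A continued repelling periodic point works: it lies in $J(T_\alpha)\subset\rs\setminus\overline{\D}(0,2)$ by Lemma~\ref{lem:disk mapping inside}.
\item Your closing worry is unfounded. The contradiction argument never follows $z_{j,m}$ past the first index at which the limit is $\infty$. So no comparison of $T_{\alpha,k}$ with $T_\alpha$ near the essential singularity is required, and the argument is already complete.
\end{enumerate}
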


\begin{proof}
	Let us first prove the first item. Let $\epsilon>0$. 
	We will prove that there exists $k_0 \in \N$ such that for all $k \geq k_0$, the complement of  $\left(\,\overline{A_n}\,\right)_\epsilon$  is contained in the complement of $\overline{A_{n,k}}$. 
	
	Observe that all $\alpha \in \C\setminus \left(\,\overline{A_n}\,\right)_\epsilon$
	are a definite distance away from any virtual cycle parameter.
	By continuity of $(\alpha,z) \mapsto T_\alpha(z)$ away from $z=\infty$, there exists $\eta>0$ such that for all 
	$0 \leq j \leq n$, for all $\alpha \in \C\setminus \left(\,\overline{A_n}\,\right)_\epsilon$:
	\[
	\left|T_\alpha^j \left(\frac{1}{\alpha}\right)\right| \leq \frac{1}{\eta} \quad \text{ and } \quad \left|T_\alpha^n \left(\frac{1}{\alpha}\right)\right| \leq 2-\eta.
	\]

	By uniform convergence of $(\alpha,z) \mapsto T_{\alpha,k}(z)$ to $T_\alpha(z)$ on compact subsets, there exists $k_0 \in \N$ such that for all $k \geq k_0$, for all $\alpha \in  \C\setminus \left(\,\overline{A_n}\,\right)_\epsilon$: 
	\[
	\left|T_{\alpha,k}^n \left(\frac{1}{\alpha} \right)\right| \leq 2.
	\]
	This exactly proves the first item.

	\medskip
	
	Let us now prove the second item. Let $\epsilon>0$ and $\delta>0$.

	Let $\eta>0$ be small enough that if $|T_{\alpha,k}^n(\frac{1}{\alpha})| > 2 - \eta$, then $d(\alpha,\overline{A_{k,n}}) \leq \epsilon$.
	Again, by definition of $A_n(\delta)$, the sequence of maps $\alpha \mapsto T_{\alpha,k}^n(\frac{1}{\alpha})$ converge uniformly on $\overline{A_n(\delta)}$ to $T_\alpha^n(\frac{1}{\alpha})$.
	Therefore, there exists $k_0 \in \N$ such that for all $k \geq k_0$, for all $\alpha \in \overline{A_n(\delta)}$: 
	$$
	\left|T_{\alpha,k}^n \left(\frac{1}{\alpha}\right) \right|>2 - \eta,
	$$
	hence $\alpha \in \left(\,\overline{A_{k,n}} \,\right)_\epsilon$, 
	and we are done.			
\end{proof}

We are finally ready to prove the main goal and hence conclude the proof of Theorem \ref{th:msconnected}.

\begin{prop}
	For any $n \in \N$, the set $\overline{A_n}$ is connected { and full}.
\end{prop}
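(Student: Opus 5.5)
We will deduce connectedness and fullness of $\overline{A_n}$ from the corresponding properties of the rational approximants $A_{n,k}$, using Lemma~\ref{lem:nearcv} as a Hausdorff-type sandwich. By Lemma~\ref{lem:lyubichtransversality}, each $A_{n,k}$ with $k\ge k_0$ is a Jordan domain, so $\overline{A_{n,k}}$ is a closed topological disk: compact, connected and full.

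\textbf{Connectedness.} Suppose $\overline{A_n}=X_1\sqcup X_2$ with $X_1,X_2$ nonempty, disjoint and compact. Note $\overline{A_n}\subset\overline{\D}(0,1/2)$, so these sets are compact. Let $4\epsilon=d(X_1,X_2)>0$.

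First choose $\delta>0$ small enough that $A_n(\delta)$ meets both $X_1$ and $X_2$. This is possible because $A_n=\bigcup_\delta A_n(\delta)$ is an increasing union and $A_n$ is dense in $\overline{A_n}$.

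By Lemma~\ref{lem:nearcv}(1), for $k$ large we have $\overline{A_{n,k}}\subset(\overline{A_n})_\epsilon=(X_1)_\epsilon\sqcup(X_2)_\epsilon$. The two pieces on the right are disjoint closed sets. Since $\overline{A_{n,k}}$ is connected, it lies in only one of them, say $(X_1)_\epsilon$.

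By Lemma~\ref{lem:nearcv}(2), for $k$ large $\overline{A_n(\delta)}\subset(\overline{A_{n,k}})_\epsilon\subset(X_1)_{2\epsilon}$. But $A_n(\delta)$ contains a point of $X_2$, whose distance to $X_1$ is at least $4\epsilon$. This is a contradiction, so $\overline{A_n}$ is connected.

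\textbf{Fullness.} This is the main obstacle, since fullness is not stable under Hausdorff limits in general; a thin annulus can be approximated by disks. We plan to argue dynamically via the maximum principle instead.

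Suppose $W$ is a bounded component of $\C\setminus\overline{A_n}$. Then $\partial W\subset\partial A_n\subset\overline{\D}(0,1/2)$. Consider the function $F(\alpha)=1/T_\alpha^n(1/\alpha)$. On the complement of $A_n$ we have $|T^n_\alpha(1/\alpha)|\le 2$, so $|F|\ge 1/2$ there. If the orbit $T_\alpha^j(1/\alpha)$, $j<n$, never hits $\infty$ for $\alpha\in W$, then $F$ is holomorphic on $W$.

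Note that $\alpha=0$ is not in $W$: near $0$ the value $1/\alpha$ is huge, so a neighbourhood of $0$ lies in $A_0\subset\cdots$. More carefully, $0\in\overline{A_n}$ is expected. Note also that virtual cycle parameters, where some $T^j_\alpha(1/\alpha)=\infty$ with $j\le n$, lie in $\overline{A_n}$: near such a parameter, $T^{j}$ is close to $\infty$ and hence in $V$ after suitable iterates. This point needs care, because $T_\alpha(\infty)$ is undefined; we will use the tract structure near $\infty$ to show these parameters are limits of $A_n$ points, or else handle them as poles.

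Granting that, $1/F=T^n_\alpha(1/\alpha)$ is holomorphic on $W$, continuous on $\overline W$, with modulus $\le 2$ on $W$ and modulus $\ge 2$ on $\partial W\subset\partial A_n$. Hence it has modulus exactly $2$ on $\partial W$. The maximum principle then gives $|T^n_\alpha(1/\alpha)|<2$ on $W$ (unless it is constant, which is excluded since the map is nonconstant). The minimum principle forces a zero in $W$; we will aim to exclude such a zero, or otherwise use the rational approximants $T_{\alpha,k}$, whose analogous sets $\C\setminus A_{n,k}$ are connected, to rule it out. A cleaner alternative, which we would try first, is to run the whole argument for $T_{\alpha,k}$, where fullness holds, and pass $W$ to the limit via uniform convergence on $\overline W$. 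Since $\overline W$ avoids $\overline{A_n}$ except on its boundary, uniform convergence holds there by the argument of Lemma~\ref{lem:nearcv}(1). Then Hurwitz/maximum principle comparisons would force $\overline W\subset(\overline{A_{n,k}})_\epsilon$-type containments that contradict fullness of $\overline{A_{n,k}}$ for large $k$.
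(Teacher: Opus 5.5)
Your connectedness argument is correct and is essentially the paper's argument verbatim: a separation $X_1\sqcup X_2$, then Lemma~\ref{lem:nearcv}(1) together with connectedness of $\overline{A_{n,k}}$, then Lemma~\ref{lem:nearcv}(2) applied at a point of $A_n(\delta)\cap X_2$.

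\textbf{Fullness is not proved.} Your own minimum-modulus analysis shows that on a bounded component $W$ of $\C\setminus\overline{A_n}$ the function $g(\alpha)=T^n_\alpha(1/\alpha)$ satisfies $|g|<2$. It follows that, modulo boundary behaviour, the minimum modulus principle forces a zero of $g$ in $W$. So everything hinges on two things:
\begin{enumerate}
\item excluding such a zero;
\item controlling $g$ at points of $\partial W$ where some earlier iterate $T^j_\alpha(1/\alpha)$, $j<n$, equals $\infty$. At those points your claimed continuity of $g$ on $\overline W$ fails.
\end{enumerate}
You do neither. The fallback through the rational approximants is not an argument either. Lemma~\ref{lem:nearcv} only gives the two inclusions $\overline{A_{n,k}}\subset(\overline{A_n})_\epsilon$ and $\overline{A_n(\delta)}\subset(\overline{A_{n,k}})_\epsilon$. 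As you yourself note, these are compatible with full sets converging to a non-full one. To transfer fullness you would need, for example, a Jordan curve $\gamma\subset A_n(\delta)$ surrounding $W$. Uniform convergence would then give $\gamma\subset A_{n,k}$. Fullness of $\overline{A_{n,k}}$ would then put a point of $W$ at distance $>\epsilon$ from $\overline{A_n}$ inside $\overline{A_{n,k}}$, contradicting (1). However, nothing guarantees such a curve exists in $A_n$ rather than merely in $\overline{A_n}$.

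For comparison, the paper argues by induction on $n$, so that $W\subset A_{n-1}$ by fullness of $\overline{A_{n-1}}$. It then applies the minimum modulus principle to $g$, asserting $g\neq 0$ on $W$ because the zeros of $T_\alpha$ do not lie in $\overline{U_\alpha}$. Do not simply borrow this justification. It needs $T^{n-1}_\alpha(1/\alpha)\in\overline{U_\alpha}$, and the paper's line ``$T^{n-1}_\alpha(1/\alpha)\in U_\alpha$'' would in fact put $\alpha$ in $A_n$. For $\alpha\in W\subset A_{n-1}$ one actually has $T^{n-1}_\alpha(1/\alpha)\in V\setminus U_\alpha$. That is exactly where the zeros $16\pi i k/(\alpha-1)$, $k\neq 0$, of $T_\alpha$ lie. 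Zeros of $g$ off $\overline{A_n}$ do occur: for $n=1$, $T_\alpha(1/\alpha)=0$ at $\alpha=1/(1-16\pi i k)$.

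So the step you isolated is the real crux. Non-vanishing of $g$ on $W$ cannot come from a pointwise property of $T_\alpha$; it needs a global argument exploiting that $W$ is bounded. One possibility is a degree argument that tracks the position of $T^{n-1}_\alpha(1/\alpha)$ in the fundamental annulus $\overline{V}\setminus U_\alpha$ through the equivariant holomorphic motion. This would force that point to reach $\partial V$ over $\overline W$, which is impossible there. It still requires separate control at the boundary points of $W$ where an earlier iterate hits $\infty$. As it stands, your proposal proves that $\overline{A_n}$ is connected but not that it is full.
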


\begin{proof}
	Assume for a contradiction that there is $n \in \N$ such that $\overline{A_n}$ is disconnected. Then there exists $F_1, F_2$ two disjoint compact sets such that $\overline{A_n} = F_1 \sqcup F_2$.
	Let $\delta>0$ be small enough that $A_n(\delta) \cap F_i \neq \emptyset$,
	$1 \leq i \leq 2$. { Such a $\delta$ exists since $A_n\supset A_n(\delta) \to A_n$ as $\delta\to 0$.} Let $\epsilon:=\frac{1}{4} d(F_1,F_2)$.

	By Lemma \ref{lem:nearcv}, there exists $k_0 \in \N$ such that for all $k \geq k_0$, $\overline{A_{n,k}} \subset \left[\overline{A_n}\right]_\epsilon$. Since $\overline{A_{n,k}}$ is connected, and by our choice of $\epsilon$, we have either 
	$\overline{A_{n,k}} \subset (F_1)_\epsilon$ or $\overline{A_{n,k}} \subset (F_2)_\epsilon$. Assume without loss of generality that 
	$\overline{A_{n,k}} \subset (F_1)_\epsilon$: then
	$d(F_2, \overline{A_{k,n}}) \geq \epsilon$.

	On the other hand, applying the second item of Lemma \ref{lem:nearcv}
	with $\frac{\epsilon}{2}$ : there exists $k_1 \in \N$ such that for all $k \geq k_1$,
	$$\overline{A_{n}(\delta)} \subset \left[\overline{A_{n,k}}\right]_{\epsilon/2}.$$
	Let $k \geq \max(k_0,k_1)$.
	Since $A_n(\delta) \cap F_2 \neq \emptyset$, there exists $\alpha \in F_2$
	such that $d(\alpha, \overline{A_{n,k}}) \leq \frac{\epsilon}{2}$, a contradiction.
	
		It remains to prove that $\overline{A_n}$ is full. Since $A_0=\D(0,1/2)$, the claim is true for $n=0$. Let $n$ be the smallest natural number for which $\overline{A_n}$ is not full, and let $W$ denote a bounded connected component of $\C\setminus \overline{A_n}$, which by assumption is contained in $A_{n-1}$. Then, $T_\alpha^{n-1}(1/\alpha) \in U_\alpha$ for all $\alpha\in W$, which implies that the map $g(\alpha):= T_\alpha^{n}(1/\alpha)$ is meromorphic in $W$ and different from $0$ in $W$, since $\overline{U_\alpha}$ does not contain the zeros of  $T_\alpha$.
		
		By definition of $A_n$, $|g(\alpha|\geq 2$ for all $\alpha\in\partial W\subset A_n$. Hence by the { Minimum Modulus Principle}, the same is true for all $\alpha\in W$, which implies that $W\subset A_n$. 
\end{proof}

To end this section, we deduce Theorem B from the results that were  proven until now.

\begin{proof}[Proof of Theorem B]
	The assertion that $\tcal$ is connected and full is Theorem \ref{th:msconnected}. It is proved in Lemma \ref{lem:tbdd} that $\tcal \subset \D(0,\frac{1}{2})$, so in particular $\tcal \subset \D$. Finally, by Proposition \ref{prop:Kconnected},  for any tangent-like map $(f,U,V,u,v)$, $J(f)$ is connected if and only if 
 	$v \in K(f)$;  by Lemma \ref{lem:tlmodel}, for any $\alpha \in \D$, there exists domains $U_\alpha, V_\alpha$ such that $(T_\alpha, U_\alpha, V_\alpha, \infty, \frac{1}{\alpha})$ is tangent-like and the Julia set of $T_\alpha$ as a meromorphic map coincides with the Julia set of $T_\alpha$ as a tangent-like map (by Proposition \ref{prop:choicesUV}).
 	It then follows from the definition of $\tcal$ that $\alpha \in \tcal$ if and only if $J(T_\alpha)$ is connected.
\end{proof}

\subsection{Conjugacy classes in $\tcal$}

 In this section we shall see that the Tandelbrot set $\tcal$ contains a unique representative of each hybrid conjugacy class (Theorem \ref{thm:uniqueness of straightening}), while quasiconformal classes are either open or constant, the latter being the case for parameters in the boundary of $\tcal$ (Proposititon \ref{prop:quasiconformalclasses}). We also show that local quasiconformal conjugacies can be upgraded to global ones (Corollary  \ref{cor:globalconj}).

\begin{defi}[Hybrid equivalent tangent maps]
	For $\alpha_i\in\D$, we say that $T_{\alpha_1}$ and $T_{\alpha_2}$ are hybrid equivalent if there exist tangent-like restrictions $(T_{\alpha_1}, U_1,V_1,\infty,v_1)$ and $(T_{\alpha_2}, U_2,V_2,\infty,v_2)$ which are hybrid equivalent. That is, if  there exists  a quasiconformal homeomorphism $\psi: V_1 \to V_2$ such that for all $z \in U_1$, 
	$$
	\psi \circ f_1(z) = f_2 \circ \phi(z)
	$$
	and $\dbar \psi=0$ a.e. on $K(T_{\alpha_1})$.
\end{defi}
\noindent Notice that necessarily $\psi(v_1)=v_2$, and $\psi(\infty)=\infty$.

\begin{theo}[Hybrid conjugacy classes are singletons]\label{thm:uniqueness of straightening}
	Let $T_{\alpha_1}$ and $T_{\alpha_2}$ be hybrid equivalent, with $\alpha_i \in \tcal$.
	Then $\alpha_1=\alpha_2$.
\end{theo}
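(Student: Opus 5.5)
We follow the classical Douady–Hubbard argument: glue the hybrid conjugacy inside $K$ to the conformal conjugacy on the basins of $0$ provided by Proposition \ref{lem:confconjH}, and then invoke Lemma \ref{lem:rigidity}. Let $\psi: V_1\to V_2$ be the hybrid conjugacy, and let $\varphi_{\alpha_i}:\H\to\Omega_{\alpha_i}$ be the normalized Riemann maps conjugating $T_{\alpha_i}$ to the fixed model $g(z)=C\tan(\pi z)$. Since $\alpha_i\in\tcal$, $K(T_{\alpha_i})$ is connected by Theorem B, so both maps are defined. Set
\[
\Phi := \varphi_{\alpha_2}\circ\varphi_{\alpha_1}^{-1} : \Omega_{\alpha_1}\to\Omega_{\alpha_2},
\]
a conformal conjugacy between $T_{\alpha_1}$ and $T_{\alpha_2}$ on the basins of $0$. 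Define $\Psi:=\psi$ on $K(T_{\alpha_1})$ and $\Psi:=\Phi$ on $\Omega_{\alpha_1}$. If $\Psi$ is a homeomorphism of $\rs$, then it is quasiconformal. It is conformal on $\Omega_{\alpha_1}$ and satisfies $\dbar\Psi=0$ a.e.\ on $K(T_{\alpha_1})$. Since the Julia set $J=\partial\Omega_{\alpha_1}$ is a quasiconformal image (of the boundary of a quasidisk–exhausted domain), it is removable for continuous maps that are quasiconformal off it; this is the same removability used in the polynomial-like setting via Rickman's lemma. Hence $\Psi$ is a global Möbius map conjugating $T_{\alpha_1}$ to $T_{\alpha_2}$ and fixing $\infty$, i.e.\ affine. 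It fixes $0$ (since $\Phi(0)=0$) and sends $1/\alpha_1\mapsto 1/\alpha_2$ and $1\mapsto 1$ (the latter because $\varphi_{\alpha_i}(Ci)=1$). By the last part of the proof of Lemma \ref{lem:rigidity}, $\Psi=\id$, so $\alpha_1=\alpha_2$. The case where one of the $\alpha_i$ is $0$ is excluded, since $0\notin\tcal$ by the argument in Theorem \ref{th:hyperbolicornon-hyperbolic}.

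The whole difficulty is the continuity of $\Psi$ across $J(T_{\alpha_1})$, i.e.\ showing that $\Phi$ and $\psi$ agree on $J$. To handle this, consider on $V_1\setminus K(T_{\alpha_1})$ the map
\[
h:=\Phi^{-1}\circ\psi .
\]
Both $\psi$ and $\Phi$ conjugate the dynamics, so $h$ is a quasiconformal self-map of $V_1\setminus K(T_{\alpha_1})$ commuting with $T_{\alpha_1}$ on $U_1\setminus K$. (After shrinking $V_1$ and $V_2$ appropriately, $\Phi(V_1\setminus K)$ and $\psi(V_1\setminus K)$ can be arranged to match near $K$; this bookkeeping should be harmless, since one may replace $V_1$ by a preimage domain.) The goal is to apply Proposition \ref{prop:extend}, which gives $h=\id$ on $K$ and hence $\psi=\Phi$ on $J$, making $\Psi$ continuous. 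That proposition requires $h$ to be $\epsilon$-close to the identity on a domain $W'$ with $p_0\in\partial W'$.

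I expect this normalization to be the main obstacle. Both $\psi$ and $\Phi$ map $\infty$ to $\infty$ and permute poles, but $\psi$ may a priori send the marked pole $p_0(\alpha_1)$ to some $p_k(\alpha_2)$ with $k\neq0$. The plan is to first compose $\Phi$ with the conjugacy induced by the deck-type translation $z\mapsto z+k$ of $\H$, which commutes with $g$ because $g$ is $1$-periodic. This changes $\Phi$ to another conformal conjugacy of the basins, which no longer fixes $\varphi$'s normalization at $Ci$ but still fixes $1$, since $g$-orbits are preserved. After this adjustment $h$ maps the access $\sigma_0$ to $p_0$ to itself. Then, as in the proof of Proposition \ref{prop:extend}, $\tilde h=\varphi_{\alpha_1}^{-1}\circ h\circ\varphi_{\alpha_1}$ fixes $\tfrac12$ on $\R$, and Lemma \ref{lem:noaut} yields $\tilde h=\id$ on $\R$. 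The Krzyż estimate (Proposition \ref{prop:krzyz}) then gives $h\to\id$ at $J$. In other words, I would essentially rerun the proof of Proposition \ref{prop:extend}, replacing its $\epsilon$-closeness hypothesis by the statement that the boundary extension fixes $\tfrac12$, which is the property that proof actually uses.
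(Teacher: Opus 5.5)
The gap is in your normalization step, and it is fatal as written.

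The translation $\tau_k(z)=z+k$ does not commute with $g$. One-periodicity gives $g\circ\tau_k=g$, so $\tau_k$ is a deck transformation of $g$, whereas $\tau_k\circ g=g+k$. Consequently $\Phi_k:=\varphi_{\alpha_2}\circ\tau_k\circ\varphi_{\alpha_1}^{-1}$ satisfies $T_{\alpha_2}\circ\Phi_k=\Phi\circ T_{\alpha_1}$, not $T_{\alpha_2}\circ\Phi_k=\Phi_k\circ T_{\alpha_1}$. Also $\Phi_k(1)=\varphi_{\alpha_2}(Ci+k)\neq 1$.

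No other choice on the basin side can help. If $\Phi'$ is any conformal conjugacy $\Omega_{\alpha_1}\to\Omega_{\alpha_2}$, then $\varphi_{\alpha_2}^{-1}\circ\Phi'\circ\varphi_{\alpha_1}$ is an automorphism of $\H$ commuting with $g$. It must fix the attracting point $p$ and the omitted value $Ci$, so it is the identity. Thus $\Phi$ is rigid.

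The integer $k$ with $\tilde h(\tfrac12)=\tfrac12+k$ therefore has to be shown to vanish using $\psi$ alone. (The proof of Lemma \ref{lem:noaut} shows such a boundary map sends $\tfrac12+n$ to $\tfrac12+n+k$ for a fixed $k$.) Nothing in the hypotheses forces $k=0$. So you can neither invoke Proposition \ref{prop:extend} nor rerun its proof, and continuity of $\Psi$ across $J$ is not established.

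Two smaller points. Your removability step should be Rickman's lemma, using that $\Psi=\psi$ on $K$ with $\psi$ quasiconformal on $V_1$; Julia sets are not removable in general. Also $0\in\tcal$, but your endgame need not exclude it, since an affine map fixing $0$ and $1$ is the identity.

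This is not a bookkeeping issue: $k\neq 0$ actually occurs.
\begin{itemize}
\item In the linearizing coordinate of $g$ at $p$, take a Dehn twist supported in one thin annulus per fundamental annulus, avoiding the radii of the orbit of $\ell(Ci)$.
\item Transport it by the univalent inverse of the linearizer on a disk containing $\ell(Ci)$, then lift successively through $g$. This gives a quasiconformal $\tilde H:\H\to\H$ commuting with $g$, equal to the identity near $Ci$ and near $i\infty$.
\item Relative to $i(C,\infty)$, the curve $\tilde H(i(C,\infty))$ winds once around $Ci$, so $\tilde H(\tfrac12)=\tfrac12\pm1$.
\item Transport the Beltrami coefficient of $\tilde H$ to $\Omega_\alpha$ via $\varphi_\alpha$, set it to $0$ on $K(T_\alpha)$, and integrate. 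This yields a hybrid equivalence $\Theta$ from $T_\alpha$ to some $T_{\alpha'}$ with $\alpha'\in\tcal$.
\end{itemize}
Take $\alpha$ in the main shell component (Lemma \ref{lem:maincardioid}) and suppose $\alpha'=\alpha$. Then $\Theta$ is conformal on the basin $B$ of the other attracting fixed point, commutes with $T_\alpha$, and fixes that point and $1/\alpha$. Hence $\Theta$ is the identity on $\overline{B}\supset J$. But on $\Omega_\alpha$ we have $\Theta=\varphi_\alpha\circ\tilde H\circ\varphi_\alpha^{-1}$, which sends $p_0$ to $p_{\pm1}$, a contradiction. So $\alpha'\neq\alpha$ although the two maps are hybrid equivalent.

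The conclusion therefore seems to need an extra normalization, such as requiring the hybrid conjugacy to respect the access to the marked pole $p_0$; this is exactly the hypothesis of Proposition \ref{prop:extend}.

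The paper's proof does not avoid this either. In Lemma \ref{lem:lift} it concludes $h_{i+1}=\psi$ on $T_{\alpha_1}^{-(i+1)}(\overline{U_1})$ from uniqueness of lifts. It only checks that both maps lift the same map, not that they agree at one point. That is the same deck-transformation ambiguity you identified.
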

To prove this Theorem we shall construct an affine conjugacy between $T_{\alpha_1}$ and $T_{\alpha_2}$, as a limit of quasiconformal maps. To start the construction, let  $U_i, V_i$ 
	such that $T_{\alpha_i}: U_i \to V_i$ are tangent-like (see Lemma ~\ref{lem:tlmodel}) and let  $\psi: V_1 \to V_2$ denote a hybrid conjugacy.
	Notice that $\psi$ fixes infinity. 
	Let $\phi: \Omega_1 \to \Omega_2$ denote the conformal conjugacy given by lemma \ref{lem:confconjH}
	where $\Omega_i$ denote the attracting basins of $0$ for $T_{\alpha_i}$. 
 Observe that there is no reason for $\partial V_1$ to be mapped to $\partial V_2$ under $\phi$, but this poses no problem in the construction that follows. 
	
Consider  a quasiconformal  interpolation (see Lemma \ref{lem:interpolation}) $h$ between $\phi|_{\partial V_1}$ and $\psi|_{\partial U_1}$ on the annulus $V_1 \setminus \ov{ U}_1$ and define the 
	 quasiconformal  homeomorphism $h_0: \rs \to \rs$ as
	$$
	h_0 = \left\{\begin{array}{ll}
		\psi &\text{ on $\ov{U}_1$}\\
			h &\text{ on $V_1 \setminus \ov{ U}_1$}  \\
		\phi & \text{ on $\rs \setminus V_1$}.\\
	\end{array}\right.
	$$
 Note that $h_0$ is not a conjugacy on all of $\rs$, but it is a quasiconformal conjugacy on  $U_1\supset K(T_{\alpha_1})$  satisfying $\overline{\partial}\psi=0$ on $K(T_{\alpha_1})$,  and a holomorphic conjugacy on $\rs\setminus \overline{V_1}$.

The sequence of conjugacies is defined in the following lemma.

	\begin{lem}\label{lem:lift}
		There exists a  sequence of uniformly quasiconformal  homeomorphisms $h_i: \rs \to \rs$ such that  for all $i$ 
\begin{align}
&h_{i}=\psi &\text{on\ \ } &T_{\alpha_1}^{-i}(\ov{U}_1),\\
&h_{i}=\phi &\text{on\ \  } &T_{\alpha_1}^{-i}(\rs \setminus V_1)\\
\label{eq:fun}
		& h_i \circ T_{\alpha_1} = T_{\alpha_2} \circ h_{i+1} &\text{on\  \ } &\rs.
\end{align}		
	\end{lem}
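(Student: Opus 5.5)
We build the $h_i$ inductively by pullback, starting from $h_0$. The base case $i=0$ holds by construction: $h_0=\psi$ on $\ov U_1$ and $h_0=\phi$ on $\rs\setminus V_1$. Given $h_i$, define $h_{i+1}$ piecewise:
\begin{itemize}
\item $h_{i+1}:=\phi$ on $\rs\setminus \ov U_1$, which is contained in $\Omega_1$, where $\phi$ is a conformal conjugacy;
\item $h_{i+1}:=\psi$ on $K(T_{\alpha_1})$;
\item on $\ov U_1\setminus\{\infty\}$, $h_{i+1}$ is the lift of $h_i\circ T_{\alpha_1}$ through $T_{\alpha_2}$.
\end{itemize}

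For the lift, both $T_{\alpha_1}\colon \ov U_1\setminus\{\infty\}\to \ov V_1\setminus\{1/\alpha_1\}$ and $T_{\alpha_2}\colon \ov U_2\setminus\{\infty\}\to\ov V_2\setminus\{1/\alpha_2\}$ are universal coverings. We need $h_i(\ov V_1)=\ov V_2$ and $h_i(1/\alpha_1)=1/\alpha_2$. The first holds because $h_i=\phi$ near $\partial V_1$ once we arrange $\phi(V_1)=V_2$; if we do not take $V_2=\phi(V_1)$ from the start, we replace $V_2$ by $\phi(V_1)$ and $U_2$ by its preimage, so this is harmless. The second holds because $h_i=\psi$ on $K(T_{\alpha_1})\ni 1/\alpha_1$. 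Since $\ov U_1\setminus\{\infty\}$ is simply connected, the map $h_i\circ T_{\alpha_1}$ lifts, and the lift is unique once we prescribe one value. We normalize it to agree with $\psi$ at one point of $U_1$.

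We then check that the lift agrees with $\psi$ on $T_{\alpha_1}^{-(i+1)}(\ov U_1)$ and with $\phi$ on $T_{\alpha_1}^{-(i+1)}(\rs\setminus V_1)\cap U_1$. By induction, $h_i=\psi$ on $T_{\alpha_1}^{-i}(\ov U_1)$, and $\psi$ itself is a lift of $\psi\circ T_{\alpha_1}$ on $U_1$. Likewise $\phi$ is a lift of $\phi\circ T_{\alpha_1}$ on $U_1\cap\Omega_1$. Uniqueness of lifts on connected sets then identifies the pieces, provided each piece is connected and contains the normalization point, or at least is matched on a connected overlap. This is exactly where $h_0$ was designed to equal $\psi$ on $\ov U_1$. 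On $\partial U_1$, the lift agrees with $\phi$ because $h_i=\phi$ on $\partial V_1$, so $h_{i+1}$ is continuous across $\partial U_1$ and is a homeomorphism of $\rs$ once we set $h_{i+1}(\infty)=\infty$.

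The functional equation \eqref{eq:fun} holds on $\ov U_1$ by construction of the lift. Outside $\ov U_1$ it holds because $h_{i+1}=\phi$ there, $h_i=\phi$ on $T_{\alpha_1}(\rs\setminus \ov U_1)\subset \rs\setminus \ov V_1$, and $\phi$ conjugates the two maps on $\Omega_1$.

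Uniform quasiconformality follows because the dilatation of $h_{i+1}$ is, pointwise, either that of $\phi$ (which is $1$), that of $\psi$, or pulled back from $h_i$ by the conformal maps $T_{\alpha_j}$. Hence $K(h_{i+1})\le\max(K(h_0),K(\psi))$. A homeomorphism of $\rs$ that is quasiconformal off the analytic curves $\partial U_1,\partial V_1$ and the single point $\infty$ is quasiconformal, since these sets are removable.

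The main obstacle is the consistent choice of lift. The covering has deck group $\Z$, so a wrong choice could shift by a deck transformation and break agreement with $\psi$ and $\phi$. We handle this by fixing the deck ambiguity once, using $h_0|_{\ov U_1}=\psi$ as the initial lift, and propagating it; the invariant arc landing at $\infty$ in Lemma~\ref{lem:labelling} pins the normalization at each step.
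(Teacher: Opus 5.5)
Your scheme is the paper's scheme with the normalization moved to the other side. Both build $h_{i+1}$ by lifting $h_i\circ T_{\alpha_1}$ through $T_{\alpha_2}$. The paper lifts on all of $\C$ and normalizes by $h_{i+1}(0)=0$, which makes agreement with $\phi$ on the connected set $T_{\alpha_1}^{-(i+1)}(\rs\setminus V_1)\ni 0$ automatic. You normalize on the $\psi$ side instead. Either way there is a genuine gap at the same step.

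The deck group of $T_{\alpha_2}$ consists of the translations by multiples of $P=16\pi i/(\alpha_2-1)$, and a lift is fixed by one value. After your normalization, uniqueness of lifts gives $h_{i+1}=\psi$ on the component of $T_{\alpha_1}^{-(i+1)}(\ov U_1)$ containing your base point. On $\partial U_1$ it only tells you that $h_{i+1}$ and $\phi$ are both lifts of $\phi\circ T_{\alpha_1}$, i.e. $h_{i+1}=\phi+kP$ there for some $k\in\Z$. Your sentence ``the lift agrees with $\phi$ because $h_i=\phi$ on $\partial V_1$'' is exactly the unproved step. The last paragraph does not repair it: one normalization cannot pin two disjoint pieces, and the arc of Lemma~\ref{lem:labelling} lies in $\Omega_1$, so it can only normalize the $\phi$ side. (If $T_{\alpha_1}$ has a virtual cycle, $T_{\alpha_1}^{-(i+1)}(\ov U_1)\setminus\{\infty\}$ is moreover disconnected for large $i$, so even the $\psi$ side is not pinned by one value.)

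The paper's proof has the mirror-image gap. From $T_{\alpha_2}\circ h_{i+1}(z_0)=T_{\alpha_2}\circ\psi(z_0)$ it concludes $h_{i+1}=\psi$, which only holds up to $kP$. If $k\neq 0$, then $h_{i+1}(1/\alpha_1)\neq 1/\alpha_2$, and the next lift does not even exist.

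What is missing is a compatibility between $\phi$, $\psi$ and the interpolation $h$. At level $1$, composing $h$ with a Dehn twist of $V_1\setminus\ov U_1$ changes the offset $k$ by $\pm1$. So $k$ can be killed, but only by choosing the isotopy class of $h$ explicitly, which neither you nor the paper do.

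From level $2$ on there is no freedom left. The lemma demands $h_2=\phi$ on $\partial T_{\alpha_1}^{-1}(U_1)$ and $h_2=\psi$ on $T_{\alpha_1}^{-2}(\ov U_1)$. Each component of $T_{\alpha_1}^{-2}(V_1\setminus\ov U_1)$ sits under an arc of $\partial T_{\alpha_1}^{-1}(U_1)$ between consecutive poles $p_k,p_{k+1}$, and $h_2$ must map it onto a single component of the analogous set for $\alpha_2$. Its outer boundary forces the component under $(p_k(\alpha_2),p_{k+1}(\alpha_2))$. Its inner boundary forces the one under $(\psi(p_k),\psi(p_{k+1}))$. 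So the statement requires $\psi(p_k(\alpha_1))=p_k(\alpha_2)$ for the poles labelled as in Lemma~\ref{lem:labelling}.

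This is a condition on $\psi$ alone, which no choice of $h$ can arrange and which you do not derive from $\psi$ being a hybrid conjugacy. I would expect it to fail in general. Take two distinct parameters of a hyperbolic component with the same multiplier. The conformal conjugacy between their free basins cannot agree with $\phi$ on $J$, since otherwise the two would glue to an affine conjugacy. Gluing that basin conjugacy to an external quasiconformal conjugacy with the matching shift should then give a hybrid conjugacy that moves the labels.

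As it stands, your proposal, like the paper's argument, does not prove the lemma. You need either to add this label compatibility (together with the right choice of $h$) as a hypothesis or to prove it.
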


\begin{proof}[Proof of Lemma \ref{lem:lift}]
	Let $h_0$ be defined as above  and assume that $h_i$ has been constructed for some $i\in\N$. Notice that $h_i(\frac{1}{\alpha_1})=\frac{1}{\alpha_2}$ and that $h_i(1)=1$. Indeed, $h_i=\phi$ is a (holomorphic) conjugacy on an open subset of the basin of $0$ containing $1$; and $h_{i}=\psi$ is a conjugacy on $T_{\alpha_1}^{-i}(\ov{U}_1)\supset K(T_{\alpha_1})$, which contains  $\frac{1}{\alpha_1}$ because the filled Julia set is connected (see Prop~\ref{prop:Kconnected} and \ref{prop:choicesUV}). Also $h_i(0)=0$ by the induction hypothesis.

	The maps  $h_i \circ T_{\alpha_1}: \C \to \rs \setminus \{1,\frac{1}{\alpha_2}\}$ and 
	$T_{\alpha_2}: \C \to \rs \setminus \{1,\frac{1}{\alpha_2}\}$ are then two universal covers. 
	So (see e.g. \cite[Proposition 1.37]{Hatcher}) there exists a unique homeomorphism $h_{i+1}: \C \to \C$ such that 
	$$
 h_i \circ T_{\alpha_1} = T_{\alpha_2} \circ h_{i+1},
 $$ 
 normalized by $h_{i+1}(0)=0$.
	It extends to a homeomorphism $h_{i+1} : \rs \to \rs$ fixing $\infty$ which satisfies  (\ref{eq:fun}) by construction, hence is quasiconformal with the same dilatation as $h_i$. It remains to show that  $h_{i+1}=h_i = \psi$ on $ T_{\alpha_1}^{-(i+1)}(\ov U_1)$ and that $h_{i+1}=h_i = \phi$ on $ T_{\alpha_1}^{-(i+1)}(\rs\setminus V_1)$.
	
	By the induction hypothesis $h_i=\psi$ on $T_{\alpha_1}^{-i}(\ov{U}_1)$, hence 	$T_{\alpha_2}\circ h_{i} = h_i \circ T_{\alpha_1}$
	on $T_{\alpha_1}^{-(i+1)}(\ov U_1)\subset T_{\alpha_1}^{-i}(\ov{U}_1)$. 
	 Moreover, both
	\[
	T_{\alpha_2}: T_{\alpha_2}^{-(i+1)}(\ov U_2) \to  T_{\alpha_2}^{-i}(\ov U_2) \setminus \{1/\alpha_2\} 
	\]
	and 
	$$
	h_i \circ T_{\alpha_1}: T_{\alpha_1}^{-(i+1)}(\ov U_1) \to  T_{\alpha_2}^{-i}(\ov U_2) \setminus \{1/\alpha_2\}
	$$
	are also universal covers.
	If we fix some  some arbitrary $z_0 \in T_{\alpha_1}^{-(i+1)}(\overline{U_1})$, then by the induction hypothesis, $h_i \circ T_{\alpha_1}(z_0)=\psi \circ T_{\alpha_1}(z_0)$, and since $\psi$ is a conjugacy,
	$$h_i \circ T_{\alpha_1}(z_0) = T_{\alpha_2} \circ \psi(z_0).$$
	By the unicity of the equivalence of covering maps  (\cite[Theorem 79.2 p. 480]{munkres2018elements}), we therefore have $h_{i+1}= \psi$ on $T_{\alpha_1}^{-(i+1)}(\overline{U_1})$.

	Likewise, if we define the Jordan domain $\tilde{V_2}:=\rs \setminus\varphi (\rs\setminus V_1)$, then
	 \[
	T_{\alpha_2}: T_{\alpha_2}^{-(i+1)}(\rs\setminus \tilde{V_2}) \to  T_{\alpha_2}^{-i}(\rs\setminus \tilde{V_2}) \setminus \{1\} 
	\]
	and 
	$$
	h_i \circ T_{\alpha_1}: T_{\alpha_1}^{-(i+1)}(\rs\setminus V_1) \to  T_{\alpha_2}^{-i}(\rs\setminus V_1) \setminus \{1\}
	$$
are both universal covers. Again by unicity of the equivalence between universal covers, we must have $h_{i+1}=h_i = \phi$ on $ T_{\alpha_1}^{-(i+1)}(\rs\setminus V_1) \subset \Omega_1$.
\end{proof}

We are now ready for the proof. 

\begin{proof}[Proof of Theorem~\ref{thm:uniqueness of straightening}]
	Let $h_i$ be the sequence of quasiconformal homeomorphisms given by Lemma \ref{lem:lift}.
	By compactness of quasiconformal maps with uniformly bounded dilatation, we can extract a subsequence converging uniformly to a quasiconformal homeomorphism $h_\infty: \rs \to \rs$. Moreover, $h_\infty=\phi$ on $\Omega_1$, and 
	$h_\infty=\psi$ on $K(T_{\alpha_1})$. Therefore, $\dbar h_\infty=0$ a.e., so by Weyl's lemma, 
	$\phi_\infty$ is analytic on $\rs$, hence affine (since it fixes $\infty$).
	So $T_{\alpha_1}$ and $T_{\alpha_2}$ are affinely conjugated,  which, by Lemma \ref{lem:rigidity} implies that $\alpha_1=\alpha_2$.
\end{proof}

 A consequence of the proof is the following corollary.

\begin{coro}[From local to global quasiconformal conjugacy] \label{cor:globalconj}
Let $T_{\alpha_1}, T_{\alpha_2}$ be two tangent maps which are quasiconformally conjugate on $V$. Then they are quasiconformally conjugate on $\rs$. 
\end{coro}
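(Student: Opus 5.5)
The plan is to rerun the pullback construction of Lemma \ref{lem:lift}, but without the hybrid condition. Let $\psi: V \to V'$ be the given quasiconformal conjugacy between tangent-like restrictions $(T_{\alpha_1},U_1,V_1,\infty,1/\alpha_1)$ and $(T_{\alpha_2},U_2,V_2,\infty,1/\alpha_2)$. It maps $U_1$ to $U_2$, $\infty$ to $\infty$ and $1/\alpha_1$ to $1/\alpha_2$.

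\textbf{Case split.} I would first separate two cases. If $\alpha_1 \in \tcal$, then Proposition \ref{prop:Kconnected} shows $1/\alpha_1\in K(T_{\alpha_1})$, hence $1/\alpha_2 \in K(T_{\alpha_2})$, so both maps lie in $\tcal$ and Lemma \ref{lem:confconjH} applies. If $\alpha_1 \notin \tcal$, the same equivalence forces $\alpha_2\notin\tcal$. In that case both maps have a single completely invariant Fatou component $\Omega_i$ containing both asymptotic values. I would then aim to produce a conformal conjugacy $\phi$ on a neighbourhood of $0$ by linearisation, and extend it along the escaping orbit of $1/\alpha_1$. The problem is that such a $\phi$ need not agree with $\psi$ on $1/\alpha_1$. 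To fix this, I would replace $\phi$ on a fundamental domain by a quasiconformal map matching $\psi$ there, in the standard way one handles escape regions.

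\textbf{Main case $\alpha_i\in\tcal$.} Take $\phi:\Omega_1\to\Omega_2$ conformal from Lemma \ref{lem:confconjH}. Build $h_0$ exactly as before the proof of Lemma \ref{lem:lift}: set $h_0=\psi$ on $\overline{U_1}$, $h_0=\phi$ on $\rs\setminus V_1$, and on $V_1\setminus\overline{U_1}$ take the quasiconformal interpolation of Lemma \ref{lem:interpolation}. For this interpolation to apply, the boundary maps must be quasisymmetric. This holds for $\phi|_{\partial V_1}$ since $\partial V_i$ are analytic curves, and for $\psi|_{\partial U_1}$ since it is the restriction of a global quasiconformal map.

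\textbf{Lifting and limit.} Now apply the proof of Lemma \ref{lem:lift} verbatim. The lifting argument only used that $h_i$ maps $\{1,1/\alpha_1\}$ to $\{1,1/\alpha_2\}$ and is a conjugacy on $T_{\alpha_1}^{-i}(\overline{U_1})$ and on $T_{\alpha_1}^{-i}(\rs\setminus V_1)$; the hybrid property was never used. This yields uniformly $K$-quasiconformal maps $h_i$ with $h_i\circ T_{\alpha_1}=T_{\alpha_2}\circ h_{i+1}$. Their dilatation is bounded by the maximum of $K(\psi)$ and the dilatation of the interpolation, because the lifts are holomorphic pullbacks. A subsequence converges to a quasiconformal $h_\infty$. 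Since every point of $\Omega_1$ eventually enters $T^{-i}(\rs\setminus V_1)$, where $h_i$ stabilises, $h_\infty$ agrees with $\phi$ on $\Omega_1$. Since $h_i=\psi$ on $K(T_{\alpha_1})$ for all $i$, it agrees with $\psi$ on $K(T_{\alpha_1})$. Passing to the limit in the functional equation shows $h_\infty$ conjugates $T_{\alpha_1}$ to $T_{\alpha_2}$ on all of $\rs$, as required.

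\textbf{Main obstacle.} The hard point is the disconnected case, where no Böttcher-type conjugacy on the full basin exists. There I would first try to avoid it altogether: show that quasiconformal classes outside $\tcal$ are all of $\C\setminus(\tcal\cup 1/\tcal)$, which is connected by Theorem \ref{th:msconnected}. For any two parameters there, an explicit global quasiconformal conjugacy can be produced by a holomorphic motion over this hyperbolic, $J$-stable region, using Theorem \ref{thm:J stability} and Slodkowski's extension.
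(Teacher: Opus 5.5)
For $\alpha_1,\alpha_2\in\tcal$ your argument is the paper's. You rerun Lemma \ref{lem:lift} with a merely quasiconformal $\psi$ and take the limit $h_\infty$, which is forced to equal $\phi$ on $\Omega_1$ and $\psi$ on $K(T_{\alpha_1})$. That construction needs $1/\alpha_1\in K(T_{\alpha_1})$ and Lemma \ref{lem:confconjH}, so the paper tacitly assumes $\alpha_i\in\tcal$. This is also the only setting in which the corollary is used (Proposition \ref{prop:quasiconformalclasses}, Lemma \ref{lem:continuitybt}).

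The gap is your second case, and it cannot be filled, because the statement is false there.

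Take a virtual cycle parameter $\alpha_0$ with $T_{\alpha_0}^n(1/\alpha_0)=\infty$. Both asymptotic values of $T_\alpha$ are omitted, so $T_\alpha(1)$ is never a singular value. The Shooting Lemma (Proposition \ref{shooting}) then gives $\alpha'$ near $\alpha_0$ with $T_{\alpha'}^{n+1}(1/\alpha')=T_{\alpha'}(1)\in\D$, so $\alpha'\notin\tcal$.

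Now pick $\alpha''$ near $\alpha'$ with $T_{\alpha''}^{n+1}(1/\alpha'')\neq T_{\alpha''}(1)$ and $T_{\alpha''}^{n+1}(1)\neq T_{\alpha''}(1/\alpha'')$. Both conditions are non-persistent analytic relations, so such $\alpha''$ exist. Apply the argument of Lemma \ref{lem:existholmot} to the model family with $V=\rs\setminus\overline{\D}(0,1/r)$. It gives a quasiconformal conjugacy on $V$ between the tangent-like restrictions of $T_{\alpha'}$ and $T_{\alpha''}$. The relation is invisible to that restriction, since it takes place at $T_{\alpha'}(1)\in\D$, outside $V$.

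However, any global topological conjugacy must map $\{1,1/\alpha'\}$ onto $\{1,1/\alpha''\}$ and would carry the relation over to $T_{\alpha''}$. That is impossible by the choice of $\alpha''$.

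This has two consequences for your proposal:
\begin{enumerate}
\item The claim that the escape locus is a single quasiconformal class is false.
\item Your linearisation and fundamental-domain repair fails for the same reason. Besides matching $\psi$ at $1/\alpha_1$, you would have to match the position of the grand orbit of $1$ relative to that of $1/\alpha_1$ inside the basin, and $\psi$ does not control this.
\end{enumerate}
Note also that $J$-stability together with Slodkowski's theorem only gives a conjugacy on $J$. The extended motion is not equivariant on the basin, and making it equivariant is exactly where such relations obstruct.

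The fix is to drop the second case and state the corollary for $\alpha_1,\alpha_2\in\tcal$.
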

\begin{proof}
We proceed as in the proof of Theorem \ref{thm:uniqueness of straightening}, and $h_\infty$ provides the desired conjugacy, which in this case is not necessarily an affine map.  
\end{proof}

This allows us to describe  the quasiconformal conjugacy classes in the model family.

\begin{prop}[Quasiconformal conjugacy classes] \label{prop:quasiconformalclasses}
Quasiconformal conjugacy classes in $\tcal$ are either open or constant. Moreover, if $\alpha\in\partial\tcal$, then $\alpha$ is the unique representative in its quasiconformal conjugacy class. 
\end{prop}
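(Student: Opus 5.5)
The plan follows the standard Mañé–Sad–Sullivan / McMullen argument for quadratic polynomials, with Corollary \ref{cor:globalconj} and Theorem \ref{thm:uniqueness of straightening} replacing the usual polynomial tools.

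\emph{Open classes.} Fix $\alpha_0\in\tcal$ and let $\mathcal{Q}(\alpha_0)$ be the set of $\alpha\in\tcal$ with $T_\alpha$ quasiconformally conjugate to $T_{\alpha_0}$.

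First suppose $\alpha_0\in\inter(\tcal)$, and let $U$ be its component. By Theorem \ref{thm:J stability}, or \cite[Theorem E]{ABF}, $J(T_\alpha)$ moves holomorphically over $U$. Lemma \ref{lem:zakeri} extends this to a holomorphic motion of $\rs$ which conjugates $T_{\alpha_0}$ to $T_\alpha$. Hence all of $U$ lies in a single quasiconformal class. Using Slodkowski if needed, this gives that the class of $\alpha_0$ contains the open set $U$.

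\emph{Trivial classes on the boundary.} Now suppose $\alpha_0\in\partial\tcal$, and let $\alpha\in\tcal$ with a quasiconformal conjugacy $h$ from $T_{\alpha_0}$ to $T_\alpha$. By Corollary \ref{cor:globalconj} we may take $h$ global on $\rs$. Let $\mu$ be its Beltrami coefficient; it is $T_{\alpha_0}$-invariant.

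Split $\mu=\mu|_{\Omega}+\mu|_{K}$, where $\Omega=\Omega_{\alpha_0}$ and $K=K(T_{\alpha_0})$. On $\Omega$, the conformal conjugacy of Proposition \ref{lem:confconjH} identifies both basins with $(\H,g)$. So I would replace $h$ on $\Omega$ by the conformal conjugacy $\phi_\alpha\circ\phi_{\alpha_0}^{-1}$ and keep $h$ near $K$. Gluing as in Lemma \ref{lem:lift} and taking a limit, I expect to get a conjugacy which is conformal on $\Omega$ and carries the invariant part $\mu|_K$.

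Next consider the family of invariant Beltrami coefficients $t\mu|_K$, for $t\in\D_{1/\|\mu\|_\infty}$. Integrating them by the measurable Riemann mapping theorem gives maps $T_{\alpha(t)}$, where $\alpha(t)$ depends holomorphically on $t$ after normalizing via Lemma \ref{lem:rigidity}. This family connects $\alpha_0$ to $\alpha$.

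The key point is the following dichotomy. If $\mu|_K\neq0$ on a positive-measure set, then $t\mapsto\alpha(t)$ is non-constant. This uses Theorem \ref{thm:uniqueness of straightening}: two values of $t$ with the same $\alpha$ would differ by a hybrid conjugacy, forcing equal Beltrami data up to conformal change. A non-constant holomorphic map is open, so $\alpha_0$ would lie in an open subset of $\tcal$, contradicting $\alpha_0\in\partial\tcal$. Hence $\mu|_K=0$ a.e., and $h$ is a hybrid equivalence. Theorem \ref{thm:uniqueness of straightening} then gives $\alpha=\alpha_0$.

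\emph{Conclusion.} For a general class, the same argument shows it is either a single point or open. If it contains an interior point, it is open by the interior step; otherwise all its points lie on $\partial\tcal$ and the class is a singleton.

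\emph{Main obstacle.} The hard step is the gluing on $\Omega$ in the boundary case, that is, producing a conjugacy that is conformal off $K$. One must check that the replacement is compatible with $h$ near $J$ and still quasiconformal; I would use Proposition \ref{prop:extend} and the lift construction of Lemma \ref{lem:lift}. The non-constancy of $t\mapsto\alpha(t)$ when $\mu|_K\neq0$ is the other delicate point.
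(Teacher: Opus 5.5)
\paragraph{Verdict.} Your set-up is the paper's, but the route you take from it to $\alpha=\alpha_0$ passes through a false claim; the repair is short and uses only what you already have.

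\paragraph{What matches the paper.} You take the global conjugacy from Corollary~\ref{cor:globalconj}, form the family $t\mu$, and integrate it by maps $\varphi_t$ fixing $0,1,\infty$. This gives a holomorphic path $t\mapsto\alpha(t)$ with $\alpha(0)=\alpha_0$ and $\alpha(1)=\alpha$, since $\varphi_1=h$.

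The step you flag as the main obstacle is already done. The map $h_\infty$ built via Lemma~\ref{lem:lift} equals the conformal conjugacy on $\Omega$. So $\mu$ is supported on $K$, and $t\mu$ keeps the multiplier $1/8$ at $0$.

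\paragraph{The gap: your ``key dichotomy''.} The claim that $\mu|_K\neq 0$ forces $t\mapsto\alpha(t)$ to be non-constant is false, and your justification misreads Theorem~\ref{thm:uniqueness of straightening}.
\begin{itemize}
\item If $\alpha(t_1)=\alpha(t_2)$, then $\varphi_{t_2}\circ\varphi_{t_1}^{-1}$ is a quasiconformal \emph{self}-conjugacy of $T_{\alpha(t_1)}$. Its Beltrami coefficient on $K$ is non-zero precisely because $t_1\mu|_K\neq t_2\mu|_K$, so it is not hybrid.
\item Theorem~\ref{thm:uniqueness of straightening} only forbids hybrid equivalences between \emph{distinct} parameters. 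It says nothing about non-conformal self-conjugacies.
\item Such self-conjugacies plausibly exist on $\partial\tcal$. At a parabolic parameter (for instance a root of a hyperbolic component), any Beltrami differential on the \'Ecalle cylinder pulls back to a non-zero invariant $\mu$ supported in $\inter K$. Yet $\alpha(t)$ stays constant: parabolic multipliers are topological invariants, and parameters with a parabolic cycle of given period and multiplier are isolated. Siegel parameters behave the same way.
\end{itemize}
So your intermediate conclusion ``$\mu|_K=0$, hence $h$ is hybrid'' fails in general. Take, for example, $\alpha=\alpha_0$ parabolic and $h$ such a self-conjugacy.

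\paragraph{The repair.} Bypass $\mu|_K$ entirely, as the paper does. If $\alpha\neq\alpha_0$, then $\alpha(0)\neq\alpha(1)$, so $t\mapsto\alpha(t)$ is non-constant and hence open. Every $T_{\alpha(t)}$ is quasiconformally conjugate to $T_{\alpha_0}$ via $\varphi_t$. Hence the class of $\alpha_0$ contains a neighbourhood of $\alpha_0$.

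This gives ``open or a singleton'' for every class at once. For $\alpha_0\in\partial\tcal$, an open class is impossible because quasiconformal conjugacy preserves connectedness of $J$.

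\paragraph{Your interior step.} The argument above makes your separate interior step unnecessary, which matters because that step is incomplete. On a hyperbolic component, Lemma~\ref{lem:zakeri} yields a motion that is equivariant only on $J\cup\Omega$, not on the attracting basins filling $\inter K$. Slodkowski's theorem gives no equivariance there.
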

\begin{proof}
Let $\alpha_i\in\tcal$, $i=0,1$, and $T_{\alpha_0}$ and $T_{\alpha_1}$ be quasiconformally conjugate 
on a neighborhood of the Julia set, by a quasiconformal map $\varphi$. By Corollary \ref{cor:globalconj} we may assume that $\varphi$ is a global conjugacy in all of $\chat$. Let $\mu$ be the Beltrami form induced by $\varphi$ on the dynamical plane of $T_{\alpha_0}$, and consider the holomorphic family of Beltrami forms $\{t\mu, t\in \D(0,||\mu||_\infty^{-1})\}$. Let $\varphi_t$ be the integrating maps of $t\mu$ fixing $0,1$ and $\infty$. Then the maps $\varphi_t$ depend holomorphically on $t$ and conjugate $T_{\alpha_0}$  to a family of tangent maps $T_{\alpha(t)}$ such that $t\mapsto \alpha(t)$ is holomorphic, $\alpha(0)=\alpha_0$ and $\alpha(1)=\alpha_1$. Since holomorphic maps are open or constant, the first statement is proven. 

Now observe that $J(T_\alpha)$ is connected if and only if $\alpha\in \TT$. Therefore, if $\alpha\in\partial \tcal$ and its quasiconformal conjugacy class is open, $J(T_	{\alpha'})$ is connected for $\alpha'$ in a neighborhood of $\alpha$, a contradiction. 

\end{proof}

{ We will end this section with a technical lemma, Lemma  \ref{lem:extendh}, which shows that a conjugacy between $T_\alpha$ and itself in any fundamental annulus, extends uniquely to a conjugacy up to the filled Julia set. It will be used in Section \ref{sec:ff} (Proposition \ref{prop:keytool}) when dealing with the parameter version of the Straightening Theorem.}

This result is  similar to a corresponding lemma for quadratic-like maps with connected Julia sets, and  However, our situation is more technical, due to the fact that if $T_\alpha:  U \to V$ is TL with connected Julia set, then pullbacks of the fundamental annulus $\overline{V} \setminus U$ do not remain topological annuli,  but annuli that are "pinched" at prepoles. See Figure \ref{connected case} (left).  {Nevertheless the closure of these sets are still connected and contractible to a circle.}

\begin{lem}[Pinched annuli]\label{lem:topo}
	Let $D, D' \subset \C$ be Jordan domains such that $D' \subset D$. Then  $X:=\overline{D} \setminus D'$ is connected, and $\pi_1(X) \simeq \Z$.
\end{lem}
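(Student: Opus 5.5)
The plan is to show that $X$ deformation retracts onto the Jordan curve $\partial D'$. Connectedness of $X$ and $\pi_1(X)\simeq\Z$ then follow at once. I make no assumption on how $\partial D$ and $\partial D'$ meet: they may touch at one point (the ``pinched'' case coming from prepoles), at many points, or along arcs.

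First I describe the pieces of $X$. Write $X=\partial D'\cup \overline{(D\setminus\overline{D'})}$. Let $\{W_j\}_{j\in J}$ be the connected components of the open set $D\setminus \overline{D'}$; there are at most countably many. The key topological claim concerns each $W_j$. I claim $W_j$ is a Jordan domain whose boundary is the union of two arcs $a_j\subset\partial D$ and $b_j\subset\partial D'$ with common endpoints. In the degenerate case where the two boundaries meet at most at one point, $W_j$ is the unique component and $b_j=\partial D'$; there I just remove a small crosscut. To prove the claim, first apply a Schoenflies homeomorphism of $\rs$ sending $D$ to $\D$. Then $\partial D'$ becomes a Jordan curve in $\overline{\D}$ meeting $S^1$ in a closed set $E$. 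The components of $S^1\setminus E$ and of $\partial D'\setminus E$ pair up into $\theta$-curve configurations. The $\theta$-curve theorem then identifies each $W_j$ as the complementary domain bounded by one arc of $\partial D'\setminus E$ and the corresponding arc of $S^1\setminus E$. Moreover $\overline{W_j}\cap\overline{W_k}\subset E$ for $j\neq k$.

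Next comes the retraction, piece by piece. For each $j$, Schoenflies gives a homeomorphism $\overline{W_j}\to[0,1]^2$ sending $b_j$ to the bottom side $[0,1]\times\{0\}$ and $a_j$ to the other three sides. The vertical straight-line homotopy, composed with a fixed retraction of the square onto its bottom side, gives a deformation retraction $r^j_t$ of $\overline{W_j}$ onto $b_j$. This retraction fixes $b_j$ pointwise, and in particular fixes the endpoints of $a_j$ throughout. I then define $r_t$ on $X$ to be $r^j_t$ on each $\overline{W_j}$ and the identity on $\partial D'$. It is well defined because the pieces meet only along $\partial D'$, where every map is the identity. The result is a deformation retraction of $X$ onto $\partial D'\cong S^1$, so $X$ is connected and $\pi_1(X)\simeq\Z$.

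The main obstacle is continuity of $r_t$ when there are infinitely many pieces $W_j$, which happens when $\partial D\cap\partial D'$ is an infinite set. Near an accumulation point of $E$, infinitely many small pieces pile up. Since the $W_j$ are disjoint subsets of a bounded set, $\diam W_j\to 0$. I will choose each $r^j_t$ so that its tracks stay inside $\overline{W_j}$, which is automatic from the construction. Then $|r_t(z)-z|\le \diam \overline{W_j}$ for every $z\in\overline{W_j}$, and a standard $\epsilon$-argument gives joint continuity in $(t,z)$ at points of $\partial D'$. If this causes trouble, I would fall back on Alexander duality as a cross-check. The complement $\rs\setminus X=D'\sqcup(\rs\setminus\overline D)$ has exactly two components, each simply connected. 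Hence $X$ is connected and $\check H^1(X)\simeq\Z$, which is consistent with the retraction picture.
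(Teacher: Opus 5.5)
Your route differs from the paper's and can be made to work, but the step you single out as the main obstacle is justified incorrectly. It is false in general that disjoint subsets of a bounded set have diameters tending to $0$. The strips $(0,1)\times(2^{-j},2^{-j}+4^{-j})$ are pairwise disjoint subsets of the unit square, yet all have diameter about $1$; disjointness only makes the areas summable.

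The correct reason comes from your own decomposition. The arcs $a_j\subset\partial D$ have pairwise disjoint interiors, and so do the arcs $b_j\subset\partial D'$. Pulled back by homeomorphisms $S^1\to\partial D$ and $S^1\to\partial D'$, they become arcs of $S^1$ with disjoint interiors, hence with summable lengths. Uniform continuity of the parametrizations then gives $\diam a_j,\diam b_j\to0$. Since $W_j$ lies in the convex hull of the Jordan curve $a_j\cup b_j$, we get $\diam W_j\le\diam a_j+\diam b_j\to0$. With this, your $\epsilon$-argument for continuity of $r_t$ goes through.

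The pairing step is also only gestured at. You need that every component $W$ of $D\setminus\overline{D'}$ is the side, not containing $D'$, of some crosscut $b_j$. This holds because $\partial W$ must meet $\partial D'\setminus E$: otherwise $\partial W\subset\partial D$, so $W$ is closed in $D$ and hence equals $D$. Near a point of such an open arc, the exterior of $\partial D'$ lies on that side of the crosscut.

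The paper sidesteps all of this. It takes a Riemann map $\phi:\D\to D'$, extended homeomorphically to $\overline{\D}$. It notes that $X\subset A_{1/2}:=\overline{D}\setminus\phi(\D(0,\tfrac12))$, which is a genuine closed annulus since $\phi(\D(0,\tfrac12))\Subset D$. It then deformation retracts $A_{1/2}$ onto $X$ by fixing $X$ and pushing $\phi(\{\tfrac12\le|w|\le1\})$ radially, in $\phi$-coordinates, onto $\partial D'$. This needs only Carath\'eodory's extension and the annulus theorem. It is completely insensitive to how $\partial D$ and $\partial D'$ meet: there is no $\theta$-curve analysis, no degenerate cases for $|E|\le1$, and no countable gluing or diameter control.

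Your argument buys a finer picture: an explicit deformation retraction of $X$ onto $\partial D'$, exhibiting $X$ as $\partial D'$ with countably many closed disks attached along arcs. The price is considerably more point-set topology.
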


Observe that we do not assume that $D' \Subset D$: there can be many intersections between the boundaries of $D$ and $D'$.

\begin{proof}
	We will prove that $X$ is a deformation retract of a closed annulus, which implies both its connectedness and the fact that $\pi_1(X) \simeq \Z$.
	Let $\phi: \D \to D'$ be a Riemann map. 
	Since by assumption $D'$ is a Jordan domain, $\phi$ extends to a homeomorphism from $\overline{\D}$ to $\overline{D'}$. 
	For $0<t \leq 1$, let $A_t:=\overline{D} \setminus \phi( \D(0,t) )$. For any $0<t<1$, $D_t:=\phi( \D(0,t) )$ is a Jordan domain compactly contained in $D$, therefore $A_t$ is a closed annulus; and of course $X \subset A_t$. 

		For $0\leq t \leq 1$, we define $F_t: A_{1/2} \to X$ by 
	$$F_t(x) = \left\{
	\begin{array}{ll}
		x & \text{ if $x \in X$} \\
		\phi\left((1-t)\phi^{-1}(x)+t\frac{\phi^{-1}(x)}{|\phi^{-1}(x)|}  \right)  & \text{otherwise}.
	\end{array}	
	\right. $$
	
	Then:
	\begin{enumerate}
		\item $F_t(x)=x$ for all $x \in X$, by definition
		\item For all $0 \leq t \leq 1$, $F_t(A_{1/2}) \subset A_{1/2}$
		\item For all $x \in A_{1/2}$, $F_0(x) =x$
		\item For all $x \in A_{1/2}$, $F_1(x) \in \partial D' \subset X$.
	\end{enumerate}
	Therefore $X$ is indeed a deformation retract of $A_{1/2}$.
\end{proof}

\begin{lem}[Extension of conjugacies on fundamental annuli]\label{lem:extendh}
	Let $\alpha \in \mathcal{T}$ be such that $T_\alpha$ has no virtual cycle. Let $(U_1,  V_1)$ and $(U_2, V_2)$ be two pairs of Jordan domains such that
	$(T_\alpha, U_i, V_i, \infty, \frac{1}{\alpha})$ are two TL restrictions of $T_\alpha$.
	Let $h: \overline{V_1} \setminus U_1 \to \overline{V_2} \setminus U_2$ be a homeomorphism such that
	for all $z \in \partial U_1 \setminus \{\infty\}$, 
	$$h \circ T_\alpha(z) = T_\alpha \circ h(z).$$
	Then $h$ extends uniquely to a  homeomorphism $\tilde h : \overline{V_1} \setminus K(T_\alpha) \to \overline{V_2} \setminus K(T_\alpha)$ 
	such that for all $z \in U_1 \setminus K(T_\alpha)$, 
	$$\tilde h \circ T_\alpha(z) = T_\alpha \circ \tilde h(z).$$
\end{lem}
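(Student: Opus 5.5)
We extend $h$ by successive lifting through $T_\alpha$ on the nested preimages of the fundamental annulus, exactly as for quadratic-like maps. The covering structure and Lemma \ref{lem:topo} replace the usual annulus topology.

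\textbf{Setup.} For $i=1,2$ and $n\ge 0$ set $U_i^n:=T_\alpha^{-n}(U_i)$, with $U_i^0=U_i$ and $U_i^{-1}:=V_i$, and put $A_i^n:=\overline{U_i^{n-1}}\setminus U_i^{n}$. Since $\alpha\in\tcal$ and $T_\alpha$ has no virtual cycle, the orbit of $\frac1\alpha$ never hits $\infty$. By Proposition \ref{lem:K} and the argument of Proposition \ref{prop:Kconnected}, every $U_i^n$ is then a simply connected domain containing $\frac1\alpha$. Moreover $T_\alpha:U_i^{n+1}\to U_i^n\setminus\{\frac1\alpha\}$ is a universal cover, so $T_\alpha:A_i^{n+1}\setminus\{\infty\}\to A_i^n$ is a covering. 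By Lemma \ref{lem:topo} (after checking $U_i^n$ is a Jordan domain, or working with the closed pinched set directly) each $A_i^n$ is connected with $\pi_1\simeq\Z$, generated by a loop around $\frac1\alpha$. Finally,
\[
\overline{V_i}\setminus K(T_\alpha)=\bigcup_{n\ge1}A_i^n\setminus K(T_\alpha),
\]
because $\bigcap_n \overline{U_i^n}=K(T_\alpha)$ by definition.

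\textbf{Inductive lift.} Put $h_1:=h$ on $A_1^1$. Suppose $h_n:A_1^n\to A_2^n$ is a homeomorphism with $h_n=h_{n-1}$ on $\partial U_1^{n-1}$ and $h_{n}\circ T_\alpha=T_\alpha\circ h_{n+1}$ where both sides are defined. Consider the map
\[
h_n\circ T_\alpha: A_1^{n+1}\setminus\{\infty\}\to A_2^n .
\]
Both this map and $T_\alpha:A_2^{n+1}\setminus\{\infty\}\to A_2^n$ are coverings from simply connected (pinched) sets. Their domains are the full preimages of pinched annuli, which are strips accumulating only at $\infty$ and the prepoles; this is why simple connectivity holds. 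We lift $h_n\circ T_\alpha$ through $T_\alpha$ to get $h_{n+1}$. To fix the lift we take a basepoint $z_0\in\partial U_1^n\setminus\{\infty\}$ and require $h_{n+1}(z_0)=h_n(z_0)$. This is consistent because on $\partial U_1^n$ we already have $T_\alpha\circ h_n=h_n\circ T_\alpha$: for $n=1$ this is the hypothesis, and for larger $n$ it comes from the previous step. By uniqueness of lifts, $h_{n+1}=h_n$ on the whole common boundary $\partial U_1^n\setminus\{\infty\}$, which is connected since it is a single curve through $\infty$. Lifting $h_n^{-1}$ symmetrically shows $h_{n+1}$ is a homeomorphism. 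We set $h_{n+1}(\infty)=\infty$, and continuity at $\infty$ and at prepoles follows as in the proof of Proposition \ref{lem:K}.

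\textbf{Gluing and uniqueness.} Define $\tilde h$ to be $h_n$ on $A_1^n\setminus K(T_\alpha)$. It is well defined, since consecutive $h_n$ agree on the common boundaries. It is a local homeomorphism and a bijection onto $\overline{V_2}\setminus K(T_\alpha)$, hence a homeomorphism, and it satisfies the conjugacy relation by construction. Uniqueness follows by induction. Any extension must satisfy $T_\alpha\circ\tilde h=h_n\circ T_\alpha$ on $A_1^{n+1}$ and agree with $h_n$ on $\partial U_1^n$. A lift of a map from a connected set that is pinned at one point is unique, so the extension must coincide with $h_{n+1}$.

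\textbf{Main obstacle.} The delicate step is the topology near $\infty$ and the prepoles. The sets $A_i^n$ are pinched, and $\partial U_i^n$ passes through $\infty$, where $T_\alpha$ is undefined. We must justify two things:
\begin{enumerate}
\item $T_\alpha$ restricted to $A^{n+1}\setminus\{\infty\}$ is a genuine covering onto $A^n$ (boundary included); here the hypothesis of no virtual cycle is essential, to keep $\frac1\alpha$ out of $\partial U^n$;
\item the lift extends continuously at the pinch points.
\end{enumerate}
I would handle the first by thickening slightly: use $\partial V_i$ being analytic to extend $h$ to a neighborhood, and then lift on open sets. The second I would handle with the local exponential model of $T_\alpha$ near each pole, as in the proof of Lemma \ref{lem:labelling}.
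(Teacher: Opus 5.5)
Your proposal is correct and follows essentially the same route as the paper. Both arguments lift $h$ inductively through $T_\alpha$ on the pinched annuli $\overline{T_\alpha^{-n}(V_i)}\setminus T_\alpha^{-(n+1)}(V_i)$, using Lemma \ref{lem:topo} and uniqueness of lifts through the universal cover $T_\alpha$ pinned at a basepoint on the common boundary curve, and then glue the pieces.

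Two cosmetic remarks. First, your indices are shifted by one: in your own notation $h$ lives on $A_1^0=\overline{V_1}\setminus U_1$, not on $A_1^1$. Second, the thickening in your last paragraph is unnecessary. Restricting the covering $T_\alpha:\overline{U_i}\setminus\{\infty\}\to\overline{V_i}\setminus\{\frac{1}{\alpha}\}$ to full preimages already gives the coverings you need, prepoles lie in the domain of each lift, and continuity at $\infty$ is automatic by one-point compactification.
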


\begin{proof}

	We start by introducing some notations: for $n \in \N$ and $i \in \{1,2\}$, let 
	$C_n^i:=\overline{T_\alpha^{-n}(\partial V_i)}$, $D_n^i:=T_\alpha^{-n}(V_i)$ and $X_n^i:=\overline{D_n^i} \setminus D_{n+1}^i$.

	We claim that the sets $D_{n,i}$ are Jordan domains. We will argue inductively on $n \in \N$. By definition of a TL map, $D_0^i$ is a Jordan domain.
	Let now $n \in \N$ be such that $D_n^i$ is a Jordan domain.
	Then, since $\frac{1}{\alpha} \in D_{n,i}$, the set $D_{n+1}^i:=T_\alpha^{-1}(D_n^i)$ is a logarithmic tract above $D_n^i$, 
	hence a Jordan domain.

	We can now apply Lemma \ref{lem:topo}: the sets $X_n^i$ are connected, and $\pi_1(X_n^i) \simeq \Z$.
	Moreover, the sets $C_n^i$ are the boundary of $D_n^i$, so they are Jordan curves.

	Next, we construct a sequence of homeomorphisms $h_n: X_n^1 \to X_n^2$ such that $h_0=h$ and for all $n \geq 1$, 
	\begin{enumerate}
		\item for all $z \in X_n^1 \setminus \{\infty\}$, $h_n \circ T_{\alpha}(z)=T_\alpha \circ h_{n-1}(z)$
		\item $h_n = h_{n-1}$ on $X_n^1 \cap X_{n-1}^1$. 
	\end{enumerate}
	
	We first treat the case $n=1$. The sets $X_0^i$ are closed annuli which do not contain any singular value, so $T_\alpha: T_\alpha^{-1}( X_0^i) \to X_0^i$ is a covering map. It has infinite degree, so by the classification of coverings of annuli, it must be a universal cover. Then both maps $T_\alpha: T_\alpha^{-1}(X_0^2) \to X_0^2$ 
	and $h_0 \circ T_\alpha : T_\alpha^{-1}(X_0^1) \to X_0^2$ are universal covers. Let $z_0 \in C_1^1 \setminus \{\infty\}$: then $z_0 \in X_0^1 \cap X_1^1$.  By the unicity of universal covers and the unicity of the equivalence between universal covers, there is a unique homeomorphism $h_1: T_\alpha^{-1}(X_0^1) \to T_\alpha^{-1}(X_0^2)$ such that 
	$h_1(z_0)=h_0(z_0)$ and $h_1 \circ T_{\alpha}(z)=T_\alpha \circ h_{0}(z)$ for all $z \in  T_\alpha^{-1}(X_0^1)$.	
	We can extend continuously $h_1$ to $X_1^1=T_\alpha^{-1}(X_0^1) \cup \{\infty\}$ be setting $h_1(\infty):=\infty$.
	This proves (1).

	It remains to prove that $h_1$ coincides with $h_0$ on $X_1^1 \cap X_0^1=C_1^1$.
	This follows from the same unicity argument: both $T_\alpha \circ h_0: C_1^1 \setminus \{\infty\} \to C_0^1$ and 
	$h_1 \circ T_\alpha: C_1^1  \setminus \{\infty\} \to C_0^1$ are universal covers, and they coincide at $z_0 \in C_1^1$ by construction; so they coincide on all of  $C_1^1 \setminus \{\infty\}$. And since $h_1(\infty)=h_0(\infty)=\infty$, $h_1$ and $h_0$ coincide on all of $X_1^1 \cap X_0^1$. 	
	This proves (2).

	\medskip

	We now treat the general case, which is exactly the same. Let $n \geq 1$ be such that $h_{n}$ is constructed.
	As we have proved, the sets $X_n^i$ are connected and $\pi_1(X_{n}^i) \simeq \Z$, and moreover $X_n^i$ does not contain any asymptotic value. Therefore, $T_\alpha: T_\alpha^{-1}( X_n^i) \to X_n^i$ is a covering map. It has infinite degree, so by the classification of coverings, it must be a universal cover. Then both maps $T_\alpha: T_\alpha^{-1}(X_n^2) \to X_n^2$ 
	and $h_n \circ T_\alpha : T_\alpha^{-1}(X_n^1) \to X_n^2$ are universal covers. Let $z_n \in C_{n+1}^1 \setminus \{\infty\}$: then $z_n \in X_n^1 \cap X_{n+1}^1$.  By the unicity of universal covers and the unicity of the equivalence between universal covers, there is a unique homeomorphism $h_{n+1}: T_\alpha^{-1}(X_n^1) \to T_\alpha^{-1}(X_n^2)$ such that 
	$h_{n+1}(z_n)=h_n(z_n)$ and $h_{n+1} \circ T_{\alpha}(z)=T_\alpha \circ h_{n}(z)$ for all $z \in  T_\alpha^{-1}(X_n^1)$.	
	We can extend continuously $h_{n+1}$ to $X_{n+1}^1=T_\alpha^{-1}(X_0^1) \cup \{\infty\}$ be setting $h_{n+1}(\infty):=\infty$.
	This proves (1).
	
	It remains to prove that $h_{n+1}$ coincides with $h_n$ on $X_{n+1}^1 \cap X_n^1=C_{n+1}^1$.
	This follows from the same unicity argument: both $T_\alpha \circ h_n: C_{n+1}^1 \setminus \{\infty\} \to C_n^1$ and 
	$h_{n+1} \circ T_\alpha: C_{n+1}^1  \setminus \{\infty\} \to C_n^1$ are universal covers, and they coincide at $z_n \in C_{n+1}^1$ by construction; so they coincide on all of  $C_{n+1}^1 \setminus \{\infty\}$. And since $h_{n+1}(\infty)=h_n(\infty)=\infty$, $h_{n+1}$ and $h_n$ coincide on all of $X_{n+1}^1 \cap X_n^1$. 	
	This proves (2).
	
	\medskip

	Finally, we set $\tilde h(z):=h_n(z)$ if $z \in X_n^1$. By condition (2), the map $\tilde h$ is well-defined and continuous on $\bigcup_{n \geq 0} X_n^1 = V_1 \setminus K(T_\alpha)$. By construction, it is a homeomorphism 
	$\tilde h : V_1 \setminus K(T_\alpha) \to V_2 \setminus K(T_\alpha)$, and it satisfies 
		$$\tilde h \circ T_\alpha(z) = T_\alpha \circ \tilde h(z)$$
		for all $z \in U_1 \setminus K(T_\alpha)$.
\end{proof}

\section{The straightening theorem}\label{sec:straight}

The goal in this section is to prove Theorem A: The Straightening Theorem for tangent-like maps. We end the section with some considerations about the uniqueness of the straightening maps.

\begin{theo}[Straightening theorem]\label{th:straight}
	Let $(f,U,V,u,v)$ be a tangent-like map. Then 
 there exists $\alpha \in \C \setminus \{1\}$ such that $f$ is hybrid equivalent to $T_\alpha$. Moreover, if $K(f)$ is connected, then $\alpha$ is unique, and  
 $\alpha=0$ if and only if $u=v$.
\end{theo}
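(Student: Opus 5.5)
We follow the Douady–Hubbard scheme: build an external quasiconformal model of the fundamental annulus, pull the almost complex structure back, and integrate. Up to a Möbius change of coordinates, assume $u=\infty$.

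\textbf{Step 1: a quasiregular extension of $f$.} Fix $\alpha_0\in\D(0,\frac12)$ and a tangent-like restriction $T_{\alpha_0}:U_0\to V_0$ as in Lemma \ref{lem:tlmodel}, with $V_0=\rs\setminus\overline{\D}(0,2)$, $U_0=T_{\alpha_0}^{-1}(V_0)$. Choose a quasiconformal map $\Phi:\overline V\to\overline{V_0}$ such that:
\begin{enumerate}
\item on $\partial V$, $\Phi$ is a quasisymmetric map (Lemma \ref{lem:quasidisk}, since $\partial V$ is real-analytic);
\item on $\partial U\setminus\{u\}$, $\Phi$ is defined as a lift, $T_{\alpha_0}\circ\Phi=\Phi\circ f$.
\end{enumerate}
The lift in (2) exists because $f:\partial U\setminus\{u\}\to\partial V$ and $T_{\alpha_0}:\partial U_0\setminus\{\infty\}\to\partial V_0$ are both universal covers of a circle by a line. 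The lift is unique once the image of one point is fixed, and it extends by $u\mapsto\infty$. We must check that this boundary lift is quasisymmetric on the quasicircle $\partial U$, including near $u$. Away from $u$ this follows from the local inverse branches being conformal. Near $u$ we would compare both covers with $\exp$ via Riemann maps of the tracts, so that the lift becomes a periodic extension of a quasisymmetric circle map, which is quasisymmetric on $\R$. Then interpolate with Lemma \ref{lem:interpolation} on $\overline V\setminus U$.

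Next define $F=f$ on $\overline U$ and $F=\Phi^{-1}\circ T_{\alpha_0}\circ\Phi$ on $\rs\setminus U$, after extending $\Phi$ to $\rs\setminus V$ by a conformal conjugacy on the outside. More precisely, glue the model's exterior dynamics $\rs\setminus V_0\to$ basin of $0$, which maps into itself compactly. The resulting $F$ is quasiregular, holomorphic off the annulus $\overline V\setminus U$, and continuous across $\partial U$ by the lifting relation.

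\textbf{Step 2: invariant structure and straightening.} Define $\sigma$ as the pullback of the standard structure by $\Phi$ on $\overline V\setminus U$. Propagate $\sigma$ by $F^{-n}$ to $\bigcup_n F^{-n}(\overline V\setminus U)$, set $\sigma=\sigma_0$ on $K(f)$, and take the $T_{\alpha_0}$-transported structure outside $V$. Every orbit passes through the annulus at most once, so the dilatation stays bounded by that of $\Phi$. By the measurable Riemann mapping theorem we get $\psi$ with $G=\psi\circ F\circ\psi^{-1}$ meromorphic, normalized so that $\psi(u)=\infty$ and the attracting fixed point goes to $0$.

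$G$ has exactly two singular values, the images of $v$ and of the model asymptotic value $1$ (which lies in the attracting basin). It has no critical points, and its fixed point has multiplier $\frac18$, because that point lies where $\psi\circ\Phi^{-1}$ conjugates conformally to $T_{\alpha_0}$. By Lemma \ref{lem:rigidity}, $G$ is affinely conjugate to some $T_\alpha$ with $\psi(v)\mapsto\frac1\alpha$. Since $\dbar\psi=0$ on $K(f)$, the restriction of $\psi$ to $V$ is the required hybrid equivalence. If $u=v$ the asymptotic value sits at $\infty$, so $\alpha=0$. Conversely, $\alpha=0$ forces $\frac1\alpha=\infty=\psi(u)$, hence $v=u$.

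\textbf{Step 3: uniqueness.} If $K(f)$ is connected, then $v\in K(f)$ (Proposition \ref{prop:Kconnected}), so $\alpha\in\tcal$ by Proposition \ref{prop:choicesUV}. Two straightenings $T_{\alpha_1},T_{\alpha_2}$ are then hybrid equivalent by composing the hybrid conjugacies, and Theorem \ref{thm:uniqueness of straightening} gives $\alpha_1=\alpha_2$. One detail: if $\alpha\notin\D$ we first replace $\alpha$ by $1/\alpha$ using Lemma \ref{lem:rigidity}, while keeping track that $v$ maps to the free value.

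The main obstacle is Step 1 near the essential singularity: the boundary lift must be quasisymmetric at $u$, and $F$ must be continuous and quasiregular there. I would handle this through logarithmic coordinates of both tracts.
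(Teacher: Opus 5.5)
Your proposal is correct. It has the same architecture as the paper's proof: exterior gluing along the fundamental annulus, an $F$-invariant structure, integration, identification via Lemma \ref{lem:rigidity}, and uniqueness via Theorem \ref{thm:uniqueness of straightening}. Using $T_{\alpha_0}$ instead of $T_0$ as the model is immaterial. The real difference is how you handle the step you call the main obstacle.

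\textbf{The paper's route.} It first conjugates conformally so that $V$ is a round disk with $u=\infty$. It then takes a global M\"obius map $t_1$ with $t_1(V)=V_0$ and $t_1(v)=v_0$. Because $t_1$ matches the asymptotic values, the whole map $t_1\colon\overline V\setminus\{v\}\to\overline{V_0}\setminus\{v_0\}$ lifts through the two universal covers, not just its trace on $\partial V$. The lift is therefore conformal from $U$ onto $U_0$, so its boundary values are quasisymmetric by Lemma \ref{lem:quasidisk}, including at $u$. The same M\"obius map also serves as the conformal exterior piece.

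\textbf{Your route.} You take $\Phi|_{\partial V}$ to be the trace of an exterior conformal map $\rs\setminus\overline V\to\rs\setminus\overline{V_0}$. You should say this explicitly: an arbitrary quasisymmetric map on $\partial V$ would not extend conformally outside $V$. You then lift only the circle map, so the lift on $\partial U$ need not be the trace of a conformal map of $U$, and you must check quasisymmetry at $u$ by hand.

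Your logarithmic-coordinate argument does this correctly. Take Riemann maps $\rho,\rho_0$ of $V,V_0$ onto $\D$ sending $v$ and $1/\alpha_0$ to $0$; they are analytic up to the boundary. Write $\rho\circ f=\exp\circ\tau$ and $\rho_0\circ T_{\alpha_0}=\exp\circ\tau_0$, where $\tau,\tau_0$ are Riemann maps of the quasidisks $U,U_0$ onto a half-plane. Then the boundary lift equals $\tau_0^{-1}\circ\tilde c\circ\tau$. Here $\tilde c$ is the lift to a line of the quasisymmetric circle map $\rho_0\circ\Phi\circ\rho^{-1}$. Each factor is the trace of a global quasiconformal map, so the lift is quasisymmetric.

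\textbf{Comparison.} Your route proves a slightly more general fact: any quasisymmetric identification of $\partial V$ with $\partial V_0$ lifts to one of $\partial U$ with $\partial U_0$. The paper's normalization makes that lemma unnecessary.

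The ``detail'' at the end of your Step 3 is vacuous. Once $v\in K(f)$, you already have $\alpha\in\tcal\subset\D(0,\frac12)$ by Lemma \ref{lem:tbdd}.
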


 In the following proof, we  take as model map the map $T_0(z)=1-e^{-\frac{z}{8}}$. We could in fact choose here any $T_\alpha$, for $\alpha \in \tcal$. This map will play the role that is played by the external map $z^d$ in the classical straightening theorem by Douady and Hubbard;  for tangent-like maps, the role of the superattracting basin of infinity will be replaced by the role of the the attracting basin of $0$.

\begin{proof}
	Let $(f,U,V,u,v)$ be a tangent-like map.   Up to conjugating with a conformal map (which is  a hybrid conjugacy)  we can assume that $V$ is a Euclidean disk containing infinity and  that $u=\infty$.	
Indeed, since $V$ is a simply connected domain with analytic boundary, we can map it conformally to a Euclidean disk containing infinity in such a way that $u$ is mapped to infinity; the aforementioned map extends analytically to the boundary and the image of $U$ is a quasicircle.

	 Let $T_0(z)=1-e^{-\frac{z}{8}}$. By Lemma~\ref{lem:tlmodel}, 
	$(T_0,U_0,V_0)$ is a tangent-like map for  $V_0=\rs \setminus \overline{\D(0,2)}$, $U_0=T_0^{-1}(V_0)$,  $u_0=v_0=\infty$. 
	Let $t_1$ be a Möbius map which maps $V$ to $V_0$ sending $v$ to $v_0=\infty$. 
	Note that $t_1$ is globally defined and maps $\rs\setminus \overline{V}$ to $\rs \setminus \overline{V_0}$.

 	 Since $f:  \overline{U} \setminus \{\infty\} \to \overline{V} \setminus \{v\}$ and 
	$T_0 :  \overline{U_0} \setminus \{\infty\} \to \overline{V_0} \setminus \{v_0\}$ are universal covers, the homeomorphism  $t_1 :   \overline{V} \setminus \{v\} \to  \overline{V_0} \setminus \{v_0\}$  lifts as a  homeomorphism $t_2:  \overline{U} \setminus \{\infty\} \to  \overline{U_0} \setminus \{\infty\}$ 
	such that 
	$$
	T_0 \circ t_2 = t_1 \circ f.
	$$
	The lift  $t_2:\partial U \setminus \{\infty\} \to  \partial {U_0} \setminus \{\infty\}$ is quasisimmetric since it is the boundary extension of a conformal map (see Lemma \ref{lem:quasidisk}).

	By the interpolation lemma \ref{lem:interpolation}, there exists a quasiconformal extension $t:\overline{V} \setminus U$, interpolating between   ${t_1}_{|\partial V}$ and ${t_2}_{|\partial U}$. Extending as $t=t_1$ on the complement of $V$,	we now have a quasiconformal map $t:\rs\setminus U\ra\rs\setminus U_0$, which is  a Moebius map on $\rs\setminus V$, and such that $T_0\circ t=t\circ f$ on $\partial U$.

	We shall use $t$ to glue $T_0$ on the complement of $U$. To that end, define the quasiregular map $F:\rs\ra\rs$ as 
	\begin{align*}
		F(z) = \left\{
		\begin{array}{ll}
			f(z) &\text{ if } z \in U \\
			t^{-1} \circ T_0 \circ t(z) & \text{ if } z \notin U,
		\end{array}
		\right.
	\end{align*}
Since  $F$ is conformally conjugate to $T_0$  via $t$ on  $\rs\setminus \ov V$,    $z^*=t^{-1}(0)$ is an attracting fixed point for $F$ of multiplier $1/8$.

The rest of the proof is the same as in the case of the classical straightening theorem. 
Let $\mu_0\equiv 0$ denote the trivial Beltrami form and $A:=V\setminus \overline{U}$, the only part of the plane where $F$ is not holomorphic but quasiregular. We define an $F$-invariant Beltrami form as follows. Let $\mu:=t^*\mu_0$ on $\rs \setminus U$ (hence $\mu=\mu_0$ on $\rs \setminus V$).  Extend $\mu$ to $U\setminus K(f)$ by pull-backs under the map $F$ i.e., abusing notation, $\mu=(F^n)^*\mu$ on $F^{-n} (A)$ and, finally, $\mu=0$ on $K(f)$. Then $\mu$ has bounded dilatation and is $F-$invariant.  

Let $\varphi:\rs\to\rs$ be a quasiconformal map integrating $\mu$ (i.e. satisfying $\varphi^*\mu_0=\mu$) provided by the Measurable Riemann Mapping Theorem, normalized so that $\varphi(\infty)=\infty$, $\varphi(z^*)=0$ and $\varphi((t^{-1}(1))=1$.  Then $G:=\varphi\circ F\circ \varphi^{-1}$ is meromorphic transcendental, and quasiconformally conjugate to $F$ under $\varphi$ which, since $\mu=0$ on $K(f)$,  is actually a hybrid equivalence. Moreover, $G$ has exactly two singular values ($1$ and $\varphi(v)$),  both of which are omitted, and one fixed point 0 with multiplier $1/8$, since $\varphi$ is conformal outside $V$. By Lemma \ref{lem:rigidity}, $G$ is affine conjugate to $T_\alpha$ for some  $\alpha$.  In fact, by our choice of normalization, $G=T_\alpha$ with $\alpha=1/\varphi(v)$.
	
	Now suppose that $K(f)$ is connected and that there are $\alpha_1$ and $\alpha_2$ such that $f$ is hybrid equivalent to both $T_{\alpha_1}$ and $T_{\alpha_2}$. Then both maps are hybrid equivalent to each other and $\alpha_i\in\tcal$, $i=1,2$. Hence by Theorem \ref{thm:uniqueness of straightening}, $\alpha_1=\alpha_2$.
	
	The last claim follows from the fact that  $u=v$ if and only if the straightened map $T_\alpha$ has $\infty$ as an asymptotic value. Since the asymptotic values of $T_\alpha$ are $1$ and $\frac{1}{\alpha}$, this is equivalent to $\alpha=0$.
	\end{proof}

We record here the following fact, which will play an important role later on in the proof of Proposition \ref{prop:chiinj},, when dealing with the parameter version of the Straightening Theorem. It is more a by-product of the proof of Theorem \ref{th:straight} than a consequence of its statement.

\begin{coro}[Towards B\"ottcher coordinates]\label{cor:bottcherlike}
	With the same notations as in the proof of Theorem \ref{th:straight}: the map $\Psi_\alpha:=t \circ \phi^{-1}$ is holomorphic on $\rs \setminus \overline{\phi(U)}$ and conjugates $T_\alpha$ to $T_0$.
	Moreover, $\Psi_\alpha$ depends only on $\alpha$, in the sense that if $(\tilde f, \tilde U, \tilde V, \tilde u, \tilde v)$ is another tangent-like map hybrid equivalent to $T_\alpha$ and $\tilde \Psi_\alpha:=\tilde t \circ \tilde \phi^{-1}$, then 
	$\tilde \phi (\tilde U) = \phi(U)$ and $\tilde \Psi_\alpha = \Psi_\alpha$ on $\rs \setminus \overline{\phi(U)}$.
\end{coro}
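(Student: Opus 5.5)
The plan is to exploit the rigidity of the construction in the proof of Theorem~\ref{th:straight}. The first task is holomorphy and the conjugacy relation. On $\rs\setminus\overline{V}$ we have $F=t^{-1}\circ T_0\circ t$, while on all of $\rs$ we have $T_\alpha=\phi\circ F\circ\phi^{-1}$. Hence $\Psi_\alpha\circ T_\alpha=T_0\circ\Psi_\alpha$ holds wherever $t$ is used on both sides, and by construction of $F$ this is the whole of $\rs\setminus\overline U$.

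For holomorphy, note that the Beltrami form $\mu$ equals $t^*\mu_0$ on $\rs\setminus U$ and $\phi^*\mu_0=\mu$. It follows that $t\circ\phi^{-1}$ pulls back $\mu_0$ to $\mu_0$ on $\phi(\rs\setminus \overline U)$. By Weyl's lemma, $\Psi_\alpha$ is therefore holomorphic (indeed conformal) on $\rs\setminus\overline{\phi(U)}$, with image $\rs\setminus\overline{U_0}$.

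The substantial part is independence from the tangent-like map. Suppose $\tilde f$ is hybrid equivalent to the same $T_\alpha$. Then $\Psi_\alpha$ and $\tilde\Psi_\alpha$ are both conformal maps from a neighbourhood of $0$ in the basin $\Omega_\alpha$ into the basin of $0$ of $T_0$. They conjugate $T_\alpha$ to $T_0$, send $0\mapsto0$ and $1\mapsto 1$ (this follows from the normalizations $\phi(z^*)=0$, $\phi(t^{-1}(1))=1$, $t(z^*)=0$, and $t$ being conformal near $z^*$), and they send $\infty\mapsto\infty$.

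The plan is to show they agree on a neighbourhood of $0$ via linearizing coordinates. Near the attracting fixed point, any conformal conjugacy between $T_\alpha$ and $T_0$ fixing $0$ takes the form $\ell_0^{-1}\circ (c\,\cdot)\circ \ell_\alpha$, where $\ell_\alpha,\ell_0$ are the Koenigs coordinates, and $c$ is pinned down by the requirement $1\mapsto1$. This assumes $\ell_\alpha(1)\neq0$, which holds because $1$ is not a preimage of $0$ inside the domain considered, so the map $1\mapsto 1$ determines $c$. Consequently $\Psi_\alpha=\tilde\Psi_\alpha$ near $0$. Both maps are then continued by the functional equation $\Psi=T_0^{-1}\circ\Psi\circ T_\alpha$ along pullbacks, using the same branches, so by the identity principle they coincide on the connected domain $\rs\setminus\overline{\phi(U)}\cap\rs\setminus\overline{\tilde\phi(\tilde U)}$.

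Finally, to obtain $\tilde\phi(\tilde U)=\phi(U)$, I would characterise $\rs\setminus\overline{\phi(U)}$ intrinsically as $\Psi_\alpha^{-1}(\rs\setminus\overline{U_0})$, where $U_0=T_0^{-1}(V_0)$ is fixed. Since the two $\Psi$'s agree on the common domain and each is a conformal isomorphism onto $\rs\setminus\overline{U_0}$, a connectedness and maximal-extension argument forces equality of the domains.

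I expect this last step to be the main obstacle. One must make sure the analytic continuation does not run into the singular value $1$ or into ambiguity of inverse branches. The plan for handling it is to work with the extended conjugacy given by the Böttcher-type map of Proposition~\ref{lem:confconjH}, namely $\Psi_\alpha=\phi_0\circ\phi_\alpha^{-1}$ on the whole basin when $K$ is connected. Comparing both $\Psi$'s with this canonical map then gives the equality directly.
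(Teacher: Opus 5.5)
Your holomorphy and conjugacy arguments are the paper's. Reading off $\Psi_\alpha(0)=\tilde\Psi_\alpha(0)=0$ and $\Psi_\alpha(1)=\tilde\Psi_\alpha(1)=1$ from the normalizations $\phi(z^*)=0$, $\phi(t^{-1}(1))=1$ is fine. The Koenigs argument giving $\Psi_\alpha=\tilde\Psi_\alpha$ near $0$ is also correct.

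The gap is in passing from this local statement to the two global conclusions. You say the identity principle gives agreement on $(\rs\setminus\overline{\phi(U)})\cap(\rs\setminus\overline{\tilde\phi(\tilde U)})$ because this set is connected. Nothing you know a priori guarantees that connectedness, since the complement of a union of two closed Jordan domains can be disconnected. The ``connectedness and maximal-extension argument'' meant to force $\tilde\phi(\tilde U)=\phi(U)$ is also never actually given. Your fallback does not repair this. Proposition \ref{lem:confconjH} needs $K(T_\alpha)$ connected, but the corollary is stated for every $\alpha$. Moreover, it is used in Proposition \ref{prop:chiinj} exactly for parameters with $v_\la\in V_\la\setminus\overline{U_\la}$, i.e.\ with $\chi(\la)\notin\tcal$.

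The missing idea is to compare the two maps through one globally defined map, not on the intersection of their domains. Both $\Psi_\alpha$ and $\tilde\Psi_\alpha$ are conformal isomorphisms onto the same set $\rs\setminus\overline{U_0}$. So $G:=\tilde\Psi_\alpha^{-1}\circ\Psi_\alpha$ is a biholomorphism from $\rs\setminus\overline{\phi(U)}$ onto $\rs\setminus\overline{\tilde\phi(\tilde U)}$, and it commutes with $T_\alpha$.

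Since $G$ fixes $1$, it fixes every $T_\alpha^n(1)$. As you noted, $1$ is not a preimage of $0$, so these points accumulate at $0$ without reaching it, inside the connected domain $\rs\setminus\overline{\phi(U)}$. The identity principle then gives $G=\id$ on all of $\rs\setminus\overline{\phi(U)}$; your Koenigs step gives the same conclusion but is not needed. Surjectivity of $G$ yields $\rs\setminus\overline{\tilde\phi(\tilde U)}=\rs\setminus\overline{\phi(U)}$, and $G=\id$ is exactly $\tilde\Psi_\alpha=\Psi_\alpha$. This is the paper's proof, and it works for every $\alpha$.

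A minor point: after the normalization $u=\infty$ one has $\infty\in\partial U$. So $\infty$ is not in the domain of $\Psi_\alpha$, and the assertion $\Psi_\alpha(\infty)=\infty$ plays no role.
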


\begin{proof}
	Using the notation of the proof of the Straightening Theorem, define
	$\Psi_\alpha := \psi \circ \phi^{-1} : \rs \setminus \overline{\phi(U)} \to \rs \setminus  \overline{U_0}$. 
	By construction, the maps $\phi$ and $t$ have the same Beltrami coefficient on the annulus $\overline{V} \setminus U$; moreover, $\phi$ and $t$ are holomorphic on $\rs \setminus \overline{V}$. Therefore $\Psi$ is holomorphic on $\rs \setminus \overline{\phi(U)}$. 
	The fact that $\Psi_\alpha$ conjugates $T_\alpha$ to $T_0$ is also by construction.
	
	It remains to prove the uniqueness. Let $\Psi:=(\tilde \Psi_\alpha)^{-1} \circ \Psi_\alpha: \rs \setminus \overline{\phi(U)} \to \rs \setminus \overline{\tilde \phi(\tilde U)}$. Then $\Psi$ is biholomorphic, and 
	commutes with $T_\alpha$: for all $z \in  \rs \setminus \overline{\phi(U)}$, 
	$$\Psi \circ T_\alpha(z) = T_\alpha \circ \Psi(z).$$
	Since $T_\alpha: \rs \setminus \overline{\phi(U)} \to \left(\rs \setminus \overline{\phi(V)} \right) \setminus \{1\}$ is a universal cover, and similarly $T_\alpha: \rs \setminus \overline{\tilde \phi(\tilde U)} \to \left(\rs \setminus \overline{\tilde \phi(\tilde V)} \right)\setminus \{1\}$ is also a universal cover, it follows that $\Psi(1)=1$.
	Then, for all $n \in \N$, $\Psi \circ T_\alpha^n(1)=T_\alpha^n(1)$, and so by the identity principle, 
	$\Psi=\id$ and $\phi(U) = \tilde \phi(\tilde U)$.
	
\end{proof}

We conclude this section showing that, although the straightening map of a TL map is not unique, the set of possible restrictions to the Julia set is somehow "discrete".

\begin{prop}[Uniqueness of (nearby) hybrid conjugacies]\label{prop:unique}
Suppose $\psi_1$ is a hybrid equivalence between a TL map $(f, U, V, \infty, v)$ and $T_\alpha$ for some $\alpha\in \tcal$. Let $p$ denote the marked pole of $T_\alpha$ (see Lemma \ref{lem:labelling}) and let $p_1:=\psi_1^{-1}(p)$. Then, there exists $\epsilon>0$ such that for every domain $W \subset U$ such that $p_1 \in \partial W$, for every hybrid equivalence $\psi_2$ between $f$ and $T_\alpha$,   if $|\psi_1(z)-\psi_2(z)| <\epsilon$ on $W$,  then $\psi_1= \psi_2$ on $J(f)$. 
\end{prop}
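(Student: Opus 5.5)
The natural approach is to reduce the statement to Proposition \ref{prop:extend}, applied to the self-conjugacy $h:=\psi_2\circ\psi_1^{-1}$ of $T_\alpha$.

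First, fix the domains. Since $\psi_i: V\to \psi_i(V)$ conjugate $f$ on $U$ to $T_\alpha$ on $\psi_i(U)$, the images $\psi_i(U),\psi_i(V)$ give TL restrictions of $T_\alpha$, and $\psi_i(K(f))=K(T_\alpha)$ by complete invariance (Proposition \ref{lem:K}). Restricting $V$ slightly if necessary, I take a common TL pair $(U',V')$ for $T_\alpha$ with $V'\subset\psi_1(V)\cap\psi_2(V)$, for instance $V'=\psi_1(V'')$ with $V''\Subset V$ chosen so that $\psi_2\psi_1^{-1}(V')\subset \psi_2(V)$, and $U'=T_\alpha^{-1}(V')$. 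The map $h=\psi_2\circ\psi_1^{-1}$ is then a quasiconformal homeomorphism defined near $K(T_\alpha)$. It satisfies $h\circ T_\alpha=T_\alpha\circ h$ wherever both sides are defined, and it maps $K(T_\alpha)$ to itself and its complement to itself. A technical point is that Proposition \ref{prop:extend} asks for $h: V\setminus K\to V\setminus K$ with a common $V$. I would handle this either by noting that the proof of Proposition \ref{prop:extend} only uses $h$ on a neighbourhood of $K(T_\alpha)$ together with the commutation relation, working in the strip $W$ near $\R$, or by replacing $V'$ with a deeper pullback $T_\alpha^{-n}(V')$ so that all compositions are defined.

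Second, transfer the smallness hypothesis. Set $W':=\psi_1(W)$. This is a domain in $\psi_1(U)$, and $p=\psi_1(p_1)\in\partial W'$ because $\psi_1$ is a homeomorphism. For $w=\psi_1(z)\in W'$ we have $|h(w)-w|=|\psi_2(z)-\psi_1(z)|<\epsilon$. Taking $\epsilon$ to be the constant given by Proposition \ref{prop:extend} for $T_\alpha$ (shrinking $W'$ near $p$ if necessary, so that it lies in $U'$), that proposition shows $h$ extends continuously with $h=\id$ on $K(T_\alpha)$, and in particular on $J(T_\alpha)$. Since $\epsilon$ depends only on $T_\alpha$, through the distance from $p$ to the other poles, it is independent of $W$ and $\psi_2$, as the statement requires.

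Third, conclude. Since $h$ is defined as a homeomorphism on $K(T_\alpha)$ already, we get $\psi_2\circ\psi_1^{-1}=\id$ on $J(T_\alpha)=\psi_1(J(f))$, that is, $\psi_1=\psi_2$ on $J(f)$. The main obstacle is the first step: making the domains line up so that $h$ genuinely satisfies the hypotheses of Proposition \ref{prop:extend}, with $h$ defined on $V\setminus K$ and commuting on $U\setminus K$, together with the requirement $W'\subset U$. For $W'$, intersecting with a small neighbourhood of $p$ suffices, since $p$ lies on $\partial U'$ side-accessible from the basin.
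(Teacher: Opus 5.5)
Your proposal is correct and is the same argument as the paper's: the paper sets $\psi:=\psi_2\circ\psi_1^{-1}$, notes that it commutes with $T_\alpha$ on $\psi_1(U)$, and applies Proposition~\ref{prop:extend} to conclude that $\psi=\id$ on $J(T_\alpha)$, i.e.\ $\psi_1=\psi_2$ on $J(f)$; it does so implicitly with $W'=\psi_1(W)$, which is exactly your transfer of the $\epsilon$-closeness. The paper simply asserts that the hypotheses of Proposition~\ref{prop:extend} hold, so your discussion of the mismatch between $\psi_1(V)$ and $\psi_2(V)$ is extra care rather than a deviation. Note, though, that your choice $V'=\psi_1(V'')$ does not by itself make $h$ a self-map of $V'\setminus K(T_\alpha)$; what actually settles this point is your other option, that the proof of Proposition~\ref{prop:extend} only needs $h$ near $K(T_\alpha)$ together with the commutation relation.
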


\begin{proof}
	Let $\psi_2$ be a hybrid conjugacy between $f$ and $T_\alpha$.
Let $V_i:=\psi_i(V)$ and $U_i:=\psi_i(U)$, $1 \leq i \leq 2$. The map $\psi:=\psi_2\circ\psi_1^{-1}: V_1\to V_2$ is quasiconformal, maps $U_1$ to $U_2$ and 
satisfies 
$$T_\alpha \circ \psi(z) = \psi \circ T_\alpha(z)$$
for all $z \in U_1$.
Moreover, it satisfies the assumptions of  Proposition \ref{prop:extend}; therefore its restriction to $J(T_\alpha)$ must be the identity.
In other words, $\psi_1=\psi_2$ on $J(f)$.

\end{proof}

In particular, if $\psi_2$ is sufficiently close to $\psi_1$ everywhere on $U$, then $\psi_1=\psi_2$ on $J(f)$.

\section{Tangent-like Families: $J$-stability and straightening}\label{sec:tlfam}

{ In this section we consider holomorphic families of tangent-like maps. Under certain conditions, each member of the family is hybrid equivalent to a member of the model family $T_\alpha$, by the Straightening Theorem (Theorem A). This defines a straightening map $\chi$ between the connectedness locus of the TL family and the Tandelbrot set. Our goal in this section is to prove that this map is actually a homeomorphism, proving Theorem C. 

The section is divided in three parts. Subsection \ref{defichi} introduces TL families and fixes the conditions under which we can define $\chi$, appart from discussing the family of hybrid equivalences that arise from the Straightening Theorem. In subsection \ref{sec:famjstab}, we discuss $J-$stability for TL families and show that the boundary of the connected locus is the bifurcation locus. Finnally, we use all our previous work to show in subsection \ref{sec:chihomeo} that the straightening map is a homeomoprhism and conclude the proof of Theorem C. }

\subsection{Tangent-like families and definition of the straightening map} \label{defichi}

The following definition mimics the definition in \cite[p. 545]{LyubichBook}.

\begin{defi}[Tangent-like families]\label{def:tlfamily}
	Let $\Lambda\subset\C$ be a domain. A holomorphic family of TL maps is a  family of TL maps of the form 
	$f_\la : U_\la \to V_\la$, $\la \in \Lambda$, such that:
	\begin{enumerate}
		\item $\mathcal{U}:=\{(\la,z) : z \in U_\la\}$ and $\mathcal{V}:=\{(\la,z) : z \in V_\la\}$ are domains in $\C^2$
		\item $\mathbf{f}: \mathcal{U} \to \rs$ defined by $\mathbf{f}(\la,z)=f_\la(z)$ is holomorphic.	
	\end{enumerate}
\end{defi}

As in the case of holomorphic families of quadratic-like maps, we will make additional assumptions. We require, for a start, that the fundamental annulus moves holomorphically with the parameter.

\begin{defi}[Equipped TL family]\label{def:equipped}
	A holomorphic family of TL maps is \emph{equipped} if there exists $\la_0 \in \Lambda$ and a holomorphic motion $h_\la: \overline{V_{\la_0}} \setminus U_{\la_0} \to \overline{V_\la} \setminus U_\la$ such that for all $\la \in \Lambda$, for all $z \in \partial U_{\la_0}\setminus \{u_{\la_0}\}$, 
	$$h_\la \circ f_\la(z) =  f_\la \circ h_\la(z).$$ 
	The holomorphic motion $(h_\la)_{\la \in \Lambda}$ will be called the {\em equivariant} holomorphic motion of the fundamental annulus.
\end{defi}

\begin{defi}[Proper TL family]\label{def:proper}
	A holomorphic family of TL maps is {\em proper} if:
	\begin{enumerate}
		\item $\Lambda$ is a Jordan domain, and $\{f_\la\}_{\la \in \Lambda}$ is the restriction  of a slightly larger TL family $\{f_\la\}_{\la \in \tilde \Lambda}$, where $\Lambda \Subset \tilde \Lambda$.
		\item For all $\la \in \partial \Lambda$,  $v_\la \in \partial V_\la$.
		\item The winding number of $v_\la - u_\la$ is 1, as $\la$ turns once along $\partial \Lambda$.
	\end{enumerate}
\end{defi}

From now on, we will always work with a proper equipped TL family.

\begin{defi}[The filled Julia set and the Tandelbrot set of a TL family]
	The  {\em Tandelbrot set  of a TL family} is its connectedness locus
	\[
	\tcal(\mathbf{f}):=\{ \la \in \Lambda: v_\la \in K(f_\la)  \} { =
	\{ \la \in \Lambda: K(f_\la) \text{\ is connected} \}}
	\]
  where $K(f_\la)$ denotes the filled Julia set 
	\[
	K(f_\la):=\bigcap_{n \geq 0} \overline{f_\la^{-n}(U_\la)}.
	\]
\end{defi}

Similarly to the case of families of quadratic-like map, a tubing will be a map $t: \overline{\mathcal{V}} \setminus  \mathcal{U} \to \Lambda \times A$ of the form $t(\la,z)=(\la, t_\la(z))$, where $A$ is some fixed  annulus. We will not require a general definition of tubing, and instead we will work only with  so-called standard tubings defined below.

\begin{defi}[Standard choice of tubing]\label{def:tubing}
	Let $\{f_\la\}_{\la \in \Lambda}$ be a proper equipped TL family: by definition, there is a holomorphic motion $h_\la$ of the fundamental annulus $\overline{V_{\la}} \setminus U_\la$, with basepoint $\la_* \in \Lambda$. Let $U,V \subset \rs$ be such that $(T_0,U,V, \infty, \infty)$ is a TL map, and let $A:=V \setminus \overline{U}$.
	We choose a conformal isomorphism $\phi: 
	{V_{\la_*}} \setminus \overline{U_{\la_*}} \to A$: it extends continuously to $\overline{V_{\la_*}} \setminus U_{\la_*}$. We then define the tubing map
	$$t(\la,z):=(\la, t_\la(z)):=(\la, \phi\circ h_\la^{-1} (z)). $$
\end{defi}

Observe that this tubing map may be extended to a homeomorphism $\tilde t : \overline{\mathcal{V}} \to \overline{\Lambda} \times \overline{V}$, which maps $\mathcal U$ to $\Lambda \times U$.
Indeed, first extend $\phi$ to a quasiconformal homeomorphism $\tilde \phi: V_{\la_*} \to V$, and then extend $h_\la$ by Slodkowski's theorem to a holomorphic motion $\tilde h_\la: \rs \to \rs$, and set for all $(\la,z) \in \mathcal{V}$:
$$\tilde t(\la, z) = (\la, \tilde \phi \circ \tilde  h_\la^{-1}(z)).$$
The map $\tilde t$ is clearly continuous and injective on the compact set $\overline{\mathcal{V}}$, hence it is a homeomorphism onto its image.
By continuity, $\tilde h_\la$ maps $U_{\la_*}$ to $U_{\la}$, hence $\tilde t(\la,\cdot)$ maps $U_{\la}$ to $U$.
In particular, for a proper equipped family of TL maps, the first condition in Definition \ref{def:tlfamily} is automatically satisfied.

\begin{prop}[Straightening theorem in a family]\label{prop:straightening family}
	Let $\{f_\la\}_{\la \in \Lambda}$ be a proper equipped TL family. Let $t$ be the  tubing map defined above.
	For every $\la \in \Lambda$, there is a unique hybrid conjugacy $\phi_\la$ between $f_\la$ and some $T_\alpha$, $\alpha \in \D$ such that 
	$$\frac{\overline{\partial} \phi_\la}{\partial \phi_\la}=\frac{\overline{\partial}t_\la}{\partial t_\la}$$
	a.e. on $V_\la \setminus U_\la$.
	We define the {\em straightening map }
	$$
	\chi: \Lambda  \to \D
	$$ by the relation 
	$$
	\phi_\la \circ f_\la = T_{\chi(\la)} \circ \phi_\la.
	$$
	 Moreover, $\chi_{|\tcal(\mathbf{f})}$ does not depend on the choice of the tubing.
\end{prop}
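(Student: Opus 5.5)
For each fixed $\la$ I would rerun the proof of Theorem \ref{th:straight}, with the standard tubing $t_\la$ replacing the interpolating map $t$. By Definition \ref{def:tubing}, $t_\la=\phi\circ h_\la^{-1}$ maps $\overline{V_\la}\setminus U_\la$ onto $\overline{A}=\overline V\setminus U$. Equivariance of $h_\la$ on $\partial U_{\la_*}$, together with the fact that $\phi$ is the boundary conjugacy (conjugating $f_{\la_*}$ to $T_0$ on $\partial U_{\la_*}$), gives $T_0\circ t_\la=t_\la\circ f_\la$ on $\partial U_\la\setminus\{u_\la\}$. Here I would choose $\phi$ so that this holds at the base parameter, by lifting as in the proof of Theorem \ref{th:straight}.

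Next I extend $t_\la$ to $\rs\setminus U_\la$. On $\rs\setminus\overline{V_\la}$ I take the conformal (Riemann) map onto $\rs\setminus\overline V$ that agrees with $t_\la$ on $\partial V_\la$; this agreement holds only up to a quasisymmetric boundary map, so I would instead glue along $\partial V_\la$ using Lemma \ref{lem:interpolation} where needed. I then define the quasiregular map $F_\la$ exactly as before and the $F_\la$-invariant Beltrami form $\mu_\la$:
\begin{itemize}
\item $\mu_\la$ is the pullback of $0$ by $t_\la$ on $\rs\setminus U_\la$;
\item $\mu_\la$ is spread by pullback under $F_\la^n$ over $U_\la\setminus K(f_\la)$;
\item $\mu_\la=0$ on $K(f_\la)$.
\end{itemize}
Integrating with the normalization fixing $\infty$, the attracting fixed point and the preimage of $1$ gives a hybrid conjugacy $\phi_\la$ to some $T_\alpha$, by Lemma \ref{lem:rigidity}. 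By construction, $\phi_\la$ has the same Beltrami coefficient as $t_\la$ on $V_\la\setminus U_\la$.

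For uniqueness, the key point is that the prescribed Beltrami coefficient on the fundamental annulus forces the whole Beltrami coefficient of $\phi_\la$ outside $K(f_\la)$. Invariance under $f_\la$ propagates it to $U_\la\setminus K(f_\la)$, and it vanishes on $K(f_\la)$ by hybridity. Outside $V_\la$, the conjugacy with $T_\alpha$ on the basin forces holomorphy. Equivalently, if $\phi_\la,\phi'_\la$ are two such conjugacies to $T_\alpha,T_{\alpha'}$, then $\phi'_\la\circ\phi_\la^{-1}$ is conformal on $\phi_\la(V_\la\setminus U_\la)$ and on $\phi_\la(K(f_\la))$, and it conjugates on $U$. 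I would then run the lifting argument of Lemma \ref{lem:lift} and Theorem \ref{thm:uniqueness of straightening}, using the universal covers $T_\alpha$, $T_{\alpha'}$ over the complement of the two asymptotic values. This extends it to a global map which is $1$-quasiconformal, hence affine, hence the identity under the normalization. This defines $\chi(\la)=\alpha$; and $\alpha\in\D$ because $v_\la\in V_\la$ is sent into the image of $V$, which avoids $\overline{\D}(0,2)$.

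The independence of the tubing on $\tcal(\mathbf f)$ is immediate: for $\la\in\tcal(\mathbf f)$, $K(f_\la)$ is connected, so Theorem \ref{th:straight} (uniqueness, via Theorem \ref{thm:uniqueness of straightening}) shows that any hybrid conjugacy yields the same $\alpha$, whatever tubing is used. The main obstacle I expect is the uniqueness of $\phi_\la$ itself when $K(f_\la)$ is disconnected. There the pulled-back regions are not pinched annuli but infinitely many components, and the lifting in Lemma \ref{lem:lift} needs care. I would try to handle this by working on all of $\rs$ with the global quasiregular model $F_\la$, where the covering structure is uniform.
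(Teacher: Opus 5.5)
Your existence step follows the paper's proof, which simply reruns the construction of Theorem \ref{th:straight} with $t_\la$ in place of the interpolating map. You are also right to choose $\phi$ in Definition \ref{def:tubing} equivariant, as the construction requires. Your argument for independence of the tubing on $\tcal(\mathbf f)$ is likewise the paper's.

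The gap is uniqueness of $\phi_\la$ when $\la\notin\tcal(\mathbf f)$. You flag it but do not prove it, and the route you propose fails there. Write $U':=\phi_\la(U_\la)$, $V':=\phi_\la(V_\la)$ and $\psi:=\phi'_\la\circ\phi_\la^{-1}$; then $\psi$ is conformal on $V'$ and conjugates $T_\alpha$ to $T_{\alpha'}$ on $U'$.
\begin{itemize}
\item Theorem \ref{thm:uniqueness of straightening} assumes $\alpha_i\in\tcal$, so you do not even get $\alpha'=\alpha$.
\item Lemma \ref{lem:lift} starts from $h_0=\phi$ outside $V_1$, where $\phi$ is the conformal conjugacy of the basins of $0$ from Proposition \ref{lem:confconjH}. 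That map exists only because $T_\alpha:\Omega_\alpha\to\Omega_\alpha\setminus\{1\}$ is a universal covering, i.e.\ only when $1/\alpha\notin\Omega_\alpha$. The normalization $h_i(1/\alpha_1)=1/\alpha_2$ also uses connectedness.
\item If you put an arbitrary quasiconformal piece outside $V'$ instead, the lifts retain $\psi$ only on $T_\alpha^{-i}(\overline{U'})$. The limit then agrees with $\psi$ only on $K(T_\alpha)$ and has uncontrolled dilatation on the basin, so nothing makes it $1$-quasiconformal.
\end{itemize}
``Working with $F_\la$ on $\rs$'' does not supply the missing rigidity.

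The paper gets uniqueness by inspection: once $t_\la$ is fixed, the Beltrami form of the construction is prescribed on all of $\rs$, and the normalized solution of the Beltrami equation is unique. To prove the statement as written, the missing idea is to extend $\psi$ \emph{forward} instead of by lifting. Let $D':=\rs\setminus\overline{V'}$.
\begin{itemize}
\item Since $1/\alpha$ has a single tract, $T_\alpha^{-1}(V')=U'$. Hence $T_\alpha:\rs\setminus\overline{U'}\to D'\setminus\{1\}$ is a universal covering whose deck group is generated by a translation $\sigma$ with $\sigma(U')=U'$.
\item The holomorphic map $T_{\alpha'}\circ\psi$ on $V'\setminus\{\infty\}$ equals $\psi\circ T_\alpha$ on $U'$. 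So $T_{\alpha'}\circ\psi\circ\sigma^k=T_{\alpha'}\circ\psi$ on $\C\setminus(\overline{D'}\cup\sigma^{-k}(\overline{D'}))$, which is connected for $|k|$ large.
\item Comparing two points of a fibre through a far translate $\sigma^k z$ gives a well-defined holomorphic map $\Theta$ on $D'$ ($1$ is a removable singularity) with $\Theta\circ T_\alpha=T_{\alpha'}\circ\psi$ on $V'\setminus\overline{U'}$.
\item The lift of $\Theta\circ T_\alpha$ through $T_{\alpha'}$ that agrees with $\psi$ on $V'\setminus\overline{U'}$ extends $\psi$ holomorphically to $\rs\setminus\overline{U'}$.
\item The glued map is rational and takes the value $\infty$ only once, hence is M\"obius. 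It fixes $\infty$, conjugates $T_\alpha$ to $T_{\alpha'}$ by the identity principle from $U'$, and sends $1/\alpha$ to $1/\alpha'$, hence fixes $1$.
\end{itemize}
By (the proof of) Lemma \ref{lem:rigidity} it is the identity, and connectedness was never used.

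Two smaller points. First, do the gluing with the outside strictly outside $V_\la$; any quasiconformal extension of $t_\la|_{\partial V_\la}$ will do, whereas interpolating inside the annulus would change the coefficient you must match. Second, your reason for $\alpha\in\D$ is not valid: on the annulus $\phi_\la=\Psi_\alpha^{-1}\circ t_\la$ with $\Psi_\alpha$ as in Corollary \ref{cor:bottcherlike}, so $\phi_\la(V_\la)\neq V$.
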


 \begin{proof}
	The existence and unicity of the hybrid conjugacies $\phi_\la$ follows from an inspection of the proof of the Straightening Theorem: Indeed, for every TL map $(f,U,V,u,v)$, 
	the construction of the hybrid conjugacy is completely determined by the choice of a quasiconformal homeomorphism $t$ between 
	$\overline{V} \setminus U$ and $\overline{V'} \setminus U'$ such that $t \circ f = T_0 \circ t$ on $\partial U \setminus \{u\}$, where 
	$U',V'$ and Jordan domains such that $(T_0,U', V', \infty, \infty)$ is TL. In our case, this homeomorphism is the tubing map $t_\la$.
	
	The fact that if $\la \in \mathcal{T}(\mathbf f)$ then $\chi(\la)$ does not depend on the choice of the tubing map
	follows from the unicity in the Straightening Theorem.
 \end{proof}

Naturally, the straightening map sens the Tandelbrot set of $\mathbf f$ to the actual Tandelbrot set. 

\begin{lem}
	For any $\la \in \Lambda$, we have:
$\la \in \tcal(\mathbf{f}) \Leftrightarrow \chi(\la) \in \tcal$.
\end{lem}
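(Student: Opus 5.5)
The plan is to transfer the connectedness criterion through the hybrid conjugacy $\phi_\la$ from Proposition \ref{prop:straightening family}. Fix $\la \in \Lambda$. By construction $\phi_\la$ is a quasiconformal homeomorphism from $V_\la$ onto $V' := \phi_\la(V_\la)$. It conjugates $f_\la$ on $U_\la$ to $T_{\chi(\la)}$ on $U' := \phi_\la(U_\la)$. Hence $(T_{\chi(\la)}, U', V', \phi_\la(u_\la), \phi_\la(v_\la))$ is a tangent-like restriction of $T_{\chi(\la)}$. In the straightening construction the essential singularity is sent to $\infty$ and the asymptotic value to $\frac{1}{\chi(\la)}$, since $\alpha = 1/\varphi(v)$ there. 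So this restriction has $u=\infty$ and $v=\frac{1}{\chi(\la)}$.

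The first step is to check that $\phi_\la(K(f_\la)) = K(T_{\chi(\la)}|_{U'})$, where the right side is the filled Julia set of the restriction. This follows directly from the definition $K = \bigcap_n \overline{f^{-n}(U)}$: the conjugacy relation gives $\phi_\la(f_\la^{-n}(U_\la)) = T_{\chi(\la)}^{-n}(U')\cap V'$ for every $n$. The homeomorphism $\phi_\la$ commutes with closures inside $V'$. The sets involved are contained in $\overline{U'}\Subset V'$ (taking closures in $\rs$, and being careful at $u$, which is sent to $\infty$).

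With this, the equivalence follows. We have $\la\in\tcal(\mathbf f)$ iff $v_\la\in K(f_\la)$, iff $\frac{1}{\chi(\la)} = \phi_\la(v_\la)\in K(T_{\chi(\la)}|_{U'})$. By Proposition \ref{prop:choicesUV} this filled Julia set equals $\rs\setminus\Omega_{\chi(\la)}$, independently of the choice of $U',V'$. So the condition is equivalent to $T^n_{\chi(\la)}(1/\chi(\la))\not\to 0$, which is the definition of $\chi(\la)\in\tcal$.

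The main obstacle is that Proposition \ref{prop:choicesUV} is stated for restrictions with $u=\infty$ and $v=1/\alpha$. Its proof also uses that the complement of $\overline{V'}$ lies in the basin of $0$, in order to identify the unique attracting fixed point outside $V'$ as $0$. This holds by the normalization $\varphi(z^*)=0$ in the proof of Theorem \ref{th:straight}: the glued attracting fixed point $z^*$, lying outside $V_\la$, is sent to $0$. Hence $\rs\setminus \overline{V'}$ is a neighbourhood of $0$ mapped into itself, and the argument of Proposition \ref{prop:choicesUV} applies verbatim. One also needs $\chi(\la)\in\D$ (or $\chi(\la) \ne 0$ handled separately) so that $1$ lies in $\Omega_{\chi(\la)}$. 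This holds because $1 = \varphi(t^{-1}(1))$ lies outside $V'$ by the same normalization.

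As an alternative check, one could argue via Proposition \ref{prop:Kconnected}. $K(f_\la)$ is connected iff $v_\la \in K(f_\la)$, and connectedness is preserved by the homeomorphism $\phi_\la$. Theorem B then identifies $\tcal$ as the connectedness locus.
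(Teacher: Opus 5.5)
Your argument is correct. Its main line differs from the paper's proof, which is essentially your closing ``alternative check''. The paper uses Proposition \ref{prop:Kconnected} to get that $\la\in\tcal(\mathbf f)$ iff $K(f_\la)$ is connected, uses Theorem B to get that $\chi(\la)\in\tcal$ iff $K(T_{\chi(\la)})$ is connected, and then notes that the hybrid conjugacy is a homeomorphism between the two filled Julia sets.

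Your route tracks the singular value itself instead. The normalization in the proof of Theorem \ref{th:straight} gives $\phi_\la(v_\la)=1/\chi(\la)$, and $\phi_\la(K(f_\la))$ is the filled Julia set of the straightened restriction. So the equivalence reduces directly to the definitions of $\tcal(\mathbf f)$ and $\tcal$. You need neither Proposition \ref{prop:Kconnected} nor the connectedness characterization of Theorem B, which is only stated for $\alpha\in\D$. The paper's route has the complementary advantage that connectedness is a purely topological invariant: any homeomorphism $K(f_\la)\to K(T_{\chi(\la)})$ suffices, without knowing where $v_\la$ is sent.

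Both routes share one step: identifying $\phi_\la(K(f_\la))$ with $\rs\setminus\Omega_{\chi(\la)}$ via Proposition \ref{prop:choicesUV}. You are right that this needs the attracting fixed point lying outside $\overline{\phi_\la(V_\la)}$ to be $0$ and not some other attracting fixed point. The paper leaves this implicit; your appeal to the normalization $\varphi(z^*)=0$, with $z^*=t^{-1}(0)\notin\overline{V_\la}$, makes it explicit. The degenerate case $u_\la=v_\la$ (so $\chi(\la)=0$ and $1/\chi(\la)=\infty$) is, as you flagged, a matter of the convention $0\in\tcal$.
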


\begin{proof} 
	Indeed, recall that $\la \in \tcal(\mathbf{f})$ if and only if $K(f_\la)$ is connected (see Prop. \ref{prop:Kconnected}), 
	and that $\chi(\la) \in \tcal$ if and only if $K(T_{\chi(\la)})$ is connected . The claim is then clear, as the hybrid conjugacy gives a homeomorphism between $K(f_\la)$ and $K(T_{\chi(\la)})$.
\end{proof}

In order to prove the continuity of $\chi$, we will require  the following result.

\begin{prop}[Convergence of conjugacies]\label{prop:convergence of conjugacies}
Let $\{f_\la\}_{\la\in\Lambda}$ be a proper tangent-like family, $\la_n\ra\la_0\in\Lambda$ with $\la_n\in\TT({\bf f})$, $\alpha_n:=\chi(\la_n)$, $\psi_n$ be the hybrid conjugacies between  $f_{\la_n} $ and $T_{\alpha_n}$. Then, up to passing to  a subsequence, we can assume that $\alpha_{n_k}\ra\alpha_*$ and that  $\psi_{n_k}$ converge to a quasiconformal conjugacy $\psi_*$ between $f_{\la_0}$ and $T_{\alpha_*}$.

\end{prop}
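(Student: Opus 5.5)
The plan is a compactness argument, run with uniform control on the dilatation of the conjugacies. First, the $\alpha_n$ lie in $\tcal$, and $\tcal$ is a compact subset of $\D(0,\frac12)$ by Lemma \ref{lem:tbdd} and Theorem B. So after passing to a subsequence we may assume $\alpha_n \to \alpha_* \in \tcal$.

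Next we bound the dilatation of the $\psi_n$ uniformly. By construction, $\psi_n$ is the hybrid conjugacy of Proposition \ref{prop:straightening family}. Its Beltrami coefficient vanishes on $K(f_{\la_n})$. On the fundamental annulus $V_{\la_n}\setminus U_{\la_n}$ it equals that of the tubing $t_{\la_n}=\phi\circ h_{\la_n}^{-1}$. Elsewhere it is the pull-back of that coefficient by iterates of $f_{\la_n}$, which does not change the norm. Hence $\|\mu_{\psi_n}\|_\infty \le \|\mu_{h_{\la_n}^{-1}}\|_\infty$ (the map $\phi$ being conformal). Since $h_\la$ is a holomorphic motion over $\Lambda$ and $\la_n$ stays in a compact neighbourhood of $\la_0$, Slodkowski's theorem and the Bers–Royden bound give $K(h_\la)\le \frac{1+\rho}{1-\rho}$, where $\rho$ is the hyperbolic distance from $\la$ to the basepoint. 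This gives a uniform bound $K$ on the dilatations of the $\psi_n$.

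To apply compactness of $K$-quasiconformal maps we also need a normalization. Each $\psi_n$ can be taken to be the global integrating map from the proof of Theorem \ref{th:straight}, restricted to $V_{\la_n}$. That map fixes $\infty$ and sends $t^{-1}(0)$ and $t^{-1}(1)$ to $0$ and $1$. After the standard tubing conjugation these three points vary continuously in $\la$ and stay uniformly apart. So the family $\{\psi_n\}$ consists of $K$-quasiconformal homeomorphisms of $\rs$ with three well-separated normalized points, which is normal. Extract a further subsequence with $\psi_{n_k}\to\psi_*$ uniformly on $\rs$; the limit $\psi_*$ is $K$-quasiconformal.

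Finally, pass to the limit in $\psi_n\circ f_{\la_n} = T_{\alpha_n}\circ\psi_n$ on $U_{\la_n}$. Fix $z\in U_{\la_0}$. Since $\mathcal U$ is open, $z\in U_{\la_n}$ for $n$ large. Joint holomorphy of $\mathbf f$ gives $f_{\la_n}(z)\to f_{\la_0}(z)$. Uniform convergence of $\psi_n$ with equicontinuity gives $\psi_n(f_{\la_n}(z))\to\psi_*(f_{\la_0}(z))$. On the other side, $\psi_n(z)\to\psi_*(z)$. If $\psi_*(z)\neq\infty$, joint continuity of $(\alpha,w)\mapsto T_\alpha(w)$ away from $w=\infty$ gives $T_{\alpha_n}(\psi_n(z))\to T_{\alpha_*}(\psi_*(z))$. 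The identity then holds on $U_{\la_0}$ minus the single point $\psi_*^{-1}(\infty)=u_{\la_0}\notin U_{\la_0}$, so $\psi_*$ is a quasiconformal conjugacy on $U_{\la_0}$.

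The main obstacle is the normalization and uniform-dilatation step. One has to make sure that the tubing-induced maps really extend to global maps with controlled normalization, so that no degeneration occurs in the limit. A related point is that the essential singularity $u_{\la_n}$ must be mapped to $\infty$ consistently, so that the convergence $T_{\alpha_n}\to T_{\alpha_*}$ is only ever used away from $\infty$.
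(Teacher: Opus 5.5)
Your proof is correct and is essentially the paper's argument: compactness of $\tcal$, then a locally uniform dilatation bound for the $\psi_n$, then compactness of uniformly quasiconformal maps, then passage to the limit in $\psi_n\circ f_{\la_n}=T_{\alpha_n}\circ\psi_n$. Where you differ, you add rigour: you get the dilatation bound from the tubing $t_\la=\phi\circ h_\la^{-1}$ and the Bers--Royden estimate rather than from the interpolation constant and the moduli of the annuli, and your three-point normalization ($u_\la\mapsto\infty$, $t_\la^{-1}(0)\mapsto 0$, $t_\la^{-1}(1)\mapsto 1$, via the global extension of $t_\la$ described after Definition~\ref{def:tubing}) supplies a detail the paper leaves implicit. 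One small correction: in $K\le\frac{1+\rho}{1-\rho}$, $\rho$ must be the pseudo-hyperbolic distance from $\la$ to the basepoint, not the hyperbolic distance; this does not affect the locally uniform bound you need.
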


Notice that $\psi_*$ is not necessarily a hybrid conjugacy, since filled Julia sets do not always move continuously with the parameter.

\begin{proof}

Since the Tandelbrot set is compact, up to extracting a subsequence we can assume that $\alpha_n\ra\alpha_{*}$. 
We now show that the maps $\psi_n$ are uniformly quasiconformal. For each $\la_n$ the  quasiconformality constant of the hybrid conjugacy $\psi_n$ depends only on the modulus of the annulus $A_{n}:=V_{\lambda_n}\setminus \overline{U_{\lambda_n}}$. Indeed, the quasiconformality constant of the interpolating map $\psi$ in   the proof of the straightening theorem depends only on the modulus of such annulus by Lemma~\ref{lem:interpolation}, and is not modified by any of the subsequent pullbacks. Since $\{f_\la\}_{\la \in \Lambda}$ is  equipped,  the moduli of the annuli $A_n$ are close to the modulus of the annulus $V_{\la_o}\setminus U_{\la_0}$. So the maps $\psi_n$ are uniformly quasiconformal. 

By compactness of quasiconformal maps with the same quasiconformality constant,  up to passing to a subsequence   $(\psi_n)$ converge to  a quasiconformal limit map $\psi_*$ uniformly on compact subsets of $U_{\lambda_*}$. Moreover, for each $n$ we have
$$
\psi_n\circ f_{\la_n}=T_{\alpha_n}\circ\psi_n.
$$
Up to a passing to a  subsequence, the left hand side converges to $\psi_*\circ f_{\la_0}$ uniformly on compact subsets of $U_{\la_0}$ (we are using again that the annuli $A_n$ move holomorphically), while the right hand side converges to $T_{\alpha_*}\circ\psi_*$, implying that $\psi_*$ is a quasiconformal conjugacy between $f_{\la_0}$ and $T_{\alpha_*}$. 

\end{proof}

The following Proposition asserts that proper equipped TL families are natural families (see Section \ref{sec:natfam}).

\begin{prop}[Proper equipped TL families are natural]\label{prop:TL implies natural}
	Let $\{f_\la\}_{\la \in \Lambda}$ be a proper equipped TL family. Then it is a natural family, i.e. for any $\la_* \in \Lambda$, there exists holomorphic motions $\phi_\la: V_{\la_*} \to V_\la$ and $\psi_\la: U_{\la_*} \to U_\la$ such that 
	$$f_\la=\phi_\la \circ f_{\la_*} \circ \psi_\la^{-1}.$$
\end{prop}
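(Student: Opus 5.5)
The plan is to build the two motions by lifting, exactly as in the proof of Lemma~\ref{lem:tractmoveholo} and in the proof that the rational approximants $T_{\alpha,k}$ form an equipped family. Fix $\la_* \in \Lambda$. Up to changing the basepoint of the equivariant motion (composing with $h_{\la_*}^{-1}$), we may assume $h_\la$ has basepoint $\la_*$.

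\textbf{Construction of $\phi_\la$.} First extend $h_\la$ from $\overline{V_{\la_*}}\setminus U_{\la_*}$ to a holomorphic motion of $\rs$ by Slodkowski's theorem. We would like $\phi_\la$ to send $v_{\la_*}$ to $v_\la$. To arrange this, add the point $v_{\la_*}$, moving as $\la\mapsto v_\la$, to the motion of $\partial V_{\la_*}\cup\partial U_{\la_*}$ before applying Slodkowski. This is legitimate because $v_\la\in V_\la$ never meets $\partial V_\la$. We must also ensure $v_\la\notin\partial U_\la$, or else handle that case separately. The simplest fix is to move only $\partial V_{\la_*}$ together with $v_{\la_*}$, which is injective for all $\la\in\Lambda$ since $v_\la\in V_\la$. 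Applying Slodkowski's extension to this motion gives $\phi_\la:\rs\to\rs$ with $\phi_\la(V_{\la_*})=V_\la$ and $\phi_\la(v_{\la_*})=v_\la$. Restrict it to $V_{\la_*}$.

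\textbf{Construction of $\psi_\la$.} For $z_0\in U_{\la_*}$, consider the continuous map $H(\la)=\phi_\la^{-1}(f_\la(\ldots))$. This is circular, so instead we lift directly. Set $H_{z_0}(\la):=\phi_\la\circ f_{\la_*}(z_0)\in V_\la\setminus\{v_\la\}$. The total map
\[
\mathbf f:\mathcal U\to\{(\la,w): w\in V_\la\setminus\{v_\la\}\}
\]
is a covering; it is a fiberwise universal cover that is locally trivial in $\la$, by the implicit function theorem, since $f_\la'\neq0$. Because $\Lambda$ is simply connected (a Jordan domain), the section $\la\mapsto(\la,H_{z_0}(\la))$ lifts uniquely to $\la\mapsto(\la,\psi_\la(z_0))$ with $\psi_{\la_*}(z_0)=z_0$. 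This gives $f_\la\circ\psi_\la=\phi_\la\circ f_{\la_*}$. As in Lemma~\ref{lem:tractmoveholo}, holomorphy of $\la\mapsto\psi_\la(z_0)$ follows from the implicit function theorem, since $f_\la$ has no critical points on $U_\la$. Injectivity in $z_0$ follows from uniqueness of lifts: two points with the same lift at some $\la$ would give the same lift curve, hence would coincide at $\la_*$. Surjectivity onto $U_\la$ follows by lifting backwards from any basepoint $\la$, i.e.\ reversing the roles of $\la$ and $\la_*$. Hence $\psi_\la$ is a holomorphic motion of $U_{\la_*}$ onto $U_\la$ and $f_\la=\phi_\la\circ f_{\la_*}\circ\psi_\la^{-1}$.

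\textbf{Main obstacle.} The delicate step is justifying the covering-space lifting in the family: we need $\mathbf f:\mathcal U\to\mathcal V^*$ to be a covering over the simply connected base, or at least to have the path-lifting property. I would prove this by working fiberwise. Use the equipped structure to trivialize $\mathcal V$ and $\mathcal U$ via the tubing $\tilde t$, which shows $\mathcal U\cong\Lambda\times U$. Then check local triviality near each point of $\mathcal V^*$: small polydisks avoiding the graph of $v_\la$ have preimages that are disjoint unions of graphs, by the implicit function theorem. Properness of the lifting (no escape to $\partial U_\la$ or to $u_\la$ along a path) uses that $H_{z_0}(\la)$ stays in the compact fiberwise region away from $\partial V_\la$ and $v_\la$ over compact subsets of $\Lambda$.
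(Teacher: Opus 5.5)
Your construction of $\phi_\la$ is the paper's, and your $\psi_\la$ would agree with the paper's by uniqueness of lifts. But the step you flag as the main obstacle is the heart of the proposition, and your sketch does not close it.

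\textbf{The gap.} Lifting $\la\mapsto(\la,\phi_\la\circ f_{\la_*}(z_0))$ requires the fibered map $(\la,z)\mapsto(\la,f_\la(z))$, $\mathcal U\to\mathcal V^*:=\{(\la,w): w\in V_\la\setminus\{v_\la\}\}$, to be a covering, or at least to have path lifting. This does not follow from the implicit function theorem. That theorem only gives a local inverse near each preimage point, with no uniform size. The fibres are infinite and accumulate at the essential singularity $u_\la$, so you cannot conclude that the preimage of a fixed small polydisk is a disjoint union of graphs over all of it.

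Your non-escape argument has the same hole. Keeping $H_{z_0}(\la)$ in a compact set away from $\partial V_\la\cup\{v_\la\}$ prevents the lift from reaching $\partial U_\la\setminus\{u_\la\}$, which $f_\la$ maps to $\partial V_\la$. It does not prevent the lift from running into $u_\la$, since every neighbourhood of $u_\la$ in $U_\la$ is mapped onto all of $V_\la\setminus\{v_\la\}$. Lemma~\ref{lem:tractmoveholo} is also not a valid template here: it lifts through the single fixed covering $f_{\la_0}$, which is available only because naturality is assumed there.

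\textbf{The missing idea.} The paper's argument first produces one fixed covering, and only then lifts.
\begin{enumerate}
\item Extend the equivariant motion $h_\la$ by Slodkowski to $\tilde\psi_\la$, so that $\tilde\psi_\la(U_{\la_*})=U_\la$.
\item Set $g_\la:=\phi_\la^{-1}\circ f_\la\circ\tilde\psi_\la$. Every $g_\la$ is now a universal covering $U_{\la_*}\to V_{\la_*}\setminus\{v_{\la_*}\}$ of one fixed space by one fixed domain, jointly continuous in $(\la,z)$, with $g_{\la_*}=f_{\la_*}$.
\item Apply the homotopy lifting property of the single covering $f_{\la_*}$. The paper cites \cite[Lemma 2.7]{ABF}; equivalently, lift $(\la,z)\mapsto g_\la(z)$ from the simply connected space $\Lambda\times U_{\la_*}$ through $f_{\la_*}$. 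This gives continuous $k_\la$ with $k_{\la_*}=\id$ and $g_\la=f_{\la_*}\circ k_\la$.
\item Each $k_\la$ is a morphism between universal coverings, hence a homeomorphism of $U_{\la_*}$.
\item Put $\psi_\la:=\tilde\psi_\la\circ k_\la^{-1}$. Then $f_\la=\phi_\la\circ f_{\la_*}\circ\psi_\la^{-1}$, and holomorphy in $\la$ follows from the implicit function theorem as you say.
\end{enumerate}
This also shows that your fibered map is conjugate to $\id\times f_{\la_*}$, so your pointwise lifting becomes legitimate once this step is in place. Trivializing via the tubing $\tilde t$ alone would not suffice on the target side, because it does not send $v_\la$ to a fixed point; $\phi_\la$ is what pins the puncture.
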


\begin{proof}
	Let $\la_* \in \Lambda$ be arbitrary.
	By the assumption that $\{f_\la\}_{\la \in \Lambda}$ is equipped, there exists a holomorphic motion $h_\la: \overline{V_{\la_*}} \setminus U_{\la_*} \to \overline{V_{\la}} \setminus U_{\la}$. 
	Now consider the holomorphic motion $\phi_\la: (\partial V_{\la_*}) \cup \{v_{\la_*}\} \to  (\rs \setminus V_{\la}) \cup \{v_{\la}\}$ defined by 
	$$\phi_\la(z) = \left\{
	\begin{array}{ll}
		h_\la(z) & \text{ if $z \in \partial V_{\la_*}$} \\
		v_\la & \text{ if $z=v_{\la_*}$}.
	\end{array}
	\right.$$
	It is indeed a holomorphic motion, since for all $\la \in \Lambda$, $v_\la \in V_\la$. Now extend $\phi_\la$ 
	using Slodkowski's $\la$-lemma (\cite{slo};  compare \cite{LyubichBook}, Fourth $\lambda$-Lemma) to obtain a holomorphic motion of the whole Riemann sphere, which we still denote by $\phi_\la$. By construction, for any $\la \in \Lambda$, $\phi_\la$ maps $V_{\la_*}$ to $V_\la$
	and $v_{\la_*}$ to $v_\la$.

	Next, let us again apply Slodkowski's $\la$-lemma to $h_\la$, to obtain an extension of $h_\la$ to the whole Riemann sphere, which we denote by $ \tilde \psi_\la$. Let us emphasize that a priori, $\tilde \psi_\la$ and 
	$\phi_\la$ need not coincide outside of $\partial V_\la$. For any $\la \in \Lambda$, $\tilde \psi_\la$ maps $U_{\la_*}$ to $U_\la$.  The rest of the proof is now similar to [\cite{ABF}, Theorem 2.6]. We include it for the convenience of the reader.

	Let $g_\la:=\phi_\la^{-1} \circ f_\la \circ \tilde \psi_\la$. For every $\la \in \Lambda$, 
	$g_\la : U_{\la_*} \to V_{\la_*} \setminus \{v_{\la_*}\}$ is a universal covering map.	
	Let $\la \in \Lambda$ and let $\gamma : [0,1] \to \Lambda$ a continuous path joining $\la_*$ to $\la$.
	By [\cite{ABF}, Lemma 2.7], there exists  a homotopy 
	$\tilde k_{t} : U_{\la_*} \to  U_{\la_*}$ such that for all $t \in [0,1]$, 
	$$g_{\gamma(t)} =  g_{\la_*} \circ \tilde k_{t}.$$
	Since $\Lambda$ is simply connected, for every $z \in U_{\la_*}$ the value $\tilde k_{1}(z)$ is independant from the choice of $\gamma$, and depends only on $\la$ and $z$: we denote it by $k_\la(z)$. This gives the existence of a continuous map $(\la,z) \mapsto k_\la(z)$ such that for all $\la \in \Lambda$,
	$$g_\la = g_{\la_*} \circ k_\la = f_{\la_*} \circ k_\la.$$
	For every $\la \in \Lambda$, the map $k_\la : U_{\la_*} \to U_{\la_*}$ is a homeomorphism.
	Let $\psi_\la:=\tilde \psi_\la \circ k_\la^{-1}$: then the previous relation may be rewritten as
	$$f_\la = \phi_\la \circ f_{\la_*} \circ \psi_\la^{-1}.$$
	
	It only remains to prove that $\psi_\la$ defines a holomorphic motion, i.e. that for every $z \in U_{\la_*}$, 
	$\la \mapsto \psi_\la(z)$ is holomorphic. This follows from the relation 
	$f_{\la} \circ \psi_\la = \phi_\la \circ f_{\la_*}$ and the Implicit Function Theorem applied 
	to the holomorphic map $(\lam,z) \mapsto f_{\la}(z) -  \phi_\la \circ f_{\la_*}(z)$.
\end{proof}

We also record here the following useful lemma, which will be used repeatedly in Section \ref{sec:chihomeo}.

\begin{lem}[Lifting a holomorphic motion]\label{lem:liftholmot}
	Let $\{f_\la\}_{\la \in \Lambda}$ be a holomorphic family of TL maps. Let $\la_0 \in \Lambda$, $D \subset \Lambda$ be a simply connected domain containing $\la_0$ and $X \subset \overline{V_{\la_0}}$.
	Let $\phi^{(0)}: D \times X \to \rs$ be a holomorphic motion with basepoint $\la_0$ such that for all $(\la,z) \in D \times f_{\la_0}^{-1}(X)$,
	$\phi_\la^{(0)} \circ f_{\la_0}(z) \neq v_\la$. Then there exists a unique holomorphic motion $\phi^{(1)}: D \times f_{\la_0}^{-1}(X) \to \rs$ such that for all $\la \in D$,
	$$f_\la \circ \phi_\la^{(1)} = \phi_\la^{(0)} \circ f_{\la_0}.$$
\end{lem}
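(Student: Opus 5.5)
We follow the proof of Lemma \ref{lem:tractmoveholo}: construct the motion pointwise by lifting paths through the covering $f_{\la_0}$, then check holomorphy and injectivity. By Proposition \ref{prop:TL implies natural} (or directly, if the family is known to be natural on $D$), write $f_\la = \phi_\la\circ f_{\la_0}\circ\psi_\la^{-1}$ on $D$, where $\phi_\la,\psi_\la$ are holomorphic motions with basepoint $\la_0$ and $\phi_\la(v_{\la_0})=v_\la$. Fix $z_0\in f_{\la_0}^{-1}(X)$ and consider the continuous map
$$H_{z_0}: D\to \rs,\qquad H_{z_0}(\la):=\phi_\la^{-1}\circ\phi^{(0)}_\la\circ f_{\la_0}(z_0).$$
The hypothesis $\phi^{(0)}_\la(f_{\la_0}(z_0))\neq v_\la$ together with injectivity of $\phi_\la$ gives $H_{z_0}(\la)\neq v_{\la_0}$. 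Hence $H_{z_0}$ takes values in $\overline{V_{\la_0}}\setminus\{v_{\la_0}\}$, over which $f_{\la_0}$ is a covering map by Definition \ref{def:tlmap}.

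Since $D$ is simply connected and $H_{z_0}(\la_0)=f_{\la_0}(z_0)$, the lifting criterion gives a unique continuous lift $\gamma_{z_0}:D\to\overline{U_{\la_0}}\setminus\{u_{\la_0}\}$ with $f_{\la_0}\circ\gamma_{z_0}=H_{z_0}$ and $\gamma_{z_0}(\la_0)=z_0$. We set
$$\phi^{(1)}_\la(z_0):=\psi_\la(\gamma_{z_0}(\la)).$$
Then $f_\la\circ\phi^{(1)}_\la(z_0)=\phi_\la\circ f_{\la_0}\circ\gamma_{z_0}(\la)=\phi^{(0)}_\la\circ f_{\la_0}(z_0)$, and $\phi^{(1)}_{\la_0}=\id$ because $\psi_{\la_0}=\id$.

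It remains to show that $\phi^{(1)}$ is a holomorphic motion and that it is unique.
\begin{enumerate}
\item Holomorphy: the point $\phi^{(1)}_\la(z_0)$ satisfies $f_\la(w)=\phi^{(0)}_\la(f_{\la_0}(z_0))$, the right-hand side is holomorphic in $\la$, and $f_\la$ has no critical points. The implicit function theorem then shows that the continuous solution $\la\mapsto\phi^{(1)}_\la(z_0)$ is holomorphic. For boundary points of $U_\la$, argue with the holomorphic extension to a slightly larger domain $\tilde U_\la$, or simply note continuity on the closure.
\item Injectivity: if $\phi^{(1)}_\la(z_0)=\phi^{(1)}_\la(z_1)$ at some $\la$, then $\gamma_{z_0}(\la)=\gamma_{z_1}(\la)$. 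Reversing the lift along a path from $\la$ back to $\la_0$, uniqueness of lifts together with injectivity of $\phi^{(0)}_\la$ forces $z_0=z_1$. Equivalently, the two curves $\la\mapsto\phi_\la^{(1)}(z_i)$ are lifts of the same-or-different paths, which cannot meet unless they coincide.
\item Uniqueness: any other motion with the same property is, after composing with $\psi_\la^{-1}$, a lift of $H_{z_0}$ starting at $z_0$, so it equals $\gamma_{z_0}$ by uniqueness of lifts.
\end{enumerate}

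The main obstacle is that the lemma is stated for a general holomorphic TL family, not necessarily a natural one. If Proposition \ref{prop:TL implies natural} cannot be used, we work directly with the holomorphic family $F(\la,w)=f_\la(w)$. The map $\mathcal U\ni(\la,w)\mapsto(\la,f_\la(w))$ is a covering over $\{(\la,y): y\in V_\la\setminus\{v_\la\}\}$, because it is a fiberwise covering that is locally trivial by the implicit function theorem. We then lift the holomorphic section $\la\mapsto(\la,\phi^{(0)}_\la(f_{\la_0}(z_0)))$ over the simply connected $D$. This gives holomorphy immediately, and injectivity and uniqueness follow from uniqueness of lifts as above.
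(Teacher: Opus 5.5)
Your main argument is the same as the paper's proof. The paper likewise invokes Proposition \ref{prop:TL implies natural} to write $f_\la=\phi_\la\circ f_{\la_0}\circ\psi_\la^{-1}$, lifts $\la\mapsto\phi_\la^{-1}\circ\phi^{(0)}_\la\circ f_{\la_0}(z_0)$ through the covering $f_{\la_0}$ over the simply connected $D$, sets $\phi^{(1)}_\la(z_0)=\psi_\la(\gamma_{z_0}(\la))$, and gets holomorphy from the Implicit Function Theorem and injectivity from uniqueness of lifts; your explicit uniqueness argument is a useful addition that the paper omits.

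One caveat concerns your fallback paragraph. The Implicit Function Theorem by itself only shows that $(\la,w)\mapsto(\la,f_\la(w))$ is a local biholomorphism. Local triviality as a covering needs uniform control of the infinitely many sheets accumulating at $u_\la$, which is exactly what the natural-family decomposition provides. The fallback is not needed, however: the paper, like your main route, simply applies Proposition \ref{prop:TL implies natural}, and the lemma is only used for proper equipped families.
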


\begin{proof}
	By Proposition \ref{prop:TL implies natural}, we may write $f_\la = \phi_\la \circ f_{\la_0} \circ \psi_\la^{-1}$, 
	where $\phi_\la, \psi_\la$ are holomorphic motions of $\overline{V_{\la}}$ and $\overline{U_\la}$ respectively, 
	with basepoint $\la_0$.
	
	Let $z_0 \in  f_{\la_0}^{-1}(X)$. By the lifting property for covering maps, there exists a unique lift $\gamma_{z_0}$  by $f_{\la_0}$ of 
	the continuous map 
	$$D \ni \la \mapsto \phi_\la^{-1} \circ \phi_\la^{(0)} \circ f_{\la_0}(z_0),$$
	such that $\gamma_{z_0}(\la_0)=z_0$.
	Indeed, $\phi_\la^{-1}$ maps $v_\la$ to $v_{\la_0}$, and by assumption $\phi_\la^{(0)} \circ f_{\la_0}(z_0) \neq v_\la$ for all $\la \in D$, so  $ \phi_\la^{(0)} \circ f_{\la_0}(z_0) \in \overline{V_{\la_0}} \setminus \{v_{\la_0}\}$ 
	for all $\la \in D$, and $D$ is simply connected.
	
	We then let $\phi_\la^{(1)}(z_0):=\psi_\la(\gamma_{z_0}(\la)  )$. By construction, we have
	$$f_\la \circ \phi_\la^{(1)}(z_0) = \phi_\la^{(0)} \circ f_{\la_0}(z_0)$$
	and $\phi_{\la_0}^{(1)}(z_0)=z_0$.
	
	By the Implicit Function Theorem, $\la \mapsto \phi_\la^{(1)}(z_0)$ is holomorphic on $D$.
	By the uniqueness of the lifting property, the map $z \mapsto \phi_\la^{(1)}(z)$ is injective for all $\la \in D$.
	Thus, $\phi^{(1)}: D \times f_{\la_0}^{-1}(X) \to \rs$ is indeed a holomorphic motion.
\end{proof}

\subsection{$J$-stability in TL families} \label{sec:famjstab}

In this section we define activity and passivity for the singular value $v_\la$ of a tangent-like family, and we show that the activity locus of $v_\la$ is $\partial \TT(\bf{f})$.

\begin{defi}[Activity and passivity of $v_\la$]\label{def:passive}
	Let $\{f_\la\}_{\la \in \Lambda}$ be a proper equipped TL family, with asymptotic value $v_\la$. We say that $v_\la$ is passive at $\la_0 \in \Lambda$ 
	if one of the two following cases occur:
	\begin{enumerate}
		\item there exists an open neighborhood $W \subset \Lambda$ of $\la_0$ such that for all $n \geq 0$, for all $\la \in W$, $f_\la^n(v_\la) \in U_\la$ and $\{ \la \mapsto f_\la^n(v_\la) : n \in \N  \}$ is a normal family;
		\item or there exists $n_0 \in \N$ such that $f_{\la_0}^n(v_{\la_0}) \in \overline{V_{\la_0}} \setminus \overline{U_{\la_0}}$.
	\end{enumerate}
	We say that it is active at $\la_0$ if it is not passive.  The activity locus is the set of $\la \in \Lambda$ 
	such that $v_\la$ is active at $\la$.
\end{defi}

\begin{rem}[Virtual cycles imply activity]\label{rem:vsdense}
	We say that $f_\la$ has a virtual cycle if there exists $n \in \N$ such that $f_\la^n(v_\la) = \infty$.
	An important remark is that in a proper TL family, virtual cycles cannot be persistent.  Indeed, otherwise we would have that $f_\lambda^n(v_\lambda)\equiv\infty$ in some open set $D$, which by the Identity Theorem implies that $f_\lambda^n(v_\lambda)\equiv\infty$ on $\Lambda$, which contradicts the properness assumption. 
	It then follows from the definition that the existence of a virtual cycle implies activity of $v_\la$.
	Moreover, using Montel's theorem  and the definition of activity,  one can see that parameters with virtual cycles are dense in the activity locus. See  Proposition 5.5 in \cite{ABF}, observing that tangent-like maps always have non-omitted poles since they are hybrid equivalent to maps in the tangent family.
\end{rem}

\begin{prop}[$J-$stability and activity locus]\label{prop:msstl}
	Let $\{f_\la\}_{\la \in \Lambda}$ be a TL family. Then $\{f_\la\}$ is $J$-stable near $\la_0 \in \Lambda$ if and only if $v_{\la}$ is passive at $\la_0$. Moreover, the activity locus of $v_\la$ is exactly $\partial \tcal(\mathbf f)$.
\end{prop}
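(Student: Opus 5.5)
We prove the two implications separately and then identify the activity locus with $\partial\tcal(\mathbf f)$. The natural-family structure of Proposition \ref{prop:TL implies natural} and the lifting Lemma \ref{lem:liftholmot} do most of the work.

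\emph{Passive implies $J$-stable.} We treat the two cases of Definition \ref{def:passive} separately, on a small simply connected neighbourhood $W$ of $\la_0$.

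In case (2), some iterate $f_{\la}^{n_0}(v_\la)$ lies in the open fundamental annulus for all $\la\in W$. We start from the equivariant motion $h_\la$ of $\overline{V_\la}\setminus U_\la$ and lift it repeatedly with Lemma \ref{lem:liftholmot}. The hypothesis of that lemma, $\phi^{(0)}_\la\circ f_{\la_0}(z)\neq v_\la$, holds for every pullback up to level $n_0$. Past that level, the relevant pullbacks of the annulus no longer contain the orbit of $v_\la$ after shrinking $W$. This yields a holomorphic motion of $\overline{V_{\la_0}}\setminus K(f_{\la_0})$ commuting with the dynamics. Its closure is given by the $\la$-lemma; it moves $J(f_{\la_0})=\partial K(f_{\la_0})$ equivariantly, and no prepole can collide with the orbit of $v_\la$.

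In case (1), the orbit stays in $U_\la$ and $\{\la\mapsto f^n_\la(v_\la)\}$ is normal. We first claim there is no virtual cycle in $W$. Indeed, $f^n_\la(v_\la)=\infty$ at one parameter would, by non-persistence (Remark \ref{rem:vsdense}) and the Shooting Lemma argument, force nearby parameters to send the orbit into $V_\la\setminus\overline{U_\la}$, which contradicts (1). Hence, for every $n$, the graphs of $\la\mapsto f^n_\la(v_\la)$ avoid the prepoles $\la\mapsto f^{-k}_\la(u_\la)$, which move holomorphically by Lemma \ref{lem:liftholmot}. The set of repelling periodic points and prepoles then moves holomorphically as in \cite{ABF}: a collision of $f^n_\la(v_\la)$ with a repelling periodic point would be non-persistent, and Montel's theorem together with normality excludes this. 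The density of prepoles in $J$ (from the exponential-like covering structure at $u_\la$) and the $\la$-lemma then give a holomorphic motion of $J(f_\la)$. Alternatively, and more cleanly, we can transfer the statement via straightening: the characterization in Theorem \ref{thm:J stability} holds for the model family, and the family is natural by Proposition \ref{prop:TL implies natural}. I would use the direct argument here to avoid needing continuity of $\chi$.

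\emph{Active implies not $J$-stable.} If $v_{\la_0}$ is active, Remark \ref{rem:vsdense} gives parameters $\la$ arbitrarily close to $\la_0$ with a virtual cycle, i.e.\ $f^n_\la(v_\la)=\infty=u_\la$. At such a parameter $v_\la$ lies in $J(f_\la)$, since it is a prepole and prepoles lie in $J$. By the Shooting Lemma there are also nearby parameters with $f^{n+1}_{\la'}(v_{\la'})\in V_{\la'}\setminus\overline{U_{\la'}}$, where $K$ is totally disconnected and $v_{\la'}\notin K$. A holomorphic motion of $J$ that respects the dynamics would conjugate $f_\la$ to $f_{\la'}$ on their Julia sets. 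Since $J(f_\la)$ is connected and $J(f_{\la'})$ is not, by Proposition \ref{prop:Kconnected}, this is a contradiction. A simpler route is that $J$-stability preserves whether the singular value orbit lands on $u$, and this fails near a non-persistent virtual cycle.

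\emph{The activity locus equals $\partial\tcal(\mathbf f)$.} For $\partial\tcal(\mathbf f)\subset$ activity: if $v$ is passive at $\la_0$, then in case (2) a whole neighbourhood lies outside $\tcal(\mathbf f)$, and in case (1) a whole neighbourhood lies inside it. Conversely, at an active $\la_0$, Remark \ref{rem:vsdense} gives arbitrarily close virtual cycle parameters, which lie in $\tcal(\mathbf f)$ since $u_\la\in K$. The Shooting Lemma \ref{shooting} perturbs these to parameters whose orbit escapes to the annulus, which lie outside $\tcal(\mathbf f)$. Hence $\la_0\in\partial\tcal(\mathbf f)$.

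\emph{Main obstacle.} The hard step is case (1) of passive implies $J$-stable: getting an equivariant holomorphic motion of $J$ when $K$ is connected, where pullbacks of the annulus are pinched at prepoles. I would handle this by moving the dense set of prepoles via repeated application of Lemma \ref{lem:liftholmot}, using the absence of virtual cycles to justify each lift, and then closing up with the $\la$-lemma.
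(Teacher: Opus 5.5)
\textbf{Gap in case (2).} Your case (2) of ``passive implies $J$-stable'' does not go through as written.

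To lift the level-$n_0$ motion $h^{(n_0)}_\la$ to level $n_0+1$ with Lemma \ref{lem:liftholmot}, you need $h^{(n_0)}_\la(x)\neq v_\la$ for every $x\neq v_{\la_0}$ in the level-$n_0$ piece $f_{\la_0}^{-n_0}(\overline{V_{\la_0}}\setminus U_{\la_0})$. But $v_\la$ lies in the interior of the corresponding piece for $f_\la$, and that piece is the image of the $\la_0$-piece under $h^{(n_0)}_\la$. So the condition holds only if $h^{(n_0)}_\la(v_{\la_0})=v_\la$. By uniqueness of lifts, this is equivalent to $h_\la(f^{n_0}_{\la_0}(v_{\la_0}))=f^{n_0}_\la(v_\la)$ for the original motion of the annulus.

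The equivariant motion of Definition \ref{def:equipped} has no reason to satisfy this, and shrinking $W$ does not help. At every $\la\neq\la_0$ where it fails, some other point $x$ of the piece is sent onto $v_\la$, and the lift is obstructed over $f_{\la_0}^{-1}(x)$. Your remark that later pullbacks no longer contain the orbit concerns the pieces of level $>n_0$, whereas the obstruction sits in the level-$n_0$ piece itself. The paper handles exactly this point in Lemma \ref{lem:existholmot}: it first re-chooses $h_\la$ via Slodkowski's theorem so that it sends $f^{n_0}_{\la_0}(v_{\la_0})$ to $f^{n_0}_\la(v_\la)$, and uniqueness of lifts then gives $h^{(n_0)}_\la(v_{\la_0})=v_\la$.

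\textbf{Simpler repair.} You can drop the case distinction entirely. In case (2) there are no virtual cycles near $\la_0$: the orbit is defined up to time $n_0$ and then leaves $\overline{U_\la}$, so it never hits $u_\la$. In case (1), virtual cycles are excluded directly by Definition \ref{def:passive}, since $u_\la\in\partial U_\la\not\subset U_\la$; no Shooting Lemma is needed. Absence of virtual cycles over a simply connected $W$ is all your prepole argument uses. The prepoles $f_\la^{-k}(u_\la)$ then move holomorphically and injectively by iterating Lemma \ref{lem:liftholmot}, they are dense in $J(f_\la)$, and the $\la$-lemma finishes. Normality and the motion of repelling periodic points are not needed.

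\textbf{Comparison with the paper.} With that fix your proof is correct, and it differs from the paper's in three places.

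For ``passive implies $J$-stable'', the paper considers the analytic sets $Z_n=\{(\la,z): f^n_\la(z)=z\}$ over a simply connected passive $W$. It shows the projection to $W$ has no critical values (no non-persistent parabolic cycles) and no asymptotic values (by \cite[Lemma 3.4]{ABF} these would be virtual cycle parameters). Hence each component of $Z_n$ is a graph, and the $\la$-lemma plus density of repelling periodic points conclude. This treats both passive cases uniformly but imports a lemma from \cite{ABF}; your prepole lifting is more elementary and self-contained.

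For ``active implies not $J$-stable'', the paper simply asserts that $J$-stability would force the virtual cycle to persist. Your argument is a more explicit justification: $J$ is connected at the virtual-cycle parameter by Proposition \ref{prop:Kconnected}, it is disconnected after shooting the orbit into the annulus, and a holomorphic motion would make these Julia sets homeomorphic.

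For the identification with $\partial\Tf$, the paper proves passivity in the interior of $\Tf$ by Montel and in its exterior via case (2). You instead use density of virtual cycles plus the Shooting Lemma. Both work.
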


The proof uses the same type of arguments as in \cite{ABF}, but will be simpler since we will not have to deal with persistent virtual cycles or critical points.

\begin{proof}

	Let us first prove that the family $\{f_\la\}$ is $J$-stable near any parameter $\la_0$ in the passivity locus.
	Let $W \subset \Lambda$ be any simply connected neighborhood of $\la_0$ contained in the passivity locus.
	For any $n \in \N$, let 
	$$Z_n:=\{(\la,z), \la \in W \text{ and } z \in U_\la : f_\la^n(z)=z  \},$$ 
	and let $\pi : Z_n \to W$ denote the projection on the first coordinate.
	The set $Z_n$ is an analytic subset of the domain $\mathcal{U} \subset \C^2$. 
{Since $W$ is in the passivity locus, there cannot be nonpersistent   parabolic parameters in $W$, so by the Implicit Function Theorem, $Z_n$ is a smooth complex 1-manifold and $\pi: Z_n \to W$ is locally invertible (i.e.$\pi$ has no critical values).  

Also, by   \cite[Lemma 3.4]{ABF}, a parameter $\lambda_*\in\Lambda$ is an asymptotic value for  $\pi$ only if the map $f_{\la_*}$ has a  virtual cycle of period at most $n$. Such virtual cycle cannot be persistent by Remark  \ref{rem:vsdense}, and cannot be non-persistent because $W$ is in the passivity locus. So $\pi$ does not have asymptotic values either.
	}

	This proves that for   any connected component $Z$ of $Z_n$, $\pi: Z \to W$ is a covering map; but since $W$ is simply connected, it is a conformal isomorphism; in other words, $Z$ is a holomorphic graph above $W$. Since this is true for any connected component of $Z_n$, we have proved that all periodic points move holomorphically over $W$. By the $\la$-lemma and the density of repelling periodic points in $J(f_\la)$ (which follows easily from the Straightening Theorem), 
	the family $\{f_\la\}_{\la \in W}$ is therefore $J$-stable.
	\medskip
	
	Let us now prove that if $W$ is a domain which intersects the activity locus, then $\{f_\la\}_{\la \in W}$ is not $J$-stable. By density of paraemeters with virtual cycles (see  Remark \ref{rem:vsdense}), $W$ contains a parameter $\la_0$ with a virtual cycle:
	$f_{\la_0}^n(v_{\la_0}) = \infty$. Assume for a contradiction that $\{f_\la\}_{\la \in W}$ is $J$-stable: 
	then for all $\la \in W$, $f_\la^n(v_\la)=\infty$, or in other words, there is a persistent virtual cycle.
	But as observed in Remark \ref{rem:vsdense}, this is not possible for a proper TL family.
	
	\medskip
	
	We now prove that the activity locus coincides with $\partial\tcal(\mathbf f)$. Observe that for $\lambda\in \Tf$,  $v_\lambda\in K_\lambda$ hence $f_\lambda^n(v_n)\in U_\lambda$ for all $n\in\N$, while for $\lambda\in\Lambda\setminus \Tf$, there is $n$ such that  $f_\lambda^n(v_\lambda)\in V_\lambda\setminus \ov{U_\lambda}$ (in particular, $v_\lambda$ is passive for $\lambda\in\Lambda\setminus \Tf$). So if $\lambda\in \partial\Tf$, $v_\lambda$ must be active at $\lambda$: it cannot satisfy (1) in Definition~\ref{def:passive}  because the family of iterates of the singular value is not well defined for parameters in $\Lambda\setminus\Tf$, and it cannot satisfy (2) either because  $f_\lambda^n(v_n)\in U_\lambda$ for all $n\in\N$.
It remains to show that if $\lambda_0$ is in the interior of $\Tf$ then $v_{\lambda}$ is passive at $\lambda_0$. For all $\lambda$ in a neighborhood $D$ of $\lambda_0$ and all $n\in\N$, $f_{\lambda}^n(v_\lambda)\in U_\lambda$, so the family $(f_\lambda^n(v_\lambda))$ takes values in a bounded set when $\lambda\in D$, and normality follows by Montel's Theorem.
\end{proof}

\begin{coro}\label{cor:boundary approximated by interior}
Any parameter $\lambda\in \partial \Tf$ is approximated by attracting parameters.
\end{coro}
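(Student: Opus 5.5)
The argument goes through virtual cycles and the shooting lemma, so that the approximating parameters are the last step of a chain of perturbations. Let $\la_0\in\partial\Tf$ and let $W$ be an arbitrarily small neighbourhood of $\la_0$. By Proposition \ref{prop:msstl}, $v_\la$ is active at $\la_0$. By Remark \ref{rem:vsdense}, $W$ contains a parameter $\la_1$ with a virtual cycle $f_{\la_1}^n(v_{\la_1})=u_{\la_1}$, and this virtual cycle is non-persistent.

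I want to perturb $\la_1$ so that the orbit of $v_\la$ lands on a point with an attracting dynamics. The plan is to use the straightening map rather than to find an attracting cycle in $f_{\la_1}$ directly. By Proposition \ref{prop:TL implies natural} the family is natural. I will run the Shooting Lemma (Proposition \ref{shooting}) inside the TL family: near $\la_1$ there are parameters $\la'$ with $f_{\la'}^{n+1}(v_{\la'})=\gamma(\la')$ for any holomorphic germ $\gamma$ avoiding $v_\la$. Its proof is the same argument used there, namely lifting a small disc around the virtual-cycle value through the logarithmic tract over $v_\la$, using Lemma \ref{lem:liftholmot}.

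The choice of target $\gamma(\la)$ is where the attracting behaviour enters. First, by Proposition \ref{prop:density_of_virtual cycles} and Remark \ref{rem:vsdense}, virtual cycle parameters are dense in $\partial\Tf$, so I would rather produce a superattracting-like relation. Tangent-like maps have no critical points, so instead I pick $\gamma(\la)$ to be a holomorphically moving periodic point, or more simply a preimage of $v_\la$ itself. Concretely, I would solve $f_\la^{n+1}(v_\la)=w_\la$, where $w_\la$ is a point of $f_\la^{-1}(\text{tract})$ chosen so that $f_\la^{m}(w_\la)=v_\la$. This produces a parameter $\la_2$ for which $v_{\la_2}$ is periodic. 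A periodic asymptotic value has a periodic orbit in whose Fatou neighbourhood the map is contracting, since a logarithmic tract maps onto a punctured disc with hyperbolic contraction. Hence the cycle through $v_{\la_2}$ is attracting, being "super-attracting" in the transcendental sense: the multiplier at $v$ is not defined, but a small disc around $v$ is mapped into itself by $f^{m+n+1}$. Then a nearby open set of parameters has a genuine attracting cycle absorbing $v_\la$, by an application of Rouché and Schwarz's lemma to $f^{N}_\la$ on that small disc, which maps compactly inside itself for $\la$ near $\la_2$.

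The main obstacle is ensuring that the target germ $w_\la$ exists and is holomorphic near $\la_1$, and avoids $v_\la$. I would produce it by lifting $v_\la$ back along $f_\la^{-m}$ via Lemma \ref{lem:liftholmot}, using the Straightening Theorem to guarantee that $v_{\la_1}$ has preimages in $J$ away from the virtual cycle. I also need to check the hypothesis $\gamma(\la_1)\neq v_{\la_1}$ required by Proposition \ref{shooting}. If this is delicate, the fallback is to shoot onto a repelling periodic point close to an attracting parameter of the model family, transported through $\chi$ and Theorem B: hyperbolic components of $\tcal$ are dense in the interior by Theorem \ref{th:hyperbolicornon-hyperbolic}.
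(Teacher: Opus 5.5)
Your key step is impossible in this setting. In a tangent-like map the asymptotic value is \emph{omitted}: $f_\la:\overline{U_\la}\setminus\{u_\la\}\to\overline{V_\la}\setminus\{v_\la\}$ is a covering. So no point $w_\la$ satisfies $f_\la^m(w_\la)=v_\la$ for $m\geq 1$, and $v_\la$ can never be periodic. In the model, $1/\alpha$ is omitted by $T_\alpha$, and this is why Lemma \ref{lem:liftholmot} assumes $\phi^{(0)}_\la\circ f_{\la_0}\neq v_\la$. The Straightening Theorem cannot supply preimages of $v_{\la_1}$ either.

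The heuristic behind this step is also wrong for maps whose asymptotic values are not omitted. The tract over $v$ is a neighbourhood of the essential singularity, not of $v$. The multiplier of a cycle through $v$ is the product of $f'$ at regular points, which can be of any size, so there is no ``transcendental super-attraction'' at $v$.

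The fallback does not repair this:
\begin{enumerate}
\item Shooting onto a repelling periodic point gives parameters where $v_\la$ lands on a repelling cycle, not attracting ones.
\item Theorem \ref{th:hyperbolicornon-hyperbolic} does not say hyperbolic components are dense; non-hyperbolic components are not excluded.
\item Pulling back attracting parameters of $\tcal$ through $\chi$ would need $\chi$ to be locally onto near boundary points. That is exactly what this corollary is used for in Proposition \ref{prop: chi surjective}, so the argument would be circular.
\end{enumerate}

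The paper does not build attracting parameters by hand. It notes that $\partial\Tf$ is the activity locus (Proposition \ref{prop:msstl}), the family is natural (Proposition \ref{prop:TL implies natural}), and tangent-like maps have non-omitted poles (via straightening). It then applies \cite[Corollary 5.9]{ABF}.

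To argue directly, you need a \emph{near}-return rather than an exact one. Write $F_\la=\log(f_\la-v_\la)$ on the tract. For $\la$ close to a virtual cycle parameter with $f_{\la_1}^n(v_{\la_1})=u_{\la_1}$, arrange that $F_\la(f_\la^n(v_\la))$ has very negative real part and bounded imaginary part. Then $f_\la^{n+1}(v_\la)$ is very close to $v_\la$, and $|f_\la'|$ is very small at $f_\la^n(v_\la)$. Since $f_\la^n$ is univalent with bounded derivative near $v_\la$, a small disc $B$ around $v_\la$ satisfies $f_\la^{n+1}(B)\Subset B$. This gives an attracting cycle passing near $u_\la$ and attracting $v_\la$.

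This requires controlling \emph{which} preimage in the tract the point $f_\la^n(v_\la)$ is. Preimages of a fixed target with $|\Im F_\la|$ large have large derivative. The Shooting Lemma alone only prescribes $f_\la^{n+1}(v_\la)$, so it does not give this control.
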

\begin{proof}
Since $\Tf$ is in the activity locus, and tangent-like maps have non-omitted poles by the straightening theorem, we can use Corollary 5.9 in \cite{ABF} to obtain the claim.
\end{proof}

\subsection{The straightening map is a homeomorphism} \label{sec:chihomeo}

In this section we aim at showing that for proper equipped tangent-like families,  the straightening map $\chi$ induced by Proposition~\ref{prop:straightening family} is a homeomorphism between $\Tf$ and $\TT$.
We first look at the strightening map on $\Lambda\setminus\Tf$, to obtain a criterion for the injectivity of $\chi$ in terms of the winding number of $v_\lambda-u_\lambda$ around $0$; we then look at $\chi$ on $\Tf$, with the main goal of showing that fibers are discrete (see Proposition~\ref{prop:discrete fibers}); we then use this result, together with a topological argument by Douady and Hubbard, to deduce that $\chi$ is a homeomorphism.

\subsubsection{The straightening map outside of $\tcal(\mathbf{f})$}

Our goal in this subsection is to prove the following statement.

\begin{prop}[Continuity in the disconnectedness locus]\label{prop:cont}
	The straightening map $\chi$ and the hybrid conjugacy $\phi_\la$ both depend continuously on $\la$ on $\Lambda \setminus \mathcal{T}(\mathbf f)$.
\end{prop}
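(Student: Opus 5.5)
The plan is to follow the Douady--Hubbard argument for continuity of the straightening map outside the connectedness locus. In that region the filled Julia set has zero area, and the $F$-invariant Beltrami form from the proof of Theorem \ref{th:straight} depends continuously on $\la$ in $L^1$, with uniformly bounded $L^\infty$ norm.

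Fix $\la_0 \in \Lambda \setminus \tcal(\mathbf f)$. For each $\la$, the proof of Proposition \ref{prop:straightening family} builds a quasiregular map $F_\la$. It equals $f_\la$ on $U_\la$ and $t_\la^{-1}\circ T_0\circ t_\la$ off $U_\la$, and carries an invariant Beltrami form $\mu_\la$. This form is $t_\la^*\mu_0$ on $\rs\setminus U_\la$, is pulled back by $F_\la^n$ on $F_\la^{-n}(\overline{V_\la}\setminus U_\la)$, and is $0$ on $K(f_\la)$. Then $\phi_\la$ is the normalized solution of the Beltrami equation for $\mu_\la$ (fixing $\infty$, sending $t_\la^{-1}(0)\mapsto 0$ and $t_\la^{-1}(1)\mapsto 1$), and $\chi(\la)=1/\phi_\la(v_\la)$.

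Because the tubing is built from the holomorphic motion $h_\la$, $t_\la$ and hence $\mu_\la$ on the fundamental annulus vary continuously in $\la$ (a.e.). The dilatation is also bounded by $\|\mu_\la\|_\infty\le k<1$ uniformly for $\la$ near $\la_0$.

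The key steps, in order, are the following.

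\emph{Step 1.} Show that $K(f_\la)$ has Lebesgue measure zero for $\la\notin\tcal(\mathbf f)$. By Theorem \ref{th:straight}, $f_\la$ is hybrid equivalent to some $T_\alpha$ with $\alpha\notin\tcal$. Then $K(T_\alpha)=J(T_\alpha)$ is the Julia set of a finite type map all of whose singular values lie in the attracting basin $\Omega_\alpha$. Such maps are hyperbolic with Julia set of measure zero (or one invokes \cite{GaKo2}, where $K$ is totally disconnected and the map is expanding on $K$). Quasiconformal maps preserve null sets.

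\emph{Step 2.} Prove continuity of $\la\mapsto\mu_\la$ in $L^1$ near $\la_0$. Since $v_{\la_0}$ escapes, there is $N$ with $f_{\la_0}^N(v_{\la_0})\in V_{\la_0}\setminus\overline{U_{\la_0}}$, and this persists near $\la_0$. Hence the first $N$ preimages of the fundamental annulus move holomorphically by Lemma \ref{lem:liftholmot}. For each fixed $n$, the lifted motion gives that $\mu_\la$ restricted to $\bigcup_{j\le n}F_\la^{-j}(\overline{V_\la}\setminus U_\la)$ converges a.e. to $\mu_{\la_0}$ after transport. More precisely, one uses the holomorphic motion of $\bigcup_{j\le n}f^{-j}_{\la_0}(\overline{V_{\la_0}}\setminus U_{\la_0})$ built inductively. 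Away from the escape level no obstruction to lifting arises, since the annuli at deeper levels avoid $v_\la$ only after level $N$, and a more careful inductive lifting handles the pieces containing $v_\la$.

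Given $\varepsilon$, choose $n$ with $\mathrm{area}\big(\bigcap_{j\le n}\overline{f_{\la_0}^{-j}(U_{\la_0})}\big)<\varepsilon$; this is possible by Step 1 and monotone convergence. The same area bound holds for nearby $\la$ by continuity of the motion. The remaining part contributes at most $2\varepsilon$ to $\|\mu_\la-\mu_{\la_0}\|_1$.

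\emph{Step 3.} By the continuous dependence of normalized solutions of the Beltrami equation on $\mu$ in $L^1$ under a uniform $L^\infty$ bound, $\phi_\la\to\phi_{\la_0}$ uniformly. The normalization points $t_\la^{-1}(0),t_\la^{-1}(1)$ move continuously, and $v_\la$ is holomorphic, so $\chi(\la)=1/\phi_\la(v_\la)$ is continuous.

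The main obstacle is Step 2. Near the essential singularity $u_\la$, the pulled-back pieces accumulate and there are infinitely many of them. So "a.e. convergence of $\mu_\la$ on level $j$" must be justified carefully: one needs the holomorphic motion of $f_{\la_0}^{-j}(\overline{V_{\la_0}}\setminus U_{\la_0})$ given by Lemma \ref{lem:liftholmot}, together with the $\la$-lemma to get quasiconformal extensions with dilatation $\to 0$ as $\la\to\la_0$. The plan is to write $\mu_\la=(H^j_\la)^*$-transport of $\mu_{\la_0}$ composed with an error controlled by the dilatation of $H^j_\la$. Dominated convergence then gives $L^1$ convergence on each finite level, with the tail controlled by the small area.
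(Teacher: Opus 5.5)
\textbf{The gap is Step~1.} Your tail estimate needs $\operatorname{area}(K(f_{\la_0}))=0$, and the justification you give is false for transcendental maps: finite type plus hyperbolicity does not make a Julia set Lebesgue-null. McMullen proved that $J(\sin(cz))$ has positive area for every $c\neq 0$. For small $c$ this map is hyperbolic of finite type, with both critical values $\pm 1$ in the basin of the attracting fixed point $0$. It is therefore also expanding on its Julia set, so ``expanding on $K$'' does not help. Total disconnectedness of $K$ says nothing about area either. Zero area may well hold for tangent-like maps with escaping $v$, but it would be a new result. It would need its own proof, for instance a bounded-distortion density argument that copes with the essential singularity. (If you had Step~1, the rest of your route would be fine and even elementary. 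For fixed $n$, $\la\mapsto\operatorname{area}\bigcap_{j\le n}\overline{f_\la^{-j}(U_\la)}$ is upper semicontinuous, because leaving $\overline{U_\la}$ within $n$ steps is an open condition in $(\la,z)$.)

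\textbf{How the paper avoids needing Step~1.} Since $\mu_\la\equiv 0$ on $K(f_\la)$, one has $\|\mu_{\la,n}-\mu_\la\|_{L^1}\le \operatorname{area}(X_n(\la))$, where $X_n(\la)=\bigcap_{k\le n}f_\la^{-k}(U_\la)\setminus K(f_\la)$. These sets decrease to $\emptyset$ for every $\la$, whatever the area of $K(f_\la)$ (Lemma~\ref{lem:area}). The real difficulty is then not the fixed-level convergence you single out. It is making $\operatorname{area}(X_n(\la))\to 0$ locally uniformly in $\la$, which your phrase ``by continuity of the motion'' skips.

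The paper obtains this uniformity in two steps:
\begin{itemize}
\item \emph{Structural stability} (Lemma~\ref{lem:existholmot}). One builds a holomorphic motion $H_\la$ of all of $V_{\la_0}$ with $f_\la\circ H_\la=H_\la\circ f_{\la_0}$, so that $X_n(\la)=H_\la(X_n(\la_0))$.
\item \emph{Jacobian control.} Uniform convergence of $H_\la$ alone does not bound $\int_{X_n(\la_0)}\jac H_\la$ uniformly. The paper compares $\jac H_\la$ with $\|dH_\la\|^2$ and uses that $\la\mapsto\int_{X_n(\la_0)}\|dH_\la\|^2$ is subharmonic. The sub-mean-value inequality plus dominated convergence then upgrades pointwise convergence to locally uniform convergence.
\end{itemize}

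\textbf{Your inductive lifting fails at level $N+1$.} Lemma~\ref{lem:liftholmot} needs $h^{(N)}_\la\circ f_{\la_0}(z)\neq v_\la$. The map $f_{\la_0}$ sends the level-$(N+1)$ piece onto the level-$N$ piece minus $v_{\la_0}$, and $h^{(N)}_\la$ is injective. So the condition amounts to $h^{(N)}_\la(v_{\la_0})=v_\la$, i.e.\ to the fundamental-annulus motion sending $f^N_{\la_0}(v_{\la_0})$ to $f^N_\la(v_\la)$. The given equivariant motion need not do this. The missing idea is to re-choose that motion by Slodkowski, prescribing it on $\partial U_{\la_0}\cup\partial V_{\la_0}$ (which preserves equivariance) and at the escaping point. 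After that, lifting works at every level, and the motion extends across $K(f_{\la_0})$, which has empty interior.
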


The next Lemma shows that it sufices to prove that the sequence of area functions 
\[
X_n(\lambda):= \text{area}\left( \bigcap_{0 \leq k \leq n} f_\la^{-k}(U_\la) \setminus K(f_\la) \right)
\]
decrease uniformly to 0. 

\begin{lem}[Area condition]\label{lem:area}
	If the sequence of maps $\{X_n(\lambda)\}_{n\geq 0}$	converges locally uniformly to $0$ outside $\mathcal{T}(\mathbf f)$,
	then Proposition \ref{prop:cont} holds.
\end{lem}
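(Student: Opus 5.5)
The plan is to follow the classical Douady--Hubbard argument for continuity of straightening outside the connectedness locus. Everything is controlled by the Beltrami coefficient $\mu_\la$ produced in the proof of the Straightening Theorem (Proposition \ref{prop:straightening family}), and by the continuous dependence of normalized solutions of the Beltrami equation on $\mu$ in $L^1$ when the dilatation is uniformly bounded. Recall the construction: $\mu_\la = t_\la^*\mu_0$ on the fundamental annulus $A_\la:=V_\la\setminus \overline{U_\la}$, $\mu_\la=0$ outside $V_\la$, $\mu_\la$ is pulled back by $F_\la$ on $F_\la^{-n}(A_\la)$, and $\mu_\la=0$ on $K(f_\la)$. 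The hybrid conjugacy is the normalized solution $\phi_\la$ (fixing $\infty$, sending $z^*_\la=t_\la^{-1}(0)$ to $0$ and $t_\la^{-1}(1)$ to $1$), and $\chi(\la)=1/\phi_\la(v_\la)$. Since the family is equipped and we use the standard tubing, $t_\la=\phi\circ h_\la^{-1}$ depends continuously on $\la$, and its dilatation is bounded uniformly on compact subsets of $\Lambda$. Hence $\|\mu_\la\|_\infty\le k<1$ locally uniformly.

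\textbf{Step 1.} Show that $\la\mapsto \mu_\la$ is continuous in $L^1(\rs)$ at every $\la_0\in\Lambda\setminus\tcal(\mathbf f)$. Split the sphere into the region $E_n(\la):=\bigcup_{0\le k< n}F_\la^{-k}(\rs\setminus U_\la)$, where $\mu_\la$ is determined by at most $n$ pullbacks, and its complement. The complement is contained in $\bigcap_{0\le k\le n} f_\la^{-k}(U_\la)$, and $K(f_\la)$ has measure zero there. (Measure zero is not an issue here: $\mu_\la=0$ on $K(f_\la)$ by definition, and all we need is that $\mu_\la$ is bounded by $k$ off $E_n(\la)$.) So $\|\mu_\la-\mu_{\la_0}\|_{L^1}$ is at most the $L^1$ distance on $E_n(\la)\cap E_n(\la_0)$, plus $2k$ times the area of the symmetric difference, plus $k\,(X_n(\la)+X_n(\la_0))$.

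\textbf{Step 2.} The hypothesis of the lemma is that $X_n\to 0$ locally uniformly near $\la_0$. Fix $\epsilon$ and choose $n$ with $X_n<\epsilon$ on a neighbourhood of $\la_0$. On the finite-level region, $F_\la^{-k}$ for $k<n$ consists of inverse branches of the covering $f_\la$. By Lemma \ref{lem:liftholmot}, applied iteratively starting from the equivariant holomorphic motion $h_\la$, these branches move holomorphically. This requires checking the hypothesis of that lemma: the relevant images must avoid $v_\la$. This holds near $\la_0\notin\tcal(\mathbf f)$ for the finitely many levels used, because the orbit of $v_{\la_0}$ leaves $\overline{U_{\la_0}}$ at some finite time and this persists nearby. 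Consequently the sets $F_\la^{-k}(A_\la)$ and the pulled-back coefficients vary continuously. This gives the required smallness of the first two terms (pointwise a.e. convergence plus dominated convergence). Note that the area of the symmetric difference also involves the near-$K$ part, but this part has area bounded by $X_n$.

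\textbf{Step 3.} From $L^1$-continuity of $\mu_\la$ with uniform bound $k$, the Measurable Riemann Mapping Theorem with parameters gives that the normalized $\phi_\la$ depends continuously on $\la$ in the uniform topology. The normalization points $t_\la^{-1}(0)$ and $t_\la^{-1}(1)$ move continuously, so one can renormalize by a continuously varying Möbius map. Then $\chi(\la)=1/\phi_\la(v_\la)$ is continuous as well.

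I expect the main obstacle to be Step 2, namely controlling the finitely many pullback levels uniformly near $\la_0$, including near the essential singularity $u_\la$, where the pieces $F_\la^{-k}(A_\la)$ accumulate. I would handle the accumulation by shrinking the pieces to exclude a small disk about $u_\la$. Such a disk contributes small area independently of $\la$, which suffices because the integrand is bounded.
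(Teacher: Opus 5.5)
Your overall plan matches the paper's proof: a locally uniform bound $\|\mu_\la\|_\infty\le k<1$ inherited from the motion defining the tubing, $L^1$-continuity of $\la\mapsto\mu_\la$ from finitely many pullback levels plus a tail whose area is controlled by $X_n$, then continuity of normalized solutions of the Beltrami equation and $\chi(\la)=1/\phi_\la(v_\la)$. The step that fails as written is the justification in Step~2.

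\textbf{Why Step~2 fails.} Let $N$ be the first time at which $f_{\la_0}^N(v_{\la_0})$ leaves $U_{\la_0}$, say into the open fundamental annulus. When you pass from level $N$ to level $N+1$, Lemma~\ref{lem:liftholmot} requires $h^{(N)}_\la(w)\neq v_\la$ for every $w\neq v_{\la_0}$ in the level-$N$ set. Since $h^{(N)}_\la$ is a bijection between level-$N$ sets, this means $h^{(N)}_\la(v_{\la_0})=v_\la$, i.e.\ $h_\la(f_{\la_0}^N(v_{\la_0}))=f_\la^N(v_\la)$. Nothing in the definition of an equipped family forces this, because $h_\la$ is constrained only on $\partial U_{\la_0}$. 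When it fails, no lift to level $N+1$ exists at all: it would produce a point of $U_\la$ that $f_\la$ maps to the omitted value $v_\la$. Persistence of the escape of $v_\la$ does not cure this. Moreover, making $X_n<\epsilon$ forces $n>N$, so the unreachable levels are exactly the ones you need.

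\textbf{How to fix it.} You do not need holomorphic motions here, and the paper does not use them in this lemma. It works with truncations $\mu_{\la,n}$, equal to $\mu_\la$ on the first $n$ levels and $0$ elsewhere. Then $\|\mu_{\la,n}-\mu_\la\|_{L^1}\le X_n(\la)$, and only continuity of $\la\mapsto\mu_{\la,n}$ for fixed $n$ is required. This also removes your symmetric-difference bookkeeping. There, the part of $E_n(\la)\triangle E_n(\la_0)$ lying in $K(f_\la)\cup K(f_{\la_0})$ is not bounded by $X_n$ (you do not know these sets are null) and has to be absorbed by dominated convergence.

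Fixed-$n$ continuity can be proved directly:
\begin{enumerate}
\item Finitely many pieces on which $f_{\la_0}^j$ ($j\le n$) is univalent carry all but $\epsilon$ of the area of the first $n$ levels. A single disk about $u_\la$ is not a sufficient cut-off, since level-$k$ pieces also accumulate at prepoles of order $<k$.
\item On compact subsets of these pieces, $f_\la^j\to f_{\la_0}^j$ uniformly with derivatives.
\item Conclude from $L^1$-continuity of $\mu_{\la,0}$ by change of variables, after approximating $\mu_{\la_0,0}$ in $L^1$ by continuous functions. This approximation is needed because a merely measurable coefficient composed with a moving point need not converge pointwise a.e.
\end{enumerate}
If you prefer to keep motions, you must first modify $h_\la$ inside the annulus so that it tracks $f_\la^N(v_\la)$, as in Lemma~\ref{lem:existholmot}. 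That modified motion no longer defines $t_\la$, however, so $\mu_\la$ is not simply transported by it.
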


\begin{proof}
	If we can prove that the hybrid conjugacy $\phi_\la$ depends continuously on $\la$, then the continuity of $\chi$ will follow, 
	since $\chi(\la)=\frac{1}{\phi_\la(v_\la)}$.
	Recall that $\phi_\la$ is the restriction to $V_\la$ of a quasiconformal homeomorphism of $\rs$, whose Beltrami coefficient we denote by $\mu_\la$.
	To prove continuity of $\la \mapsto \phi_\la$, it is enough to prove 
	\begin{itemize}
		\item[(a)] that there exists $0<c<1$ and $\delta>0$ such that for all $\la \in \D(\la_0,\delta)$, 
		$\|\mu_\la\|_{L^\infty}\leq c$;
		\item[(b)] and that the map $\la \mapsto \mu_\la$ is continuous in the $L^1$ topology.
	\end{itemize}
	(See \cite[I. 3)]{douhub}).	  
	In order to prove (a) and (b), let us go back through the construction of $\phi_\la$.
	
	For all $\la \in \Lambda$, the tubing map $t_\la$ is a quasiconformal homeomorphism between the annulus 
	$V_\la \setminus U_\la$ and the  annulus $V \setminus T_0^{-1}(V)$, where $T_0:  T_0^{-1}(V) \to V$ is TL.
	Moreover, $t_\la$ is more precisely of the form $t_\la = t_{\la_0} \circ h_\la^{-1}$, where $h_\la : \overline{V_{\la_0}} \setminus U_{\la_0} \to \overline{V_\la} \setminus U_\la$ is the equivariant motion of the fundamental annulus. Let $\mu_{\la,0}:=\frac{\overline{\partial} t_\la}{\partial t_\la}$ be the Beltrami form on $V_\la \setminus U_\la$, and for any $n \in \N$, let
	$$\mu_{\la,n}:=\left\{
	\begin{array}{ll}
		(f_\la^j)^*\mu_{\la,0} \quad &\text{ on } f_\la^{-j}(V_\la \setminus U_\la) \text{ for $0 \leq j \leq n$} \\
		0 \quad &\text{ elsewhere. }
	\end{array}
	\right.
	$$
	By construction, $\mu_\la = \limn \mu_{\la,n}$, pointwise a.e.
	Property (a) follow from the fact that 
	$\| \mu_\la \|_{L^\infty} = \| \mu_{\la,0} \|_{L^\infty}$, which is locally bounded since 
	$(h_\la)_{\la \in \Lambda}$ is a holomorphic motion. Let us now prove (b).

	By the explicit expression of $\mu_{\la, 0}$ in terms of $\mu_{\la_0,0}$, if $(\la_k)_{k \in \N}$ is a sequence of parameters with $\lim_{k \to +\infty} \la_k = \la_0$, 
	then $\mu_{\la_k, 0}$ converges pointwise a.e. to $\mu_{\la_0, 0}$. Moreover, by definition of a quasiconformal map, we have $|\mu_{\la_k, 0}(z)| <1$ for all $k \in \N$
	and for a.e. $z \in \rs$. By the dominated convergence theorem, it follows that $\lim_{k \to +\infty} \|  \mu_{\la_k,0} - \mu_{\la_0,0} \|_{L^1} = 0$. 
	In other words, $\la \mapsto \mu_{\la, 0}$ is continuous in the $L^1$ topology.
	More generally, by a similar argument, for any fixed $n$, the map $\la \mapsto \mu_{\la, n}$ is also continuous in the $L^1$ topology.
	To ensure continuity of $\la \mapsto \mu_\la$, we must therefore prove that the sequence of maps $(\la \mapsto \mu_{\la, n})_{n \in \N}$ converges uniformly 
	in $C^0(W, L^1(\rs))$, where $W$ is some fixed neighborhood of $\la_0$; but observe that 
	$$\| \mu_{\la, n} - \mu_\la\|_{L^1} \leq  \text{area}\left( \bigcap_{0 \leq k \leq n} f_\la^{-k}(U_\la) \setminus K(f_\la) \right).$$
	It is therefore indeed sufficient to prove that  $\text{area}\left( \bigcap_{0 \leq k \leq n} f_\la^{-k}(U_\la) \setminus K(f_\la) \right)$	
	converges uniformly to $0$ on a neighborhood $W$ of $\la_0$.
\end{proof}

The next lemma says that $V_\la$ moves locally holomorphically, with a holomorphic motion that preserves the dynamics.

\begin{lem}[Quasiconformal conjugacies]\label{lem:existholmot}
	Let $\la_0 \in \Lambda \setminus \mathcal{T}$. There exists a neighborhood $W$ of $\la_0$ and a holomorphic motion $H_\la: V_{\la_0} \to V_\la$ 
	($\la \in W$) such that for all $z \in U_{\la_0}$,
	$$H_\la \circ f_{\la_0}(z) = f_\la \circ H_\la(z).$$
\end{lem}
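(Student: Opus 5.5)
Since $\la_0 \notin \tcal(\mathbf f)$, there is a minimal $N$ with $f_{\la_0}^N(v_{\la_0}) \in \overline{V_{\la_0}} \setminus \overline{U_{\la_0}}$. This is an open condition, so on a small simply connected disk $W$ around $\la_0$ we have $f_\la^N(v_\la)\in V_\la\setminus\overline{U_\la}$, and $f_\la^j(v_\la) \in U_\la$ for $j<N$. I would build $H_\la$ level by level, starting from the equivariant motion of the fundamental annulus and pulling it back with Lemma \ref{lem:liftholmot}. Along the way I would keep track of the orbit of $v_\la$ so that the hypothesis of that lemma (the lifted point never hits $v_\la$) holds at every stage. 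At the end I would close up using the area argument or the $\la$-lemma.

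\emph{Step 1 (base level).} Start from the equipped motion $h_\la$ of $\overline{V_{\la_0}}\setminus U_{\la_0}$, with the basepoint moved to $\la_0$ by composing with $h_{\la_0}^{-1}$. Modify it inside the annulus $A_\la:=V_\la\setminus\overline{U_\la}$ so that it also carries the marked orbit point $f_{\la_0}^N(v_{\la_0})$ to $f_\la^N(v_\la)$. To do this, define the motion on $\partial V_{\la_0}\cup\partial U_{\la_0}\cup\{f_{\la_0}^N(v_{\la_0})\}$ and extend it by Slodkowski's theorem, restricted to $\overline{A_{\la_0}}$. This preserves equivariance on $\partial U_{\la_0}$, since only the interior of the annulus changes. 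Call the result $H^{(0)}_\la$.

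\emph{Step 2 (pull back $N$ times).} Inductively apply Lemma \ref{lem:liftholmot} to lift $H^{(k)}_\la$ on $X_k=\overline{f_{\la_0}^{-k}(V_{\la_0})}\setminus f_{\la_0}^{-k-1}(U_{\la_0})$. At each lift, also add the point $f_{\la_0}^{N-k-1}(v_{\la_0})\mapsto f_\la^{N-k-1}(v_\la)$ and re-extend, the same way as in Step 1. The added point guarantees $H^{(k)}_\la(z)\ne v_\la$ whenever $z\ne v_{\la_0}$, which is exactly the hypothesis needed for the next lift. The lifts agree on overlaps by the uniqueness in Lemma \ref{lem:liftholmot}, as in the proof of Lemma \ref{lem:extendh}.

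\emph{Step 3 (beyond level $N$).} After level $N$, $f^{-N-1}(U)$ no longer contains the asymptotic value, so every further pullback is a univalent lift on each component. All later lifts therefore exist with no further constraint, and they define a motion of $V_{\la_0}\setminus K(f_{\la_0})$ that is equivariant and injective. Finally, extend to $K(f_{\la_0})$ using the $\la$-lemma of Mañé–Sad–Sullivan. This works because $K(f_{\la_0})$ lies in the closure of the set of points where the motion is already defined: $K$ has empty interior, being a Cantor-like set of points whose orbits stay in $U$ by the straightening and \cite[Lemma 3.3]{GaKo2}. Equivariance passes to the closure by continuity.

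\emph{Main obstacle.} I expect Step 2 to be the hard part: maintaining the condition $H^{(k)}_\la\circ f_{\la_0}\neq v_\la$ while re-extending without destroying the motion already built on previous levels. I would handle it by extending only on the new open piece, with the previous boundary data fixed. Slodkowski applies to a closed set, namely the earlier levels' boundary together with the single new point, and the result is restricted to the new region. Injectivity across levels then follows because distinct levels are disjoint for $\la\in W$.
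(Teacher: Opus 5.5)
Your outline matches the paper's: mark $f_{\la_0}^N(v_{\la_0})$ in the fundamental annulus, extend by Slodkowski, pull back with Lemma \ref{lem:liftholmot}, glue by uniqueness, and fill in $K(f_{\la_0})$ using that it has empty interior. However, Step 2 as you describe it destroys the conclusion.

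\textbf{The re-extension breaks the conjugacy.} Your last paragraph says that at each level $k+1\le N$ you discard the lifted motion on the open piece and replace it by a Slodkowski extension of the boundary data plus one point. On that open piece the relation $f_\la\circ H^{(k+1)}_\la=H^{(k)}_\la\circ f_{\la_0}$ then no longer holds. So the final $H_\la$ fails to conjugate on the interiors of $f_{\la_0}^{-j}(\overline{V_{\la_0}}\setminus U_{\la_0})$ for $1\le j\le N$. These sets lie in $U_{\la_0}$, and the lemma asks for the conjugacy on all of $U_{\la_0}$.

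\textbf{The re-extension is also unnecessary.} The lift already transports the orbit. Write $f_\la=\phi_\la\circ f_{\la_0}\circ\psi_\la^{-1}$ as in Lemma \ref{lem:liftholmot}, and assume inductively that $H^{(k)}_\la(f_{\la_0}^{N-k}(v_{\la_0}))=f_\la^{N-k}(v_\la)$; Step 1 gives this for $k=0$. Then the maps $\la\mapsto H^{(k+1)}_\la(f_{\la_0}^{N-k-1}(v_{\la_0}))$ and $\la\mapsto f_\la^{N-k-1}(v_\la)$ are continuous and agree at $\la_0$. Moreover, $f_\la$ sends both to $f_\la^{N-k}(v_\la)$. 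By uniqueness of path lifting they coincide. Inductively $H^{(N)}_\la(v_{\la_0})=v_\la$, with only the single point marked in Step 1, which is exactly what the paper uses.

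\textbf{The lifting hypothesis for $k<N$.} Your justification points at the wrong thing here. In that range $v_{\la_0}$ is not in $X_k$ at all, and what you need is $v_\la\notin H^{(k)}_\la(X_k)$. This holds because $f_\la^k\circ H^{(k)}_\la=H^{(0)}_\la\circ f_{\la_0}^k$ puts $H^{(k)}_\la(X_k)$ inside $f_\la^{-k}(\overline{V_\la}\setminus U_\la)$, while $f_\la^k(v_\la)\in U_\la$. The marked point matters only at level $N$. There, $H^{(N)}_\la(v_{\la_0})=v_\la$, injectivity, and the fact that $f_{\la_0}$ omits $v_{\la_0}$ give the hypothesis for the lift to level $N+1$. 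Beyond that, $v_\la$ lies in no level.

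\textbf{The boundary case.} The case $f_{\la_0}^{N}(v_{\la_0})\in\partial U_{\la_0}\setminus\{u_{\la_0}\}$ is not covered. With your definition of the minimal $N$, this case places the marked point on $\partial V_{\la_0}$. There the motion is already fixed by equivariance, and the condition is not open. The paper removes this case by first shrinking $V_{\la_0}$ slightly.

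With these repairs, your Step 3 finishes the proof as in the paper: univalent lifts, then the $\la$-lemma on the closure, which is legitimate because $K(f_{\la_0})$ has empty interior.
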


\begin{proof}[Proof of Lemma \ref{lem:existholmot}]
	Let $\la_0 \notin \mathcal{T}$: then there exists $N \in \N$ such that $f_{\la_0}^N(v_{\la_0}) \in V_{\la_0} \setminus \left( U_{\la_0}  \cup \{u_{\la_0}\} \right)$.
	
	If $f_{\la_0}^N(v_{\la_0})  \in \partial U_{\la_0} \setminus \{u_{\la_0}\}$, then up to reducing slightly 
	$V_{\la_0}$, we may reduce to the case where $f_{\la_0}^N(v_{\la_0}) \in V_{\la_0} \setminus \overline{U_{\la_0}}$,
	which we assume from now on.
	Let $D$ be a small simply connected neighborhood of $\la_0$ such that for all $\la \in D$, 
	$f_\la^N(v_\la) \in V_{\la} \setminus \overline{U_\la}$. Let $(h_\la)_{\la \in D}$ denote the equivariant holomorphic motion of $\overline{V_\la} \setminus U_\la$ given by the definition of a holomorphic family of TL maps: we will modify $h_\la$ so that $h_\la ( f_{\la_0}^N(v_{\la_0})) = f_\la^N(v_\la)$. To this end, 
	let $h_\la^{(0)}$ denote the holomorphic motion of $\partial V_{\la_0} \cup \partial U_{\la_0} \cup \{ f_{\la_0}^N(v_{\la_0})\}$ 	over $D$ defined by 
	$$h_\la(z)=\left\{
	\begin{array}{ll}
		h_\la^{(0)}(z) & \text{ if $z \in\partial V_{\la_0} \cup \partial U_{\la_0}$ } \\
		f_\la^N(v_\la) & \text{ if $z=f_{\la_0}^N(v_{\la_0})$ }.
	\end{array}
	\right.$$
	By Slodkowski's theorem, it may be extended to a holomorphic motion of $\rs$; and by continuity, 
	this extension must map $\overline{V_{\la_0}} \setminus U_{\la_0}$ to $\overline{V_\la} \setminus U_\la$.
	We still denote by $h_\la$ this extended holomorphic motion over $D$ of $\overline{V_{\la_0}} \setminus U_{\la_0}$.
	
	Applying successively Lemma \ref{lem:liftholmot}, we obtain holomorphic motions $h^{(j)}$ (for $0 \leq j \leq N$)
	of $f_{\la}^{-j}(\overline{V_\la} \setminus U_\la)$ over $D$, such that 
	$$f_\la \circ h_\la^{(j+1)} = h_\la^{(j)} \circ f_{\la_0}.$$
	For $j=N+1$, observe that since $h_\la^{(N)}(v_{\la_0}) = v_\la$ and since $v_\la$ is an omitted value, 
	we have $h_\la^{(N)} \circ f_{\la_0}(z) \neq v_\la$ for all $z \in f_{\la_0}^{-N-1}( \overline{V_{\la_0} } \setminus U_{\la_0}   )$. 
	We may therefore still apply Lemma \ref{lem:liftholmot} to construct $h_\la^{(N+1)}$ on  $f_{\la_0}^{-N-1}( \overline{V_{\la_0} \setminus U_{\la_0} })$, and more generally $h_\la^{(j)}$ for all $j \geq N+1$.

	Moreover, since $h_{\la}^{(0)}$ is equivariant and by unicity of the lift, the $h_\la^{(j)}$ glue together continuously, i.e. we have 
	$h_\la^{(j+1)}=h_\la^{(j)}$ on $f_\la^{-j}(\partial U_{\la_0})$.
	Therefore, for every $\la \in D$, the map $H_\la$ defined by
	$$H_\la(z):=h_\la^{(j)}(z) \text{ if $z \in f_{\la_0}^{-j}(\overline{V_{\la_0}} \setminus U_{\la_0}  )$ } $$
	is a quasiconformal homeomorphism satisfying 
	$$f_\la \circ H_\la(z) = H_\la \circ f_{\la_0}(z)$$
	for all $z \in \bigcup_{j \geq 1} f_{\la_0}^{-j}(\overline{V_{\la_0}} \setminus U_{\la_0}) = U_{\la_0} \setminus K(f_{\la_0})$.
	It is also a holomorphic motion of $\bigcup_{j \in \N} f_{\la_0}^{-j}(\overline{V_{\la_0}} \setminus U_{\la_0}) = V_{\la_0} \setminus K(f_{\la_0})$ over $D$, which extends by Slodkowski's theorem to a holomorphic motion 
	of $V_{\la_0}$ (here, we use the fact that $J(f_\la) = K(f_\la)$ has no interior if $\la \notin \mathcal{T}$).
	By continuity, we still have 
	$$f_\la \circ H_\la(z) = H_\la \circ f_{\la_0}(z)$$
	for all $z \in U_{\la_0}$, which finishes the proof.
\end{proof}

We are finally ready to conclude the proof of continuity of the straightening map.

\begin{proof}[Proof of Proposition \ref{prop:cont}]
	Let $X_n(\la):= \bigcap_{0 \leq k \leq n} f_\la^{-k}(U_\la) \setminus K(f_\la) $.
	We have:
	\begin{align*}
		\text{area}\left( X_n(\la) \right) &=
		\text{area}\,  H_\la\left(  X_n(\la_0) \right) \\	
		&=\int_{X_n(\la_0)} \mathrm{Jac} \, H_\la(z) |dz|^2. 
	\end{align*}
	
	Up to reducing a little bit the set $W$ given by the lemma above, we may assume without loss of generality that there exists a constant $K>0$ such that 
	for all $\la \in W$, the quasiconformal distortion of $H_\la$ is less than $K$.
	Recall that since  $\jac H_\la = |\partial H_\la|^2 - |\overline{\partial} H_\la|^2$ 
	and $\| dH_\la\| =  |\partial H_\la| + |\overline{\partial} H_\la|$,
	there exists a constant $C=C(K)>1$ (explicit) such that for all $\la \in W$,
	\begin{equation}\label{eq:comparjac}
		C^{-1} \| dH_\la\|^2 \leq \jac H_\la \leq C \| dH_\la\|^2 .
	\end{equation}

	Let $N_n(\la):= \int_{X_n(\la_0)}\| dH_\la(z)\|^2  |dz|^2$. By the Measurable Riemann Mapping Theorem,
	the for a.e. $z$, the maps $\la \mapsto  \partial H_\la(z)$ and $\la \mapsto \overline{\partial} H_\la(z)$ are holomorphic; therefore $N_n$ is subharmonic.
	For every fixed $\la \in W$, the map $z \mapsto \jac H_\la(z)$ is integrable,
	therefore by \eqref{eq:comparjac}, so is the map $z \mapsto\| dH_\la(z)\|^2.$
	Since the $X_n(\la)$ are decreasing and their intersection is empty, it follows that 
	$N_n$ converges pointwise a.e. to $0$.
	
	Let $\epsilon>0$ be small enough that $\D(\la_0,2\epsilon) \subset W$. Then, since $N_n$ is subharmonic, we have for all $\la \in \D(\la_0, \epsilon)$:
	\begin{equation}\label{eq:ineqN}
		N_n(\la) \leq \frac{1}{\pi \epsilon^2} \int_{\D(\la,\epsilon)} N_n(u) \cdot |du|^2.
	\end{equation}
	
	The sequence $(N_n)_{n \in \N}$ is pointwise decreasing, $N_0$ is integrable and $(N_n)_{n \in \N}$ converges a.e. to $0$.
	Therefore, by the dominated convergence theorem, $\limn  \int_{\D(\la_0,2\epsilon)} N_n(u) \cdot |du|^2=0$.
	By \eqref{eq:comparjac} and \eqref{eq:ineqN}, we therefore finally have, for all $\la \in \D(\la_0, \epsilon)$:
	$$\int_{X_n(\la_0)} \mathrm{Jac} \, H_\la(z) |dz|^2 \leq \frac{1}{\pi \epsilon^2} \int_{\D(\la_0,2\epsilon)} N_n(u) \cdot |du|^2 \to_{n \to +\infty} 0.$$
	This proves that $\limn\sup_{\la \in \D(\la_0,\epsilon)} \text{area}(X_n(\la)) = 0$,
	and we are done by Lemma \ref{lem:area}.
\end{proof}

\subsubsection{Regularity of the straightening map on $\tcal(\mathbf{f})$} \label{sec:onT}

We now proceed to study the regularity of $\chi$ in different parts of  
$\tcal(\mathbf{f})$.

We start by showing that $\chi$ is continuous on the boundary of $\tcal(\mathbf{f})$ (c.f. \cite[Prop. 14, p.313]{douhub}).

\begin{lem}[Continuity on the boundary]\label{lem:continuitybt}
	The straightening map $\chi$ is continuous at every point $\la \in \partial \tcal(\mathbf{f})$, and $\chi(\partial \tcal(\mathbf{f})) \subset \partial \tcal$. 
	\end{lem}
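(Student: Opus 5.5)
Two things need proving: first that $\chi(\partial\tcal(\mathbf f))\subset\partial\tcal$, and then continuity at boundary points. The argument follows Douady--Hubbard's Proposition 14, built on compactness of uniformly quasiconformal conjugacies (Proposition \ref{prop:convergence of conjugacies}) and on the rigidity of boundary parameters of $\tcal$ (Proposition \ref{prop:quasiconformalclasses}).

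\emph{Step 1: $\chi(\partial\tcal(\mathbf f))\subset\partial\tcal$.} Let $\la_0\in\partial\tcal(\mathbf f)$. By the lemma preceding Proposition \ref{prop:convergence of conjugacies}, $\chi(\la_0)\in\tcal$. Suppose $\chi(\la_0)\in\mathring\tcal$. Then $\chi(\la_0)$ lies in a component of $\mathring\tcal$, which is either hyperbolic or carries an invariant line field (Theorem \ref{th:hyperbolicornon-hyperbolic}).
\begin{itemize}
\item In the hyperbolic case, $T_{\chi(\la_0)}$ has an attracting cycle other than $0$ that attracts $1/\chi(\la_0)$. Transporting it by the hybrid conjugacy gives an attracting cycle of $f_{\la_0}$ inside $U_{\la_0}$ that attracts $v_{\la_0}$. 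This persists under perturbation, so $v_\la$ stays in $K(f_\la)$ for nearby $\la$. Hence $\la_0\in\mathring\tcal(\mathbf f)$, a contradiction.
\item In the line-field case, I would transport the line field to $J(f_{\la_0})$ and deform $f_{\la_0}$ through a holomorphic one-parameter family of Beltrami forms $t\mu$. This produces maps quasiconformally conjugate to $f_{\la_0}$ near $K$. The equipped tubing should let us realise them within the family, and then $v_\la$ would be passive near $\la_0$, contradicting Proposition \ref{prop:msstl}. This step is the more delicate one.
\end{itemize}
A cleaner route to Step 1 goes through activity. Since $\la_0$ is active, virtual-cycle parameters $\la_n\to\la_0$ exist by Remark \ref{rem:vsdense}. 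For these, $f_{\la_n}^k(v_{\la_n})=u_{\la_n}$, so $T_{\chi(\la_n)}^k(1/\chi(\la_n))=\infty$, and therefore $\chi(\la_n)\in\partial\tcal$ (virtual cycles imply activity in the model). Once continuity (Step 2) is established, $\chi(\la_0)=\lim\chi(\la_n)\in\partial\tcal$, since $\partial\tcal$ is closed. So I would prove continuity first, by an argument that does not use Step 1.

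\emph{Step 2: continuity.} Take $\la_n\to\la_0\in\partial\tcal(\mathbf f)$ and consider two kinds of sequences.
\begin{itemize}
\item For $\la_n\in\tcal(\mathbf f)$: by Proposition \ref{prop:convergence of conjugacies}, after passing to a subsequence, $\chi(\la_n)\to\alpha_*$ and the hybrid conjugacies converge to a quasiconformal conjugacy $\psi_*$ between $f_{\la_0}$ and $T_{\alpha_*}$. Composing with the hybrid conjugacy of $f_{\la_0}$ shows that $T_{\chi(\la_0)}$ and $T_{\alpha_*}$ are quasiconformally conjugate near their Julia sets, and hence globally by Corollary \ref{cor:globalconj}. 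Now restrict to virtual-cycle parameters $\la_0$, which are dense in $\partial\tcal(\mathbf f)$, or argue via Step 1. The point is that $\chi(\la_0)\in\partial\tcal$, and Proposition \ref{prop:quasiconformalclasses} says the quasiconformal class of a boundary parameter is the singleton $\{\chi(\la_0)\}$. Hence $\alpha_*=\chi(\la_0)$. Since every subsequence has a further subsequence with this limit, $\chi(\la_n)\to\chi(\la_0)$.
\item For $\la_n\notin\tcal(\mathbf f)$: the same compactness argument applies. The conjugacies $\phi_{\la_n}$ are uniformly quasiconformal, because their dilatation comes only from the tubing, which moves holomorphically. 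The limits $\alpha_*$ then lie in $\overline{\C\setminus\tcal}$, and the limit map is again a quasiconformal conjugacy from $f_{\la_0}$ to $T_{\alpha_*}$, so $\alpha_*=\chi(\la_0)$ as before.
\end{itemize}

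\emph{Main obstacle.} The hardest point is to establish $\chi(\la_0)\in\partial\tcal$ independently of continuity, since the singleton property is needed to identify the limit. I would attempt this directly. If $\chi(\la_0)\in\mathring\tcal$, pick $\la_n\notin\tcal(\mathbf f)$ with $\la_n\to\la_0$. Their limit values $\alpha_*$ are quasiconformally conjugate to $\chi(\la_0)$ and lie in $\overline{\C\setminus\tcal}$, hence in $\partial\tcal$. But quasiconformal classes in $\tcal$ are open or constant, and a boundary point's class is a singleton, so $\alpha_*=\chi(\la_0)\in\partial\tcal$. This contradicts $\chi(\la_0)\in\mathring\tcal$. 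With that settled, both assertions follow.
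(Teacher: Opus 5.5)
Your final argument is correct and is essentially the paper's proof: it consists of the direct proof of $\chi(\la_0)\in\partial\tcal$ in your last paragraph, followed by Step 2. As in the paper, you approximate $\la_0$ by parameters whose $\chi$-images accumulate at some $\alpha\in\partial\tcal$, obtain a global quasiconformal conjugacy between $T_\alpha$ and $T_{\chi(\la_0)}$ from Proposition~\ref{prop:convergence of conjugacies} and Corollary~\ref{cor:globalconj}, use the rigidity of $\alpha\in\partial\tcal$ (Proposition~\ref{prop:quasiconformalclasses}) to get $\chi(\la_0)=\alpha$, and then rerun the argument for arbitrary sequences.

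The only real difference is the approximating sequence. The paper uses virtual-cycle parameters, which lie in $\tcal(\mathbf f)$ and satisfy $\chi(\la_n)\in\partial\tcal$ directly. You use parameters outside $\tcal(\mathbf f)$, so you need two small extra points: the compactness argument must be run off $\tcal(\mathbf f)$, as the paper does anyway later, and the limit conjugacy must preserve connectedness of $K$ so that $\alpha_*\in\tcal$. Your line-field case analysis and the ``continuity first'' route can be dropped; the latter is circular as written.
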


\begin{proof}
	Let $\la_* \in \partial \tcal(\mathbf{f})$: let us first prove that $\chi(\la_*) \in \partial \tcal$.
	By  Remark \ref{rem:vsdense}, there exists a sequence $\la_n \to \la_*$ such that $f_{\la_n}$ has a virtual cycle.
	Let $\alpha_n:=\chi(\la_n)$: up to extracting a subsequence, we may assume that $\alpha_n \to \alpha \in \partial \tcal$ (indeed, $\partial \tcal$ is closed and $\alpha_n \in \partial \tcal$ since $T_{\alpha_n}$ also has a virtual cycle, as this property is invariant under topological conjugacy). Moreover, 	by the definition of $\chi$, there is a uniformly quasiconformal sequence of hybrid conjugacies
	$\phi_n$ between $f_{\la_n}$ and $T_{\alpha_n}$.
	By  Proposition~\ref{prop:convergence of conjugacies}, up to passing to a subsequence 
	we may also assume that $(\phi_n)_{n \in \N}$ converges locally uniformly 
	to a quasiconformal homeomorphism $\phi$ conjugating $f_{\la_*}$ to $T_{\alpha}$ 
	on a neighborhood of their respective Julia sets. 
	On the other hand, $f_{\la_*}$ is also hybrid equivalent to $T_{\chi(\la_*)}$; therefore, $T_\alpha$ and $T_{\chi(\la_*)}$ 
	are quasiconformal conjugated on a neighborhood of their respective Julia sets. 
	By Corollary \ref{cor:globalconj}, the maps  are quasiconformal conjugated on $\rs$.
By Proposition \ref{prop:quasiconformalclasses},  quasiconformal conjugacy classes  in the model family are either trivial or open, and  in particular, parameters in $\partial \tcal$ are rigid, i.e. they are alone in their quasiconformal conjugacy class.
	This proves that $\chi(\la_*)=\alpha$, and so $\chi(\la_*) \in\partial  \tcal$, as announced.
	
	Let us now prove that $\chi$ is continuous at $\la_*$. The reasoning is exactly similar, except that now
	we take an arbitrary sequence 
	 $\la_n \to \la_*$, and  we must still prove that 
	$\chi(\la_n) \to \chi(\la_*)$. 

	Reasoning as above, we obtain again (up to passing to a subsequence) that $\chi(\la_n) \to \alpha$ and that $T_{\alpha}$ and $T_{\chi(\la_*)}$ are   globally quasiconformally conjugated, except that now we do not know a priori that $\alpha \in \partial \tcal$. But we have proved that $\chi(\la_*) \in \partial \tcal$, so we can still conclude that $\alpha=\chi(\la_*)$.  Since the sequence $(\la_n)_{n \in \N}$ was arbitrary, we have proved that $\chi$ is continuous at $\la_*$.
\end{proof}

\begin{lem}[Continutity on the interior]\label{lem:continuityintm}
	The straightening map $\chi$ is continuous on $\mathring{\tcal(\mathbf{f})}$.
\end{lem}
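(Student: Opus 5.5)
The plan is to transport everything to the dynamical plane of a fixed parameter and show that the Beltrami coefficients of the straightening conjugacies vary continuously in $L^1$ with uniformly bounded dilatation, as in the proof of Lemma \ref{lem:area}. Fix $\la_0 \in \mathring{\tcal(\mathbf{f})}$ and a small simply connected disk $D \subset \mathring{\tcal(\mathbf{f})}$ around it. By Proposition \ref{prop:msstl}, $v_\la$ is passive on $D$ and the family is $J$-stable there. Moreover, for $\la \in D$ we have $v_\la \in K(f_\la)$, and $v_\la$ does not lie on $f_\la^{-j}(\partial U_\la)$ for any $j$.

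\emph{Step 1.} I first build a holomorphic motion $H_\la : V_{\la_0} \to V_\la$ over $D$ that is equivariant off $K(f_{\la_0})$. Start from the equivariant motion $h_\la$ of the fundamental annulus $\overline{V_{\la_0}}\setminus U_{\la_0}$. Apply Lemma \ref{lem:liftholmot} repeatedly to get motions $h^{(j)}_\la$ of $f_{\la_0}^{-j}(\overline{V_{\la_0}}\setminus U_{\la_0})$. The hypothesis of that lemma holds because $v_\la$ is never in the image of these pinched annuli for $\la \in D$. As in the proof of Lemma \ref{lem:existholmot}, uniqueness of lifts makes the $h^{(j)}_\la$ glue into a single motion of $V_{\la_0}\setminus K(f_{\la_0})$. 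By the $\la$-lemma this extends to $\partial K(f_{\la_0}) = J(f_{\la_0})$, and it agrees there with the dynamical motion of the Julia set given by $J$-stability. Finally, Slodkowski's theorem extends it to a holomorphic motion of all of $V_{\la_0}$. I do not require equivariance on $\mathring K(f_{\la_0})$; I only need that $H_\la(K(f_{\la_0})) = K(f_\la)$, which holds because $H_\la$ maps $V_{\la_0}\setminus K(f_{\la_0})$ onto $V_\la \setminus K(f_\la)$.

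\emph{Step 2.} Let $\phi_\la$ be the hybrid conjugacy of Proposition \ref{prop:straightening family}, and consider $G_\la := \phi_\la \circ H_\la$. I compute its Beltrami coefficient $\nu_\la$ on the fixed plane of $\la_0$.

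On $f_{\la_0}^{-j}(V_{\la_0}\setminus U_{\la_0})$, the coefficient of $\phi_\la$ is $(f_\la^j)^*\mu_{\la,0}$, where $\mu_{\la,0}$ is the coefficient of $t_\la = t_{\la_*}\circ h_\la^{-1}$. Using $f_\la^j\circ H_\la = H_\la \circ f_{\la_0}^j$ there, we get
$$\nu_\la = (f_{\la_0}^j)^*\big(\text{Beltrami coefficient of } t_\la \circ h_\la\big) = (f_{\la_0}^j)^*\mu_{\la_0,0},$$
which is independent of $\la$. (Up to changing the base point of the tubing, $t_\la\circ h_\la$ is the fixed map $t_{\la_0}$.)

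On $K(f_{\la_0})$, $\phi_\la$ is conformal a.e. on $K(f_\la) = H_\la(K(f_{\la_0}))$. Hence $\nu_\la$ equals the Beltrami coefficient of $H_\la$ there, using that quasiconformal maps preserve null sets.

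It follows that $\|\nu_\la\|_\infty$ is bounded by $\max\big(\|\mu_{\la_0,0}\|_\infty,\ \|\mu_{H_\la}\|_\infty\big)$, which is locally uniformly less than $1$. It also follows that $\la \mapsto \nu_\la$ is continuous in $L^1$: on the complement of $K(f_{\la_0})$ it is constant, and on $K(f_{\la_0})$ the coefficients of a holomorphic motion depend holomorphically on $\la$ for a.e. $z$, so dominated convergence applies.

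\emph{Step 3.} The normalization of $G_\la$ is continuous in $\la$. On the exterior region $\rs\setminus V_{\la_0}$, $G_\la$ is the normalized conformal map fixing the images $0$, $1$ and $\infty$, so this normalization carries over continuously. By continuous dependence of normalized solutions of the Beltrami equation (\cite[I.3]{douhub}), $\la\mapsto G_\la$ is continuous. Then $\chi(\la) = 1/\phi_\la(v_\la) = 1/G_\la(H_\la^{-1}(v_\la))$. Both $H_\la$ and $v_\la$ move continuously, and $G_\la$ is locally equicontinuous, so $\chi$ is continuous on $D$.

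The main obstacle is Step 1 together with the Beltrami bookkeeping in Step 2. Equivariance of $H_\la$ is only available off $K$. So I must check carefully that the tubing coefficient pulled back via $f_\la^j$ and transported by $H_\la$ really coincides with the $\la$-independent form $(f_{\la_0}^j)^*\mu_{\la_0,0}$; this is where equivariance on the pinched annuli is used. I must also check that the Slodkowski extension on the interior of $K$ is harmless, because there only conformality of $\phi_\la$ a.e. on $K(f_\la)$ is needed.
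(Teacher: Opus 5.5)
Your argument is correct in substance, but it follows a different route from the paper.

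\textbf{The paper's route.} It argues by compactness and rigidity. For $\la_n\to\la_*\in\mathring{\tcal(\mathbf f)}$ it uses Proposition~\ref{prop:convergence of conjugacies} to extract a limit $\phi$ of the uniformly quasiconformal hybrid conjugacies. This $\phi$ conjugates $f_{\la_*}$ to $T_\alpha$, where $\alpha$ is a subsequential limit of $\chi(\la_n)$. Since $K(f_\la)$ moves holomorphically near $\la_*$, the paper concludes that $\dbar\phi=0$ a.e.\ on $K(f_{\la_*})$. Uniqueness in the Straightening Theorem (Theorem~\ref{thm:uniqueness of straightening}) then forces $\alpha=\chi(\la_*)$.

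\textbf{Your route.} You run the direct Douady--Hubbard argument that the paper uses only off $\tcal(\mathbf f)$ (Lemma~\ref{lem:area}, Proposition~\ref{prop:cont}). You build the motion of Lemma~\ref{lem:sstableq} over an arbitrary simply connected $D\subset\mathring{\tcal(\mathbf f)}$. This is legitimate: prepole parameters are active by Remark~\ref{rem:vsdense}, so $v_\la\in\bigcap_n f_\la^{-n}(U_\la)$, and the hypothesis of Lemma~\ref{lem:liftholmot} holds at every stage. You then observe that $\phi_\la\circ H_\la$ has a $\la$-independent Beltrami coefficient off $K(f_{\la_0})$, and the coefficient of $H_\la$ on $K(f_{\la_0})$.

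\textbf{Comparison.} The paper's route is shorter given the machinery already in place. However, its limiting step on the moving sets $K(f_{\la_n})$ is asserted in one line. It needs an argument, for example weak $L^2$ convergence of $\dbar\phi_k$ together with $\mathrm{area}(K(f_{\la_n})\triangle K(f_{\la_*}))\to 0$. Your route avoids both that step and the compactness argument. It also gives continuity of the conjugacies $\phi_\la$ themselves. Moreover, it shows that $\la\mapsto\nu_\la$ is holomorphic, which is essentially the mechanism of Proposition~\ref{prop:non-hyperbolic}, now available on every interior component.

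\textbf{What to tighten: the normalization in Step~3.} As set up in Step~1, $H_\la$ is only prescribed on $V_{\la_0}$, and any Slodkowski extension beyond it is arbitrary. So your claim that $G_\la$ is conformal and correctly normalized on $\rs\setminus V_{\la_0}$ does not follow. Without it, the $G_\la$-preimages of $0,1,\infty$ are not controlled.

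You can fix this by defining $H_\la:=t_\la^{-1}\circ t_{\la_0}$ on all of $\rs\setminus U_{\la_0}$. Here $t_\la$ is the full tubing used to glue in $T_0$ in the proof of Theorem~\ref{th:straight}, built on the exterior from the same Slodkowski-extended motion as after Definition~\ref{def:tubing}. This agrees with your $H_\la$ on the fundamental annulus. It also conjugates the glued maps $F_{\la_0}$ and $F_\la$ on $\rs\setminus U_{\la_0}$. Consequently $G_\la=\phi_\la\circ H_\la$ has the $\la$-independent coefficient $t_{\la_0}^*\mu_0$ there. It also sends the $\la$-independent points $u_{\la_0}$, $t_{\la_0}^{-1}(0)$, $t_{\la_0}^{-1}(1)$ to $\infty,0,1$. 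Continuous dependence of normalized solutions (\cite[I.3]{douhub}) then applies exactly as you intend.
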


\begin{proof}
	Let $\la_* \in \mathring{\tcal(\mathbf{f})}$ and $\la_n \to \la_*$.
	We will prove that $\chi(\la_n) \to \chi(\la_*)$. By compactness of $\tcal$, 
	it is sufficient to prove that any limit of a subsequence of 
	 $\chi(\la_n)$ is equal to $\chi(\la_*)$: let $\alpha$ denote such a limit.
	 
	Then there is a subsequence $n_k \to +\infty$ such that for all $k \in \N$,
	$\phi_k \circ f_{\la_{n_k}} = T_{\chi(\la_{n_k})}  \circ \phi_k$,
	for some uniformly quasiconformal sequence of homeomorphisms $\phi_k$ such that 
	$\dbar \phi_k = 0$ a.e. on $K(f_{\la_{n_k}})$.
	Up to extracting a further subsequence (see Proposition~\ref{prop:convergence of conjugacies}), we may assume without loss of generality that $\phi_k \to \phi$ quasiconformal  homeomorphism such that 
	$$\phi \circ f_{\la_*} = T_\alpha \circ \phi.$$
	
	But since $\la_* \in \mathring{\Tf}$, Proposition \ref{prop:msstl} implies that $K(f_\la)$ moves holomorphically near $\la_*$.
	Passing to the limit, this implies that $\dbar \phi = 0$ a.e. on $K(f_{\la_*})$: in other words, 
	$T_\alpha$ is hybrid equivalent to $f_{\la_*}$. But so is $T_{\chi(\la_*)}$, so by unicity in the Straightening Theorem, 
	$\alpha= \chi(\la_*)$ and we are done.
\end{proof}

In fact we see below that it maps hyperbolic components holomorphically and properly onto hyperbolic components. 

\begin{prop}[Holomorphic and proper in hyperbolic components]\label{prop:hyperbolic}
	Let $U$ be a hyperbolic component of $\tcal(\mathbf{f})$.
	The straightening map $\chi_{|U}$ is holomorphic and maps $U$  properly into a hyperbolic component $V$ of $\tcal$.
\end{prop}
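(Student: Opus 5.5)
The plan is to use the multiplier of the attracting cycle as a coordinate on both sides, together with the continuity results already established: Lemma \ref{lem:continuityintm} on the interior and Lemma \ref{lem:continuitybt} on the boundary.

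\emph{Step 1: $\chi$ maps $U$ into one hyperbolic component.} Let $\la\in U$. Then $f_\la$ has an attracting cycle of period $p$, constant on $U$, with multiplier $\rho_U(\la)$.
\begin{itemize}
\item The cycle moves holomorphically by the Implicit Function Theorem, so $\rho_U$ is holomorphic on $U$.
\item The hybrid conjugacy $\phi_\la$ of Proposition \ref{prop:straightening family} sends this cycle to a cycle of $T_{\chi(\la)}$ of the same period.
\item The immediate basins lie in $\mathring{K}(f_\la)$, where $\dbar\phi_\la=0$. So $\phi_\la$ is conformal near the cycle, and the image cycle is attracting with the same multiplier.
\item Hence $T_{\chi(\la)}$ has an attracting cycle other than $0$. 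Such a cycle persists under perturbation and attracts $1/\alpha$, so $1/\alpha$ is passive and $\chi(\la)\in\mathring{\tcal}$ (Proposition \ref{prop:activity locus}).
\item By Theorem \ref{th:hyperbolicornon-hyperbolic}, $\chi(\la)$ lies in a hyperbolic component.
\item $U$ is connected and $\chi$ is continuous on $U$ (Lemma \ref{lem:continuityintm}), so $\chi(U)$ lies in a single hyperbolic component $V$ of period $p$.
\end{itemize}

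\emph{Step 2: holomorphy.} Step 1 gives the identity
$$\rho_V\circ\chi=\rho_U \quad\text{on } U,$$
where $\rho_V:V\to\D^*$ is the multiplier map of $V$.
\begin{itemize}
\item By Theorem \ref{th:hyperbolicornon-hyperbolic}, $\rho_V$ is a universal covering, hence a local biholomorphism.
\item Near any $\la_0\in U$, take the local inverse branch $\sigma$ of $\rho_V$ at $\rho_U(\la_0)$ sending it to $\chi(\la_0)$.
\item Continuity of $\chi$ and uniqueness of lifts give $\chi=\sigma\circ\rho_U$ near $\la_0$. This is a composition of holomorphic maps, so $\chi$ is holomorphic on $U$.
\end{itemize}

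\emph{Step 3: properness.} It suffices to show that if $\la_n\in U$ and $\la_n\to\partial U$, then $\chi(\la_n)$ leaves every compact subset of $V$.
\begin{itemize}
\item $\tcal(\mathbf f)$ is relatively closed in $\Lambda$ and stays away from $\partial\Lambda$, because $v_\la\in\partial V_\la$ there by properness of the family. So limit points $\la_*$ of $(\la_n)$ lie in $\Lambda$.
\item $\la_*$ cannot lie in $\mathring{\tcal(\mathbf f)}$: otherwise it would lie in some component of the interior, and that component would have to meet $U$, hence equal $U$, contradicting $\la_*\in\partial U$.
\item Therefore $\la_*\in\partial\tcal(\mathbf f)$. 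By Lemma \ref{lem:continuitybt}, $\chi$ is continuous at $\la_*$ and $\chi(\la_*)\in\partial\tcal$.
\item Hence $\chi(\la_n)\to\chi(\la_*)\in\partial\tcal$, which is disjoint from $V$. So $\chi(\la_n)$ leaves every compact subset of $V$.
\end{itemize}

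The step needing most care is Step 1: the equality of multipliers, and the landing of $\chi(\la)$ in the interior of $\tcal$ rather than on its boundary. The conformality of $\phi_\la$ on the interior of $K(f_\la)$ handles both points.
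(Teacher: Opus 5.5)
Your proposal is correct and follows essentially the same route as the paper. Holomorphy comes from the identity $\rho_V\circ\chi=\rho_U$ together with a local inverse of the covering $\rho_V$, and properness comes from continuity of $\chi$ on $\overline U$ with $\chi(\partial U)\subset\partial\tcal$ (Lemma \ref{lem:continuitybt}); your Step 1 also spells out why $\chi(U)$ lands in $\mathring{\tcal}$ (the attracting cycle persists and attracts $1/\alpha$, combined with $\partial\tcal=\acal$), a point the paper passes over quickly.
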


\begin{proof}
	By Lemmas \ref{lem:continuitybt} and \ref{lem:continuityintm}, there is a component $V$ of $\mathring{\tcal}$
	such that $\chi(U) \subset V$ and $\chi: U \to V$ is continuous. Clearly, $V$ is a hyperbolic component 
	(since the property of having an attracting cycle is preserved under hybrid equivalence). Moreover, the multiplier is invariant under hybrid equivalence, so we have $\rho_V = \rho_U \circ \chi$, where 
	$\rho_U : U \to \D^*$ and $\rho_V : V \to \D^*$ are the multiplier maps on $U$ and $V$ respectively.
	
	By \cite{FK}, the map $\rho_V$ is a holomorphic universal cover; in particular, it is locally invertible.
	By the Implicit Function Theorem,
	the map $\rho_U : U \to \D^*$ is holomorphic. Therefore, we may locally write $\chi  = \rho_V^{-1} \circ \rho_U$, 
	which implies that $\chi$ is holomorphic on $U$.
	
	By Lemma \ref{lem:continuitybt}, the map $\chi$ is moreover continuous on $\overline{U}$,
	and $\chi(\partial U) \subset \partial  \tcal \cap \overline{V} = \partial V$.
	Therefore, $\chi: U \to V$ is proper.
\end{proof}

To end this section, we prove that the same properties are true in the case that non-hyperbolic components existed. Our goal is to show the following statement (c.f. \cite[Lemma 42.13 p. 553]{LyubichBook}).

\begin{prop}[Holomorphic and proper in non-hyperbolic components]\label{prop:non-hyperbolic}
		Let $U$ be a non-hyperbolic component of $\tcal(\mathbf{f})$.
	The straightening map $\chi_{|U}$ is holomorphic and maps $U$ properly into a non-hyperbolic component $V$ of $\tcal$.
\end{prop}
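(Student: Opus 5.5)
We follow Lyubich's argument for quadratic-like families. The idea is to write $\chi$ on $U$ as the composition of a holomorphic map into a space of Beltrami coefficients with the holomorphic parametrization of the qc deformation space of a model map.

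\textbf{Step 1: a dynamical holomorphic motion over $U$.} Fix $\la_0 \in U$. By Proposition \ref{prop:msstl}, $J(f_\la)=K(f_\la)$ moves holomorphically over $U$, since a non-hyperbolic component has empty Fatou set inside $K$. Using the equipped motion $h_\la$ of the fundamental annulus and lifting it inductively with Lemma \ref{lem:liftholmot}, we obtain an equivariant holomorphic motion of $V_{\la_0}\setminus K(f_{\la_0})$. The hypothesis of that lemma holds because $v_\la \in K(f_\la)$ never meets the lifted annuli. The pieces glue together as in Lemma \ref{lem:existholmot}, and by the $\la$-lemma they agree with the motion of $J$. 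This gives a holomorphic motion $H_\la: V_{\la_0}\to V_\la$ with $H_\la\circ f_{\la_0}=f_\la\circ H_\la$ on $U_{\la_0}$.

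\textbf{Step 2: Beltrami coefficients.} Transport everything to the model plane with the hybrid conjugacy $\phi_{\la_0}$ onto $T_{\alpha_0}$, where $\alpha_0=\chi(\la_0)$. Consider $\nu_\la$, the Beltrami coefficient of $H_\la\circ\phi_{\la_0}^{-1}$ restricted to $K(T_{\alpha_0})=J(T_{\alpha_0})$, extended by $0$ on $\Omega_{\alpha_0}$. It depends holomorphically on $\la$ in $L^\infty$, as Beltrami coefficients of holomorphic motions do. It is $T_{\alpha_0}$-invariant because $H_\la$ is equivariant. Integrate $\nu_\la$ with the Measurable Riemann Mapping Theorem, normalizing to fix $0$, $1$ and $\infty$. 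As in Proposition \ref{prop:quasiconformalclasses} and Lemma \ref{lem:rigidity}, this conjugates $T_{\alpha_0}$ to some $T_{\beta(\la)}$. Holomorphic dependence of the solution on parameters makes $\la\mapsto\beta(\la)$ holomorphic, since $\beta(\la)$ is the reciprocal of the image of $1/\alpha_0$.

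\textbf{Step 3: identify $\beta$ with $\chi$.} The main point is to show $\beta(\la)=\chi(\la)$. The composite
\[
\Phi_\la \circ H_\la \circ \phi_{\la_0}^{-1}
\]
conjugates $T_{\alpha_0}$ to $T_{\beta(\la)}$ on a neighborhood of $K$. On $K(T_{\alpha_0})$, both $\Phi_\la\circ\phi_{\la_0}\circ H_\la^{-1}$ and $\phi_\la$ carry the dilatation of $H_\la^{-1}$. Hence $f_\la$ is conjugate to $T_{\beta(\la)}$ on a neighborhood of $K(f_\la)$ with zero dilatation on $K(f_\la)$. To upgrade this to a genuine hybrid equivalence of tangent-like maps, replace the conjugacy off $K$ by a pullback construction as in Theorem \ref{th:straight}. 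Uniqueness in Theorem A (Theorem \ref{thm:uniqueness of straightening}) then gives $\beta(\la)=\chi(\la)$. Hence $\chi$ is holomorphic on $U$, and nonconstant or constant alike it maps into $\mathring{\tcal}$.

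\textbf{Step 4: target component and properness.} The image $\chi(U)$ is connected and lies in $\mathring{\tcal}$, since $\chi(\la)\in\tcal$ and holomorphic maps are open or constant. The constant case still lies in the interior, because hybrid equivalence preserves the existence of an invariant line field on $J$. So $\chi(U)$ lies in one component $V$, which is non-hyperbolic because attracting cycles are preserved under hybrid equivalence. For properness, argue as in Proposition \ref{prop:hyperbolic}: $\chi$ is continuous on $\overline U$ by Lemmas \ref{lem:continuitybt} and \ref{lem:continuityintm}, and $\chi(\partial U)\subset\partial\tcal$, which misses $V$. Hence $\chi:U\to V$ is proper.

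\textbf{Main obstacle.} The main obstacle is Step 3, namely making sure the qc conjugacy obtained from the motion really reproduces the tubing-normalized hybrid class, so that uniqueness applies.
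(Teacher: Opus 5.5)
Your Steps 1--3 are essentially the paper's proof of holomorphy. The ingredients match: the equivariant motion $H_\la$ of Lemma \ref{lem:sstableq}, its Beltrami coefficient on $J$ pushed forward by the hybrid conjugacy, integration normalized at $0,1,\infty$, Lemma \ref{lem:rigidity} to land on $T_{\beta(\la)}$, and uniqueness of straightening.

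There are two slips in Step 3. First, the conjugacy you want is $\Phi_\la\circ\phi_{\la_0}\circ H_\la^{-1}$, from $f_\la$ to $T_{\beta(\la)}$. Your composite $\Phi_\la\circ H_\la\circ\phi_{\la_0}^{-1}$ is not well defined, since $H_\la\circ\phi_{\la_0}^{-1}$ lands in the dynamical plane of $f_\la$. Second, this map is conformal a.e.\ on $K(f_\la)$ because the dilatation of $\Phi_\la$ on $J(T_{\alpha_0})$ was built to cancel that of $\phi_{\la_0}\circ H_\la^{-1}$. It is not because anything ``carries the dilatation of $H_\la^{-1}$''. The map is already defined on $V_\la$ and conjugates on $U_\la$, so it is a hybrid equivalence as it stands. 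No pullback upgrade is needed. Your ``main obstacle'' is also moot: $K(f_\la)$ is connected, so the straightening is unique independently of the tubing.

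The genuine gap is the constant case in Step 4. The paper's written proof only establishes holomorphy, so this is where you go beyond it. You assert that a constant value of $\chi$ lies in $\mathring{\tcal}$ because hybrid equivalence preserves invariant line fields. However, nothing shows that $f_\la$ carries an invariant line field on $J(f_\la)$.

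The natural candidate $\mu_\la|_{J}$ vanishes identically precisely when $\chi$ is locally constant. If $\chi\equiv\alpha_0$ near $\la_0$, then $\Phi_\la$ commutes with $T_{\alpha_0}$ and fixes each repelling periodic point by continuity in $\la$. Hence $\Phi_\la$ is the identity on $J(T_{\alpha_0})$, and $\nu_\la=0$ there. Moreover, continuity on $\overline U$ and $\chi(\partial U)\subset\partial\tcal$ (Lemma \ref{lem:continuitybt}) show that a constant value would have to lie in $\partial\tcal$. So your claim amounts to ruling out constancy altogether, and that is exactly what needs an argument.

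One fix uses the proof of Proposition \ref{prop:discrete fibers}. If $\chi$ were constant on $U$, then for any $\la_\infty\in\partial U\subset\partial\tcal(\mathbf f)$ the fiber of $\chi(\la_\infty)$ would contain $\overline U$. It would therefore accumulate at a boundary point of $\tcal(\mathbf f)$. The boundary-point part of that proof rules this out and does not use holomorphy on $U$. In the virtual-cycle case it uses the identity theorem. Otherwise it uses Proposition \ref{prop:keytool}, the integrated Beltrami family, and the identity theorem, contradicting properness.

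Once $\chi|_U$ is known to be nonconstant, openness puts $\chi(U)$ in $\mathring{\tcal}$. Your properness argument, copied from Proposition \ref{prop:hyperbolic}, then goes through.
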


First we need the following lemma (c.f. \cite[Lemma 42.12 p. 553]{LyubichBook}).

\begin{lem}\label{lem:sstableq}
	Let $\la_0$ be in a non-hyperbolic component $Q \subset \tcal(\mathbf{f})$. Then there exists a neighborhood $W$ of $\la_0$ and a holomorphic motion $H: W \times V_{\la_0} \to \rs$ 
	such that for all $\la \in W$, for all $z \in U_{\la_0}$,
	$$H_\la \circ f_{\la_0}(z) = f_\la \circ H_\la(z).$$
\end{lem}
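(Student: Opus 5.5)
We build $H_\la$ by gluing a dynamical holomorphic motion of $K(f_\la)$ to a holomorphic motion of $V_{\la_0}\setminus K(f_{\la_0})$ obtained by lifting the equivariant motion of the fundamental annulus. This is the same scheme as in Lemma \ref{lem:existholmot}, with the roles of the pieces adapted to the connected case.

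\emph{Step 1: motion of the Julia set.} We take $W$ a small simply connected neighborhood of $\la_0$ with $W\subset Q\subset\mathring{\tcal(\mathbf f)}$. By Proposition \ref{prop:msstl}, $v_\la$ is passive on $W$ and the family is $J$-stable there. The proof of that proposition shows that all periodic points move holomorphically over $W$, with no collisions. Using the $\la$-lemma and density of repelling periodic points, we get a holomorphic motion $H^J_\la$ of $J(f_{\la_0})$ conjugating $f_{\la_0}$ to $f_\la$. Since $Q$ is non-hyperbolic, we claim $K(f_\la)=J(f_\la)$ for all $\la\in W$. Indeed, by hybrid equivalence $f_\la$ corresponds to some $T_\alpha$ with $\alpha$ in a non-hyperbolic component, and the argument in the proof of Theorem \ref{th:hyperbolicornon-hyperbolic} then gives $K(T_\alpha)=J(T_\alpha)$ (finite type maps have no wandering domains, and there are no other Fatou components besides $\Omega_\alpha$). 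This motion also moves $v_{\la_0}\mapsto v_\la$, because $v_\la\in J(f_\la)$ is a limit of points that move compatibly. A more careful option is to characterize $v_\la$ as a limit of preimages under the covering.

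\emph{Step 2: motion of the complement.} We start from the equivariant motion $h_\la$ of $\overline{V_{\la_0}}\setminus U_{\la_0}$. Since $v_\la\in K(f_\la)$ for all $\la\in W$, the point $v_\la$ is never in the image of this motion. We then apply Lemma \ref{lem:liftholmot} inductively to lift $h_\la$ through $f_\la$, obtaining motions $h^{(j)}_\la$ of $f_{\la_0}^{-j}(\overline{V_{\la_0}}\setminus U_{\la_0})$. As in Lemma \ref{lem:existholmot}, uniqueness of lifts makes these motions glue along $f_{\la_0}^{-j}(\partial U_{\la_0})$, and this includes the pinching points (prepoles), which are sent to the corresponding prepoles of $f_\la$. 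The glued map $H^{\mathrm{out}}_\la$ is a holomorphic motion of $V_{\la_0}\setminus K(f_{\la_0})$ which is equivariant on $U_{\la_0}\setminus K(f_{\la_0})$.

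\emph{Step 3: gluing.} We define $H_\la$ to be $H^J_\la$ on $K(f_{\la_0})=J(f_{\la_0})$ and $H^{\mathrm{out}}_\la$ elsewhere. Injectivity for each $\la$ is clear, since the two pieces map into the disjoint sets $K(f_\la)$ and $V_\la\setminus K(f_\la)$. Holomorphy in $\la$ holds pointwise. Hence $H$ is a holomorphic motion of $V_{\la_0}$, and it is automatically continuous in $z$ by the $\la$-lemma. Equivariance holds on each piece, so it holds on all of $U_{\la_0}$.

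\emph{Main obstacle.} The delicate point is to show that $H$ really is a holomorphic motion of all of $V_{\la_0}$ and not just a union of two. Since injectivity is immediate, the only genuine issue is compatibility of the two pieces along $J(f_{\la_0})$. If we instead extend $H^{\mathrm{out}}$ to $\overline{V_{\la_0}\setminus K}$ by the $\la$-lemma, we must check that this extension agrees with $H^J$ on $J$. The approach I would try first is to use repelling periodic points accessible from $V\setminus K$, i.e. landing points of periodic curves built from preimages of a curve in the annulus. Both motions conjugate the dynamics, and such points move holomorphically and uniquely by the implicit function theorem, so the two motions agree on them. By density, they then agree on all of $J$.
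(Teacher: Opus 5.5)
Your Step 2 is exactly the paper's proof: lift the equivariant motion of $\overline{V_{\la_0}}\setminus U_{\la_0}$ repeatedly with Lemma \ref{lem:liftholmot} (legitimate because $v_\la\in K(f_\la)$ on $W$) and glue by uniqueness of lifts; the paper leaves the passage across $K(f_{\la_0})$ implicit, and it is simply the $\la$-lemma extension from the dense set $V_{\la_0}\setminus K(f_{\la_0})$ (dense since $K(f_{\la_0})=J(f_{\la_0})$ has no interior), with equivariance on $U_{\la_0}\cap K(f_{\la_0})$ following by continuity. So Step 1 and your ``main obstacle'' are superfluous---your own Step 3 already settles compatibility, since the glued map is injective and holomorphic in $\la$, hence continuous by the $\la$-lemma, so it must agree on $J(f_{\la_0})$ with the continuous extension of $H^{\mathrm{out}}_\la$; the one thing to repair is the justification of $K(f_\la)=J(f_\la)$, which should come directly from the absence of attracting cycles on $Q$ plus passivity excluding parabolic and Siegel components (as in the proof of Theorem \ref{th:hyperbolicornon-hyperbolic}), not from $\chi(\la)$ lying in a non-hyperbolic component of $\tcal$, a fact the paper only derives afterwards from this very lemma.
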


\begin{proof}[Proof of Lemma \ref{lem:sstableq}]
	Let $W$ be any simply connected domain contained in the non-hyperbolic component $Q$ and containing $\la_0$.
	By assumption, there is a holomorphic motion $h_\la: \overline{V_{\la_0}} \setminus U_{\la_0} \to \overline{V_\la} \setminus U_\la$ such that for all $z \in \partial U_{\la_0} \setminus \{\infty\}$,
	$$h_\la \circ f_{\la_0} = f_\la \circ h_\la.$$
	Let $X_n(\la):=f_\la^{-n}(\overline{V_\la} \setminus U_\la)$. Observe that the interior of
	the $X_n(\la)$ are pairwise disjoint, and that 
	$$\bigcup_{n \geq 0} X_n(\la)=  \overline{V_\la} \setminus K(f_\la).$$
	By Lemma \ref{lem:liftholmot}, there exists a unique lift $h_\la^{(1)}$ to $X_1(\la)$ of the holomorphic motion $h_\la$, satisfying 
	$$f_\la \circ h_\la^{(1)} = f_{\la_0} \circ h_\la.$$
	But since by assumption $h_\la$ is equivariant, i.e. $f_\la \circ h_\la(z) = f_{\la_0} \circ h_\la(z)$ for all $z \in \partial U_{\la_0} \setminus \{\infty\}$, we have in fact $h_\la^{(1)}(z)=h_\la(z)$ for all $z \in \partial U_{\la_0} \setminus \{\infty\}$. Additionnally, $h_\la^{(1)}$ extends to $\overline{X_1(\la)}$ by the $\la$-lemma, with  $h_\la^{(1)}(\infty)=\infty$. In other words, $h_\la$ and $h_\la^{(1)}$ coincide on their common boundary.
	Moreover, by definition $h_\la$ maps $X_0(\la_0)$ to $X_0(\la)$, and $h_\la^{(1)}$ maps $X_1(\la_0)$ to $X_1(\la)$, whose interior are pairwise disjoint;
	therefore, for all $\la \in W$, the following map is well-defined and injective:
	$$H_\la(z)=\left\{
	\begin{array}{ll}
		h_\la(z) &\text{ \quad if $z \in X_0(\la)$}\\
		h_\la^{(1)}(z) &\text{ \quad if $z \in X_1(\la)$}\\
	\end{array}
	\right.$$
	It is also clearly holomorphic in $\la$, hence it defines a holomorphic motion.
	By construction, we have $H_\la \circ f_{\la_0}(z) = f_{\la} \circ H_\la(z)$ for all $z \in X_1(\la)$.

	Proceeding in a similar way, we define inductively $h_\la^{(n+1)}$ as a lift of $h_\la^{(n)}$ using Lemma \ref{lem:liftholmot}. This is possible since for all $\la \in W$, $v_\la \in K(f_\la)$. We also define $H_\la(z):=h_\la^{(n)}(z)$ for all $z \in X_n(\la)$ and all $\la \in W$.
	Arguing as above, this defines a holomorphic motion on $W$, 
	and for all $(\la,z) \in W \times U_{\la_0} \setminus K(f_{\la_0})$,
	$$f_\la \circ H_\la(z) = f_{\la_0} \circ H_\la(z).$$
\end{proof}

\begin{proof}[Proof of Proposition \ref{prop:non-hyperbolic}]
	Let $\la_0 \in U$, and let $W, H_\la$ be given by Lemma \ref{lem:sstableq}.
	Up to restricting if necessary, assume that $W \subset U$ and that 
	$W$ is simply connected. 
	
	Let $\mu_\la:=\frac{\dbar H_\la}{\partial H_\la}$: then 
	$f_{\la_0}^*\mu_\la = \mu_\la$ and $\la \mapsto \mu_\la$ is holomorphic. Let $\nu_\la(z):=\phi_* \mu_\la(z)$ for $z \in J(T_{\chi(\la_0)})$ and $0$ outside, where $\phi$ is the hybrid conjugacy 
	between $f_{\la_0}$ and $T_{\chi(\la_0)}$.
	Then $\nu_\la =T_{\chi(\la_0)}^* \nu_\la$.
	
	Moreover, since $\phi$ is complex differentiable a.e. on $J(f_{\la_0})$, we have
	$$\nu_\la = \mu_\la \circ \phi^{-1} \cdot \frac{\phi'}{\overline{\phi'}} \circ \phi^{-1}$$
	and so $\la \mapsto \nu_\la$ is holomorphic. 
	Let $h_\la$ be the quasiconformal homeomorphisms which integrate $\nu_\la$, normalized so that $h_\la$ fix $0, 1, \infty$.
	 Then $h_\la \circ T_{\chi(\la_0)} \circ h_\la^{-1}$ depends holomorphically on $\la$, is holomorphic with respect to $z$, and by Lemma \ref{lem:rigidity}, it is of the form 
	$h_\la \circ T_{\chi(\la_0)} \circ h_\la^{-1} = T_{\alpha(\la)}$,
	for some holomorphic map $\alpha: W \to \C$.
	
	Finally, observe that by construction, $T_{\alpha(\la)}$ is hybrid conjugated to $f_\la$ via the quasiconformal homeomorphism $h_\la \circ \phi \circ H_\la^{-1}$. Hence, by unicity of the straightening, $\alpha(\la)=\chi(\la)$ for all $\la \in W$. This proves that $\chi$ is holomorphic on the non-hyperbolic component $U$. 
\end{proof}

\subsubsection{Finiteness of the fibers of $\chi$ above $\tcal$}
\label{sec:ff}

A crucial part of the proof of bijectivity of $\chi : \tcal(\mathbf f) \to \tcal$ is to show that its fibers are discrete (and hence finite, by compactness).  We will use the following preliminary result, that connects the equivariant motion of the fundamental annulus with the hybrid conjugacies between the maps in the TL-family and the members of the  model family.

\begin{prop}[Matching hybrid conjugacies with external homeomorphisms] \label{prop:keytool}
	Let $\la_0 \in \partial \tcal(\mathbf f)$, such that $u_{\la_0} \neq v_{\la_0}$. Let $h_\la$ denote the equivariant holomorphic motion of $\overline{V_\la} \setminus U_\la$, with basepoint $\la_0$. There exists $\eps_0>0$
	such that if  $\la \in \D(\la_0,\eps_0)$ and $\chi(\la)=\chi(\la_0)$, then 
	$h_\la$ extends to a quasiconformal homeomorphism $h_\la: V_{\la_0} \to V_\la$ such that for all $z \in U_{\la_0}$, we have
	$$f_\la \circ h_\la(z) = h_\la \circ f_{\la_0}(z)$$
	and moreover $h_\la$ coincides with $\phi_\la \circ \phi_{\la_0}^{-1}$ on $J(f_{\la_0})$, 
	where $\phi_\la, \phi_{\la_0}$ denote the hybrid conjugacies associated to $\la$ and $\la_0$ respectively.
\end{prop}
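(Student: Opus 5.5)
Set $\alpha:=\chi(\la_0)=\chi(\la)\in\partial\tcal$; recall that $\chi(\partial\tcal(\mathbf f))\subset\partial\tcal$ by Lemma \ref{lem:continuitybt}, and that $\la\in\tcal(\mathbf f)$ since $\chi(\la)\in\tcal$. The plan is to transport the equivariant motion $h_\la$ to the dynamical plane of $T_\alpha$ and apply the rigidity statement Proposition \ref{prop:extend}.

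\textbf{Step 1: extend $h_\la$ by pullback.} Using Lemma \ref{lem:liftholmot} repeatedly, as in Lemma \ref{lem:sstableq}, we lift $h_\la$ to the pullbacks $f_{\la_0}^{-n}(\overline{V_{\la_0}}\setminus U_{\la_0})$. The lifts glue along common boundaries by uniqueness of lifts, exactly as in Lemma \ref{lem:extendh}. This gives a conjugacy
$$H_\la:V_{\la_0}\setminus K(f_{\la_0})\to V_\la\setminus K(f_\la).$$
A lift only fails to exist if some pulled-back point of the motion is sent to $v_\la$. Since $v_\la\in K(f_\la)$ and $v_{\la_0}\in K(f_{\la_0})$, that would require $v$ to lie on the boundary of a pulled-back pinched annulus, which only happens at a virtual cycle. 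If this turns out to be delicate, I would instead build the extension purely topologically via the universal-cover argument of Lemma \ref{lem:extendh}, which gives a homeomorphism without needing holomorphy in $\la$. The hypothesis $u_{\la_0}\neq v_{\la_0}$ ensures $\alpha\neq0$ (Theorem \ref{th:straight}), so the marked pole $p_0$ of Lemma \ref{lem:labelling} is defined. Each $H_\la$ is quasiconformal with the dilatation of $h_\la$, because pullbacks by the holomorphic maps $f_\la$ do not increase dilatation.

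\textbf{Step 2: conjugate to the model.} Set
$$\Phi_\la:=\phi_\la\circ H_\la\circ\phi_{\la_0}^{-1},$$
defined on $\phi_{\la_0}(V_{\la_0})\setminus K(T_\alpha)$. It is a quasiconformal self-conjugacy of $T_\alpha$ near $K(T_\alpha)$. To apply Proposition \ref{prop:extend}, we also need it to be close to the identity on a domain $W'$ adjacent to $p_0$. Here the tubing construction enters. By Proposition \ref{prop:straightening family}, $\phi_\la$ has the same Beltrami coefficient as $t_\la=t_{\la_0}\circ h_\la^{-1}$ on the fundamental annulus. Hence $\phi_\la\circ h_\la\circ\phi_{\la_0}^{-1}$ is conformal on the image of the fundamental annulus, and it depends continuously on $\la$ as long as $\la\mapsto\phi_\la$ is continuous.

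Continuity of $\phi_\la$ at $\la_0$ is the main obstacle. I would proceed as in Proposition \ref{prop:convergence of conjugacies} and Lemma \ref{lem:continuitybt}. Suppose there were $\la_n\to\la_0$ with $\chi(\la_n)=\alpha$ but $\Phi_{\la_n}$ not close to the identity near $p_0$. Extract a limit: $\phi_{\la_n}\to\psi_*$, a quasiconformal conjugacy between $f_{\la_0}$ and $T_\alpha$. On the fundamental annulus, $\psi_*$ has the same Beltrami coefficient as $t_{\la_0}$, since $\mu_{\la_n,0}\to\mu_{\la_0,0}$ pointwise a.e. Then $\psi_*\circ\phi_{\la_0}^{-1}$ is a self-conjugacy of $T_\alpha$ that is conformal on the image of the fundamental annulus. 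By the uniqueness part of Corollary \ref{cor:bottcherlike}, it is the identity on $\rs\setminus\overline{\phi_{\la_0}(U_{\la_0})}$. This yields $\sup_{W'}|\Phi_{\la_n}-\mathrm{id}|\to0$, a contradiction, so a uniform $\eps_0$ exists. Here $W'$ is a piece of the fundamental annulus image whose boundary reaches $p_0$ along the access $\varphi_\alpha(\tfrac12+it)$.

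\textbf{Step 3: conclude.} Proposition \ref{prop:extend} now gives that $\Phi_\la$ extends continuously as the identity on $K(T_\alpha)$ (note $J=K$ since $\alpha\in\partial\tcal$). Translating back, $H_\la$ extends to $J(f_{\la_0})=K(f_{\la_0})$ by $\phi_\la^{-1}\circ\phi_{\la_0}$. This is a homeomorphism of $V_{\la_0}$ onto $V_\la$ that is quasiconformal off a set of measure zero. Here I would use that $J$ has zero area whenever $J$ carries no invariant line field, and otherwise argue via removability of the quasiconformal extension obtained from Slodkowski. The conjugacy relation on $U_{\la_0}$ follows by continuity.
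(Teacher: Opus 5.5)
\textbf{The gap: where $W'$ lives.} In Step 2 you only control $\Phi_\la$ on $\phi_{\la_0}(\overline{V_{\la_0}}\setminus U_{\la_0})$, where $H_\la=h_\la$. You then take $W'$ to be a piece of that set with $p_0\in\partial W'$, but no such piece exists. The marked pole satisfies $T_\alpha(p_0)=\infty=u\in\partial U\subset V$. So $p_0$ lies in the open set $\phi_{\la_0}(U_{\la_0})$ (the paper notes $p_{\la_0}\in f_{\la_0}^{-1}(\partial U_{\la_0})\subset U_{\la_0}$), at positive distance from the closed fundamental annulus.

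The access $\varphi_\alpha(\tfrac12+it)$, $t\to 0$, runs through the first pullback $T_\alpha^{-1}\bigl(\phi_{\la_0}(V_{\la_0}\setminus U_{\la_0})\bigr)$. That is where Proposition~\ref{prop:extend} needs $\sup_{W'}|\Phi_\la-\id|<\eps$. There $\Phi_\la=\phi_\la\circ H_\la\circ\phi_{\la_0}^{-1}$, with $H_\la$ the lifted motion, and your limit argument says nothing about $H_\la$ near $p_{\la_0}$.

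The paper supplies exactly this step. It takes $\mathcal N$ in the first pullback with $p\in\partial\mathcal N$ and writes, on $\mathcal N$,
$\tilde\Phi_\la=\phi_\la\circ g_\la\circ h_\la\circ f_{\la_0}\circ\phi_{\la_0}^{-1}$.
Here $g_\la$ is the inverse branch of $f_\la$ on a disc around $u_{\la_0}$ with values near $p_\la$; this expression is continuous in $\la$ and equals the identity at $\la_0$. You can repair Step 2 the same way. For a single level, Lemma~\ref{lem:liftholmot} over a small parameter disc is legitimate, because $v_\la\in U_\la$ for $\la$ near $\la_0$, and by uniqueness of lifts it agrees with your topological $H_\la$. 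As written, however, the hypothesis of Proposition~\ref{prop:extend} is not verified.

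\textbf{What your Step 2 gets right, and a shortcut.} Your Step 2 does address a point the paper only asserts. The paper uses continuity of $\la\mapsto\phi_\la$ at $\la_0$ and cites Lemma~\ref{lem:continuitybt}, which proves continuity of $\chi$, not of $\phi_\la$. Your compactness argument plus Corollary~\ref{cor:bottcherlike} justifies it along the fibre.

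In fact Corollary~\ref{cor:bottcherlike}, used directly as the paper does in Proposition~\ref{prop:chiinj}, gives more. Since $t_\la=t_{\la_0}\circ h_\la^{-1}$ and $\chi(\la)=\chi(\la_0)$, the equality $t_\la\circ\phi_\la^{-1}=t_{\la_0}\circ\phi_{\la_0}^{-1}$ yields $\phi_\la\circ h_\la=\phi_{\la_0}$ exactly on $\overline{V_{\la_0}}\setminus U_{\la_0}$. Then $\phi_\la^{-1}\circ\phi_{\la_0}$ is already a quasiconformal extension of $h_\la$ to $V_{\la_0}$ that conjugates $f_{\la_0}$ to $f_\la$ on $U_{\la_0}$. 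The statement follows for every $\la$ in the fibre, with no limit argument and no appeal to Proposition~\ref{prop:extend}. The location of $W'$ then becomes irrelevant.

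\textbf{Minor points.}
\begin{itemize}
\item In Step 1 your primary route does not work. Lemma~\ref{lem:liftholmot} requires the non-hitting condition for every $\la$ in the parameter disc. Any disc around $\la_0\in\partial\tcal(\mathbf f)$ contains parameters whose asymptotic value eventually enters the pulled-back annuli. Your fixed-$\la$ universal-cover fallback, i.e.\ Lemma~\ref{lem:extendh} transported, is the right tool. It is what the paper uses, with the same implicit no-virtual-cycle caveat.
\item In Step 3, $J(T_\alpha)=K(T_\alpha)$ fails for parabolic or Siegel parameters in $\partial\tcal$. This is harmless: just extend by the identity on all of $K$.
\item Also in Step 3, ``zero area unless there is an invariant line field'' is not a valid principle. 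Quasiconformality of the glued map follows instead from Rickman's lemma: a homeomorphism that is quasiconformal off a compact set $E$ and agrees on $E$ with a quasiconformal map is quasiconformal. Apply it with $E=K(T_\alpha)$ and the identity. Alternatively, it is immediate from the global formula $\phi_\la^{-1}\circ\phi_{\la_0}$ above.
\end{itemize}
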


\begin{proof}
Let $\alpha:=\chi(\la_0)$, and let $\la_1 \in \Lambda$ such that $\chi(\la_1) = \chi(\la_0)$.	
We have the following commuting diagram:

\[
\begin{CD}
\phi_{\la_1}(\partial U_{\la_1} )\setminus \{\infty\}   @>T_\alpha>> \phi_{{\la_1}}(\partial V_{{\la_1}}) \\
@A{\phi_{{\la_1}}}AA  @AA{\phi_{{\la_1}}}A \\
\partial U_{\la_1}\setminus \{u_{\la_1}\}  @>f_{{\la_1}}>> \partial V_{\la_1}\\
@Ah_{\la_1} AA  @AAh_{\la_1} A \\
\partial U_{\la_0}\setminus \{u_{\la_0}\}  @>f_{\la_0}>> \partial V_{\la_0}\\
@V{\phi_{\la_0}}VV  @VV{\phi_{\la_0}}V \\
\phi_{\la_0}(\partial U_{\la_0}) \setminus \{\infty\}  @>T_\alpha>> \phi_{\la_0}(\partial V_{\la_0})
\end{CD}
\]

Consider the map $\Phi_{\la_1}:= \phi_{{\la_1}} \circ h_{\la_1} \circ \phi_{\la_0}^{-1}:\phi_{\la_0}(\overline{V_{\la_0}} \setminus U_{\la_0})\ra \phi_{\la_1}(\overline{V_{\la_1}}\setminus U_{\la_1})$. By the diagram above, 
$$T_\alpha \circ \Phi_{\la_1}(z) = \Phi_{\la_1} \circ T_\alpha(z)$$
for all $z \in \partial U_{\la_0} \setminus \{u_{\la_0}\}$.
By Lemma \ref{lem:extendh},  $\Phi_{\la_1}$ extends to a homeomorphism $\tilde \Phi_{\la_1} : \phi_{\la_0}(\overline{V_{\la_0}}) \setminus K(T_\alpha) \to \phi_{\la_0}(\overline{V_{\la_0}}) \setminus K(T_\alpha)$ 
which commutes with $T_\alpha$ on $\phi_{\la_0}(U_{\la_0}) \setminus K(T_\alpha)$.

 Let $p$ be the marked pole of $T_\alpha$ (see Lemma \ref{lem:labelling}),  and  let   $p_\la:=\phi_\la^{-1}(p)$. 
Let $g_{\la_0}$ denote the inverse branch of $f_{\la_0}$  mapping $\infty$ to $p_{\la_0}$, and let $g_\la$ be the corresponding inverse branch for $f_\la$. There exists $\delta>0$ such that for all $\la$ sufficiently close to $\la_0$, $g_\la$ is well-defined on $\D(u_{\la_0}, 2\delta)$, and $\la \mapsto g_\la(z)$ is holomorphic for all $z \in \D(u_{\la_0}, 2\delta)$.

Let $\mathcal{N} \subset \phi_{\la_0}( f_{\la_0}^{-1}( V_{\la_0} \setminus U_{\la_0}   )  ) 
= T_\alpha^{-1}(\phi_{\la_0}( V_{\la_0} \setminus U_{\la_0}    ))$ be a domain such that $p \in \partial \mathcal{N}$, and such that $\mathcal{N}$ is compactly contained in $\phi_{\la_0}(U_{\la_0})$.
This is possible since 
$$p_{\la_0} \in f_{\la_0}^{-1}(\partial U_{\la_0}) \subset U_{\la_0}.$$

Moreover, up to reducing $\mathcal{N}$ if necessary, we may assume that for all $\la$ close enough to $\la_0$, 
$$h_\la \circ f_{\la_0} \circ \phi_{\la_0}^{-1}(\mathcal{N}) \subset \D(u_{\la_0}, \delta).$$
Observe that this expression is well-defined not just for $\la=\la_1$ but for all $\la$ close enough to $\la_0$, 
since by construction $f_\la \circ h_{\la_0}^{-1}(\mathcal{N}) \subset V_{\la_0} \setminus U_{\la_0}$.

Let $\eps>0$ be the constant given by Proposition \ref{prop:extend}.
We will prove that there exists $\eps_0>0$ such that if $|\la_1-\la_0|<\eps_0$, then 
$\sup_{z \in \mathcal{N} } |\tilde \Phi_{\la_1}(z)-z| < \eps$.

Recall that the hybrid conjugacies $\phi_\la$ depend continuously on $\la$ at $\la_0$ (Lemma \ref{lem:continuitybt}).
By some diagram chasing, we may write, for $z \in \mathcal{N}$: 
\begin{equation}
	\tilde \Phi_{\la_1}(z) = \phi_{\la_1} \circ g_{\la_1} \circ h_{\la_1} \circ f_{\la_0} \circ \phi_{\la_0}^{-1}(z) = 
	\left(\phi_{\la_1} \circ g_{\la_1}\right) \circ h_{\la_1} \circ  \left(\phi_{\la_0} \circ g_{\la_0}^{-1}\right)^{-1}(z).
\end{equation}

As noted above, this expression is defined not just for $\la=\la_1$ but for all $\la$ close enough to $\la_1$.
Moreover, since $\phi_\la, g_\la, h_\la$ all depend continuously on $\la$ (uniformly in $z$), the map 
$$\la \mapsto \phi_\la \circ g_\la \circ h_\la \circ  f_{\la_0} \circ \phi_{\la_0}^{-1} $$
depends continuously on $\la$, uniformly on $z \in \mathcal{N}$. Since 
$$\phi_{\la_0} \circ g_{\la_0} \circ h_{\la_0} \circ f_{\la_0} \circ \phi_{\la_0}^{-1} = \id,$$
it follows that at there exists $\eps_0>0$ such that if $|\la_1-\la_0|<\eps_0$, then 
$\sup_{z \in \mathcal{N} } |\tilde \Phi_{\la_1}(z)-z| < \eps$.
We can then apply Proposition \ref{prop:extend}: the map $\tilde \Phi_{\la_1}$ extends continuously to $J(T_\alpha)$ by the identity, hence it extends continuously to $K(T_\alpha)$ by the identity. The desired extension of $h_{\la_1}$ is then given by 
$$
\tilde h_{\la_1}(z):= \left\{
\begin{array}{ll}
	\phi_{\la_1}^{-1} \circ \tilde \Phi_{\la_1} \circ \phi_{\la_0}(z) & \text{ if $z \in \overline{V_{\la_0}} \setminus K(T_{\la_0})$ } \\
	=\phi_{\la_1}^{-1}  \circ \phi_{\la_0}(z) & \text{ if $z \in K(f_{\la_0})$. }
\end{array}
\right.
$$

\end{proof}

We can now prove that the fibers of $\chi$ are finite (c.f.  \cite[Lemma 42.15, p. 554]{LyubichBook}.

\begin{prop}[Finiteness of fibers]\label{prop:discrete fibers}
	The fibers of $\chi : \tcal(\mathbf f) \to \tcal$ are finite.
\end{prop}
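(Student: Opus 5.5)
The plan is to show that the fibers are discrete; finiteness then follows because $\tcal(\mathbf f)$ is compact in $\Lambda$ by properness, so $v_\la \in \partial V_\la$ on $\partial \Lambda$ keeps $\tcal(\mathbf f)$ away from $\partial\Lambda$. Fix $\alpha \in \tcal$ and suppose for contradiction that $\chi^{-1}(\alpha)$ has an accumulation point $\la_0 \in \tcal(\mathbf f)$, with $\la_n \to \la_0$, $\la_n \neq \la_0$ and $\chi(\la_n)=\alpha$.

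\textbf{Interior points.} First I dispose of the case $\la_0 \in \mathring{\tcal(\mathbf f)}$. There $\chi$ is holomorphic by Propositions \ref{prop:hyperbolic} and \ref{prop:non-hyperbolic}, so it would have to be constant equal to $\alpha$ on the whole component $U$ containing $\la_0$. This is impossible because $\chi$ is proper from $U$ onto a component of $\mathring{\tcal}$, and a constant map is not proper onto an open set. Hence $\la_0 \in \partial \tcal(\mathbf f)$, and by Lemma \ref{lem:continuitybt} also $\alpha \in \partial\tcal$.

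\textbf{Excluding $u_{\la_0}=v_{\la_0}$.} If $u_{\la_0}=v_{\la_0}$, then $\alpha=0$ by Theorem \ref{th:straight}. The zeros of $\la\mapsto v_\la-u_\la$ are isolated, since otherwise this function vanishes identically, contradicting the winding number condition. So for $n$ large $u_{\la_n}\neq v_{\la_n}$, which forces $\chi(\la_n)\neq 0$, a contradiction. We may therefore assume $u_{\la_0}\neq v_{\la_0}$ and apply Proposition \ref{prop:keytool}.

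\textbf{Boundary points with $u_{\la_0}\neq v_{\la_0}$.} For $n$ large with $|\la_n-\la_0|<\eps_0$, Proposition \ref{prop:keytool} shows that $h_{\la_n}$ extends to a quasiconformal conjugacy $V_{\la_0}\to V_{\la_n}$ between $f_{\la_0}$ and $f_{\la_n}$. This extension equals $\phi_{\la_n}^{-1}\circ\phi_{\la_0}$ on $J(f_{\la_0})$, and the same formula holds on all of $K(f_{\la_0})$ by the gluing in that proof. In particular it maps $v_{\la_0}$ to $v_{\la_n}$. The idea is then to contradict the fact that $v_\la$ is active at $\la_0$ (Proposition \ref{prop:msstl}).

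To make this precise, consider the analytic set
\[
Z=\{(\la,z): z=h_\la\text{-type conjugacy image}\}.
\]
More concretely, I would consider the graph of the map $\la\mapsto h_\la(v_{\la_0})$, where $h_\la$ is the Slodkowski extension of the equivariant motion. I would compare it with the holomorphic map $\la\mapsto v_\la$ and argue that both are holomorphic in $\la$ and agree at the points $\la_n$, which accumulate at $\la_0$. By the identity principle they then agree on a neighborhood, giving $v_\la=h_\la(v_{\la_0})$ there. The lifting argument of Lemma \ref{lem:sstableq} (via Lemma \ref{lem:liftholmot}) then produces an equivariant holomorphic motion of $V_{\la_0}\setminus K(f_{\la_0})$ that tracks the orbit of $v_\la$. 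This makes $\la\mapsto f^k_\la(v_\la)$ a normal family near $\la_0$, contradicting activity.

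\textbf{Main obstacle.} The delicate step is producing one holomorphic object in $\la$ that agrees with $v_{\la_n}$ at every $\la_n$. The extension from Proposition \ref{prop:keytool} is built separately for each $\la_n$ and is not a priori holomorphic in $\la$. What I would try first is to use the marked prepole structure instead. Since $v_{\la_0}\in K(f_{\la_0})$ and the $J$-stability failure is witnessed by virtual cycles, I would pick a prepole $q_{\la_0}$ of $f_{\la_0}$ near $v_{\la_0}$, which moves holomorphically as $q_\la$ by Lemma \ref{lem:liftholmot}. The conjugacy $\phi_\la^{-1}\circ\phi_{\la_0}$ sends $q_{\la_0}$ to $q_\la$ whenever $\chi(\la)=\alpha$, because it coincides with the extended $h_\la$. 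Then the holomorphic function $\la\mapsto f_\la^m(v_\la)-f^m_\la(\text{corresponding point})$ vanishes at all the $\la_n$. I would choose the point so that this function is not identically zero, using the non-persistence of virtual cycles from Remark \ref{rem:vsdense}, and then the accumulating zeros at $\la_0$ give the contradiction.
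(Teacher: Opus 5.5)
Your preliminary reductions are fine. Discreteness suffices by compactness. Accumulation in $\mathring{\tcal(\mathbf f)}$ is excluded by holomorphy and properness of $\chi$ on components. The case $u_{\la_0}=v_{\la_0}$ is excluded because $\alpha=0$ only at the isolated zeros of $v_\la-u_\la$.

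The gap is in the core step, which you flag but do not close: you never produce an object, holomorphic in $\la$, that agrees with the data at every $\la_n$. Neither of your candidates works:
\begin{enumerate}
\item The Slodkowski extension of the annulus motion is an arbitrary extension into $U_{\la_0}$. It has no relation to the extension $\tilde h_{\la_n}$ produced by Proposition~\ref{prop:keytool}, so nothing forces $h_{\la_n}(v_{\la_0})=v_{\la_n}$.
\item The prepole idea yields no equation. When $T_\alpha$ has no virtual cycle, all you know at $\la_n$ is that some quasiconformal conjugacy, varying non-holomorphically with $n$, sends $q_{\la_0}\mapsto q_{\la_n}$ and $v_{\la_0}\mapsto v_{\la_n}$. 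That is purely topological information. The holomorphic functions you can build from prepoles, such as $f^m_\la(v_\la)-u_\la$, vanish only at virtual cycle parameters, and the $\la_n$ are not such parameters.
\item Even an identity $v_\la=H_\la(v_{\la_0})$ near $\la_0$, for some holomorphic motion $H_\la$, would not let you follow $f^k_\la(v_\la)$. Lemma~\ref{lem:liftholmot} only pulls motions back, and $v_\la$ is omitted, so it never enters the lifted sets. You would need $H_\la$ to actually conjugate $f_{\la_0}$ to $f_\la$.
\end{enumerate}

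The missing idea is to build the interpolation with the Measurable Riemann Mapping Theorem, taking $\la_0$ as basepoint.
\begin{enumerate}
\item Let $\mu_\la$ be the pullback of the standard structure by the (extended) annulus motion $h_\la$ on $\rs\setminus U_{\la_0}$. Spread it over $U_{\la_0}\setminus K(f_{\la_0})$ by $f_{\la_0}$-invariance, and set $\mu_\la=0$ on $K(f_{\la_0})$. Then $\la\mapsto\mu_\la$ is holomorphic.
\item Integrate it by $H_\la$, normalized to agree with $h_\la$ at three points of the fundamental annulus. Then $H_\la$ and $G_\la:=H_\la\circ f_{\la_0}\circ H_\la^{-1}$ depend holomorphically on $\la$.
\item The extension $\tilde h_{\la_n}$ of Proposition~\ref{prop:keytool} has Beltrami coefficient exactly $\mu_{\la_n}$. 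It equals $h_{\la_n}$ off $U_{\la_0}$, conjugates the dynamics on $U_{\la_0}\setminus K(f_{\la_0})$, and equals the a.e.\ conformal map $\phi_{\la_n}^{-1}\circ\phi_{\la_0}$ on $K(f_{\la_0})$. It also has the same normalization, hence $\tilde h_{\la_n}=H_{\la_n}$ and $f_{\la_n}=G_{\la_n}$.
\item The identity theorem applied to $\la\mapsto f_\la(z)-G_\la(z)$ gives $f_\la=G_\la$ near $\la_0$. So $K(f_\la)$ is connected on a neighborhood of $\la_0$, contradicting $\la_0\in\partial\tcal(\mathbf f)$.
\end{enumerate}
The holomorphic object you need is the family of maps $G_\la$, not a point-valued function.

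There is also a smaller gap. You must exclude every virtual cycle before invoking Proposition~\ref{prop:keytool}, not just $u_{\la_0}=v_{\la_0}$. Its proof uses Lemma~\ref{lem:extendh}, which assumes $T_\alpha$ has no virtual cycle. This case is easy: if $T_\alpha^k(1/\alpha)=\infty$, then every $\la\in\chi^{-1}(\alpha)$ satisfies $f^k_\la(v_\la)=u_\la$. That relation is not persistent (Remark~\ref{rem:vsdense}), so the fiber is discrete.
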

\begin{proof}	
	By compactness of $\tcal(\mathbf f)$, it is enough to prove that fibers are discrete. 	
	We  first deal with the case where $T_\alpha$ has a virtual cycle: then there exists $n \in \N$ such that for any $\la \in \chi^{-1}(\{\alpha\})$, $f_\la^n(v_\la)=u_\la$; and since $\la \mapsto f_\la^n(v_\la)$ is holomorphic and non-constant (by the definition of a proper TL family), $\chi^{-1}(\{\alpha\})$ is discrete.

	From now on, we therefore assume that $T_\alpha$ has no virtual cycle.
	Assume that there exists $\alpha \in \tcal$ such that $\chi^{-1}(\{\alpha\})$ is infinite: then there exists an injective sequence $\la_n \in  \tcal(\mathbf f)$  such that $\chi(\la_n) = \alpha$.

	Up to extracting a subsequence, we may assume that it converges to some $\la_\infty \in  \tcal(\mathbf f)$. Moreover,  since $\chi$ is holomorphic on $\mathring\TT(\mathbf f)$ (Propositions \ref{prop:hyperbolic} and \ref{prop:non-hyperbolic}), we 
	may assume without loss of generality that $\la_\infty \in \partial \tcal(\mathbf f)$. Since $\chi$ is continuous on $\TT(\mathbf f)$ (Lemmas \ref{lem:continuitybt} and \ref{lem:continuityintm}), $\chi(\la_\infty)=\alpha$, so $f_{\la_\infty}$ is hybrid equivalent to $f_{\lambda_n}$ for all $n$. 	
	Let 
	$$
	h_\la : \overline{V_{\la_\infty}} \setminus U_{\la_\infty} \to \overline{V_{\la}} \setminus U_{\la}
	$$
	be  the holomorphic motion 	given by the definition of an equipped TL family with the choice of  $\lambda_\infty$ as a base point. Recall that $h_\la$ conjugates $f_{\la_\infty}$ to $f_\la$ on $\partial U_{\la_\infty}\setminus \{u_{\la_\infty}\}$. Let us extend it by Slodkowski's $\lambda-$lemma to a holomorphic motion of $\rs\setminus U_\lambda$ over $\Lambda$, which we still denote by $h_\la$.

We define a family of Beltrami forms $(\mu_\lambda)_{\lambda\in\Lambda}$ on $\rs$	in the following way. For each $\lambda$ first consider the pullback under $h_\lambda$ of the standard complex structure on $\rs \setminus U_\lambda$.

Then extend $\mu_\lambda$   to $\rs\setminus K(f_\infty)$ by pulling it back successively under $f_{\lambda_\infty}$, in such a way that it is   $f_{\lambda_\infty}$-invariant on $U_{\lambda_\infty} \setminus K(f_{\la_\infty})$; finally extend it to  $K(f_{\la_\infty})$ by setting $\mu_\la=0$ on $K(f_{\la_\infty})$.

Choose any 3 distinct points $z_1, z_2, z_3 \in \overline{V_{\la_\infty}} \setminus U_{\la_\infty}$.
By the Measurable Riemann Mapping Theorem, there exists a holomorphic family of quasiconformal maps $H_\lambda:\rs\to\rs$,  which integrate $\mu_\la$ and such that $H_{\la}(z_i) = h_\la(z_i)$, $1 \leq i \leq 3$.
This induces a family of holomorphic maps  $G_\la: H_\la(U_{\la_\infty} )\to H_\la(V_{\la_\infty})$, defined as 
$$
G_\la:=H_\la \circ f_{\la_\infty} \circ H_\la^{-1}, 
$$ 
which are all holomorphic since  $\mu_\la = f_{\la_\infty}^*\mu_\la$ on $U_{\la_\infty}$,
and are all  hybrid equivalent to $f_{\la_\infty}$ since $\mu_\la=0$ a.e. on $K(f_{\la_\infty})$. Additionnally,
 $G_\la(z)$ depends holomorphically on $\la$ (see e.g. \cite[Lemma 1.40]{bf}).

	On the other hand, recall that for all $n \in \N$, $f_{\la_n}$ and $f_{\la_\infty}$ are both hybrid equivalent to $T_\alpha$ by respective hybrid equivalences $\psi_n$ and $\psi_\infty$. Set $\varphi_n:=\psi_n\circ \psi_\infty^{-1}$, which is a hybrid equivalence between $f_{\la_\infty}$ and $f_{\la_n}$. By Proposition \ref{prop:keytool}, for all $n$ large enough, $h_{\la_n}:\rs\setminus U_{\la_\infty} \ra \rs\setminus U_{\la_n}$ extends 
 
	to a quasiconformal homeomorphism $\tilde{h}_{\la_n}:\rs\to\rs$, which coincides with $\varphi_n$ on $K(f_{\la_\infty})$  and such that, for all $z\in U_{\la_n}$,
	 $$
	 f_{\la_n}(z):=\htil_{\la_n} \circ f_{\la_\infty} \circ \htil_{\la_n}^{-1}(z).
	 $$

Hence we have the following commuting diagram.
\[
\begin{CD}
U_{\la_n} @>{f_{\la_n}}>> V_{\la_n}\\
@A{\htil_{\la_n}}AA @AA{\htil_{\la_n}}A\\
U_{\la_\infty} @>{f_{\la_\infty}}>> V_{\la_\infty}\\
@V{H_{\la_n}}VV @VV{H_{\la_n}}V\\
H_{\la_n}(U_{\la_\infty}) @>{G_{\la_n}}>> H_{\la_n}(V_{\la_\infty})\\
\end{CD}
\]

	 We claim that $\tilde{h}_n = H_{\la_n}$. Let $\sigma_n:=\frac{\dbar \htil_{\la_n}}{\partial \htil_{\la_n}}$ be the Beltrami form associated to $\htil_{\la_n}$.
	Since $\htil_{\la_n}$ coincides with $h_{\la_n}$ on $\rs\setminus U_{\la_\infty}$, 
	we have $\sigma_n = \mu_{\la_n}$ on $V_{\lambda_\infty}\setminus U_{\la_\infty}$. 
	Therefore, since both are $f_{\la_\infty}$-invariant, we also have $\sigma_n = \mu_{\la_n}$ on	 $ \rs  \setminus K(f_{\la_\infty})$; and $\sigma_n = \mu_{\la_n}=0$ a.e. on $K(f_{\la_\infty})$. Therefore $\sigma_n = \mu_{\la_n}$. 
	Finally, both integrating maps $H_{\la_n}$ and $\htil_{\la_n}$ must be equal since they coincide on the 3 points $z_1, z_2, z_3$ introduced above.
	
	Hence $f_{\la_n} = H_{\la_n} \circ f_{\la_\infty} \circ H_{\la_n}^{-1}$ for all $n \in \N$.
	By the   Identity Theorem  applied to $\lambda \mapsto f_{\la}(z) -  H_{\la} \circ f_{\la_\infty} \circ H_{\la}^{-1}(z)$, we therefore have $f_{\la} =  H_{\la} \circ f_{\la_\infty} \circ H_{\la}^{-1}$
	for all $\la$ in a neighborhood of $\la_\infty$. But this would imply that $J(f_\la)$ is connected for all 
	$\la$ in a neighborhood of $\la_\infty$, contradicting $\la_\infty \in \partial \tcal(\mathbf f)$.
\end{proof}

\subsubsection{Bijectivity and proof that  $\chi$ is a homeomorphism}

We start by proving surjectivity, via topological holomorphicity.

\begin{defi}[Local degree]
	Let $X$ and $Y$ be two oriented (real) surfaces, and $h: X \to Y$ a continuous map. Let $x \in X$ be such that $x$ is isolated in $h^{-1}(\{h(x)\})$.

	 Let $U$ and $V$ be simply connected neighborhoods of $x$ and $y:=h(x)$ respectively,  	and such that $h(U) \subset V$ and $\{x\} = U \cap h^{-1}(\{y\})$. If $\gamma$ is a loop in $U\setminus \{x\}$ with winding number 1 around $x$ then  the local degree  $i_x(h)$ is the winding number of $h \circ \gamma$ around $y$, that is 

	$$
	i_x(h)=\ind(h \circ \gamma,y).
	$$
	
\end{defi}

We recall the classical topological argument principle, see e.g. \cite[Proposition 3.9]{LyubichBook}, . 

\begin{prop}[Topological Argument Principle]
Let $D\subset \R^2$ be a domain bounded by a Jordan curve $\gamma$, $h:\ov{D}\ra \R^2$ be a continuous map such that $h$ does not take a value $y$ on $\gamma$. Assume that the preimage $\{h^{-1}(y)\}$ form a discrete set in $D$.  Then  
$$
\ind(h\circ\gamma,y)=\sum_{x\in h^{-1}(y)}i_x(h)
$$
\end{prop}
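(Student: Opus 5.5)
The approach is the classical degree-theoretic proof via homotopy invariance of the winding number. The statement is the Topological Argument Principle (the multiplicative version of the residue theorem for continuous maps).

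\emph{Step 1: reduce to finitely many preimages.} First I would check that $h^{-1}(y)$ is finite. It is closed in $\ov D$. It misses $\gamma$, so it is a compact subset of the open set $D$. Being discrete and compact, it is finite; write $h^{-1}(y)=\{x_1,\dots,x_m\}$. Around each $x_j$, choose pairwise disjoint closed round disks $\ov{B_j}\subset D$ containing no other preimage. By the definition of local degree, $i_{x_j}(h)=\ind(h\circ\partial B_j,y)$, with $\partial B_j$ positively oriented. This is independent of the radius, by homotopy invariance of the index in the punctured disk.

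\emph{Step 2: cut the domain.} Let $D'=D\setminus\bigcup_j B_j$. The map $h$ does not take the value $y$ on $\ov{D'}$. I would show that in $H_1(\ov{D'})$ the cycle $\gamma$ is homologous to $\sum_j\partial B_j$. The simplest route is to join the circles $\partial B_j$ to $\gamma$ by disjoint arcs in $D'$. Cutting along these arcs produces a closed topological disk $\Delta$ whose boundary loop traverses $\gamma$, each arc twice in opposite directions, and each $\partial B_j$ with the negative orientation. Since $\Delta$ is contractible and $h(\Delta)\subset\R^2\setminus\{y\}$, the loop $h\circ\partial\Delta$ is null-homotopic in $\R^2\setminus\{y\}$. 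Its index is therefore $0$.

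\emph{Step 3: add up the indices.} The index is additive under concatenation of loops, and the two traversals of each arc cancel. Hence
\[
0=\ind(h\circ\partial\Delta,y)=\ind(h\circ\gamma,y)-\sum_j\ind(h\circ\partial B_j,y),
\]
which gives the claim.

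The main technical point is Step 2: constructing the cut disk for a general Jordan domain $D$. The cleanest route is the Schoenflies theorem. It gives a homeomorphism of $\ov D$ onto the closed unit disk, and the index is invariant under orientation-preserving reparametrization. This reduces the problem to the round disk, where the arcs and $\Delta$ can be written down explicitly (for instance, radial slits to small disks after moving the $x_j$ to distinct radii). Alternatively, one can define the index via the degree of $(h-y)/|h-y|$ and invoke Stokes' theorem after a smooth approximation of $h$ on $\ov{D'}$.
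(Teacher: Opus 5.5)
The paper does not prove this proposition. It recalls it as a classical fact and refers to \cite[Proposition 3.9]{LyubichBook}, so there is no argument in the paper to compare with. Your proof is the standard one and it is correct. Finiteness of $h^{-1}(y)$ follows from compactness. The local degrees are read off on small circles. The key relation $[\gamma]=\sum_j[\partial B_j]$ in $H_1(\ov{D'})$ is then pushed forward by $h$ into $H_1(\R^2\setminus\{y\})\cong\Z$. Step 2 is the only place where the Jordan-curve hypothesis is used, and you identified it correctly.

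A few points to state precisely when writing it up:
\begin{itemize}
\item \emph{Orientation.} The formula presupposes that $\gamma$ is positively oriented, and the homeomorphism $\Phi:\ov D\to\ov\D$ must be orientation preserving. Instead of Schoenflies, you can use the Riemann map with its Carath\'eodory extension, as the paper does in Lemma~\ref{lem:topo}; it is automatically orientation preserving.
\item \emph{Where to place the small disks.} Choose them as round disks in the model $\ov\D$, not in $D$. This is harmless because $i_{x_j}(h)$ does not depend on which loop of winding number $1$ around $x_j$ is used, as long as it lies in a punctured neighbourhood containing no other preimage of $y$.
\item \emph{``Distinct radii''.} This should mean that the points $\Phi(x_j)$ lie on distinct rays from the origin. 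If they do not, simply take disjoint polygonal arcs instead of radial slits.
\item \emph{Additivity in Step 3.} $\partial\Delta$ is a concatenation of paths, not of closed loops. Phrase the additivity via the total change of a continuous branch of $\arg(h-y)$ along each piece. Read this way, the two traversals of each slit cancel and the pieces of $\gamma$ reassemble into the full loop $\gamma$.
\end{itemize}
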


 Notice that given $\gamma$, for any $\tilde x$ sufficiently close to $x$, 
\begin{equation}\label{eq:continuity of winding number}
\ind(h \circ \gamma,h(x))=\ind(h \circ \gamma,h(\tilde x))\geq i_{\tilde x}(h).
\end{equation}

\begin{defi}[Topologically holomorphic]
	Let  $X$ and $Y$ be two oriented (real) surfaces, and $h: X \to Y$ be a continuous map. Let $T \subset Y$ be closed, and let $P:=h^{-1}(T)$. We will say that $h$ is topologically holomorphic over $T$  if for all $x \in P$, $h^{-1}(\{h(x)\})$ is discrete and $i_x(h)>0$.
\end{defi}

The follwoing propositions say that in our case it suffices to prove topological holomorphicity to have surjectivity of $\chi$ (c.f. \cite[Prop.19, p. 323]{douhub} and \cite[Thm. 4, p. 326]{douhub}.

\begin{prop}[Topological holomorphicity implies surjectivity]\label{prop:surjectivity}
	Let $X,Y, h, P,T$ be as above, and assume that $h$ is
	topologically holomorphic over $T$ and that $T$ is connected.  Then $h : P \to T$ is  surjective, and for all   $y \in T$,  its topological degree
	\begin{equation}\label{eq:topological degree}
	\delta := \sum_{x_j \in h^{-1}(y)} i_{x_j}(h) \geq 1
	\end{equation}
	is independent from $y$ and is called the  degree of $h$.
\end{prop}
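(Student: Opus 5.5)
The plan is to prove that the degree $\delta(y)$ in \eqref{eq:topological degree} is a locally constant function of $y\in T$, and then use connectedness of $T$ and positivity of local degrees. Throughout, I will use (as Douady and Hubbard do, and as holds in our application to $\chi$ on the compact set $\tcal(\mathbf f)$) that $h$ is proper over $T$: $P=h^{-1}(T)$ is compact, or at least $h^{-1}(K)\cap P$ is compact for compact $K\subset T$. I will also assume $P\neq\emptyset$. Together with discreteness of the fibres, which is part of topological holomorphicity, properness makes each fibre $h^{-1}(y)$, $y\in T$, finite. Hence $\delta(y)$ is a well-defined nonnegative integer.

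\textbf{Local constancy.} Fix $y_0\in T$ and write $h^{-1}(y_0)=\{x_1,\dots,x_k\}$. Choose pairwise disjoint closed Jordan disks $\overline{D_j}\ni x_j$ such that $\overline{D_j}\cap h^{-1}(y_0)=\{x_j\}$. Then $h(\partial D_j)$ is a compact set avoiding $y_0$, and by the definition of local degree $\ind(h\circ\partial D_j,y_0)=i_{x_j}(h)$.

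Next I will use properness to choose a connected neighbourhood $N$ of $y_0$ with two properties. First, $N$ is disjoint from every $h(\partial D_j)$. Second, $h^{-1}(N\cap T)\subset\bigcup_j D_j$. If the second property failed for every $N$, there would be points of $P$ outside $\bigcup_j D_j$ whose images converge to $y_0$. By compactness they would accumulate at a point of $h^{-1}(y_0)$ outside $\bigcup_j D_j$, which is impossible.

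Now take $y\in N\cap T$. Its fibre lies in $P$, hence is discrete, and it is contained in $\bigcup_j D_j$. Applying the Topological Argument Principle in each $D_j$ gives
$$\sum_{x\in h^{-1}(y)\cap D_j} i_x(h)=\ind(h\circ\partial D_j,y)=\ind(h\circ\partial D_j,y_0)=i_{x_j}(h).$$
The middle equality holds because $y$ and $y_0$ lie in the same connected component of the complement of $h(\partial D_j)$, as in \eqref{eq:continuity of winding number}. Summing over $j$ yields $\delta(y)=\delta(y_0)$.

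\textbf{Conclusion.} Since $\delta$ is locally constant on the connected set $T$, it is constant there. Pick any $x\in P$, which exists since $P\neq\emptyset$, and let $y=h(x)\in T$. Every local index is positive, so $\delta(y)\geq i_x(h)\geq 1$. Hence $\delta\geq1$ at every point of $T$. In particular every fibre over $T$ is nonempty, which is exactly surjectivity of $h:P\to T$.

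The main obstacle is the choice of $N$: one needs the fibre of $y$ not to escape the disks $D_j$. This is precisely where properness over $T$ is required, and in the application it comes from the compactness of $\tcal(\mathbf f)$ (using $v_\la\in\partial V_\la$ on $\partial\Lambda$ for proper families) together with continuity of $\chi$ there. Note that the argument only uses the Topological Argument Principle for fibres over points of $T$, where discreteness is guaranteed by hypothesis.
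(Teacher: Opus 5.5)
Your argument is correct and is the standard Douady--Hubbard proof: local constancy of $\delta$ via the Topological Argument Principle, then connectedness of $T$ and positivity of local indices. The paper does not write out a proof; it defers to \cite[Prop.~19]{douhub}. You are also right to add properness of $h$ over $T$ and $P\neq\emptyset$, which the statement as printed omits (the inclusion $h:\D\hookrightarrow\C$ with $T=\C$ is topologically holomorphic over $T$ but not surjective), and these hypotheses hold in the application because $\tcal(\mathbf f)=\chi^{-1}(\tcal)$ is compact and nonempty.
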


\begin{prop}\label{prop: chi surjective}
	The map $\chi$ is topologically holomorphic over $\mathcal{T}$,  hence surjective.
\end{prop}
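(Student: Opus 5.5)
We must show: for all $\la\in\tcal(\mathbf f)$, the fiber $\chi^{-1}(\chi(\la))$ is discrete, and $i_\la(\chi)>0$. Surjectivity then follows from Proposition \ref{prop:surjectivity} applied with $X=\Lambda$, $Y=\C$, $h=\chi$, $T=\tcal$. This works because $\tcal$ is connected by Theorem \ref{th:msconnected}, $P=\chi^{-1}(\tcal)=\tcal(\mathbf f)$, and $\chi$ is continuous on $\tcal(\mathbf f)$ and on its complement (Lemmas \ref{lem:continuitybt}, \ref{lem:continuityintm} and Proposition \ref{prop:cont}). Discreteness of fibers is exactly Proposition \ref{prop:discrete fibers}. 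So the whole task is positivity of the local degree, and I would split into the cases $\la_0\in\mathring{\tcal(\mathbf f)}$ and $\la_0\in\partial\tcal(\mathbf f)$.

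\textbf{Interior points.} By Propositions \ref{prop:hyperbolic} and \ref{prop:non-hyperbolic}, $\chi$ is holomorphic on the component containing $\la_0$. Its fiber through $\la_0$ is finite, so $\chi$ is nonconstant there. A nonconstant holomorphic map has local degree equal to its multiplicity, which is at least $1$.

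\textbf{Boundary points.} This is the main obstacle. Here I would follow Douady--Hubbard and compute $i_{\la_0}(\chi)$ by comparing with the winding of $v_\la$. The steps are:
\begin{enumerate}
\item Fix a small loop $\gamma$ around $\la_0$ that avoids $\chi^{-1}(\chi(\la_0))$.
\item Deform $\gamma$ into a nearby loop $\gamma'$ lying in $\Lambda\setminus\tcal(\mathbf f)$. This should be possible using density of the escape locus near boundary points, together with the fact that the index is locally constant; compare \eqref{eq:continuity of winding number}.
\item For $\la\notin\tcal(\mathbf f)$, use that $\chi(\la)=1/\phi_\la(v_\la)$, where $\phi_\la$ depends continuously on $\la$ (Proposition \ref{prop:cont}).
\end{enumerate}

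To make step 3 quantitative, I would use a Böttcher-like coordinate in the spirit of Corollary \ref{cor:bottcherlike}. Via the tubing, points escaping at the first step have positions in the fundamental annulus that are read in the same coordinates for $f_\la$ and for $T_{\chi(\la)}$. The plan is to define, on $\Lambda\setminus\tcal(\mathbf f)$ near $\la_0$, the escape position of $v_\la$. Concretely, I would pick the first $n$ with $f_\la^n(v_\la)$ in the annulus and pull it back through the tubing $t_\la$. I would then check that the analogous quantity for $\alpha=\chi(\la)$ in the model family coincides with it. This gives a factorization $\Phi_{\mathbf f}=\Phi_{\mathcal T}\circ\chi$ on the escape locus, where each $\Phi$ is a "parameter Böttcher map".

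$\Phi_{\mathcal T}$ is locally orientation-preserving, because it is holomorphic in $\alpha$ near parameters where the first escape occurs transversally. $\Phi_{\mathbf f}$ is only quasiregular-like, because the tubing is quasiconformal in $z$ but the motion is holomorphic in $\la$. Still, it is orientation-preserving, since $\la\mapsto f^n_\la(v_\la)$ is holomorphic and the tubing is an orientation-preserving homeomorphism in $z$. These two facts force $\chi$ to preserve orientation on small loops in the escape locus, so $\ind(\chi\circ\gamma',\chi(\la_0))\ge 0$.

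It remains to exclude the value $0$. Suppose the index were $0$. Then, by continuity of the index and \eqref{eq:continuity of winding number}, nearby interior parameters $\tilde\la$ mapping onto values close to $\chi(\la_0)$ would have nonpositive local degree. This contradicts the interior case once we know that such $\tilde\la$ exist. Their existence comes from Corollary \ref{cor:boundary approximated by interior} and the fact that boundary points map to the boundary (Lemma \ref{lem:continuitybt}). I expect the orientation bookkeeping for $\Phi_{\mathbf f}$ to be the delicate part, and I would first try it near virtual cycle parameters, where $f^n_\la(v_\la)-u_\la$ has an isolated zero of positive order.
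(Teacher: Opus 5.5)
Your reduction to positivity of local degrees and your interior case agree with the paper. The boundary case, as you structure it, has a genuine gap: nothing you write establishes $\ind(\chi\circ\gamma',\chi(\la_0))\ge 0$.

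Step 2 cannot be done as a small deformation. A loop in $\Lambda\setminus\tcal(\mathbf f)$ that winds once around $\la_0$ must wind once around the entire component of $\tcal(\mathbf f)$ containing $\la_0$. It therefore may enclose other points of $\chi^{-1}(\chi(\la_0))$, and its index is the sum of local degrees over all of them, not $i_{\la_0}(\chi)$. The two agree only if $\la_0$ is the only fiber point enclosed, which is the injectivity you ultimately want to deduce from this proposition.

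Even granting such a $\gamma'$, and a factorization $\Phi_{\mathbf f}=\Phi_{\mathcal T}\circ\chi$ making $\chi$ a local orientation-preserving homeomorphism near $\gamma'$, this gives no sign for the winding of $\chi\circ\gamma'$ around an enclosed point. For instance, $z\mapsto 1/z$ is holomorphic near $|z|=1$, yet it sends the positively oriented unit circle to a loop of index $-1$ around $0$. The sign is governed by what $\chi$ does inside the loop, which is exactly the unknown. Separately, the escape coordinate is canonical only for first-step escape, i.e. on the set $X$ of Proposition~\ref{prop:chiinj}, where Corollary~\ref{cor:bottcherlike} applies. For later escape you must choose lifts through an infinite-degree covering, and the escape time jumps.

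Your last paragraph is exactly the paper's argument, and it needs no prior sign information. It gives $i_{\la_0}(\chi)\ge 1$ outright, so you can delete steps 1--3 and the orientation discussion. Concretely:
\begin{enumerate}
\item Take a disk $D$ around $\la_0$ with $\overline D\cap\chi^{-1}(\chi(\la_0))=\{\la_0\}$, and set $\gamma=\partial D$.
\item By Corollary~\ref{cor:boundary approximated by interior} and continuity of $\chi$ at $\la_0$, pick $\tilde\la\in\mathring{\tcal(\mathbf f)}\cap D$ with $\ind(\chi\circ\gamma,\chi(\tilde\la))=\ind(\chi\circ\gamma,\chi(\la_0))=i_{\la_0}(\chi)$.
\item By Propositions~\ref{prop:hyperbolic} and~\ref{prop:non-hyperbolic}, $\chi(\tilde\la)\in\mathring{\tcal}$.
\item Since $\la\in\tcal(\mathbf f)$ if and only if $\chi(\la)\in\tcal$, and $\chi(\partial\tcal(\mathbf f))\subset\partial\tcal$ (Lemma~\ref{lem:continuitybt}), every point of $D\cap\chi^{-1}(\chi(\tilde\la))$ lies in $\mathring{\tcal(\mathbf f)}$. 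There $\chi$ is holomorphic and nonconstant, so each such point has local degree at least $1$.
\item The topological argument principle then gives $i_{\la_0}(\chi)=\sum_j i_{\la_j}(\chi)\ge i_{\tilde\la}(\chi)\ge 1$.
\end{enumerate}
Make step 4 explicit. It is what your phrase ``would have nonpositive local degree'' silently relies on, and it is where Lemma~\ref{lem:continuitybt} is really needed, rather than for the existence of $\tilde\la$.
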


\begin{proof}
	We have already proved that fibers of $\chi$ are discrete, so we only need to prove that if $\chi(\la) \in \tcal$, then $i_\la(\chi)>0$.
	We will distinguish two cases, depending on whether $\chi(\la) \in \mathring{\tcal}$ or $\chi(\la) \in \partial \tcal$.

	Assume first that $\chi(\la) \in \mathring{\tcal}$. By Lemma \ref{lem:continuitybt}, $\chi(\partial \tcal(\mathbf{f})) \subset \partial \tcal$, 
	therefore $\la \in \mathring{\tcal(\mathbf{f})}$. Then, by Propositions \ref{prop:hyperbolic} and \ref{prop:non-hyperbolic}, the map $\chi$ is holomorphic near $\la$,
	from which it follows that $i_\la(\chi) \geq 1$.
	
	Let us now assume that $\chi(\la) \in \partial \tcal$. Let $D$ be a disk around $\la$ small enough that $D \cap \chi^{-1}(\{\chi(\la)\}) = \{\la\}$, and let $\gamma:=\partial D$.

	Since $\la \in \partial \tcal(\mathbf{f})$,   by Corollary~\ref{cor:boundary approximated by interior} there exists $\tilde \la \in \mathring{\tcal(\mathbf{f})}$ arbitrarily close to $\la$. By (\ref{eq:continuity of winding number}), we can choose  $\tilde \la$ close enough to $\la$ so  that 
	$$
i_\la(\chi):=\mathrm{ind}(\chi \circ \gamma,  \chi(\la))=\mathrm{ind}(\chi \circ \gamma,  \chi(\tilde\la)).
	$$
	By the Topological Argument Principle we have 
	$$
	\mathrm{ind}(\chi \circ \gamma,  \chi(\tilde \la)) =\sum_{\la_j \in D \cap \chi^{-1}(\chi(\tilde\la))} i_{\la_j}(\chi)\geq i_{\tilde \la}(\chi).
	$$
	Since $\chi$ is holomorphic near $\tilde \la$ (see Proposition~\ref{prop:hyperbolic}), $i_{\tilde \la}(\chi)\geq 1$, so we can conclude that $i_\la(\chi)\geq 1$ as desired. 
	This proves that $\chi$ is indeed topologically holomorphic;  surjectivity follows from Proposition~\ref{prop:surjectivity}.	
\end{proof}

\begin{prop}[Injectivity in an exterior neighborhood of $\tcal(\mathbf f)$]\label{prop:chiinj}
	The map $\chi$ is injective on $X:=\{ \la \in \Lambda : v_\la \in V_\la \setminus \overline{U_\la}  \}$.
\end{prop}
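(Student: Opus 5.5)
The plan is the Douady--Hubbard ``B\"ottcher coordinate'' argument, with Corollary \ref{cor:bottcherlike} playing the role of the B\"ottcher map. The idea is to attach to each $\la \in X$ a quantity computed purely on the escaping side of the dynamics that determines $\la$ on one side and $\chi(\la)$ on the other.

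Fix $\la \in X$ and write $\alpha := \chi(\la)$. Since $v_\la \in V_\la \setminus \overline{U_\la}$, the point $v_\la$ lies in the fundamental annulus. The tubing therefore gives the point $t_\la(v_\la) \in \overline V \setminus U$, where $(T_0, U, V, \infty, \infty)$ is the fixed model TL map from Definition \ref{def:tubing}.

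\begin{itemize}
\item \emph{First identity.} The hybrid conjugacy $\phi_\la$ has the same Beltrami coefficient as $t_\la$ on $V_\la\setminus U_\la$, by Proposition \ref{prop:straightening family}. So on $\phi_\la(\overline{V_\la}\setminus U_\la)$ the map $\Psi_\alpha := t_\la\circ\phi_\la^{-1}$ is holomorphic and agrees with the map of Corollary \ref{cor:bottcherlike}.
\item \emph{Uniqueness.} That corollary says $\Psi_\alpha$ depends only on $\alpha$: if $\chi(\la)=\chi(\la')=\alpha$, then $\phi_\la(U_\la)=\phi_{\la'}(U_{\la'})$, and $t_\la\circ\phi_\la^{-1}=t_{\la'}\circ\phi_{\la'}^{-1}$ on the common domain.
\item \emph{Second identity.} Since $1/\alpha = \phi_\la(v_\la)$, we get
\begin{equation*}
t_\la(v_\la) = \Psi_\alpha\!\left(\tfrac{1}{\alpha}\right).
\end{equation*}
Hence $\chi(\la)=\chi(\la')$ forces $t_\la(v_\la)=t_{\la'}(v_{\la'})$.
\end{itemize}

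It then remains to show that the map $\Theta:\la\mapsto t_\la(v_\la) = \phi\circ h_\la^{-1}(v_\la)$ is injective on $X$. Equivalently, since $\phi$ is a fixed homeomorphism, $\la\mapsto h_\la^{-1}(v_\la)\in \overline{V_{\la_*}}\setminus U_{\la_*}$ must be injective.

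For this I would use the properness of the family. On $\partial\Lambda$ we have $v_\la\in\partial V_\la$, so $h_\la^{-1}(v_\la)\in\partial V_{\la_*}$. The winding number of $v_\la-u_\la$ around $\partial\Lambda$ is $1$. Because $h_\la$ is a holomorphic motion fixing the annulus structure, this should translate into the loop $\la\mapsto h_\la^{-1}(v_\la)$, $\la\in\partial\Lambda$, having degree one on the outer boundary circle.

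Next, consider the function $F_w(\la):=v_\la-h_\la(w)$ for a fixed $w$ in the annulus. Its zeros are exactly the parameters with $h_\la^{-1}(v_\la)=w$. Each $F_w$ is holomorphic in $\la$, so every zero counts with positive multiplicity. On $\partial\Lambda$ the argument principle should give exactly one zero, since $v_\la$ winds once and $h_\la(w)$ stays inside $V_\la$. This yields a unique $\la$ with $h_\la^{-1}(v_\la)=w$, i.e.\ $\Theta$ is injective on $X$. The intended comparison is that $v_\la$ winds once around $u_\la$, while $h_\la(w)$ is homotopic to $u_\la$ within $V_\la$ with $v_\la$ avoiding it on $\partial\Lambda$; so $\mathrm{ind}(F_w\circ\partial\Lambda,0)=1$.

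\textbf{Main obstacle.} This last winding computation is where I expect the difficulty. One must check that $v_\la - h_\la(w)$ and $v_\la-u_\la$ have the same index. For that, $h_\la(w)$ and $u_\la$ must be joinable through points distinct from $v_\la$ for all $\la\in\partial\Lambda$; I would do this along $h_\la$ of a fixed path from $w$ to $u_{\la_*}$ inside $\overline{V_{\la_*}}\setminus U_{\la_*}$, which avoids $v_\la\in\partial V_\la$ except possibly at the endpoint. The case where $w\in\partial V_{\la_*}$ itself, and the fact that the relevant zeros lie in the closure of $\Lambda$, will need to be handled by slightly enlarging to $\tilde\Lambda$.
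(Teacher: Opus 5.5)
Your proposal is correct and is essentially the paper's proof. The paper likewise uses Corollary~\ref{cor:bottcherlike} to get $t_\la(v_\la)=\Psi_\alpha(1/\alpha)$, which reduces the claim to injectivity of $\la\mapsto h_\la^{-1}(v_\la)$ on $X$. It proves that injectivity exactly by your homotopy $F_t(\la)=h_\la(z_t)-v_\la$ along a path from $w$ to $u_{\la_*}$ in the annulus, using $h_\la(u_{\la_*})=u_\la$, properness and the argument principle.

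The boundary case you worry about does not arise. Since $\la\in X$, the point $w=h_\la^{-1}(v_\la)$ lies in the open annulus $V_{\la_*}\setminus\overline{U_{\la_*}}$, so $h_\la(z_t)\neq v_\la$ for all $\la\in\partial\Lambda$ and all $t$.
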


\begin{proof}
	We first claim that $\la \mapsto h_\la^{-1}(v_\la)$ is injective on $X$. 
	To that end, assume that $z:=h_{\la_1}^{-1}(v_{\la_1}) = h_{\la_2}^{-1}(v_{\la_2})$, for some $\la_1, \la_2 \in X$. 
	Then $z \in V_{\la_*} \setminus \overline{U_{\la_*}}$. Let $t \mapsto z_t$ be a continuous curve joining $z_0:=z$ to $z_1:=u_{\la_*}$ in the open annulus  $V_{\la_*} \setminus U_{\la_*}$. Consider the family of  maps
	$F_t: \la \mapsto h_{\la}(z_t)-v_\la$, defined on $\overline{\Lambda}$.
	For all $\la \in \partial \Lambda$, $v_\la \in \partial V_\la$, hence $v_\la \neq h_\la(z_t)$ for all $t \in [0,1]$. Moreover,
	the curves $(F_t : \partial \Lambda \to \C)_{t \in [0,1]}$ are homotopic to each other, hence have the same winding number around $0$.
	Since $h_\la(u_{\la_*})=u_\la$ (by the equivariance property), $F_1(\la)=u_\la-v_\la$, so this winding number is 1, by the properness assumption. It then follows from the Argument Principle that $F_0$ has a unique zero in $\Lambda$. But since $F_0(\la_1)=F_0(\la_2)=0$, we have in fact $\la_1=\la_2$. Thus the claim that $\la \mapsto h_\la^{-1}(v_\la)$ is injective on $X$ is proved.

	\medskip
	
	We will now prove that $\chi$ is injective on $X$. Let $\la_1, \la_2 \in X$
	be such that $\chi(\la_1)=\chi(\la_2)=:\alpha$.
	Recall that by Corollary \ref{cor:bottcherlike}, we have $t_\la \circ \phi_\la^{-1}(z) = \Psi_{\chi(\la)}(z)$, for all $\la \in \Lambda$ and for all $z \in \phi_\la(V_\la \setminus \overline{U_\la})$. In particular, for all $\la \in X$, taking $z=\frac{1}{\chi(\la)}$ we obtain:
	$$t_{\la}(v_\la) = \Psi_{\chi(\la)}\left(\frac{1}{\chi(\la)}\right).$$
	By the definition of the tubing map $t_\la$ (Definition \ref{def:tubing}), we therefore have:
	$$h_{\la_1}^{-1}(v_{\la_1}) = h_{\la_2}^{-1}(v_{\la_2}) = \Psi_\alpha\left(\frac{1}{\alpha}\right).$$
	Therefore, by the above, $\la_1=\la_2$.
\end{proof}

We are now finally ready to finish the proof of Theorem D.

\begin{proof}[Proof of Theorem D]
		We have already proved that $\chi: \Lambda \to \C$ is continuous  (Lemmas~\ref{lem:continuitybt},  \ref{lem:continuityintm}) and 
		Proposition \ref{prop:cont}.
		We also proved that $\chi: \tcal(\mathbf f) \to \tcal$ is surjective (Proposition~\ref{prop: chi surjective}). Let us now prove that $\chi: \tcal(\mathbf{f}) \to \tcal$ is injective,
		or equivalently, that the topological degree $\delta$ of $\chi: \tcal(\mathbf{f}) \to \tcal$ is 1 (See Proposition~\ref{prop:surjectivity}). By the Argument Principle and the hypothesis that $\{f_\la\}_{\la \in \Lambda}$ is proper, there is a unique $\la_0 \in \Lambda$ such that $u_{\la}=v_\la$; 
		in other words, $\chi^{-1}(\{0\}) = \{\la_0\}$.
		It is then enough to prove that $i_{\la_0}(\chi)=1$ (by Proposition \ref{prop:surjectivity}).

		Let $\gamma \subset \Lambda$ be a Jordan curve close enough to $\partial \Lambda$ that $\gamma \subset X$, and that $\gamma$ winds once around $\la_0$.
		Then $i_{\la_0}(\chi)= \ind(\chi \circ \gamma, \chi(\la_0) ) = \ind(\chi \circ \gamma, 0)$.
		By Proposition \ref{prop:chiinj}, $\chi$ is injective on $\gamma$; therefore $\chi \circ \gamma$ is a Jordan curve, and the domain it bounds contains $0$.
		Therefore 
		$$\delta = i_{\la_0}(\chi) = \ind(\chi \circ \gamma, 0)=1,$$
		and $\chi:  \tcal(\mathbf f) \to \tcal$ is indeed injective. Since $\tcal(\mathbf f)$ is compact, it follows that 
		$\chi:  \tcal(\mathbf f) \to \tcal$ is a homeomorphism.

		Finally, by Propositions \ref{prop:hyperbolic} and \ref{prop:non-hyperbolic}, $\chi$ is holomorphic on $\mathring{\tcal(\mathbf f)}$.
\end{proof}

\section{Existence of proper equipped TL families and  proof of Theorem C}\label{sec:existtlfam}

Our goal in this section is to show that, for any natural family of meromorphic maps (under some non-exceptionaliity conditions),  copies of the Tandelbrot set appear {nearby almost any parameter} in the activity locus of a given asymptotic value. This is the content of Theorem C. Furthermore, we shall prove Corollary D, stating that in the presence of critical points and asymptotic values, any such open set intersecting the bifurcation locus will contain copies of the Tandelbrot set or of the connectedness locus of $z^d+c$ for some $d\geq 2$.

We begin with the following observation, which we will use repeatedly below.
	Let $\{f_\la\}_{\la \in M}$ be a natural family of meromorphic maps, and let $\la_0 \in M$, $v_{\la_0}$ an isolated asymptotic value be such that for all quasidisk containing  $v_{\la_0}$, $f_{\la_0}$ has a tract above $v_{\la_0}$ which is a quasidisk. 
We claim that this property automatically extends to all $f_\la$.	
	
Indeed,	since the family is natural, there exists holomorphic motions $\phi_\la$, $\psi_\la$ of $\rs$ with basepoint $\la_0$ such that for all $\la \in M$, 
$$f_\la = \phi_\la \circ f_{\la_0} \circ \psi_\la^{-1}.$$

Now, if $D$ is a quasidisk around $v_\la$ for some $\la \in M$, 
then $\phi_\la^{-1}(D)$ is a quasidisk around $v_{\la_0}$, so by assumption there is a tract $T$ for $f_{\la_0}$ above 
$\phi_\la^{-1}(D)$ which is a quasidisk. But then $\psi_\la(T)$ is also a quasidisk, and is a tract above $D$ for $f_\la$.

The next lemma allows us to assume that the starting parameter is a virtual cycle parameter. 

\begin{lem}[Density of non-critical virtual cycles]\label{lem:exsimplevc}
	Under the assumptions of Theorem C, there exists $\la_0$ arbitrarily close to $0$ and $m \in \N^*$ such that 
	$f_{\la_0}^m(v_{\la_0}) = \infty$, and $|(f_{\la_0}^m)'(v_{\la_0})| \neq 0$ (in the spherical metric).
\end{lem}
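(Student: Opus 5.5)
The plan is to first produce virtual cycle parameters near $0$, and then perturb them so that the orbit of $v$ avoids critical points before reaching $\infty$.

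\textbf{Step 1: virtual cycles near $0$.} Since $v_0$ is active at $0$ and $f_0$ has a simple pole that is not a singular value (hence non-omitted), the density result for virtual cycles applies. I would invoke Proposition~\ref{prop:density_of_virtual cycles}, or its version \cite[Proposition 2.23]{ABFAhlfors} if the family is not of finite type. This gives parameters $\la_1$ arbitrarily close to $0$ with $f_{\la_1}^{n}(v_{\la_1})=\infty$ non-persistently, for some $n$.

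\textbf{Step 2: reaching $\infty$ through a simple pole.} The condition $|(f^m)'(v)|\neq 0$ in the spherical metric means two things: no point $f^j(v)$ with $j<m-1$ is a critical point, and the last finite point $f^{m-1}(v)$ is a simple pole, since a multiple pole gives vanishing spherical derivative. To arrange this, let $p_\la$ be the simple non-singular pole from assumption~\eqref{it:pole}; it moves holomorphically because the family is natural. Since $p_{\la_1}\notin S(f_{\la_1})$, the Shooting Lemma (Proposition~\ref{shooting}) applied at $\la_1$ with $\gamma(\la)=p_\la$ gives $\la_2$ arbitrarily close to $\la_1$ with $f_{\la_2}^{n+1}(v_{\la_2})=p_{\la_2}$. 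Hence $f_{\la_2}^{n+2}(v_{\la_2})=\infty$, and the final step passes through a simple pole. Set $m=n+2$.

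\textbf{Step 3: no critical points along the orbit (main obstacle).} It remains to ensure that $f_{\la_2}^j(v_{\la_2})$ is not a critical point for $j\le n$. I would handle this by a genericity argument, choosing the parameters inside Steps 1 and 2 rather than modifying them afterwards.

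\begin{itemize}
\item By assumption~\eqref{it:vc}, the relation $f_\la^j(v_\la)=c_\la$ for a marked critical point $c_\la$ is not persistent. Critical points move holomorphically and are locally finite in number, so the parameters where such a relation holds near a fixed $\la$ form a discrete set.
\item Each shooting step offers a whole open set of nearby solutions, coming from the implicit function and Hurwitz argument that proves the Shooting Lemma. Within that set we may pick $\la_2$ outside this countable discrete union.
\item The delicate point is that the relation $f^{n+2}(v)=\infty$ is itself a codimension-one condition. To check that it is compatible with avoiding the critical relations, I would look at the holomorphic map $\la\mapsto f_\la^{n+1}(v_\la)-p_\la$ near $\la_1$. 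Its zeros are isolated, and each zero either satisfies a critical relation or does not. If every zero near $\la_1$ satisfied a critical relation, the points $f^j(v)$ would coincide with critical points on an accumulating sequence of parameters. By the identity theorem that relation would then be persistent, contradicting assumption~\eqref{it:vc}. Making this dichotomy rigorous is the step I expect to require the most care.
\end{itemize}

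An alternative for Step 3 is to start from a Misiurewicz parameter (Proposition~\ref{prop: density Misiurewicz}) whose repelling orbit avoids the critical points, and then shoot to the pole.

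Once Steps 1–3 are in place, $v_{\la_0}$ (with $\la_0=\la_2$) is not a critical point, each intermediate point is regular, and the last finite point is a simple pole. The chain rule in spherical coordinates then gives $|(f_{\la_0}^m)'(v_{\la_0})|\neq 0$.
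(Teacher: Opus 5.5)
Your proposal is correct and follows essentially the paper's route. You take a virtual cycle parameter $\la_1$ near $0$ from \cite[Proposition 2.23]{ABFAhlfors}. You note that, by assumption~\eqref{it:vc} and the identity theorem, the parameters where some $f_\la^j(v_\la)$, $j\le n-1$, is critical are discrete near $\la_1$. Then the Shooting Lemma with $\gamma(\la)=p_\la$ gives solutions of $f_\la^{n+1}(v_\la)=p_\la$ accumulating at $\la_1$, one of which avoids those bad parameters; these solutions form a discrete set, not an open one, and $\la\mapsto f_\la^{n+1}(v_\la)$ is only defined on a punctured neighbourhood of $\la_1$.

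The one imprecision is $j=n$, which your identity-theorem argument does not cover, since $f_\la^n(v_\la)\to\infty$ as $\la\to\la_1$ and critical points may accumulate at $\infty$. This case is nevertheless immediate: $f_{\la_2}^n(v_{\la_2})$ is mapped to $p_{\la_2}\notin S(f_{\la_2})$, so it cannot be a critical point (the paper extracts the same fact from the proof of the Shooting Lemma).
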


\begin{proof}
		
	Let  $p_0$ be a  simple pole for $f_{0}$ which is not a singular value.  
	Let $p_\la$ denote its holomorphic motion as a pole given by the Implicit Function Theorem, near $\la=0$.
	In view of Proposition 2.23 in \cite{ABFAhlfors} (see Proposition~\ref{prop:density_of_virtual cycles}), up to replacing $0$ with a nearby parameter $\la_1$, we can assume that there exists $n \in \N$ such that $f_{\la_1}^n(v_{\la_1})=\infty$. 

	By assumption, $v_0$ does not persistently have a critical point in its forward orbit; and since critical points form a discrete subset of $\C$ and $\psi_\la$ maps the critical set of $f_0$ to those of $f_\la$, the set 
	$$C_n:=\{\la \in \D: (f_{\la}^{n})'(v_{\la}) =0  \}$$
	 is discrete near $\la_1$ (note however that the set $C_n$ is not discrete in $\D$, as it accumulates on parameters $\la$ such that $f_\la^{k}(v_\la)=\infty$ for some $k <n$).
	If $\la_1 \notin C_n$, then we are done. Otherwise, there exists $\delta>0$ such that $\D(\la_1,\delta) \cap C_n = \{\la_1\}$. By \cite[Prop. 2.13]{ABF} (see the Shooting Lemma, Proposition~\ref{shooting}), there exists $\la_0 \in \D(\la_1,\delta) \setminus \{\la_1\}$
	such that $f_{\la_0}^{n+1}(v_{\la_0})=p_{\la_0}$; although not stated explicitly, the proof of \cite[Prop. 2.13]{ABF} also implies that $f_{\la_0}'(f_{\la_0}^n(v_{\la_0})) \neq 0$.  
	Finally, by definition, $p_{\la_0}$ is a simple pole, hence not a critical point. 
	
	The lemma is therefore proved, with $m:=n+2$.	
\end{proof}

Under the assumptions of Theorem C, let $\la_0$ be the virtual cycle parameter given by Lemma \ref{lem:exsimplevc}, hence arbitrarily close to $\la=0$ and such that $f_{\la_0}^m(v_{\la_0})=\infty$ for some $m\in \N$ which is fixed from now on.  By virtue of the  parameter version of the Straightening Theorem (Theorem D), to  prove Theorem C, it suffices to prove the the existence of proper equipped TL families in a region {nearby $\la_0$}.  Hence our goal is to prove the following.

\begin{theo}[Existence of proper equipped TL families]\label{th:existtlfam}
	Under the assumptions of Theorem C, there exists  $\Lambda \Subset \D$  simply connected, $U_\la \Subset V_\la \subset \rs$ and $N \in \N^*$ 
	such that $\{f_\la^{N}: U_\la\ra  V_\la\}_{\lambda\in\Lambda}$   is a proper equipped TL family, with  singular values $f_\la^{N-1}(v_\la)$.
\end{theo}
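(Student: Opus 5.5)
We follow the scheme of Figure \ref{fig:renormalization}. Start from the parameter $\la_0$ of Lemma \ref{lem:exsimplevc}, with $f_{\la_0}^m(v_{\la_0})=\infty$ and $(f_{\la_0}^m)'(v_{\la_0})\neq 0$. By construction the orbit is $v_{\la_0}\mapsto\cdots\mapsto f^{m-2}_{\la_0}(v_{\la_0})=p_{\la_0}\mapsto\infty$, where $p_\la$ is a simple non-singular pole moving holomorphically. First fix a small round disk $V=\D(\infty,r)$ (spherical) around $\infty$, not depending on $\la$. Then use Lemma \ref{lem:smalltracts} to pick $\delta>0$ and a neighborhood of $\la_0$ such that every tract $T_\la(\delta)$ above $D_\la:=\D(v_\la,\delta)$ is compactly contained in $\D(\infty,r/2)$. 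By hypothesis \eqref{it:tractqd} and the observation preceding Lemma \ref{lem:exsimplevc}, we may take these tracts to be quasidisks. By Lemma \ref{lem:tractmoveholo} (with $w_\la$ the translation $z\mapsto z+v_\la-v_{\la_0}$), they move holomorphically, with $f_\la\circ m_\la=w_\la\circ f_{\la_0}$.

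Next, since $(f^{m}_{\la_0})'(v_{\la_0})\ne0$, the branch of $f_{\la_0}^{-(m-1)}$ sending $p_{\la_0}$ to $v_{\la_0}$, composed with the local inverse of $f$ near the simple pole $p$, is univalent near $\infty$. Shrinking $r$, we let $W_\la\ni v_{\la_0}$ be the component of $f_\la^{-(m-1)}(V)$ containing the relevant preimage. It is a Jordan domain with $f_\la^{m-1}:W_\la\to V$ conformal, its boundary moves holomorphically in $\la$ by the implicit function theorem, and we may assume it is contained in $D_\la$ for $\la$ near $\la_0$ (choosing $r$ small relative to $\delta$). Set $N:=m$, $V_\la:=V$, and let $U_\la$ be the tract of $f_\la$ above $W_\la$, i.e. the component of $f_\la^{-1}(W_\la)$ inside $T_\la(\delta)$. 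Then $f_\la^{N}=f_\la^{m-1}\circ f_\la:U_\la\to V\setminus\{f^{m-1}_\la(v_\la)\}$ is a universal cover. Also $U_\la\Subset \D(\infty,r/2)\Subset V$, $u_\la=\infty\in\partial U_\la$, and $\partial U_\la$ is a quasicircle, being the conformal preimage of a quasicircle inside a quasidisk tract (via Lemma \ref{lem:quasidisk}). This holds whenever $v_\la\in W_\la$, so each $f_\la^N$ is TL with asymptotic value $f_\la^{N-1}(v_\la)$.

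For properness, consider the holomorphic function $\la\mapsto f_\la^{m-1}(v_\la)$. It equals $\infty$ at $\la_0$, and this relation is non-persistent because $v_0$ is active. Take $\Lambda$ to be the component containing $\la_0$ of $\{\la: f_\la^{m-1}(v_\la)\in \D(\infty,r')\}$, with $r'<r$ chosen so that this component is compactly contained in the region where the above constructions hold. Since $f^{m-1}_\la(v_\la)$ has a pole at $\la_0$, we need the pole to be simple for the winding number to be $1$. The first thing to try is to perturb $\la_0$ via the Shooting Lemma, so that the virtual cycle relation is transverse. Alternatively, $V$ can be replaced by $\{|f^{m-1}_\la(v_\la)|\text{-level}\}$ chosen along a small disk $D$ in which $\la\mapsto 1/f_\la^{m-1}(v_\la)$ is univalent. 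Such a $D$ exists after moving to a nearby simple zero: the zeros of this holomorphic function are isolated, and a generic nearby virtual cycle parameter has a simple zero, by shooting again. The winding condition then says that $v$ winds once around $u=\infty$, and the extension to a larger $\tilde\Lambda$ is automatic by taking $r'$ slightly bigger. Rescale by a Möbius map so that $u_\la=\infty$ and $V$ is a round disk, as in the straightening proof.

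For equippedness, we need a holomorphic motion $h_\la$ of $\overline V\setminus U_\la$ commuting with $f_\la^N$ on $\partial U_\la$. On $\partial V$ take $h_\la=\id$. On $\partial U_\la\setminus\{\infty\}$, lift the identity of $\partial V$ through $f^{m-1}_\la$ (conformal on $W_\la$, hence a holomorphic motion of $\partial W_\la$) and then through $f_\la$ using Lemma \ref{lem:liftholmot} / Lemma \ref{lem:tractmoveholo}. This is legitimate since $v_\la\notin\partial W_\la$ for $\la\in\tilde\Lambda$, after shrinking so that $v_\la\in W_\la$ throughout. Put $h_\la(\infty)=\infty$, and extend by Slodkowski's theorem to the annulus; by continuity the extension maps $\overline V\setminus U_{\la_*}$ onto $\overline V\setminus U_\la$. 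I expect the main obstacle to be the properness step, where we must simultaneously secure the winding number one (simplicity of the parameter zero) and the compact containment of $\Lambda$ in the region where tracts remain small quasidisks. This needs a careful ordering of the choices of $\delta$, $r$, $r'$ and the perturbation of $\la_0$.
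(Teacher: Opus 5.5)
\textbf{The proposal has a genuine gap: the choices of $r$ and $\delta$ are circular, and for small $r$ they cannot be made compatible.} You fix $V=\D(\infty,r)$ and take $\delta=\delta(r)$ from Lemma \ref{lem:smalltracts}, so that tracts above $\D(v_\la,\delta)$ lie in $\D(\infty,r/2)$. You then want the pullback $W_\la$ of $V$ along the original virtual cycle to lie in $\D(v_\la,\delta)$, ``choosing $r$ small relative to $\delta$''. But $\delta$ depends on $r$, and in the wrong direction.

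Since $f_\la^{k}$ (with $k$ the length of the virtual cycle at $\la_0$) is univalent near $v_\la$ with a fixed nonzero derivative, $W_\la$ has diameter comparable to $r$. By contrast, $\delta(r)$ is far smaller than $r$. Already for the model maps $T_\alpha$, which satisfy the hypotheses of Theorem C, one computes $T_\alpha(z)-\frac1\alpha=\frac{1/\alpha-1}{\alpha e^{w}-1}$ with $w=(\alpha-1)z/8$. So the tract above $\D(1/\alpha,\delta)$ is essentially the half-plane $\{\mathrm{Re}\,((\alpha-1)z/8)>\log(1/\delta)+O(1)\}$. It comes within euclidean distance $O(\log(1/\delta))$ of $0$, and therefore lies in $\D(\infty,r/2)$ only if $\delta\lesssim e^{-c/r}$. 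Hence $W_\la\subset\D(v_\la,\delta)$ is impossible for small $r$. You never get $U_\la\Subset V$, and $f_\la^N$ is not tangent-like.

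\textbf{The properness step is also unproved.} The Shooting Lemma (Proposition \ref{shooting}) gives no transversality. The claim that ``a generic nearby virtual cycle parameter has a simple zero, by shooting again'' is asserted without proof.

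\textbf{The missing idea is to pass once more near the essential singularity.} This fixes both problems at once, and it is how the paper proceeds.
\begin{itemize}
\item Fix $r$ and $\delta$ first. Then choose a tiny simply connected $\Omega$ near $\infty$ that $f$ maps univalently onto a fixed neighbourhood $W$ of the pole $p$. Such preimages shrink to $\infty$, so $\diam\psi_\la(\Omega)$ can be made as small as desired.
\item Choose $\overline\Omega$ to avoid the branch values of $G(\la)=\psi_\la^{-1}\circ f_\la^n(v_\la)$, which is a branched cover over a neighbourhood of $\infty$.
\item On a component $\hat\Lambda$ of $G^{-1}(\Omega)$, the map $\la\mapsto f^{n+1}_\la(v_\la)$ is a homeomorphism onto a perturbation of $W$. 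The argument principle then shows that $\la\mapsto f^{n+2}_\la(v_\la)$ is a conformal isomorphism from $\Lambda$ onto $\D(\infty,r/2)$, so the winding number one comes for free.
\item The pullback $V_\la^{(n+1)}$ of $V_\la(r/2)$ to $v_\la$ passes through $\psi_\la(\Omega)$. Its diameter is therefore controlled by $\diam\psi_\la(\Omega)$, not by $r$, and can be made $<\delta$. The tract above it then sits in $\D(\infty,r/2)$, and $g_\la=f_\la^{n+3}$ is tangent-like.
\end{itemize}
In effect, the new virtual cycle, where $f^{n+2}_\la(v_\la)=\infty$, has enormous derivative, and this is what decouples the two scales. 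Your equippedness argument is fine once the family is built this way.
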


\begin{proof}
	In the following, all distances between points in the Riemann sphere are measured in the spherical metric. In particular, we will denote by $\D(\infty,r)$ the open ball of spherical radius $r>0$ at $\infty$ in the Riemann sphere.

Let  $p_0$ be a  simple pole for $f_0$ which is not a singular value.  
Let $p_\la$ denote its holomorphic motion as a pole given by the Implicit Function Theorem. From now on, let $\la_0 \in \D$ be given by Lemma \ref{lem:exsimplevc}, { hence arbitrarily close to 0 and such that $f_{\la_0}^m(v_{\la_0})=\infty$ for some  $m\in \N$,  with a non-zero derivative. Let us fix $n=m$ once and for all.}

	For any $r>0$, let $V_\la(r)$ denote the connected component of 
	$f_\la^{-1}(\D(\infty,r))$ containing $p_\la$.
	Let $r>0$ and $\eta_0>0$ be small enough that there is a holomorphic motion 
	$h_\la: V_{\la_0}(r) \to V_\la(r)$ defined on $\D(\la_0,\eta_0)$, 
	such that $f_\la \circ h_\la(z) = f_{\la_0}(z)$ and such that 
	$V_\la(r) \cap S(f_\la) = \emptyset$ for all $\la \in \D(\la_0,\eta_0)$
	($h_\la$ is given by the Implicit Function Theorem).

	 Up to reducing $r$ and $\eta_0$ if necessary, assume that for all $\la \in \D(\la_0,\eta_0)$
	there exists a well-defined univalent map $k_\la : \D(\infty, r) \to \rs$  which is the inverse branch of $f_\la^n$ 
	mapping $f_\la^n(v_\la) \in \D(\infty, r)$ to $v_\la$.
	Here, we use the assumption that  the orbit of $v_0$ does not contain other singular values.

	Let $\delta, \eta>0$ be the constants given by Lemma \ref{lem:smalltracts} applied with $\frac{r}{2}$, i.e.: for all $\la \in \D(\la_0,\eta)$, a suitable  tract $T_\la(\delta)$ above $\D(v_\la, \delta)$ is compactly contained in $\D(\infty, \frac{r}{2})$. Up to reducing $\eta_0$ or $\eta$ if necessary, let us assume without loss of generality that $\eta_0=\eta$.
	
	Since the family is natural, we may write $f_\la=\phi_\la \circ f \circ \psi_\la^{-1}$, where $f:=f_{\la_0}$ and $\phi_\la, \psi_\la$ are holomorphic motions of the Riemann sphere with base point $\la_0$.
	Let $G(\la):=\psi_\la^{-1} \circ f_\la^n(v_\la)$. By \cite[Remark 2.12]{ABF}, there exists a neighborhood $W_G$ of $\infty$ such that $G$ is a topological branched cover over $W_G$.

	Let $W \subset \C$ be any Jordan domain such that $V_{\la_0}(r) \Subset W$ and $W \cap S(f_\la) = \emptyset$ for all $\la \in \D(\la_0,\eta)$ (up to reducing $\eta$ if necessary). Let $W_i$ be any Jordan domain such that 
	$$V_{\la_0}\left(\frac{r}{2}\right) \Subset W_i \Subset V_{\la_0}(r)$$
	(for instance, one could take $W_i:=V_{\la_0} \left(\frac{3r}{4} \right)$).
	Let $\xi>0$ be small enough that for all $\la \in \D(\la_0,\xi)$, 
	\begin{enumerate}
		\item $\phi_\la^{-1} \circ h_\la(V_{\la_0}(r)) \Subset W$
		\item $W_i \Subset \phi_\la(W)$
		\item $h_\la(V_{\la_0}\left( \frac{r}{2} \right)) \Subset W_i$.
	\end{enumerate}

Because infinity is an essential singularity and there are no critical points   there exists a sequence of simply  connected domains $\Omega_k$, shrinking to $\infty$, that map univalently to $W$ under $f$. In particular, we may fix
	$\Omega \subset \C$ a simply connected domain such that 
	\begin{enumerate}
		\item $f: \Omega \to W$ is a conformal isomorphism
		\item $\Omega \Subset W_G$, and $\overline{\Omega}$ does not contain { the image of }any branching point of $G$
		\item\label{it:estdiam} For all $\la \in \D(\la_0, \xi)$, $\mathrm{diam}(\psi_\la(\Omega)) \leq \frac{\delta C }{100}$, where $C:=\sup_{\la \in \D(\la_0,\eta)} | ((f_\la)^n)'(v_\la)|$.
		\item $G^{-1}(\Omega) \Subset \D(\la_0,\min(\xi, \eta))$.
	\end{enumerate} 
	
	Let $\hat{\Lambda}$ denote a connected component of $G^{-1}(\Omega)$.
	By the choice of $\Omega$, $f \circ G: \overline{\hat{\Lambda}} \to \overline{W}$ is a 
	homeomorphism. In particular, $\hat{\Lambda}$ is a Jordan domain.
	Now let 
	$$
	F(\la):=f_\la^{n+1}(v_\la) = \phi_\la \circ f \circ G(\la),
	$$
	 and for all $z \in V_{\la_0}\left( \frac{r}{2} \right)$, let $\gamma_z(\la):=h_\la(z)$: both of these maps are holomorphic on $\hat \Lambda$. Moreover, by the choice of $\xi$, we have 
	$\gamma_z(\hat{\Lambda}) \Subset F(\hat{\Lambda})$. Therefore, by the Argument Principle, there exists a unique $\la_z \in \hat{\Lambda}$ such that $F(\la_z)= \gamma_z(\la_z)$.
	Then, by definition of $h_\la$, we have 
	$$
	f_{\la_z}^{n+2}(v_{\la_z}) ={  f_{\la_z}\circ F(\la_z)}={ 	f_{\la_z}\circ \gamma_z(\la_z)= 	f_{\la_z}\circ h_{\la_z}(z) =} f_{\la_0}(z) \in \D(\infty, \frac{r}{2}).
	$$	
	In particular, we have proved that for any $y \in \D(\infty, \frac{r}{2})$, there exists a unique $\lambda \in \hat{\Lambda}$ such that $f_\la^{n+2}(v_\la) = y$. We let 
	$$\Lambda = \left\{ \la \in \hat \Lambda : f_\la^{n+2}(v_\la) \in \D(\infty, \frac{r}{2})\right\}.$$
	Since the map $\la \mapsto f_\la^{n+2}(v_\la)$ is a conformal isomorphism from $\Lambda$ to $\D(\infty,\frac{r}{2})$ by the above, $\Lambda$ is also a Jordan domain. 
	
{   Since $f_\la^{n+1}(v_\la) \in V_{\la}(\frac{r}{2})$ and since $V_\la(\frac{r}{2})$ is simply connected and disjoint from $S(f_\la)$, the  connected component of $f_\la^{-(n+1)}(V_\la(\frac{r}{2}))$ containing $v_\la$ is simply connected. We will choose  an appropriate tract  $U_\la$  above it, and prove that for all $\la \in \Lambda$, the map
 $$g_\la:=f_\la^{n+3}: U_\la \to \D(\infty, \frac{r}{2}) \setminus \{f_\la^{n+2}(v_\la)\}$$
  is a tangent-like map. We will then prove that $\{g_\la: U_\la \to V\}_{\la \in \Lambda}$ 
	is a proper equipped family of tangent-like maps with singular values $f_\la^{n+2}(v_\la)$.  }

For all $\la \in \Lambda$, let $V_\la^{(j)}$ denote the connected component of $f_\la^{-j}(V_{\la}\left(\frac{r}{2}\right))$ containing $f_\la^{n+1-j}(v_\la)$. Since $V_{\la}\left(\frac{r}{2}\right) \Subset W$ and $W \cap S(f_\la) = \emptyset$ for all $\la \in \Lambda$, there is a univalent inverse branch 
	$\ell_\la$ of $f_\la^{-1}$, mapping $V_{\la}\left(\frac{r}{2}\right)$ conformally to $V_\la^{(1)}$ and depending holomorphically on $\la$.

	We claim that for all $\la \in \Lambda$, $V_\la^{(1)} \subset \psi_\la(\Omega)$. 
	Indeed, recall that  for all $\la \in \Lambda$, $G(\la) = \psi_\la^{-1} \circ f_\la^n(v_\la) \in \Omega$; hence $f_\la^n(v_\la) \in \psi_\la(\Omega)$. Moreover, $f$ maps $\Omega$ univalently to $W$, therefore $f_\la$ maps $\psi_\la(\Omega)$ univalently to $\phi_\la(W)$. By the choice of $\xi$, we have $V_\la(\frac{r}{2}) \subset \phi_\la(W)$ for all $\la \in \D(\la_0,\xi)$, hence for all $\la \in \Lambda$. Therefore $V_\la^{(1)} \subset \psi_\la(\Omega)$, as claimed.

	By the choice of $\Omega$  (condition \eqref{it:estdiam}) we then have 
	$$\diam(V_{\la}^{(1)}) \leq \diam(\psi_\la(\Omega)) \leq  \frac{\delta C}{100},$$ 
	for all $\la \in \Lambda$.
		\color{black}
	In particular, without loss of generality, 
	$V_\la^{(1)} \Subset \D(\infty, r)$, so that $k_\la \circ \ell_\la : W \to \C$ is a well-defined inverse branch 
	of $f_\la^{-(n+1)}$, mapping $V_{\la}\left(\frac{r}{2}\right)$ to $V_\la^{(n+1)}$.
	By the above estimate on $\diam(V_\la^{(1)})$, the choice of $\delta$ (cf. Lemma \ref{lem:smalltracts}) and Koebe's theorem applied to $k_\la$,
	we have $\diam(V_\la^{(n+1)}) \leq \delta$ for all $\la \in \Lambda$.

	Let $w_\la:=k_\la \circ \ell_\la \circ h_\la \circ f_{\la_0}^{n+2}: V_{\la_0}^{(n+1)} \to V_\la^{(n+1)}$.
	For all $\la \in \Lambda$, the map $w_\la$ is injective, as a composition of univalent maps. 
	Moreover, since $k_\la, h_\la$ and $\ell_\la$ are all holomorphic in $(\la,z)$, it follows that for all $z \in  V_{\la_0}^{(n+1)}$, the map $z \mapsto w_\la(z)$ is holomorphic. (In fact, more is true: $(\la,z) \mapsto w_\la(z)$ is holomorphic, but we do not require this). In particular, $w_\la$ is a holomorphic motion of $V_\la^{(n+1)}$ over $\Lambda$; and by construction, $w_\la(v_{\la_0}) = v_\la$.
	By assumption, there is a logarithmic tract $U_{\la_0}$ above $V_{n+1}(\la_0)$ which is a quasidisk.

	Let $U_\la$ denote the corresponding tract above $V_\la^{(n+1)}$, given by Lemma \ref{lem:tractmoveholo}: since $U_\la$ is  given by a holomorphic motion of $U_{\la_0}$ which is itself a quasidisk, $U_\la$ is also a quasidisk.
	By construction, $U_\la \Subset \D(\infty, \frac{r}{2})=V$, and 
	$f_\la^{n+3} : U_\la \to \D(\infty, \frac{r}{2}) \setminus \{f_\la^{n+2}(v_\la)\}$ is a universal cover. 
	Therefore $g_\la: U_\la \to V \setminus \{f_\la^{n+2}(v_\la)\}$ is indeed a TL map.
		{By construction, $\Lambda$ is a Jordan domain, and $f_\la^{n+2}(v_\la) \in V:=\D(\infty, \frac{r}{2})$.
		Additionally, $\la \mapsto f_\la^{n+2}(v_\la)$ is an isomorphism between $\overline{\Lambda}$ and $\overline{\D}(\infty, \frac{r}{2})$, so $\la \mapsto f_\la^{n+2}(v_\la)$ turns once around $\infty$ as $\la$ turns once along $\partial \Lambda$.  }
	
{  	With this, we have proved that $\{f_\la^{n+3}: U_\la \to V \}_{\la \in \Lambda}$  is a  TL family with singular values $f_\la^{n+2}(v_\la)$. Properness  follows from the fact that for any $y \in \D(\infty, \frac{r}{2})$, there exists a unique $\lambda \in \hat{\Lambda}$ such that $f_\la^{n+2}(v_\la) = y$.
}

	It only remains to prove that it is equipped, i.e. that there is a holomorphic motion $\tilde m_\la: \overline{V} \setminus U_{\la_0} \to V \setminus U_\la$ such that for all $\la \in \Lambda$, for all $z \in \partial U_{\la_0}$, 
	$$
	g_\la \circ \tilde m_\la(z)= \tilde m_\la \circ g_{\la_*}(z),
	$$
	for some basepoint $\la_* \in \Lambda$.
	By construction, there is a holomorphic motion $m_\la: \overline{U_{\la_0}} \to \overline{U_\la}$ {  given by Lemma~\ref{lem:tractmoveholo}} 
	such that for all $\la \in \Lambda$, for all $z \in \overline{U_{\la_0}}$,
	$$g_\la \circ m_\la(z)=  g_{\la_0}(z).$$
	Let $\la_* \in \Lambda$ be an arbitrary parameter, and set 
	$$
	\tilde m_\la(z) = \left\{
	\begin{array}{ll}
		m_\la \circ m_{\la_*}^{-1}(z)  &\text{ if $z \in \partial U_{\la_*}$} \\
		 z  &\text{ if $z \in \partial \D(\infty, \frac{r}{2})$.}
	\end{array}
	\right.
	$$
	This defines an equivariant holomorphic motion of $\partial U_\la$ based at $\la_*$, i.e. for all $\la \in \Lambda$, for all $z \in \partial U_{\la_*}$,
	$$
	g_\la \circ \tilde m_\la(z)= \tilde m_\la \circ g_{\la_*}(z).
	$$
	Finally, we can extend $\tilde m_\la$ to a holomorphic motion of $\rs$ over $\Lambda$ by Slodkowski's $\la$-lemma, which we still denote by $\tilde m_\la$.
	By continuity of $\tilde m_\la$ and since $\tilde m_\la(\partial U_{\la_*}) = \partial U_\la$ and $\tilde m_\la(\partial V) = \partial V$, we have $\tilde m_\la(\overline{V} \setminus U_{\la_*}  ) = \overline{V} \setminus U_\la$, as desired.
	\color{black}
\end{proof}

This concludes the proof of Theorem C. 

The remaining of the section is devoted to the proof of Corollary \ref{coro:C}.
Following \cite{mcmullen2000mandelbrot}, we recall here the notion of unramified points.

\begin{defi}[Unramified points]
	Let $f$ be a transcendendal meromorphic map, and let $z \in \rs$. We way that $y \in \C$ is an unramified preimage of order $n$ of $z$ if there exists $n \in \N^*$ such that $f^n(y)=z$ and $f^n$ is locally univalent near $y$.
	If $z$ has unramified preimages of arbitrarily high order, we say that $z$ is unramified.
\end{defi}

The next lemma asserts that if $c_\la$ is an active critical point which is not persistently a Picard exceptional value, then after small perturbations we can find an unramified Misiurewicz critical point. The assumption that $c_\la$ is not persistently a Picard exceptional value is necessary, since for instance if $c_\la$ is always an omitted value, then it can never be unramified. The proof is therefore slightly different than in the case of rational maps. Compare with \cite[Prop. 2.4 and 2.5]{mcmullen2000mandelbrot}.

\begin{lem}[Density of Misiurewicz critical points]\label{lem:unramif}
	Let $\{f_\la\}_{\la \in \Lambda}$ be a natural family of finite type meromorphic maps, where $\Lambda \subset \C$ is a domain. Let $c_\la$ be a marked critical point, active at $\la_1 \in \Lambda$, and assume that there exists $\la_0 \in \Lambda$ such that $c_{\la_0}$ is not a Picard exceptional value. Then, there exists some $\la_2$ arbitrarily close to $\la_1$ such that $f_{\la_2}$ has  an unramified  critical point which is Misiurewicz.
\end{lem}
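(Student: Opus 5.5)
First I would reduce to the case where the marked critical point is an unramified point of the base map. Because the family is natural, the critical points move holomorphically, $c_\la=\psi_\la(c_{\la_0})$. By property (4) of natural families, the set of Picard exceptional values is $\mathcal V_\la=\phi_\la(\mathcal V_{\la_0})$; it has at most two points, and these move holomorphically. The set of $\la$ with $c_\la\in\mathcal V_\la$ is therefore analytic in $\la$. By hypothesis it is not all of $\Lambda$, so it is discrete. After an arbitrarily small move of $\la_1$ inside the (open) activity locus, I may assume that $c_\la$ is not Picard exceptional for all $\la$ in a small disk $D$ around $\la_1$. Then $c_\la$ has infinitely many preimages under $f_\la$.

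Next I would show that $c_\la$ is unramified for $\la$ in $D$. Since $f_\la$ has finite type, it has finitely many singular values. I would argue by backward iteration. The backward orbit of a non-exceptional point accumulates on the whole Julia set and has infinitely many points at each level, so at each step I can choose a preimage that is not a critical point. More concretely, $f_\la^{-1}(z)$ is infinite unless $z$ is exceptional, while each critical value has only finitely many relevant "bad" branches. Choosing preimages $y_1,y_2,\dots$ with $f(y_{k+1})=y_k$, $y_0=c_\la$, avoiding critical points, gives unramified preimages of every order. The one constraint is that each $y_k$ must itself be non-exceptional, which is automatic outside a set of at most two points, so I can always choose differently.

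Now I would produce a Misiurewicz relation. By Proposition~\ref{prop: density Misiurewicz}, applied to the active singular value $f_\la(c_\la)$ (equivalently, to the active critical point), there are parameters $\la_2\in D$ arbitrarily close to $\la_1$ for which $f_{\la_2}^k(c_{\la_2})$ lands non-persistently on a repelling periodic point $p_{\la_2}$. This needs the existence of a non-omitted pole, which I would take from the standing hypotheses of Corollary~\ref{coro:C} (the simple pole $p_\la$); otherwise I would argue directly with the shooting lemma as in the rational case. Since $\la_2\in D$, the critical point $c_{\la_2}$ is still unramified by the previous step, which concludes the proof.

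The main obstacle is the unramified step. I need to make sure that the backward branches avoid critical points at every level, and that this choice can be made for the specific $\la_2$ rather than only generically. I would try first to handle this by noting that unramifiedness depends only on the single map $f_{\la_2}$ and on $c_{\la_2}$ not being exceptional. At each level there are infinitely many preimages but only finitely many critical points over each finite critical value, so a univalent choice always exists. If needed, I would follow McMullen's Prop.~2.4 and require instead that $c_{\la_2}$ lie in the Julia set, hence be accumulated by repelling cycles whose backward orbits are non-critical.
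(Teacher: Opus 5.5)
There is a genuine gap in your second step, which is the heart of the lemma. You assert that there are ``only finitely many critical points over each finite critical value''. This is false for transcendental maps, even of finite type: every preimage of $\pm1$ under $\cos z$ is critical. So a non-exceptional critical point may have no non-critical preimage at all. It then has no unramified preimage of any order, since every backward chain passes through an order-one preimage.

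Concretely, take the natural finite-type family $f_\lambda(z)=\pi+\lambda(\cos z-1)$. The critical point $c_\lambda=\pi$ is never Picard exceptional, yet $f_\lambda^{-1}(\pi)=2\pi\mathbb{Z}$ consists only of critical points. Hence $c_\lambda$ is unramified for no $\lambda$, while it is active exactly where the free critical value $f_\lambda(\pi)=\pi-2\lambda$ is. The relation $f_\lambda(0)=c_\lambda$ is persistent, so no choice of $\lambda_2$ rescues the claim that $c_{\lambda_2}$ itself is unramified. Your fallback via McMullen's Prop.~2.4 does not help either: the obstruction sits at the very first pullback of $c_{\lambda_2}$, not deeper in the backward orbit.

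Relatedly, along your backward chain the condition to impose is that each $y_k$ avoid the finite set $S(f_\lambda)$, not merely the exceptional values. A point outside $S(f_\lambda)$ has infinitely many preimages, none of them critical and only finitely many in $S(f_\lambda)$. That is what makes the induction run for $k\ge 1$.

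The missing idea is that the conclusion only asks for \emph{some} unramified Misiurewicz critical point, not for $c_{\lambda_2}$ itself. This is how the paper argues. Take $\lambda_2$ near $\lambda_1$ with $c_{\lambda_2}$ Misiurewicz and not exceptional, and pick $y_1\in f_{\lambda_2}^{-1}(c_{\lambda_2})\setminus S(f_{\lambda_2})$. If $y_1$ is not critical, $c_{\lambda_2}$ is unramified by the induction above. If $y_1$ is critical, then $y_1$ is a critical point outside the singular set, hence unramified. It is also Misiurewicz, because $f_{\lambda_2}(y_1)=c_{\lambda_2}$.

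A minor point in your first step: the activity locus is closed, not open, and you do not need to move $\lambda_1$. It suffices to take the Misiurewicz parameter $\lambda_2$ in a punctured neighbourhood of $\lambda_1$ that avoids the discrete set where $c_\lambda$ is exceptional.
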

\begin{proof}
	Let $N$ be the number of singular values.
	We start with the following observations: if $c_\la$ has at least $N+1$ unramified preimages of order $k$, then it has infinitely many unramified preimages of order $k+1$. Indeed, there is at least one unramified preimage of order $k$ which is not a singular value, and therefore it itself has infinitely many unramified preimages. In particular, if $c_\la$ has $N+1$ unramified preimages of order 1, then $c_\la$ is unramified.
	
	First, we claim that the set of $\la \in \Lambda$ such that $c_\la$ is a Picard exceptional value is discrete.
		Indeed, let $\mathcal{V}_\la$ denote the set of Picard exceptional values of $f_{\la}$. 
	Since the family $\{f_\la\}_{\la \in \Lambda}$ is natural, we may write $f_\la = \phi_\la \circ f_{\la_0} \circ \psi_\la^{-1}$, with $\phi_{\la_0} = \psi_{\la_0} = \id$, which implies that $\mathcal{V}_\la=\phi_\la(\mathcal{V}_{\la_0})$.
	By Picard's Theorem, $f_{\la_0}$ has at most two Picard exceptional values on the sphere, say $v_1, v_2$. It follows that, if $\la$ is such that $c_\la$ is Picard exceptional,  then either $c_\la=\phi_\la(v_1)$ or  $c_\la=\phi_\la(v_2)$. 
		On the other hand,  $c_\la=\psi_\la(c_{\la_0})$. Hence the set of $\la\in\Lambda$ such that $c_\la$ is a Picard exceptional value satisfy  the equation $\phi_\la(v_i)=\psi_\la(c_0)$ for either $i=1$ or $i=2$. Both equations are holomorphic in $\la$ and are not satisfied for $\la=\la_0$, hence their set of solutions is discrete in the one-dimensional parameter space $\Lambda$. 

	Hence, by density of Misiurewicz  parameters (\ref{prop: density Misiurewicz}), there exists $\la_2$ arbitrarily close to $\la_1$ such  that $c_{\la_2}$ is Misiurewicz and not  Picard exceptional.  In particular, $c_{\la_2}$ has infinitely many preimages, only finitely many  of which can be singular values (since the family is of finite type). We may therefore choose a preimage $y_1$  of $c_\la$ such that $y_1$ is not a singular value (but might possibly be a critical point). If $y_1$ is not a critical point, then it is an unramified preimage of $c_{\la_2}$, and since it is not a singular value, it itself has infinitely many unramified preimages. Then, by our observation, $c_{\la_2}$ is unramified and we are done.  Otherwise, $y_1$ is a critical point which is not a singular value. But then $y_1$ has infinitely many unramified preimages of order 1 (since it is not a singular value), hence $y_1$ is unramified; and since $f_{\la_2}(y_1)=c_{\la_2}$, $y_1$ is also Misiurewicz.
\end{proof}

We now recall the result from McMullen below. It is stated for rational maps, however the proof is purely local and holds for  transcendental meromorphic maps. The exact statement is also more technical but more precise; we have extracted only what we will use.

\begin{theo}[\cite{mcmullen2000mandelbrot}, Theorem 4.1]\label{th:mcmu}
	If $\{f_\la\}_{\la \in \Lambda}$ is a family of rational or meromorphic maps such that $f_{\la_*}$ has a marked unramified critical point $c_\la$ of constant degree $d \geq 2$ which is Misiurewicz, then for any neighborhood $W$ of $\la_*$, there is a quasiconformal embedding $\chi : \partial \mcal_d \to \bif \cap W$.
\end{theo}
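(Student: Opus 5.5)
The plan is to follow McMullen's renormalization argument from \cite{mcmullen2000mandelbrot} and check that each step is local in phase space near the relevant orbit, so that it goes through for meromorphic maps. The model is the proof of Theorem C, with polynomial-like maps in place of tangent-like maps. Let $c_{\la_*}$ be the marked critical point of degree $d$, with $f_{\la_*}^k(c_{\la_*})=q_{\la_*}$, where $q_{\la_*}$ is a repelling periodic point of period $p$. The Misiurewicz relation is non-persistent, since otherwise $c_\la$ would not be active. The repelling cycle moves holomorphically near $\la_*$ by the Implicit Function Theorem, so $\la\mapsto f_\la^k(c_\la)-q_\la$ is a non-constant holomorphic function vanishing at $\la_*$.

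First, I would use unramifiedness to find, for each large $j$, a small disk $B_j$ near $c_{\la_*}$ and an iterate $N_j$ such that $f_{\la_*}^{N_j}:B_j\to V$ is a degree $d$ proper map onto a fixed disk $V$. Here $V$ is a fixed neighborhood of $c_{\la_*}$. The map goes as follows: $c$ is sent near $q$ by $f^k$; it then follows the repelling cycle $jp$ times, which expands a tiny neighborhood to definite size; finally an unramified preimage branch of $c$ of high order, chosen to land near the repelling cycle, brings it back onto $V$ univalently. The only critical point met along the way is $c$ itself, contributing degree $d$. Unramifiedness is exactly what ensures the return branch is univalent. Since $f^{jp}$ expands exponentially near $q$, we get $\overline{B_j}\Subset V$ for $j$ large, which yields a unicritical polynomial-like map of degree $d$. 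Since all inverse branches used avoid singular values, they persist holomorphically. Together with the holomorphic motion of the fundamental annulus $\overline V\setminus B_j(\la)$ given by lifting (as in Lemma \ref{lem:liftholmot}), this makes the family equipped over a small parameter disk $\Lambda_j$ around a parameter $\la_j\to\la_*$.

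Next comes properness: one needs $\Lambda_j$, a Jordan domain, such that as $\la$ runs along $\partial\Lambda_j$ the critical value of $f_\la^{N_j}$ runs once along $\partial V$. I would obtain this exactly as in the proof of Theorem \ref{th:existtlfam}. The function $G(\la)=f_\la^{k}(c_\la)$ moves relative to $q_\la$ with nonvanishing order at $\la_*$. Rescaling by $(\text{multiplier})^{j}$ and applying the Argument Principle to $\la\mapsto f^{N_j}_\la(c_\la)-h_\la(z)$ on a component of the preimage of $V$ shows that this map is a branched covering onto $V$. If the vanishing order of $G$ is larger than $1$ one takes one of the resulting branches and restricts further, as in McMullen. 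This is the step I expect to be the main obstacle: controlling the parameter dependence so that the critical value's winding is exactly $1$ and the fundamental annulus moves holomorphically simultaneously. The essential estimate is that, on $\Lambda_j$, the parameter displacement of $c_\la$ and of the return branches is negligible compared with the phase expansion $|\rho|^{jp}$. This transversality estimate is local and uses only Koebe distortion on the univalent branches, so transcendence plays no role.

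Finally, I would apply the parameter version of the Straightening Theorem for proper equipped unicritical polynomial-like families (\cite{douhub}, \cite{LyubichBook}). This gives a homeomorphism from the connectedness locus $\mcal_d(\Lambda_j)$ onto $\mcal_d$, quasiconformal in McMullen's refined form. Its inverse restricted to $\partial\mcal_d$ lands in the activity locus of $c_\la$, hence in $\bif$, and inside $\Lambda_j\subset W$ for $j$ large. The fact that boundary parameters are active, hence bifurcation parameters, follows as in Proposition \ref{prop:msstl}: at a parameter of $\partial\mcal_d(\Lambda_j)$, the critical orbit escapes the renormalization domain nearby, so $\{\la\mapsto f_\la^n(c_\la)\}$ is not normal there.
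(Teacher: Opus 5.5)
Your route is the one the paper relies on. The paper does not prove this statement: it cites McMullen and remarks that his renormalization argument near a Misiurewicz parameter is local in phase space, so it applies to meromorphic maps. Your outline of that argument has the right architecture, but its first step is wrong as stated. At $\la_*$ there is no degree-$d$ restriction around $c_{\la_*}$. Since $f_{\la_*}^k(c_{\la_*})=q_{\la_*}$, the point $f_{\la_*}^{N_j}(c_{\la_*})$ lies on the repelling cycle. That cycle does not meet a small disk $V$ around $c_{\la_*}$, because a critical point cannot lie on a repelling cycle. Hence no component of $f_{\la_*}^{-N_j}(V)$ contains $c_{\la_*}$, and there is nothing to ``persist''.

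The construction has to be done in the other order:
\begin{itemize}
\item fix a disk $D(\la)$ near $q_\la$, with $q_\la\notin\overline{D(\la)}$, mapped univalently onto $V$ by the unramified return branch;
\item pull $D(\la)$ back $j$ times along the cycle to a disk $D_j(\la)$, which still avoids $q_\la$;
\item take $\Lambda_j$ to be a component of $\{\la: f_\la^k(c_\la)\in D_j(\la)\}$.
\end{itemize}
The last step is where non-persistence is used. This ordering also removes what you call the main obstacle. On the parameter scale of $\Lambda_j$, $D_j(\la)$ moves relative to $q_\la$ by only a small fraction of its own size. So by Rouch\'e each such component is mapped with degree one, whatever the vanishing order of $f^k_\la(c_\la)-q_\la$; that order only affects the number of components.

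The second problem is the sentence ``the only critical point met along the way is $c$ itself''. Unramifiedness controls the return branch, but nothing in the hypotheses prevents some $f_{\la_*}^i(c_{\la_*})$, $1\le i<k$, from being a critical point $c'$.
\begin{itemize}
\item If that relation is persistent, $f^k$ has local degree larger than $d$ at $c$, and your construction yields $\partial\mcal_{d'}$ with $d'>d$.
\item If it is non-persistent, then for suitable orders of vanishing the unicriticality can fail. This happens when the landing relation of $c'$ itself is what is broken at the scale of $\Lambda_j$. The points near $c_\la$ that $f_\la^i$ sends to $c'_\la$ then fall inside $B_j(\la)$ and become extra critical points of $f_\la^{N_j}$. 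The renormalization is then not unicritical, and the parameter straightening theorem you invoke does not apply.
\end{itemize}
You need either to add this as a hypothesis, reading $d$ as the local degree of $f^k$ at $c$, or to remove it by a further perturbation inside $W$.

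Finally, the Douady--Hubbard/Lyubich parameter straightening only gives a homeomorphism onto $\mcal_d$. Quasiconformality needs McMullen's additional argument, so in your proof, as in the paper, that part is a citation. Only a topological embedding is used in Corollary~\ref{coro:C}.
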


Recall that in the case of a natural family, all critical points are always marked and have constant local degree
throughout the family. We are now ready to prove Corollary \ref{coro:C}.

\begin{proof}[Proof of Corollary \ref{coro:C}]
	
	Let $\la_1$ be a parameter in the bifurcation locus and let $W$ be an arbitrary neighborhood of $\la_1$. By characterization of $\JJ$-stability (Theorem \ref{thm:J stability}) 
	there is at least one active singular value $v_{\la_1}$ at $\la_1$: this active singular value must be either critical or asymptotic.

	\begin{itemize}
		\item If $v_{\la_1}$ is a critical value: then there is an active critical point $c_{\la_1}$. By Lemma \ref{lem:unramif}, there exists $\la_2 \in W$ such that $f_{\la_2}$ has an unramified critical point which is Misiurewicz. By Theorem \ref{th:mcmu}, there is a topological embedding $\chi: \partial \mcal_d \to \bif \cap W$ for some $d \geq 2$.
		\item If $v_{\la_1}$ is an asymptotic value: if its forward orbit does not persistently contain a critical point, then we can apply directly Theorem C: in that case, we have a topological embedding $\chi: \partial \tcal \to \bif \cap W$. It its forward orbit does contain a critical point, then that critical point is active, and we are back to the previous case.
	\end{itemize}	
\end{proof}

\bibliographystyle{amsalpha}	
\bibliography{ABFTangentLike}

\end{document}